\def\inte#1{
\displaystyle\mathop{#1\kern0pt}^\circ }
\let\al=\alpha
\let\b=\beta
\let\r=\rho
\let\f=\frac
\let\p=\psi
\let\wt=\widetilde
\def\virgp{\raise 2pt\hbox{,}}
\def\cdotpv{\raise 2pt\hbox{;}}
\def\eqdefa{\buildrel\hbox{\footnotesize def}\over =}
\def\C{\mathop{\mathbb C\kern 0pt}\nolimits}
\def\DD{\mathop{\mathbb D\kern 0pt}\nolimits}
\def\EE{\mathop{{\mathbb E \kern 0pt}}\nolimits}
\def\K{\mathop{\mathbb K\kern 0pt}\nolimits}
\def\N{\mathop{\mathbb N\kern 0pt}\nolimits}
\def\Q{\mathop{\mathbb Q\kern 0pt}\nolimits}
\def\R{\mathop{\mathbb R\kern 0pt}\nolimits}
\def\SS{\mathop{\mathbb S\kern 0pt}\nolimits}
\def\ZZ{\mathop{\mathbb Z\kern 0pt}\nolimits}
\def\TT{\mathop{\mathbb T\kern 0pt}\nolimits}
\def\P{\mathop{\mathbb P\kern 0pt}\nolimits}
\newcommand{\Z}{{\ZZ}}
\def\dv{\mbox{div}}
\def\curl{\mathop{\rm curl}\nolimits}
\def\na{\nabla}
\def\p{\partial}
\newcommand{\beq}{\begin{equation}}
\newcommand{\eeq}{\end{equation}}
\newcommand{\ben}{\begin{eqnarray}}
\newcommand{\een}{\end{eqnarray}}
\newcommand{\beno}{\begin{eqnarray*}}
\newcommand{\eeno}{\end{eqnarray*}}
\newtheorem{thm}{Theorem}[section]
\newcommand{\vv}[1]{\boldsymbol{#1}}
\def\v{v}
\def\b{b}
\def\div{\text{div}\,}
\def\curl{\text{curl}\,}
\def\Zp{{Z}_+}
\def\Zm{{Z}_-}
\def\zp{{z}_+}
\def\zm{{z}_-}
\def\zpm{{z}_{\pm}}
\newtheorem*{Main Theorem}{Main Theorem}
\newtheorem{theorem}{Theorem}[section]
\newtheorem{lemma}[theorem]{Lemma}
\newtheorem{proposition}[theorem]{Proposition}
\newtheorem{corollary}[theorem]{Corollary}
\newtheorem{definition}[theorem]{Definition}
\newtheorem{remark}[theorem]{Remark}
\numberwithin{equation}{section}
\begin{document}

\title[MHD in thin domain]{On the ideal magnetohydrodynamics in three-dimensional thin domains: well-posedness and asymptotics}

\author[Li XU]{Li Xu}
\address{LSEC, Institute of Computational Mathematics, Academy of Mathematics and Systems Science, Chinese Academy of Science\\ Beijing, China}
\email{xuliice@lsec.cc.ac.cn}

%\date{July 7, 2017}

\begin{abstract}
We consider  the ideal magnetohydrodynamics (MHD) subjected to a strong magnetic field along $x_1$ direction in three-dimensional thin domains $\Omega_\delta=\R^2\times(-\delta,\delta)$  with slip boundary conditions.  It is well-known that in this situation the system will generate Alfv\'en waves. Our results are summarized as follows:

(i).\, We construct the global   solutions (Alfv\'en waves) to MHD in the thin domain $\Omega_\delta$ with $\delta>0$. In addition, the uniform energy estimates are  obtained with respected to the parameter $\delta$.

(ii). We justify the asymptotics of the MHD equations from the thin domain $\Omega_\delta$ to the plane $\R^2$. More precisely, we prove that the 3D Alfv\'en waves in $\Omega_\delta$ will converge to the Alfv\'en waves in $\R^2$ in the limit  that $\delta$ goes to zero.  This shows that Alfv\'en waves propagating along the horizontal direction of the (3D) strip are stable and can be approximated by the (2D) Alfv\'en waves when $\delta$ is sufficiently small.  Moreover, the control of the (2D) Alfv\'en waves  can be obtained from the control of (3D) Alfv\'en waves in the thin domain $\Omega_\delta$ with aid of the uniform bounds.

The proofs of main results rely on the design of the proper energy functional and the null structures of the nonlinear terms. Here the null structures means two aspects: separation of the Alfv\'en waves ($z_+$ and  $z_-$) and no bad quadratic terms $Q(\p_3z_-^h,\p_3 z_+^h)$ where $z_\pm=(z_\pm^h, z^3_\pm)$ and $Q(\p_3 z_-^h,\p_3 z_+^h)$ is the linear combination of terms $\p^{\al}\p_3z_-^h\p^{\beta}\p_3z_+^h$ with $\al,\beta\in(\Z_{\geq0})^3$.
\end{abstract}

\maketitle

\section{Introduction}
The purpose of this article is to study the global well-posdness and  asymptotics of  incompressible ideal magnetohydrodynamics(MHD) in three-dimensional(3D) thin domains $\Omega_\delta\eqdefa\R^2\times(-\delta,\delta)$ with strong background magnetic fields.  We remark that  parameter $\delta>0$ is sufficiently small. Thin domains are widely considered in the study of many problems in science, such as in solid mechanics (thin rod, plates and shells), in fluid dynamics (lubrication, meteorology problems, ocean dynamics), and in magnetohydrodynamics (wave heating in the solar and stellar atmosphere, solar tachocline, shallow-water MHD).  Most of the above problems are described by  partial differential equations(PDE) in thin domains.  We  refer to  \cite{Davidson, Mu-Ro-Ha, Ra} for more details on physics background.

In the present article, we consider the incompressible ideal MHD equations in the thin plate $\Omega_\delta$. The ideal MHD equations in thin plate (or strip) read as
\begin{equation}\label{ideal MHD}
\begin{split}
\partial_t  \vv v+ \vv v\cdot \nabla \vv v &= -\nabla p + (\na\times\vv b)\times\vv b, \ \ \text{in}\ \ \Omega_\delta\times\R^+\\
\partial_t \vv b + \vv v\cdot \nabla \vv b &=  \vv b \cdot \nabla \vv v,\\
\div \vv v &=0,\\
\div \vv b &=0,
\end{split}
\end{equation}
where $\vv b=(b^1,b^2,b^3)^T, \vv v=(v^1,v^2,v^3)^T$ and $ p$ are the magnetic field, the velocity and scalar pressure of the fluid respectively. %The domain $\Omega_\delta$ is defined by
%\beno
%\Omega_\delta\eqdefa\R^2\times(-\delta,\delta),\quad\delta>0.
%\eeno
The slip boundary conditions are imposed on $\vv v$ and $\vv b$ as
\beq\label{original boundary condition}
v^3|_{x_3=\pm\delta}=0,\quad b^3|_{x_3=\pm\delta}=0.
\eeq

We can write the Lorentz force term $(\na\times\vv b)\times\vv b$ in the momentum equation in a more convenient form. Indeed, we have
$$(\nabla\times\vv b)\times\vv b=\na(-\f12|\vv b|^2)+\vv b \cdot \nabla \vv b.$$
The first term $\na(-\f12|\vv b|^2)$ is called the magnetic pressure force since it is in the gradient form just as the fluid pressure does. The second term $\vv b\cdot\na\vv b=\na\cdot(\vv b\otimes\vv b)$ is the magnetic tension force, which is the only restoring source to generate Alfv\'{e}n waves. Therefore, we can use $p$ again in the place of $p+\f12|\b|^2$. The momentum equation then reads
$$
\partial_t  \vv v+ \vv v\cdot \nabla \vv v = -\nabla p + \vv b \cdot \nabla \vv b .
$$

We study the most interesting situation when a strong background magnetic field $\vv B_0$ presents (to generate Alfv\'en waves). Let $\vv B_0=|\vv B_0| \,\vv e_1$ be a uniform constant (non-vanishing) background magnetic field. The vector $\vv e_1$ is the unit vector parallel to $x_1$-axis. In this case, the small initial perturbation will generate a stable Alfv\'en waves which propagate along the background magnetic field $\vv B_0$. We shall study the global existence of the Alfv\'en waves in the thin plate and also the asymptotics of the system as the width of the strip goes to zero.
%More precisely, we shall
%give rigorous justifications of that the horizontal component of Alfv\'en waves in every slice are approximated by their mean average in height and prove that the Alfv\'en waves in 3D thin strip converge to the Alfv\'en waves in the 2D space as the width of the strip goes to zero.

\medskip

We introduce the Els\"{a}sser variables:
$
\Zp = \v +\b$ and $\Zm = \v-\b,
$
to rewrite the MHD equations \eqref{ideal MHD} as
\begin{equation}\label{MHD in Elsasser}
\begin{split}
\partial_t  \Zp +\Zm \cdot \nabla \Zp &= -\nabla p, \ \ \text{in}\ \ \Omega_\delta\times\R^+\\
\partial_t  \Zm +\Zp \cdot \nabla \Zm &= -\nabla p,\\
\div \Zp &=0,\\
\div \Zm &=0.
\end{split}
\end{equation}
Let $\vv B_0 =|\vv B_0|(1,0,0)^T$  be a uniform background magnetic field and
$
\zp = \Zp-B_0, \ \ \zm = \Zm + B_0.
$
Then the MHD equations and boundary condition \eqref{boundary condition} can  be reformulated as
\begin{equation}\label{MHD}
\begin{split}
\partial_t  \zp +\Zm \cdot \nabla \zp &= -\nabla p,  \ \ \text{in}\ \ \Omega_\delta\times\R^+\\
\partial_t  \zm +\Zp \cdot \nabla \zm &= -\nabla p,\\
\div \zp &=0,\\
 \div \zm &=0,
\end{split}
\end{equation}
with the boundary conditions
\beq\label{boundary condition}
\zp^3|_{x_3=\pm\delta}=0,\quad\zm^3|_{x_3=\pm\delta}=0.
\eeq

Suppose $j_+=\curl z_+$ and $j_-=\curl z_-$. It is easy to see that $(j_+,j_-)$ satisfies
\begin{equation}\label{Voricity equation}
\begin{split}
\partial_t  j_+ +\Zm \cdot \nabla j_+ &= -\na z_-\wedge\na z_+,  \ \ \text{in}\ \ \Omega_\delta\times\R^+\\
\partial_t  j_- +\Zp \cdot \nabla j_- &= -\na z_+\wedge\na z_-.\\
\end{split}
\end{equation}
We remark that $j_+$ and $j_-$ are divergence free. The explicit expressions of the nonlinear terms on the righthand side (r.h.s) are
\beq\label{nonlinearity}
\na z_-\wedge\na z_+=\na z_-^k\times\p_k z_+,\quad \na z_+\wedge\na z_-=\na z_+^k\times\p_k z_-.
\eeq
Here we use the Einstein's convection: if an index appears once up and once down, it is understood to be summing over $\{1,2,3\}$.

\subsection{Short review of the problem}
We first give a  short review on the PDEs in thin domains as well as the incompressible MHD system with strong background magnetic fields.

 For the incompressible Navier-Stokes(NS) system, we refer to \cite{Ra-Sell}  on the global strong solutions and attractors of NS system for large initial data and force term in  $\Omega_\varepsilon=Q_2\times(0,\varepsilon)$ with periodic boundary conditions in the horizontal direction, where $Q_2$ is the rectangle in $\R^2$.    In \cite{If-Ra-Sell}, the authors considered the NS system in a non-flat thin domain $\Omega_\varepsilon=\{x\in\R^3: x_1,x_2\in(0,1),\ x_3\in(0,\varepsilon g(x_1,x_2))\}$ with periodic boundary condition in the horizontal direction and Navier boundary conditions on the top and bottom. They proved the existence of global solutions to NS system for the initial data whose $H^1(\Omega_\varepsilon)$ norm  is smaller than $C\varepsilon^{-\f12}$. They also verified that solutions to 3D NS converge to the 2D Navier-Stokes-like system(for $g=1$, it is exactly 2D NS), provided that the initial data converge to the 2D vector fields as $\varepsilon$ goes to zero.
For the Euler equations in   $Q_\varepsilon=\Omega\times(0,\varepsilon)$ where $\Omega$ is a  rectangle in $\R^2$, the authors in \cite{Ma-Ra-Ra}  considered the periodic condition in the horizontal direction and the slip boundary condition on the vertical direction. By assuming that the initial data is uniformly bounded in $\varepsilon$ in space $W^{2,q}(Q_\varepsilon)$ $(q>3)$, they obtained the classic solution  in the interval $(0,T(\varepsilon))$, where $T(\epsilon)\rightarrow\infty$ as $\varepsilon\rightarrow 0$. We remark that the mean value operator in the thin direction plays an important role.

For incompressible MHD with strong background magnetic fields, Bardos, Sulem and Sulem [2] first obtained the global
solutions for the ideal MHD in the H\"older space (not in the energy space). The work in \cite{Bardos} treated MHD system as 1D waves system and it relied on the convolution of the fundamental solutions.  For MHD with only strong fluid viscosity, in \cite{Lin-Xu-Zhang, Xu-Zhang} the authors proved the global existence of the small (w.r.t viscosity) solution with some admissible condition for the initial data. The work in \cite{Xu-Zhang} (also \cite{Lin-Xu-Zhang}) regarded MHD system as an anisotropic damped wave equation in the Lagrangian coordinates, that is, $\p_t^2Y-\mu\Delta\p_t Y-\p_3^2Y\sim 0$. When $\mu=0$, it holds $\p_t^2Y-\p_3^2Y\sim0$ which shows the ideal MHD system is 1D waves system. Actually, in 1942, Alfv\'en \cite{Alfven} first discovered such 1D waves (the so-called Alfv\'en waves) by linear analysis, in the incompressible ideal MHD with strong background magnetic fields. Recently, the authors in \cite{He-Xu-Yu} provided a rigorous mathematical proof for the existence, propagation and stability of the (ideal/viscous) Alfv\'en waves in the nonlinear setting.  The approach   is  inspired by   the stability of Minkowski spacetime \cite{Ch-K}. We also refer to \cite{c-l} and \cite{w-z} for alternative proofs on  the same subject.

Finally we give some comments on this short review:

(i). In \cite{If-Ra-Sell,Ma-Ra-Ra,Ra-Sell}, the estimate of the pressure $p$ can be neglected due to the Leray projection and the divergence free condition. While for the incompressible ideal MHD, to catch the propagation of the  Alfv\'en waves, the spacetime weight is introduced and thus  the estimates for the pressure is  compulsory.

(ii). In \cite{If-Ra-Sell,Ma-Ra-Ra,Ra-Sell}, the authors addressed the problem in the bounded domains and regarded the asymptotic  of the equations from 3D to 2D as a perturbation of the equations in 2D.

 \medskip

Different from the previous work, in this paper, we will consider  the effect of the shape of the thin domain $\Omega_\delta$ with $\delta>0$ on the Alfv\'en waves. Our main goals can be  concluded as follows:

(i).  We want to show that  \eqref{MHD} with boundary conditions \eqref{boundary condition}  is global well-posed in any thin domain $\Omega_\delta$ with $\delta>0$. In addition, some kind of the uniform energy estimates with respect to $\delta$  can be obtained.

(ii). We want to investigate the asymptotics of the MHD equations from the thin domain $\Omega_\delta$ to the plane $\R^2$. More precisely, we want to prove that the 3D Alfv\'en waves in $\Omega_\delta$ will converge to the Alfv\'en waves in $\R^2$ in the limit  that $\delta$ goes to zero.  It shows that when the width of the strip is very small  Alfv\'en waves propagating along the horizontal direction of the (3D) strip are stable and  can be approximated by the (2D) Alfv\'en waves.  Moreover, the control of the (2D) Alfv\'en waves  can be obtained from the control of (3D) Alfv\'en waves in the thin domain $\Omega_\delta$ via the uniform bounds.

\subsection{Difficulties, key observations and the strategies}  The main difficulty of the problem results from the shape of the thin domain $\Omega_\delta$. For instance,
 in the thin domain $\Omega_\delta$, the constant in the Sobolev imbedding inequality is related to the parameter $\delta$(see Lemma \ref{Sobolev}).  Thus to make clear the dependence of the energy functional on the parameter $\delta$, it is natural to  rewrite the MHD system  in a fixed domain $\Omega_1$ by taking the proper scaling. To do that, we
introduce the new functions defined by
\beq\label{scaling}\begin{aligned}
&z_{\pm(\delta)}^h(t,x_h,x_3)\eqdefa z_{\pm}^h(t,x_h,\delta x_3),\quad z_{\pm(\delta)}^3(t,x_h,x_3)\eqdefa \delta^{-1}z_{\pm}^3(t,x_h,\delta x_3),\\
&p_{(\delta)}(t,x_h,x_3)\eqdefa p(t,x_h,\delta x_3),\quad\text{for any}\quad x\in\Omega_1.
\end{aligned}\eeq
It is easy to check that $z_{\pm(\delta)}=(z_{\pm(\delta)}^h,z_{\pm(\delta)}^3)$ still verifies the divergence free condition.
Similarly we have
\beno
z_{\pm(\delta),0}^h(x_h,x_3)\eqdefa z_{\pm,0}^h(x_h,\delta x_3),\quad z_{\pm(\delta),0}^3(x_h,x_3)\eqdefa \delta^{-1}z_{\pm,0}^3(x_h,\delta x_3),\quad\text{for any}\quad x\in\Omega_1.
\eeno
 Then due to \eqref{MHD} and \eqref{scaling}, $(z_{+(\delta)},z_{-(\delta)},p_{(\delta)})$ satisfies the following system
\begin{equation}\label{scaling MHD}
\begin{split}
\partial_t  z_{+(\delta)} +(-\vv e_1+z_{-(\delta)})\cdot \nabla z_{+(\delta)} &= -\nabla_\delta p_{(\delta)},  \ \ \text{in}\ \ \Omega_1\times\R^+\\
\partial_t  z_{-(\delta)} +(\vv e_1+z_{+(\delta)}) \cdot \nabla z_{-(\delta)} &= -\nabla_\delta p_{(\delta)},\\
\na\cdot z_{+(\delta)} =0,\quad \na\cdot z_{-(\delta)} &=0,\\
z_{+(\delta)}|_{t=0}=z_{+(\delta),0}(x),\quad z_{-(\delta)}|_{t=0}&=z_{+(\delta),0}(x),
\end{split}
\end{equation}
where $\na_\delta=(\p_1,\p_2,\delta^{-2}\p_3)^T$. Now it is clear that the system \eqref{scaling MHD} is anisotropic and moreover the pressure $p_{(\delta)}$ satisfies the singular Laplace equation \ben\label{slaeq}-(\p_1^2+\p_2^2+\delta^{-2}\p_3^2)p_{(\delta)}=\na\cdot(z_{-(\delta)}\cdot\na z_{+(\delta)}).\een    We remark that these two properties induce the difficulties of the solvability  of the well-poedness for the original system  \eqref{MHD}.

\medskip

Now let us talk about the key observations which are crucial  to overcome the difficulties.
Thanks to the anisotropic property of the equations, by calculation, we see  that
\beno&\| \p_h^{\al_h}\p_3^lz_{\pm(\delta)}^h\|_{L^2(\Omega_1)}=\delta^{l-\f12}\| \p_h^{\al_h}\p_3^lz_{\pm}^h\|_{L^2(\Omega_\delta)},\quad
\| \p_h^{\al_h}\p_3^lz_{\pm(\delta)}^3\|_{L^2(\Omega_1)}=\delta^{l-\f32}\|\ \p_h^{\al_h}\p_3^lz_{\pm}^3\|_{L^2(\Omega_\delta)},
 \eeno
which give the hints on the construction of the energy functional. In fact, we will introduce  \ben\label{basicenergy}\sum_{+,-} [\delta^{2(l-\f12)}\|\langle x_1\mp t\rangle^{1+\sigma}\na_h^{k}\p_3^lz_{\pm}(t)\|_{L^2(\Omega_\delta)}^2+ \delta^{-3}\|\langle x_1\mp t\rangle^{1+\sigma}\na_h^{k}z^3_{\pm}(t)\|_{L^2(\Omega_\delta)}^2]\een
in the total energy(see Theorem \ref{global existence in thin domain}) which is  compatible with the anisotropic property of the system.

To prove the propagation of Alfv\'en waves, we will use two kinds of the null structures inside the system. Similar to \cite{He-Xu-Yu}, in a fixed strip, we still have the separation property of Alfv\'en waves, since the background magnetic filed $\vv B_0=|\vv B_0|\vv e_1$ parallels to the strip. To implement the idea, the estimate of the pressure is compulsory since the spacetime weight $x_1\mp t$ is involved in the energy functional. Recalling that in \eqref{scaling MHD}, the pressure $p_{(\delta)}$ verifies \eqref{slaeq}.  Thus the first challenge is to give an explicit expression for the pressure in the thin domain $\Omega_\delta$. By construction of the Green function,
    we successfully obtain the explicit formula for the pressure and moreover get the upper bounds for the Green function(see Lemma \ref{equation for pressure} and Corollary \ref{derivative of Green cor}).  It is not surprise that the upper bounds for the Green function contains the singular factor $\delta^{-1}$ because of \eqref{slaeq}. But it  will bring the trouble to close the energy estimates if the total energy functional only contains the norms in \eqref{basicenergy}. To overcome the difficulty, we   introduce another energy $$\sum_{+,-}\delta^{2(l-\f{1}{2})}\|\langle x_1\mp t\rangle^{1+\sigma}\na_h^{k}\p_3^l\p_3z_{\pm,0}\|_{L^2(\Omega_\delta)}^2$$ to absorb the additional singular factor $\delta^{-1}$ coming from the pressure.
Then all the difficulties are reduced to prove the propagation of this new energy.
Our key observation lies in the second type of the null structure of the system that  there is no   linear combination of terms $\p^{\al}\p_3z_-^h\p^{\beta}\p_3z_+^h$ with $\al,\beta\in(\Z_{\geq0})^3$ in the system. Thus the propagation of the energy can be proved and then we complete the energy estimates.

\medskip

Based on  the above observations, our strategy can be concluded as:
\begin{enumerate}
\item  We first prove the global existence of the solutions (Alfv\'en waves) to the MHD system in 3D thin plates and derive some kind of the uniform energy estimates with respective to the width parameter $\delta$.
\item  With these uniform energy estimates in hand, we  consider the asymptotics of the equations from $\Omega_\delta$ to $\R^2$. We split the proof  into two steps. In the first step,
we show that the horizontal component of the Alfv\'en waves in $\Omega_\delta$ can be approximated by their mean average in height. Then in the second step, we prove that
 the mean average of the Alfv\'en waves in horizontal converges to the 2D Alfv\'en waves as $\delta$ goes to zero if the initial Alfv\'en waves converge to the 2D Alfv\'en waves.
\end{enumerate}

\subsection{Statement of the main results} In this subsection, we will state three   results on the MHD system in the thin domain $\Omega_\delta$.
\subsubsection{Global well-posedness}
Before stating the existence result, we introduce the linear characteristic hypersurfaces
\beno\begin{aligned}
&C_{u_+}^+\eqdefa\{(t,x)\in\R^+\times\Omega_\delta\,|\, u_+=x_1-t=constant\},\\
&C_{u_+,h}^+\eqdefa\{(t,x_h)\in\R^+\times\R^2\,|\, u_+=x_1-t=constant\},
\end{aligned}\eeno
where $x=(x_h,x_3)$ and $x_h=(x_1,x_2)$.
 $C_{u_-}^-$ and $C_{u_-,h}^-$ can be defined in a similar way. Here $u_\pm=u_\pm(t,x_1)=x_1\mp t$. We remark that  the hypersurfaces
$C_{u_\pm}^\pm$ and $C_{u_\pm,h}^\pm$  are regarded as the level sets of functions $u_\pm(t,x_1)$ in $(0,t^*)\times\Omega_\delta$ and  $(0,t^*)\times\R^2$
respectively.

For given multi-index $\al_h=(\al_1,\al_2)\in(\Z_{\geq0})^2$ and $l\in\Z_{\geq0}$, we set $\p_h^{\al_h}\p_3^lf=\p_1^{\al_1}\p_2^{\al_2}\p_3^lf$ and $|\na_h^k\p_3^lf|^2=\sum_{|\al_h|=k}|\p_h^{\al_h}\p_3^lf|^2$. Then
  the energy $E_{\pm}^{(\al_h,l)}(\zpm(t))$ and flux $F_{\pm}^{(\al_h,l)}(\zpm)$ are defined as follows:
\beno\begin{aligned}
&E_{\pm}^{(\al_h,l)}(\zpm(t))=\|\langle u_\mp\rangle^{1+\sigma}\p_h^{\al_h}\p_3^l\zpm(t)\|_{L^2(\Omega_\delta)}^2,\\
%&E_{\pm}^{(\al_h,l)}(\zpm^3(t))=\|\langle u_\mp\rangle^{1+\sigma}\p_h^{\al_h}\p_3^l\zpm^3(t)\|_{L^2(\Omega_\delta)}^2,\\
&F_{\pm}^{(\al_h,l)}(\zpm)=\int_0^{t^*}\int_{\Omega_\delta}\f{\langle u_\mp\rangle^{2(1+\sigma)}}{\langle u_\pm\rangle^{1+\sigma}}|\p_h^{\al_h}\p_3^l\zpm|^2dxdt,
%&F_{\pm}^{(\al_h,l)}(\zpm^3)=\int_0^{t^*}\int_{\Omega_\delta}\f{\langle u_\mp\rangle^{2(1+\sigma)}}{\langle u_\pm\rangle^{1+\sigma}}|\p_h^{\al_h}\p_3^l\zpm^3|^2dxdt.
\end{aligned}\eeno where $\langle u\rangle=(1+|u|^2)^{\f12}.
$
We  can also give the definitions to $E_{\pm}^{(\al_h,l)}(\zpm^h(t))$, $E_{\pm}^{(\al_h,l)}(\zpm^3(t))$, $E_{\pm}^{(\al_h,l)}(j_\pm(t))$, $F_{\pm}^{(\al_h,l)}(\zpm^h)$, $F_{\pm}^{(\al_h,l)}(\zpm^3)$ and $F_{\pm}^{(\al_h,l)}(j_\pm)$, etc., in a similar way.

For simplicity, we introduce the total energy  and flux  as follows:
\beno\begin{aligned}
&E_{\pm}^{(k,l)}(\zpm)=\sup_{0\leq t\leq t^*}E_{\pm}^{(k,l)}(\zpm(t))=\sup_{0\leq t\leq t^*}\sum_{|\al_h|=k}E_{\pm}^{(\al_h,l)}(\zpm(t)),\\
&F_{\pm}^{(k,l)}(\zpm)=\sum_{|\al_h|=k}F_{\pm}^{(\al_h,l)}(\zpm).
\end{aligned}\eeno
We remark that $E_{\pm}^{(k,l)}(\zpm^h)$, $E_{\pm}^{(k,l)}(\zpm^3)$, $E_{\pm}^{(\al_h,l)}(j_\pm)$, $F_{\pm}^{(k,l)}(\zpm^h)$, $F_{\pm}^{(k,l)}(\zpm^3)$ and $F_{\pm}^{(\al_h,l)}(j_\pm)$, etc., can be defined in a similar way.

Now we are in a position to state our main results. The first result is on the global well-posedness and uniform energy estimates (with respect to $\delta$) of \eqref{MHD} in the domain $\Omega_\delta$ for any $\delta\in(0,1]$.

\begin{thm}\label{global existence in thin domain}
Let $\vv B_0 =(1,0,0)^T$ be a given background magnetic field and $N_*=2N$, $N\in \mathbb{Z}_{\geq 5}$, $\delta\in(0,1]$ and $\sigma\in(0,\f13)$. Suppose that the vector fields $(z_{+,0}(x),z_{-,0}(x))$   satisfies $\div\,z_{\pm,0}=0$ and $z_{+,0}^3|_{x_3=\pm\delta}=0$, $z_{-,0}^3|_{x_3=\pm\delta}=0$. Then there exists a constant $\varepsilon_0>0$ such that  if
\beq\label{initial on delta}\begin{aligned}
\mathcal{E}(0)&\eqdefa\sum_{+,-}\bigl[\sum_{k+l\leq N_*}\delta^{2(l-\f12)}\|\langle x_1\rangle^{1+\sigma}\na_h^{k}\p_3^lz_{\pm,0}\|_{L^2(\Omega_\delta)}^2+\sum_{k\leq N_*-1}\delta^{-3}\|\langle x_1\rangle^{1+\sigma}\na_h^{k}z^3_{\pm,0}\|_{L^2(\Omega_\delta)}^2\\
&\quad+\sum_{k+l\leq N+2}\delta^{2(l-\f{1}{2})}\|\langle x_1\rangle^{1+\sigma}\na_h^{k}\p_3^l\p_3z_{\pm,0}\|_{L^2(\Omega_\delta)}^2\bigr]\\
&\leq \varepsilon_0^2,
\end{aligned}\eeq
 the MHD system  \eqref{MHD} with boundary conditions \eqref{boundary condition} admits a unique and global smooth solution. Moreover, there holds
\beq\label{total energy estimates}\begin{aligned}
&\sum_{k+l\leq N_*}\delta^{2(l-\f12)}\bigl(E_\pm^{(k,l)}(\zpm)+F_\pm^{(k,l)}(\zpm)\bigr)+\sum_{k\leq N_*-1}\delta^{-3}\bigl(E_\pm^{(k,0)}(\zpm^3)+F_\pm^{(k,0)}(\zpm^3)\bigr)\\
&\quad+\sum_{k+l\leq N+2}\delta^{2(l-\f12)}\bigl(E_{\pm}^{(k,l)}(\p_3\zpm)+F_{\pm}^{(k,l)}(\p_3\zpm)\bigr)\\
&\leq C \mathcal{E}(0).
\end{aligned}\eeq
 In particular, the constant $\varepsilon_0$  and $C$ are independent of the parameter $\delta$.
\end{thm}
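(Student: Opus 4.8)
The proof proceeds via a continuity/bootstrap argument on the rescaled system \eqref{scaling MHD}, so I would work throughout in the fixed domain $\Omega_1$, translating all estimates back to $\Omega_\delta$ at the end via the scaling relations recorded after \eqref{slaeq}. The local well-posedness (for fixed $\delta$) is standard for symmetric hyperbolic systems with the slip boundary condition, so the whole content is the a priori estimate \eqref{total energy estimates}. First I would set up the energy functional: for each pair $(k,l)$ apply $\p_h^{\al_h}\p_3^l$ to the equations \eqref{MHD} for $z_\pm$, multiply by $\langle u_\mp\rangle^{2(1+\sigma)}\p_h^{\al_h}\p_3^l z_\pm$ and integrate over $\Omega_\delta$. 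The transport operator $\p_t + \Zmp\cdot\nabla$ acting against the weight $\langle u_\mp\rangle^{2(1+\sigma)}$ produces the good flux term $F_\pm^{(k,l)}$ — this is exactly the mechanism that encodes the separation of the two Alfvén waves, since $u_\mp = x_1 \pm t$ is (almost) transported by the opposite-family velocity. The error from the background field $\mp\vv e_1\cdot\nabla$ hitting the weight is the leading-order good term; the error from the perturbation $z_{\mp}\cdot\nabla$ hitting the weight is cubic and absorbed. The commutator $[\p_h^{\al_h}\p_3^l, \Zmp\cdot\nabla]z_\pm$ is handled by anisotropic product estimates and the Sobolev embedding of Lemma \ref{Sobolev}, keeping careful track of the $\delta$-powers dictated by the weights $\delta^{2(l-1/2)}$ and $\delta^{-3}$; the point of choosing precisely those powers in \eqref{basicenergy} is that they make the trilinear terms scale-invariant, so all $\delta$'s cancel and one is left with an estimate of the form $\frac{d}{dt}\mathcal{E} \lesssim \mathcal{E}^{1/2}(\mathcal{E} + \mathcal{F})$ uniformly in $\delta$.

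The pressure is the first serious obstacle. Differentiating \eqref{slaeq}, one must estimate $\langle u_\mp\rangle^{1+\sigma}\p_h^{\al_h}\p_3^l \nabla_\delta p_{(\delta)}$ in $L^2$, and because the weight $\langle u_\mp\rangle^{1+\sigma}$ does not commute with the singular Laplacian $\p_1^2+\p_2^2+\delta^{-2}\p_3^2$, one cannot simply invert. Here I would use the explicit Green's function from Lemma \ref{equation for pressure} and Corollary \ref{derivative of Green cor}: write $p_{(\delta)}$ as a convolution against the Green kernel of $-(\p_1^2+\p_2^2+\delta^{-2}\p_3^2)$ on the strip with Neumann-type conditions (coming from the boundary condition $z_\pm^3|_{x_3=\pm\delta}=0$ together with the equations), split the source $\nabla\cdot(z_{-(\delta)}\cdot\nabla z_{+(\delta)}) = \p_i\p_j(z_{-(\delta)}^i z_{+(\delta)}^j)$ into low–high frequency pieces in $x_h$, move the spacetime weight from the output to the input at the cost of a controlled commutator (the kernel decays fast enough in the horizontal variable for this), and keep track of the overall $\delta^{-1}$ that the kernel bounds produce. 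This $\delta^{-1}$ cannot be absorbed by \eqref{basicenergy} alone, which is precisely why the third block of \eqref{initial on delta}/\eqref{total energy estimates}, involving $\p_3 z_\pm$, is included: the extra $\p_3$ derivative on $z_\pm$ gains a factor $\delta$ in the scaling (one reads this off the relation $\|\p_h^{\al_h}\p_3^l z_{\pm(\delta)}^h\|_{L^2(\Omega_1)} = \delta^{l-1/2}\|\p_h^{\al_h}\p_3^l z_\pm^h\|_{L^2(\Omega_\delta)}$), exactly compensating the $\delta^{-1}$ from the Green function. So the pressure estimate is closed by feeding it the $\p_3 z_\pm$ energy rather than the full energy.

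This then forces the second, and I expect hardest, step: propagating the auxiliary energy $\sum_\pm \delta^{2(l-1/2)}\|\langle u_\mp\rangle^{1+\sigma}\na_h^k\p_3^l\p_3 z_\pm\|_{L^2}^2$. Applying $\p_3$ to the equations for $z_\pm$, the dangerous terms are those where $\p_3$ lands on the velocity coefficient, i.e. schematically $\p_3 z_{\mp}^h \cdot \nabla_h \p_3 z_\pm$ and $\p_3 z_\mp^3\,\p_3(\p_3 z_\pm)$; the second would be a genuine loss, but by the divergence-free condition $\p_3 z_\mp^3 = -\nabla_h\cdot z_\mp^h$, so it is really of the form $\nabla_h z_\mp^h \cdot \p_3^2 z_\pm$, a horizontal derivative on the coefficient times two vertical derivatives — still within the energy. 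The truly problematic structure would be a quadratic term of the form $Q(\p_3 z_-^h, \p_3 z_+^h)$ coupling $\p_3$-derivatives of \emph{both} families, since those cannot be controlled by either $z_+$ or $z_-$ transport estimate; the second null structure asserted in the abstract is exactly the statement that, after using the divergence constraints and the structure of \eqref{MHD}, no such term appears on the right-hand side of the $\p_3 z_\pm$ equations. Verifying this cancellation term by term — expanding $\Zmp = \mp\vv e_1 + z_{\mp}$, applying $\p_3$, and checking that every surviving quadratic either has $z_\pm$ as a factor (so it is estimated by the $F_\pm$ flux) or has at most one $\p_3$ on a $z_\mp^h$ component with the other factor carrying a compensating horizontal derivative via $\dive z = 0$ — is the technical heart of the argument. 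Once this is in place, the three energy blocks close simultaneously: one obtains $\mathcal{E}(t) + \mathcal{F}(t) \lesssim \mathcal{E}(0) + (\mathcal{E}(t)+\mathcal{F}(t))^{3/2}$ with all constants independent of $\delta$, and a continuity argument starting from $\mathcal{E}(0) \le \varepsilon_0^2$ gives \eqref{total energy estimates} on $[0,\infty)$, hence global existence.
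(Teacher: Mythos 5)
Your overall architecture is sound — the weighted energy identity producing the flux terms $F_\pm^{(k,l)}$, the role of the third energy block $\delta^{2(l-\f12)}E_\pm^{(k,l)}(\p_3z_\pm)$ in absorbing the $\delta^{-1}$ from the Green function, the null‑structure observation that $\div z_\mp=0$ prevents the appearance of $Q(\p_3z_-^h,\p_3z_+^h)$, and the closure by continuity. But you are missing the device that makes the vertical‑derivative estimates go through at all, and without it the plan as written does not close.

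You propose to apply $\p_h^{\al_h}\p_3^l$ directly to the velocity equations \eqref{MHD} and then estimate $\p_h^{\al_h}\p_3^l\na p$ through the Green function representation. This cannot work for $l\geq1$, for two separate reasons. First, the integration‑by‑parts trick that converts $\int \p_h^{\al_h}\na p\cdot\langle u_-\rangle^{2(1+\sigma)}\p_h^{\al_h}z_+\,dx$ into a term where $\na$ has landed on the $x_1$‑only weight relies on the boundary condition: the boundary integral is $\int_{\p\Omega_\delta}\p_h^{\al_h}p\,\langle u_-\rangle^{2(1+\sigma)}(\p_h^{\al_h}z_+)^3$, and this vanishes because $\p_h^{\al_h}z_+^3|_{x_3=\pm\delta}=0$. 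As soon as a single $\p_3$ is applied, $\p_3 z_+^3|_{x_3=\pm\delta}=-\na_h\cdot z_+^h|_{x_3=\pm\delta}\neq0$, so the boundary integrals do not vanish, and nothing in the energy hierarchy controls them. Second, the Green function representation in Lemma \ref{derivation of pressure} exploits $\p_{x_h}^{\al_h}G_\delta=(-1)^{|\al_h|}\p_{y_h}^{\al_h}G_\delta$ to integrate by parts in $y_h$ and place the horizontal derivatives on the source; there is no analogous identity in $x_3$ because the domain is bounded there. Pulling $\p_3^l$ through the kernel costs $\delta^{-1}|x_h-y_h|^{-l}$ by Corollary \ref{derivative of Green cor}, and those extra negative powers are not compensated by the scaling weight $\delta^{2(l-\f12)}$, so the estimate would not be uniform in $\delta$.

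The paper avoids this entirely by never estimating a vertical derivative of $p$: all of Propositions \ref{al h l v prop} and \ref{al h l v-1 prop} are carried out on the vorticity system \eqref{Voricity equation}, $\p_t j_\pm\mp\p_1 j_\pm+z_\mp\cdot\na j_\pm=-\na z_\mp\wedge\na z_\pm$, which has \emph{no pressure}. This is also precisely where the null structure you describe lives — the wedge $\na z_\mp^k\times\p_k z_\pm$ together with $\p_3 z_\mp^3=-\na_h\cdot z_\mp^h$ produces the decomposition \eqref{hv5} with no $\p_3z_-^h\cdot\p_3z_+^h$ pairing. One then recovers the full $\na_h^k\p_3^l z_\pm$ bounds from curl bounds via the div–curl lemma (Corollary \ref{div-curl cor}), using $\div z_\pm=0$ and $z_\pm^3|_{x_3=\pm\delta}=0$; see \eqref{hv1} and \eqref{curl to z}. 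The pressure is then only ever estimated through purely horizontal derivatives, in Proposition \ref{al h prop}, where the boundary terms vanish and the $y_h$‑integration‑by‑parts trick applies. You should rebuild the $l\geq1$ step around the curl and the div–curl lemma rather than around the velocity equation and the pressure.
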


\begin{remark}
By   \eqref{scaling}, it is easy to check that
$
\sum_{k\leq N+2}\|\langle x_1\rangle^{1+\sigma}\na^{k}\p_3z_{\pm(\delta),0}^h\|_{L^2(\Omega_1)}^2\leq\delta^2\varepsilon_0^2,
$
which means that as $\delta\rightarrow 0$, the limit of $z_{\pm(\delta),0}^h$ (if there exists) is independent of the vertical variable $x_3$.
By similar argument, Theorem \ref{global existence in thin domain} implies that for any time $t$, the limit of solution $z_{\pm(\delta)}^h$ (if there exists) will be also independent of the vertical variable $x_3$. Thanks to $\div\,z_{\pm(\delta)}=0$,  the limit of $z_{\pm(\delta)}^3$ will be also independent of $x_3$.
It looks promising that  the solutions to the 3D MHD in thin domains  will converge to the solutions to the 2D MHD as  $\delta$ goes to $0$,.
\end{remark}

\subsubsection{Approximation theory}
The second  result is on the approximation of the solutions to MHD in thin domains. We first  introduce the projection $M_\delta$  from $L^2(\Omega_\delta)$ to $L^2(\R^2)$
as follows:
\beq\label{projection}
M_\delta f(x_h)\eqdefa\f{1}{2\delta}\int_{-\delta}^\delta f(x_h,x_3)dx_3.
\eeq
Then there hold
\beq\label{properties for projection}\begin{aligned}
&\na_h M_\delta f=M_\delta(\na_h f),\quad M_\delta(\p_3f)=\p_3M_\delta f=0\quad\text{for any}\quad f|_{x_3=\pm\delta}=0,\\
&M_\delta^2f=M_\delta f,\quad M_\delta(I-M_\delta)f=(I-M_\delta)M_\delta f=0.
\end{aligned}\eeq

Assume that $(\zp(t,x),\zm(t,x))$ is a  smooth solution to \eqref{MHD} with  boundary conditions \eqref{boundary condition} and initial data $\zpm(0,x)=z_{\pm,0}(x)$. We set
\beq\label{approximate solution}
\bar{z}_+^h=(M_\delta z_+^h)(t,x_h),\quad \bar{z}_-^h=(M_\delta z_-^h)(t,x_h),\quad \bar{z}_\pm=(\bar{z}_\pm^h,0),
\eeq
where  $f^h\eqdefa(f^1,f^2)^T$. We also set
\beq\label{error}
w_\pm^h=\zpm^h-\bar{z}_\pm^h,\quad w_\pm^3=z_\pm^3.
\eeq
Then by \eqref{properties for projection}, \eqref{approximate solution} and \eqref{error},  we have
\beq\label{app 1}
M_\delta w_\pm^h=0.
\eeq

Now we want to derive the equations for $\bar{z}_\pm^h$ and $w_\pm^h$. To do that, we observe that
\beno\begin{aligned}
z_\mp\cdot\na\zpm^h&=(\bar{z}_\mp+w_\mp)\cdot\na(\bar{z}_\pm^h+w_\pm^h)\\
&=\bar{z}_\mp^h\cdot\na_h\bar{z}_\pm^h+\bar{z}_\mp^h\cdot\na_h w_\pm^h+w_\mp\cdot\na\zpm^h.
\end{aligned}\eeno
Using \eqref{app 1} and $\div\,z_\pm=0$, we have
\beq\label{app 2}\begin{aligned}
&M_\delta(z_\mp\cdot\na\zpm^h)=\bar{z}_\mp^h\cdot\na_h\bar{z}_\pm^h+M_\delta(w_\mp\cdot\na\zpm^h),\\
&(I-M_\delta)(z_\mp\cdot\na\zpm^h)=\bar{z}_\mp^h\cdot\na_h w_\pm^h+(I-M_\delta)(w_\mp\cdot\na\zpm^h).
\end{aligned}\eeq
Due to \eqref{properties for projection}, $\div\,\zpm=0$ and $z_+^3|_{x_3=\pm\delta}=0$, $z_-^3|_{x_3=\pm\delta}=0$ , we get
\beq\label{app 3}
\na_h\cdot\bar{z}_\pm^h=M_\delta(\na\cdot\zpm)=0,\quad\na\cdot w_\pm=0,\quad w_+^3|_{x_3=\pm\delta}=0,\quad w_-^3|_{x_3=\pm\delta}=0.
\eeq
Thanks to \eqref{app 2} and \eqref{app 3}, we deduce from \eqref{MHD} that
\beq\label{equation for app solution}\begin{aligned}
&\p_t\bar{z}_\pm^h\mp\p_1\bar{z}_\pm^h+\bar{z}_\mp^h\cdot\na_h\bar{z}_\pm^h=-\na_h(M_\delta p)-M_\delta(w_\mp\cdot\na\zpm^h),\quad\text{in}\quad \R^2\times\R^+\\
&\na_h\cdot\bar{z}_\pm^h=0,\\
&\bar{z}_\pm^h|_{t=0}=M_\delta z_{\pm,0}^h\eqdefa\bar{z}_{\pm,0}^h.
\end{aligned}\eeq
and
\beq\label{equation for remainder}\begin{aligned}
&\p_tw_\pm^h\mp\p_1w_\pm^h+\bar{z}_\mp^h\cdot\na_hw_\pm^h=-(I-M_\delta)(\na_h p)-(I-M_\delta)(w_\mp\cdot\na\zpm^h),\quad\text{in}\quad \Omega_\delta\times\R^+\\
&w_\pm^h|_{t=0}=(I-M_\delta)z_{\pm,0}^h\eqdefa w_{\pm,0}^h.
\end{aligned}\eeq

To investigate the difference between the original system and the mean average system, we introduce the energy $E_{\pm,h}^{(\al_h)}(\bar{z}_\pm^h(t))$ and the flux $F_{\pm,h}^{(\al_h)}(\bar{z}_\pm^h)$ involving the quantities on $\R^2$ as follows:
\beno\begin{aligned}
&E_{\pm,h}^{(\al_h)}(\bar{z}_\pm^h(t))=\|\langle u_\mp\rangle^{1+\sigma}\p_h^{\al_h}\bar{z}_\pm^h(t)\|_{L^2(\R^2)}^2,\\
&F_{\pm,h}^{(\al_h)}(\bar{z}_\pm^h)=\int_0^{t^*}\int_{\R^2}\f{\langle u_\mp\rangle^{2(1+\sigma)}}{\langle u_\pm\rangle^{1+\sigma}}|\p_h^{\al_h}\bar{z}_\pm^h(t)|^2dxdt.
\end{aligned}\eeno
Then the total energy and flux  are defined by
\beno\begin{aligned}
&E_{\pm,h}^{(k)}(\bar{z}_\pm^h)=\sup_{0\leq t\leq t^*}E_{\pm,h}^{(k)}(\bar{z}_\pm^h(t))=\sup_{0\leq t\leq t^*}\sum_{|\al_h|=k}E_{\pm,h}^{(\al_h)}(\bar{z}_\pm^h(t)),\\
&F_{\pm,h}^{(k)}(\bar{z}_\pm^h)=\sum_{|\al_h|=k}F_{\pm,h}^{(\al_h)}(\bar{z}_\pm^h).
\end{aligned}\eeno
We remark that $E_{\pm,h}^{(\al_h)}(w_\pm(\cdot,x_3))$, $F_{\pm,h}^{(\al_h)}(w_\pm(\cdot,x_3))$, $E_{\pm,h}^{(k)}(w_\pm(\cdot,x_3))$, $F_{\pm,h}^{(k)}(w_\pm(\cdot,x_3))$, etc., can also be defined in a  similar way.

\begin{thm}\label{stability}
Let  $(z_+,z_-)$ be a solution obtained in Theorem \ref{global existence in thin domain} with $N_*=2N$, $N\in \Z_{\geq5}$. Suppose that  $w_\pm$ is defined by  \eqref{error}. Then there exists a constant $\varepsilon_1(\leq\varepsilon_0)$ such that if $
\mathcal{E}(0)\leq\varepsilon_1^2
$(see the definition of $\mathcal{E}(0)$ in \eqref{initial on delta}),
it holds
\beq\label{stability energy estimate}\begin{aligned}
&\sum_{+,-}\sup_{x_3\in(-\delta,\delta)}\Bigl(\sum_{k\leq N}\bigl(E_{\pm,h}^{(k)}(w_\pm^h(\cdot,x_3))+F_{\pm,h}^{(k)}(w_\pm^h(\cdot,x_3))\bigr)+\delta^{-2}\sum_{k\leq N-1}\bigl(E_{\pm,h}^{(k)}(w_\pm^3(\cdot,x_3))+F_{\pm,h}^{(k)}(w_\pm^3(\cdot,x_3))\bigr)\Bigr)\\
&\leq C\sum_{+,-}\sum_{k\leq N}\sup_{x_3\in(-\delta,\delta)}E_{\pm,h}^{(k)}(w_{\pm,0}^h(\cdot,x_3))+C\delta^2\varepsilon_1^4.
\end{aligned}\eeq
\end{thm}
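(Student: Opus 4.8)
The plan is to run a weighted energy estimate for the remainder system \eqref{equation for remainder} fiber by fiber in $x_3$, treating $x_3\in(-\delta,\delta)$ as a parameter, and to close the estimate using the uniform bounds from Theorem \ref{global existence in thin domain} together with two structural facts: that $M_\delta w_\pm^h=0$ (so a Poincar\'e-type inequality in $x_3$ converts $\|w_\pm^h\|$ into $\delta\|\p_3 w_\pm^h\|=\delta\|\p_3 z_\pm^h\|$, which is controlled with a gain of $\delta$ by \eqref{total energy estimates}), and that the transport operator in \eqref{equation for remainder} is $\p_t\mp\p_1+\bar z_\mp^h\cdot\na_h$, whose characteristic speed in the $\mp\p_1$ part is exactly aligned with the weight $\langle u_\mp\rangle=\langle x_1\mp t\rangle$. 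First I would apply $\p_h^{\al_h}$ with $|\al_h|\le N$ to \eqref{equation for remainder}, multiply by $\langle u_\mp\rangle^{2(1+\sigma)}\p_h^{\al_h}w_\pm^h$, and integrate over $\R^2$ (at fixed $x_3$); the good term is the flux $F_{\pm,h}^{(k)}(w_\pm^h(\cdot,x_3))$ produced by $\p_t\langle u_\mp\rangle^{2(1+\sigma)}\mp\p_1\langle u_\mp\rangle^{2(1+\sigma)}=-2(1+\sigma)\langle u_\mp\rangle^{2\sigma}\cdot(\text{something})/\langle u_\mp\rangle\lesssim -\langle u_\mp\rangle^{2(1+\sigma)}/\langle u_\pm\rangle^{1+\sigma}$ after the standard two-signs computation used in \cite{He-Xu-Yu}. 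The transport term $\bar z_\mp^h\cdot\na_h$ is handled by integration by parts since $\na_h\cdot\bar z_\mp^h=0$, producing only commutator terms $[\p_h^{\al_h},\bar z_\mp^h\cdot\na_h]w_\pm^h$ which are bilinear in $(\bar z_\mp^h, w_\pm^h)$ with at most $N$ horizontal derivatives, hence bounded by $\sqrt{\mathcal E(0)}$ times energy$+$flux via the weighted product/Moser estimates (the weight $\langle u_\mp\rangle$ travels with $z_\mp$ and is benign here, exactly as in \cite{He-Xu-Yu}).

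The two genuinely new source terms are $(I-M_\delta)\na_h p$ and $(I-M_\delta)(w_\mp\cdot\na z_\pm^h)$. For the latter I would expand $w_\mp\cdot\na z_\pm^h=w_\mp^h\cdot\na_h z_\pm^h+w_\mp^3\p_3 z_\pm^h$; both factors $w_\mp^h$ and $w_\mp^3=z_\mp^3$ carry a gain — $w_\mp^h$ because $M_\delta w_\mp^h=0$ forces $\|w_\mp^h\|\lesssim\delta\|\p_3 z_\mp^h\|$ which is $O(\delta^{3/2-l}\cdot\delta^{\cdots})$-small by \eqref{total energy estimates}, and $z_\mp^3$ because the $\delta^{-3}$-weighted norm of $z_\mp^3$ is controlled, i.e. $\|z_\mp^3\|_{L^2(\Omega_\delta)}\lesssim\delta^{3/2}\sqrt{\mathcal E(0)}$, and likewise $\|\p_3 z_\pm^h\|_{L^2(\Omega_\delta)}\lesssim\delta^{1/2}\sqrt{\mathcal E(0)}$ — so that after using the anisotropic Sobolev embedding (Lemma \ref{Sobolev}) to put the lower-derivative factor in $L^\infty$ and taking $\sup_{x_3}$, this contributes the $C\delta^2\varepsilon_1^4$ on the right of \eqref{stability energy estimate}. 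The crucial point, flagged in the abstract, is that after this expansion no term of the form $Q(\p_3 z_-^h,\p_3 z_+^h)$ appears: every quadratic remainder source has at least one factor that is either a $w^h$ (mean-zero in $x_3$, hence $\delta$-small) or a vertical component $z^3$ ($\delta^{3/2}$-small), never the dangerous pairing of two $\p_3 z^h$'s with no compensating smallness.

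For the pressure term I would use the explicit Green's function representation from Lemma \ref{equation for pressure}/Corollary \ref{derivative of Green cor} to write $(I-M_\delta)\na_h p$ in terms of $z_{-(\delta)}\cdot\na z_{+(\delta)}$, observe that $(I-M_\delta)$ kills the $x_3$-independent part so only the "fluctuation" of the pressure enters, and bound it — paying the unavoidable $\delta^{-1}$ from the singular Laplacian \eqref{slaeq} — against the extra energy $\sum\delta^{2(l-1/2)}\|\langle u_\mp\rangle^{1+\sigma}\na_h^k\p_3^l\p_3 z_\pm\|^2$ that Theorem \ref{global existence in thin domain} was specifically designed to control; the two powers of $\delta$ gained from the two mean-zero/vertical factors in the source of the Laplace equation beat the single $\delta^{-1}$, leaving a net $O(\delta)$ and hence $O(\delta^2)$ after squaring. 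Finally I would close by Gronwall in $t$ over $[0,t^*]$, summing over $|\al_h|\le N$ and over $\pm$, taking $\varepsilon_1$ small enough to absorb the $\sqrt{\mathcal E(0)}\,(E+F)$ terms into the left side; the $w_\pm^3$ estimate with its $\delta^{-2}$ weight follows from the $w_\pm^h$ estimate via $\p_3 w_\pm^3=-\na_h\cdot w_\pm^h$ (from $\na\cdot w_\pm=0$ in \eqref{app 3}) and a Poincar\'e inequality in $x_3$ using $w_\pm^3|_{x_3=\pm\delta}=0$. I expect the main obstacle to be the pressure term: getting the fluctuation $(I-M_\delta)p$ estimated with a genuine net gain in $\delta$ requires carefully extracting \emph{both} units of smallness from the quadratic source of \eqref{slaeq} while correctly accounting for the $\delta$-dependence of the Green's function kernel and of the anisotropic Sobolev constants, and making sure the spacetime weight $\langle u_\mp\rangle^{1+\sigma}$ can be moved across the nonlocal pressure operator (which is where the separation-of-$z_\pm$ null structure is used).
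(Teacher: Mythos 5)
Your plan matches the paper's overall strategy: run the $2$D weighted energy estimate (Proposition~\ref{linearized prop for 2D}) fiber by fiber in $x_3$ for $\p_h^{\al_h}w_\pm^h$, treat $\bar z_\mp^h\cdot\na_h$ by $\na_h\cdot\bar z_\mp^h=0$ and absorb the commutator and the $\bar z_\mp^1$-weight term by $\varepsilon_1$-smallness, isolate the pressure fluctuation $(I-M_\delta)\na_h p$ and the remainder source $(I-M_\delta)(w_\mp\cdot\na z_\pm^h)$ as the only terms producing the $\delta^2\varepsilon_1^4$ correction, and finally recover $w_\pm^3$ from $\p_3w_\pm^3=-\na_h\cdot w_\pm^h$ with $w_\pm^3|_{x_3=\pm\delta}=0$. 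This is essentially what the paper does, so the proposal is on the right track.

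There are two places where you deviate, neither fatal but worth flagging. First, for $\r_{\pm,2}=-(I-M_\delta)\bigl[\p_h^{\al_h}(w_\mp^h\cdot\na_h z_\pm^h)\bigr]$ you propose to convert the $w_\mp^h$ factor into $\delta\,\p_3 z_\mp^h$ via the Poincar\'e inequality $|w_\mp^h(x_h,x_3)|\lesssim\int_{-\delta}^{\delta}|\p_3 z_\mp^h(x_h,s)|\,ds$, whence a clean $O(\delta\varepsilon_1^2)$ bound and, after Young's inequality, a $\delta^2\varepsilon_1^4$ contribution. The paper instead leaves $\sup_{x_3}E_{\mp,h}^{(k)}(w_\mp^h(\cdot,x_3))^{1/2}$ as a factor and absorbs it into the symmetrised left-hand side by $\varepsilon_1$-smallness, with \emph{no} extra power of $\delta$ coming from this term at all (the whole $\delta^2\varepsilon_1^4$ on the right of \eqref{stability energy estimate} comes only from $\r_{\pm,1}$ and $\r_{\pm,3}$ via the $(I-M_\delta)$ bound \eqref{rem15}). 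Your Poincar\'e route is a legitimate alternative and gives the same final bound, since Theorem~\ref{global existence in thin domain} controls $\delta^{-1}E_\pm^{(k,0)}(\p_3 z_\pm)$ up to $k\leq N+2$, which supplies exactly the derivative room the Poincar\'e needs when $|\al_h|\leq N$; one should just keep in mind that this is \emph{not} available for $\r_{\pm,4}$ (the commutator has a $w_\pm^h$ factor on the left-traveling side, for which Poincar\'e would only give $\delta^2\varepsilon_1^3$, not $\delta^2\varepsilon_1^4$), so the absorption mechanism is still needed there — your plan correctly uses absorption for the commutator, so this is consistent.

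Second, your accounting of the pressure gain is imprecise. You attribute the net $O(\delta)$ on $(I-M_\delta)\na_h p$ to ``two powers of $\delta$ gained from two mean-zero/vertical factors in the source'' minus a single $\delta^{-1}$ from the Green kernel. In the paper's proof the mechanism is different and simpler: exactly one power of $\delta$ comes from $|(I-M_\delta)f|\leq\delta\|\p_3 f\|_{L^\infty_v}$ applied to $f=\na_h p$; the $\delta^{-1}$ in Corollary~\ref{derivative of Green cor} is then canceled by the $O(\delta)$ vertical measure of $\Omega_\delta$ in the Young-convolution step (one integrates $\delta^{-1}$ against $\int_{-\delta}^\delta dy_3$), and the remaining $L^\infty_v$-Sobolev factors $\delta^{\pm1/2}$ pair up with the anisotropic energies to leave $O(\varepsilon_1^2)$. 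The end result — $\r_{\pm,1}^{h(\al_h)}\sim\delta\varepsilon_1^2$ and hence $\delta^2\varepsilon_1^4$ after Young — is the same, but if you tried to implement your stated bookkeeping literally (expecting two units of $\delta$ \emph{in the source}) you would likely get stuck, since the source $\p_i z_+^j\p_j z_-^i$ of \eqref{equation for pressure} carries no intrinsic $\delta$-smallness. It is the $(I-M_\delta)$ on the pressure, combined with the thinness of the domain in the kernel estimate, that produces the gain.
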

\begin{remark} Roughly speaking,
the theorem implies that if $z_\pm^h$ is close to the mean average  $M_\delta z_\pm^h$ in height at the initial time, then   $z_\pm^h$ will keep close to the mean average $M_\delta z_\pm^h$  in height for all time. In other words, the horizontal part of the solutions to MHD in 3D thin domains can be approximated by the mean average in height while the vertical part of the solution is close to zero.
\end{remark}

\subsubsection{Asymptotics  from 3D MHD to 2D MHD}
Based on the first two results, we are in a position to investigate the asymptotics from the 3D MHD system in thin domains $\Omega_\delta$ to the 2D MHD in $\R^2$ as $\delta$ goes to zero. We recall that the system in thin domain $\Omega_\delta$
can be rewritten by the following system in $\Omega_1$:
\begin{equation*}
\begin{split}
\partial_t  z_{+(\delta)} +(-\vv e_1+z_{-(\delta)})\cdot \nabla z_{+(\delta)} &= -\nabla_\delta p_{(\delta)},  \ \ \text{in}\ \ \Omega_1\times\R^+\\
\partial_t  z_{-(\delta)} +(\vv e_1+z_{+(\delta)}) \cdot \nabla z_{-(\delta)} &= -\nabla_\delta p_{(\delta)},\\
\na\cdot z_{+(\delta)} =0,\quad \na\cdot z_{-(\delta)} &=0,\\
z_{+(\delta)}|_{t=0}=z_{+(\delta),0}(x),\quad z_{-(\delta)}|_{t=0}&=z_{+(\delta),0}(x),
\end{split}
\end{equation*}
where $\na_\delta=(\p_1,\p_2,\delta^{-2}\p_3)^T$.
By setting
\beno
\bar{z}_{\pm(\delta)}^h(t,x_h)=M_1z_{\pm(\delta)}^h=M_\delta z_{\pm}^h,\quad w_{\pm(\delta)}^h=z_{\pm(\delta)}^h-\bar{z}_{\pm(\delta)}^h,\quad w_{\pm(\delta)}^3=z_{\pm(\delta)}^3.
\eeno
and  with the help of \eqref{equation for app solution} and \eqref{scaling}, we see that $(\bar{z}_{+(\delta)}^h,\bar{z}_{-(\delta)}^h,p_{(\delta)})$ satisfies
\begin{equation}\label{scaling for app system}
\begin{split}
\p_t\bar{z}_{+(\delta)}^h-\p_1\bar{z}_{+(\delta)}^h+\bar{z}_{-(\delta)}^h\cdot \nabla_h\bar{z}_{+(\delta)}^h& = -\nabla_h M_1p_{(\delta)}-M_1\bigl(w_{-(\delta)}\cdot\na z_{+(\delta)}^h\bigr),  \ \ \text{in}\ \ \R^2\times\R^+\\
\p_t\bar{z}_{-(\delta)}^h+\p_1\bar{z}_{-(\delta)}^h+\bar{z}_{+(\delta)}^h\cdot \nabla_h\bar{z}_{-(\delta)}^h& = -\nabla_h M_1p_{(\delta)}-M_1\bigl(w_{+(\delta)}\cdot\na z_{-(\delta)}^h\bigr), \\
\na_h\cdot \bar{z}_{+(\delta)}^h =0,\quad \na_h\cdot\bar{z}_{-(\delta)}^h &=0,\\
\bar{z}_{+(\delta)}^h|_{t=0}=M_1z_{+(\delta),0}^h\eqdefa\bar{z}_{+(\delta),0}^h,\quad &\bar{z}_{-(\delta)}^h|_{t=0}=M_1z_{-(\delta),0}^h\eqdefa\bar{z}_{-(\delta),0}^h.
\end{split}
\end{equation}

Before stating the result, we introduce the energy   $E_\pm^{(k,l)}(z_{\pm(\delta)})$ and flux   $F_\pm^{(k,l)}(z_{\pm(\delta)})$  on domain $(0,t^*)\times\Omega_1$ which are defined in a similar way as those on domain $(0,t^*)\times\Omega_\delta$.
  For simplicity, we also use the  notations:
\beq\label{notation}\begin{aligned}
	E_\pm^{(k)}(z_{\pm(\delta)})\eqdefa\sum_{k'+l'=k}E_\pm^{(k',l')}(z_{\pm(\delta)}),\quad
	F_\pm^{(k)}(z_{\pm(\delta)})\eqdefa\sum_{k'+l'=k}F_\pm^{(k',l')}(z_{\pm(\delta)}).
\end{aligned}\eeq
We remark that $F_\pm^{(k)}(w_{\pm(\delta)})$, $E_{\pm,h}^{(k)}(\bar{z}_{\pm(\delta)}^h)$, $F_{\pm,h}^{(k)}(\bar{z}_{\pm(\delta)}^h)$, e.t.c. can be defined in a similar way.

Before stating the main result, let us give a remark on the  energies defined for the original system \eqref{MHD} and those for the rescaled system \eqref{scaling MHD}.
 \begin{remark}
	By  \eqref{scaling},  for any $\al_h\in(\Z_{\geq0})^2,\, l\geq 0$, we have
	\beno\begin{aligned}
		&\|\langle u_\mp \rangle^{1+\sigma}\p_h^{\al_h}\p_3^lz_{\pm(\delta)}^h\|_{L^2(\Omega_1)}=\delta^{l-\f12}\|\langle u_\mp \rangle^{1+\sigma}\p_h^{\al_h}\p_3^lz_{\pm}^h\|_{L^2(\Omega_\delta)},\\
		&\|\langle u_\mp \rangle^{1+\sigma}\p_h^{\al_h}z_{\pm(\delta)}^3\|_{L^2(\Omega_1)}=\delta^{-\f32}\|\langle u_\mp \rangle^{1+\sigma}\p_h^{\al_h}z_{\pm}^3\|_{L^2(\Omega_\delta)},\\
		&\|\langle u_\mp \rangle^{1+\sigma}\p_h^{\al_h}\p_3^lz_{\pm(\delta)}^3\|_{L^2(\Omega_1)}=\delta^{l-\f32}\|\langle u_\mp \rangle^{1+\sigma}\p_h^{\al_h}\p_3^lz_{\pm}^3\|_{L^2(\Omega_\delta)}\quad\text{for}\quad l\geq1,
	\end{aligned}\eeno
	i.e.,
	\beq\label{transform 1}\begin{aligned}
		&E_\pm^{(\al_h,l)}(z_{\pm(\delta)}^h)=\delta^{2(l-\f12)}E_\pm^{(\al_h,l)}(z_{\pm}^h),\quad E_\pm^{(\al_h,0)}(z_{\pm(\delta)}^3)=\delta^{-3}E_\pm^{(\al_h,0)}(z_{\pm}^3),\\
		&E_\pm^{(\al_h,l)}(z_{\pm(\delta)}^3)=\delta^{2(l-\f32)}E_\pm^{(\al_h,l)}(z_{\pm}^3)=\delta^{2(l-\f32)}E_\pm^{(\al_h,l-1)}(\na_h\cdot z_{\pm}^h)\quad\text{for}\quad l\geq1.
	\end{aligned}\eeq
	Similarly, we also have
	\beq\label{transform 2}\begin{aligned}
		&F_\pm^{(\al_h,l)}(z_{\pm(\delta)}^h)=\delta^{2(l-\f12)}F_\pm^{(\al_h,l)}(z_{\pm}^h),\quad F_\pm^{(\al_h,0)}(z_{\pm(\delta)}^3)=\delta^{-3}F_\pm^{(\al_h,0)}(z_{\pm}^3),\\
		&F_\pm^{(\al_h,l)}(z_{\pm(\delta)}^3)=\delta^{2(l-\f32)}F_\pm^{(\al_h,l)}(z_{\pm}^3)=\delta^{2(l-\f32)}F_\pm^{(\al_h,l-1)}(\na_h\cdot z_{\pm}^h)\quad\text{for}\quad l\geq1.
	\end{aligned}\eeq
	Then thanks to \eqref{transform 1} and \eqref{transform 2}, using $\div z_\pm=0$ and \eqref{notation}, we obtain
	\beq\label{transform for energy functional}\begin{aligned}
		\mathcal{E}_{(\delta)}(t)&\eqdefa\sum_{k\leq N_*}\bigl(E_\pm^{(k)}(z_{\pm(\delta)}^h)+F_\pm^{(k)}(z_{\pm(\delta)}^h)\bigr)+\sum_{k\leq N_*-1}\bigl(E_\pm^{(k)}(z_{\pm(\delta)}^3)+F_\pm^{(k)}(z_{\pm(\delta)}^3)\bigr)\\
		&\quad+\delta^2\bigl(E_\pm^{(N_*)}(z_{\pm(\delta)}^3)+F_\pm^{(N_*)}(z_{\pm(\delta)}^3)\bigr)+\delta^{-2}\sum_{k\leq N+2}\bigl(E_{\pm}^{(k)}(\p_3z_{\pm(\delta)}^h)+F_{\pm}^{(k,l)}(\p_3z_{\pm(\delta)}^h)\bigr)\\
		\sim&\sum_{k+l\leq N_*}\delta^{2(l-\f12)}\bigl(E_\pm^{(k,l)}(\zpm)+F_\pm^{(k,l)}(\zpm)\bigr)+\sum_{k\leq N_*-1}\delta^{-3}\bigl(E_\pm^{(k,0)}(\zpm^3)+F_\pm^{(k,0)}(\zpm^3)\bigr)\\
		&\quad+\sum_{k+l\leq N+2}\delta^{2(l-\f12)}\bigl(E_{\pm}^{(k,l)}(\p_3\zpm)+F_{\pm}^{(k,l)}(\p_3\zpm)\bigr).
	\end{aligned}\eeq
\end{remark}

\begin{definition}\label{def1} We call that the sequence
 $\{z_{\pm(\delta)}^h(x)\}_{0<\delta\leq1}$  converges to $z_{\pm(0)}^h(x)$ in $H^N(\Omega_1)$ if it holds
\beq\label{convergence}
\lim_{\delta\rightarrow0}\sum_{k\leq N}\|\langle u_\mp\rangle^{1+\sigma}\na^{k}\bigl(z_{\pm(\delta)}^h-z_{\pm(0)}^h\bigr)\|_{L^2(\Omega_1)}=0.
\eeq
Similarly,  the sequence $\{z_{\pm(\delta)}^h(x_h)\}_{0<\delta\leq1}$  is said to  converge to $z_{\pm(0)}^h(x_h)$ in $H^N(\R^2)$ if it holds
\beq\label{convergence for 2D}
\lim_{\delta\rightarrow0}\sum_{k\leq N}\|\langle u_\mp\rangle^{1+\sigma}\na_h^{k}\bigl(z_{\pm(\delta)}^h-z_{\pm(0)}^h\bigr)\|_{L^2(\R^2)}=0.\eeq
The convergence of the sequence  $\{z_{\pm(\delta)}^3(\cdot)\}_{0<\delta<1}$ to $z_{\pm(0)}^3(\cdot)$ in $H^N(\Omega_1)$  and the sequence $\{z_{\pm(\delta)}^3(\cdot,x_3)\}_{0<\delta<1}$   to $z_{\pm(0)}^3(\cdot,x_3)$ (for fixed $x_3\in(-1,1)$) in $H^N(\R^2)$ can be defined in a similar way.

\end{definition}

\begin{thm}\label{limitation}
Let $N_*=2N$, $N\in \mathbb{Z}_{\geq 5}$, $\delta\in(0,1]$, $\sigma\in(0,\f13)$ and $\varepsilon_1$ be the constant in Theorem \ref{stability}.  Suppose that the vector fields sequence $\{(z_{\pm(\delta),0}^h,z_{\pm(\delta),0}^3)\}_{0<\delta\leq1}$ satisfies $\div\, z_{\pm(\delta),0}=0$, $z_{+(\delta),0}^3|_{x_3=\pm 1}=0$, $z_{-(\delta),0}^3|_{x_3=\pm 1}=0$ and for all $\delta\in(0,1]$
\beq\label{initial}\begin{aligned}
&\mathcal{E}_\delta(0)\eqdefa\sum_{+,-}\bigl(\sum_{k\leq N_*}E_\pm^{(k)}(z_{\pm(\delta),0}^h)+\sum_{k\leq N_*-1}E_\pm^{(k)}(z_{\pm(\delta),0}^3)\\
&\qquad
+\delta^2E_\pm^{(N_*)}(z_{\pm(\delta),0}^3)+\delta^{-2}\sum_{k\leq N+2}E_\pm^{(k)}(\p_3z_{\pm(\delta),0}^h)\bigr)\leq\varepsilon_1^2.
\end{aligned}\eeq
Assume that
\beq\label{lim ansatz}
\bigl(z_{\pm(\delta),0}^h(x_h,x_3),z_{\pm(\delta),0}^3(x_h,x_3)\bigr)\quad\mbox{converges to}\quad \bigl(z_{\pm(0),0}^h(x_h),0\bigr)\quad\text{in}\quad H^{N+1}(\Omega_1),
\eeq
with $\na_h\cdot z_{\pm(0),0}^h=0$.
Then if   $(z_{+(\delta)}(t,x),z_{-(\delta)}(t,x))$ is a solution to MHD \eqref{scaling MHD} with the initial data $(z_{+(\delta),0}(x),z_{-(\delta),0}(x))$,
  there exist functions $z_{\pm(0)}^h(t,x_h)$ such that for any $x_3\in(-1,1)$, $t>0$
\beq\label{limitation 1}\begin{aligned}
&\lim_{\delta\rightarrow0}z_{\pm(\delta)}^h(t,x_h,x_3)=z_{\pm(0)}^h(t,x_h),\quad\text{in}\quad H^{N}(\R^2),\\
&\lim_{\delta\rightarrow0}z_{\pm(\delta)}^3(t,x_h,x_3)=0,\quad\text{in}\quad H^{N-1}(\R^2).
\end{aligned}\eeq
In particular,  $(z_{+(0)}^h(t,x_h),z_{-(0)}^h(t,x_h))$ solves 2D version of the system \eqref{MHD}  with the initial data $(z_{+(0),0}^h(x_h), z_{-(0),0}^h(x_h))$ and verifies
\beq\label{energy estimate for 2D MHD}
\sum_{+,-}\sum_{k\leq N}\bigl(E_{\pm,h}^{(k)}(z_{\pm(0)}^h)+F_{\pm,h}^{(k)}(z_{\pm(0)}^h)\bigr)\leq C\sum_{+,-}\sum_{k\leq N}E_{\pm,h}^{(k)}(z_{\pm(0),0}^h).
\eeq
Moreover  the error estimate for the asymptotics can be concluded as follows:
\beq\label{error estimate}\begin{aligned}
&\sum_{+,-}\sup_{x_3\in(-1,1)}\Bigl(\sum_{k\leq N}\bigl(E_{\pm,h}^{(k)}(z_{\pm(\delta)}^h(\cdot,x_3)-z_{\pm(0)}^h(\cdot))+F_{\pm,h}^{(k)}(z_{\pm(\delta)}^h(\cdot,x_3)-z_{\pm(0)}^h(\cdot))\bigr)\\
&\quad+\sum_{k\leq N-1}\bigl(E_{\pm,h}^{(k)}(z_{\pm(\delta)}^3(\cdot,x_3)+F_{\pm,h}^{(k)}(z_{\pm(\delta)}^3(\cdot,x_3))\bigr)\Bigr)\\
&\leq C\sum_{+,-}\sum_{k\leq N}\bigl(\sup_{x_3\in(-1,1)}E_{\pm,h}^{(k)}(z_{\pm(\delta),0}^h(\cdot,x_3)-z_{\pm(0),0}^h(\cdot))
+C\delta^2\varepsilon_1^4.
\end{aligned}\eeq
\end{thm}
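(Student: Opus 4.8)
The plan is to combine the three ingredients that have already been assembled: the uniform-in-$\delta$ bounds of Theorem \ref{global existence in thin domain}, the approximation estimate of Theorem \ref{stability}, and a compactness/limit argument in the weighted spaces of Definition \ref{def1}. First I would transfer everything to the fixed domain $\Omega_1$: by \eqref{transform for energy functional}, the hypothesis \eqref{initial} says exactly that $\mathcal E(0)\le\varepsilon_1^2$ in the sense of \eqref{initial on delta}, so Theorem \ref{global existence in thin domain} gives a global solution $(z_{+(\delta)},z_{-(\delta)})$ with $\mathcal E_{(\delta)}(t)\le C\varepsilon_1^2$ uniformly in $\delta\in(0,1]$, and Theorem \ref{stability} (after rescaling, using $M_\delta z_\pm^h=M_1z_{\pm(\delta)}^h$ and $w_{\pm(\delta)}^h=z_{\pm(\delta)}^h-\bar z_{\pm(\delta)}^h$, $w_{\pm(\delta)}^3=z_{\pm(\delta)}^3$) yields
\beq\label{app-rescaled}
\sup_{x_3\in(-1,1)}\Big(\sum_{k\le N}\big(E_{\pm,h}^{(k)}(w_{\pm(\delta)}^h(\cdot,x_3))+F_{\pm,h}^{(k)}(w_{\pm(\delta)}^h(\cdot,x_3))\big)+\delta^{-2}\sum_{k\le N-1}\big(E_{\pm,h}^{(k)}(w_{\pm(\delta)}^3(\cdot,x_3))+F_{\pm,h}^{(k)}(w_{\pm(\delta)}^3(\cdot,x_3))\big)\Big)\le C\delta^2\varepsilon_1^2,
\eeq
where I used that $E_{\pm,h}^{(k)}(w_{\pm(\delta),0}^h(\cdot,x_3))=E_{\pm,h}^{(k)}((I-M_1)z_{\pm(\delta),0}^h)\le\delta^2\varepsilon_1^2$ by \eqref{lim ansatz} (the limit is $x_3$-independent, so $(I-M_1)z_{\pm(\delta),0}^h\to0$). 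This already shows $z_{\pm(\delta)}^h$ is $O(\delta)$-close to its vertical mean $\bar z_{\pm(\delta)}^h$ and $z_{\pm(\delta)}^3=O(\delta)$ in the relevant norms, so it suffices to pass to the limit in the mean-field system \eqref{scaling for app system}.

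Next I would run the limit argument for $\bar z_{\pm(\delta)}^h$. The uniform bound on $\mathcal E_{(\delta)}$ plus $M_1$-boundedness gives $\sum_{k\le N_*}\big(E_{\pm,h}^{(k)}(\bar z_{\pm(\delta)}^h)+F_{\pm,h}^{(k)}(\bar z_{\pm(\delta)}^h)\big)\le C\varepsilon_1^2$ uniformly in $\delta$; from the equation \eqref{scaling for app system} one reads off a uniform bound on $\partial_t\bar z_{\pm(\delta)}^h$ in a lower-order weighted space (the forcing $M_1(w_{\mp(\delta)}\cdot\nabla z_{\pm(\delta)}^h)$ is controlled by \eqref{app-rescaled} and is in fact $O(\delta)$; the pressure term is handled by the $\nabla_h M_1 p_{(\delta)}$ estimate, which is the horizontal-mean part of the Green-function representation of Lemma \ref{equation for pressure}). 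By Aubin–Lions on $\R^2$ with the weight $\langle u_\mp\rangle^{1+\sigma}$ — localizing in $x_h$ and diagonalizing over a countable exhaustion to handle the non-compactness of $\R^2$, the weight providing tightness — I extract a subsequence $\delta\to0$ with $\bar z_{\pm(\delta)}^h\to z_{\pm(0)}^h$ strongly in $H^{N}_{loc}$ (weighted), and weakly in $H^{N_*}$, with $\nabla_h\cdot z_{\pm(0)}^h=0$. Passing to the limit in \eqref{scaling for app system}: the transport and time-derivative terms converge by strong $H^N$ convergence (recall $N_*=2N\ge N+2$, so there is room to pass the product to the limit), the forcing term vanishes since $\|w_{\mp(\delta)}\cdot\nabla z_{\pm(\delta)}^h\|=O(\delta)$, and $M_1p_{(\delta)}\to p_{(0)}$ is identified as the 2D pressure by taking $\nabla_h\cdot$ of the limiting momentum equations. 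Hence $(z_{+(0)}^h,z_{-(0)}^h)$ solves the 2D version of \eqref{MHD}; since the 2D Alfvén-wave system is well-posed in this class (the 2D case of Theorem \ref{global existence in thin domain}, equivalently \cite{He-Xu-Yu}), the limit is unique, so the whole family converges, and \eqref{energy estimate for 2D MHD} follows by lower semicontinuity of the norms under weak convergence together with the uniform 3D bound. The statements \eqref{limitation 1} then follow by combining $\bar z_{\pm(\delta)}^h\to z_{\pm(0)}^h$ with \eqref{app-rescaled} (which kills $w_{\pm(\delta)}^h$ and $z_{\pm(\delta)}^3$), and they hold for every fixed $x_3\in(-1,1)$, not just after averaging, precisely because \eqref{app-rescaled} carries a supremum over $x_3$.

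Finally, the quantitative error estimate \eqref{error estimate}. Here I would not use compactness but instead derive the inequality directly: decompose $z_{\pm(\delta)}^h(\cdot,x_3)-z_{\pm(0)}^h=\big(z_{\pm(\delta)}^h(\cdot,x_3)-\bar z_{\pm(\delta)}^h\big)+\big(\bar z_{\pm(\delta)}^h-z_{\pm(0)}^h\big)=w_{\pm(\delta)}^h(\cdot,x_3)+\big(\bar z_{\pm(\delta)}^h-z_{\pm(0)}^h\big)$. The first piece is $O(\delta)$ in all the norms by \eqref{app-rescaled}, uniformly in $x_3$. For the second piece, subtract the 2D system satisfied by $z_{\pm(0)}^h$ (same structure, zero forcing) from \eqref{scaling for app system}, obtaining an equation for $R_\pm\eqdefa\bar z_{\pm(\delta)}^h-z_{\pm(0)}^h$ with forcing consisting of (a) the commutator/transport difference $\bar z_{\mp(\delta)}^h\cdot\nabla_h\bar z_{\pm(\delta)}^h-z_{\mp(0)}^h\cdot\nabla_h z_{\pm(0)}^h$, which is linear in $R_\pm$ up to good factors bounded by $\varepsilon_1$; (b) the pressure difference $\nabla_h(M_1p_{(\delta)}-p_{(0)})$, estimated as in the proof of Theorem \ref{stability} by the divergence of the forcing; and (c) the genuinely small term $M_1(w_{\mp(\delta)}\cdot\nabla z_{\pm(\delta)}^h)=O(\delta)$. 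Running the same weighted-energy estimate with spacetime weight $\langle u_\mp\rangle^{1+\sigma}$ that underlies Theorems \ref{global existence in thin domain} and \ref{stability}, the linear-in-$R$ terms are absorbed (smallness of $\varepsilon_1$), and one is left with
\beq\label{err-closed}
\sum_{+,-}\sum_{k\le N}\big(E_{\pm,h}^{(k)}(R_\pm)+F_{\pm,h}^{(k)}(R_\pm)\big)\le C\sum_{+,-}\sum_{k\le N}E_{\pm,h}^{(k)}(R_\pm|_{t=0})+C\delta^2\varepsilon_1^4,
\eeq
and $R_\pm|_{t=0}=\bar z_{\pm(\delta),0}^h-z_{\pm(0),0}^h=M_1 z_{\pm(\delta),0}^h-z_{\pm(0),0}^h$, which is controlled by $\sup_{x_3}E_{\pm,h}^{(k)}(z_{\pm(\delta),0}^h(\cdot,x_3)-z_{\pm(0),0}^h)$ since $z_{\pm(0),0}^h$ is $x_3$-independent and $M_1$ is an average. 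Adding the $O(\delta)$ bound for $w_{\pm(\delta)}^h$ and for $z_{\pm(\delta)}^3=w_{\pm(\delta)}^3$ from \eqref{app-rescaled} gives \eqref{error estimate}. The main obstacle I anticipate is the pressure: unlike in \cite{If-Ra-Sell,Ma-Ra-Ra,Ra-Sell}, $p_{(\delta)}$ cannot be projected away because the spacetime weight $\langle u_\mp\rangle^{1+\sigma}$ does not commute with the (anisotropic, singular) Leray projection, so one must use the explicit Green-function formula of Lemma \ref{equation for pressure}–Corollary \ref{derivative of Green cor} and track carefully how the horizontal-mean part $M_1 p_{(\delta)}$ converges while the oscillating part $(I-M_1)p_{(\delta)}$ contributes only to the $O(\delta)$ remainder via the second (auxiliary $\partial_3 z_\pm$) energy — this is exactly where the $\delta^{-1}$ from the singular Laplacian must be seen to be compensated, as flagged in the strategy discussion.
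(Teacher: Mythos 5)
Your overall assembly of the pieces (uniform bounds from Theorem~\ref{global existence in thin domain}, the $w_{\pm(\delta)}$ estimate from Theorem~\ref{stability} after rescaling, then a limit argument for $\bar z^h_{\pm(\delta)}$) is the right scaffolding, and your quantitative difference estimate for $R_\pm=\bar z^h_{\pm(\delta)}-z^h_{\pm(0)}$ is, in substance, what closes the proof. But the route you take to produce $z^h_{\pm(0)}$ is genuinely different from the paper's, and it is worth seeing why the paper's route is tighter. The paper never invokes compactness or well-posedness of the 2D system at all: it estimates the difference $\bar z_\pm^{h(\delta,\delta')}\eqdefa\bar z^h_{\pm(\delta)}-\bar z^h_{\pm(\delta')}$ directly by running the same weighted energy scheme (Proposition~\ref{linearized prop for 2D}) on the equation \eqref{app7}, showing $\{\bar z^h_{\pm(\delta)}\}$ is Cauchy in the weighted $H^N(\R^2)$ topology (Step~3 of the paper). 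Convergence to some $z^h_{\pm(0)}$ then follows from completeness alone, the error estimate \eqref{error estimate} falls out by letting $\delta'\to0$ in \eqref{lim5}, and only \emph{afterwards} is $z^h_{\pm(0)}$ identified as a solution of the 2D system (Step~5). Your plan instead first extracts a subsequential limit via Aubin--Lions, identifies it as a 2D solution, invokes uniqueness of 2D MHD in this class to promote subsequential to full convergence, and only then runs the difference estimate. This works, but the compactness step requires more care than you indicate: the weight $\langle u_\mp\rangle^{1+\sigma}=\langle x_1\mp t\rangle^{1+\sigma}$ provides tightness in $x_1$ but \emph{not} in $x_2$, so Aubin--Lions only gives strong convergence on compact sets and the global weighted convergence in \eqref{limitation 1} has to come from the quantitative estimate anyway. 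In effect, your compactness step only identifies a candidate limit; the real work is still the difference-equation estimate, which is precisely the paper's Cauchy argument with $\delta'$ sent to $0$. The paper simply avoids the detour by never needing to know what $z^h_{\pm(0)}$ is before showing the family is Cauchy, so it also never needs the separate 2D uniqueness input.

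One concrete slip in your write-up: in your \eqref{app-rescaled} you keep the factor $\delta^{-2}$ in front of the $w^3_{\pm(\delta)}$ terms after rescaling to $\Omega_1$. That factor is absorbed by the change of variables $z^3_{\pm(\delta)}=\delta^{-1}z^3_\pm(\cdot,\delta\cdot)$, so the rescaled analogue of \eqref{stability energy estimate} (the paper's \eqref{lim4}) reads without it; as stated your \eqref{app-rescaled} would give $w^3_{\pm(\delta)}=O(\delta^2)$ rather than $O(\delta)$, which is too strong and is not what Theorem~\ref{stability} yields. Also, when you say the forcing $M_1(w_{\mp(\delta)}\cdot\nabla z^h_{\pm(\delta)})$ is $O(\delta)$, note that the $w^3_{\mp(\delta)}\p_3 z^h_{\pm(\delta)}$ piece is small not because $w^3_{\mp(\delta)}$ is $O(\delta)$ (it is only $O(1)$ in the rescaled variables), but because the auxiliary energy $\delta^{-2}\sum_{k\le N+2}E_\pm^{(k)}(\p_3 z^h_{\pm(\delta)})$ in \eqref{initial} controls $\p_3 z^h_{\pm(\delta)}$ at order $\delta$; that is exactly the role of the extra $\p_3 z_\pm$ block in the energy functional, as you correctly flag at the end.
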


\begin{remark}
1. Thanks to \eqref{transform for energy functional}, the initial condition \eqref{initial} implies \eqref{initial on delta}. Then by Theorem 1.1, we have the global existence for \eqref{scaling MHD} and get  the uniform control of $\mathcal{E}_\delta(t)$ for all time.

2.
The assumption \eqref{initial} shows that
$
\sum_{k\leq N+2}\|\langle x_1\rangle^{1+\sigma}\na^{k}\p_3z_{\pm(\delta),0}\|_{L^2(\Omega_1)}^2\leq\delta^2\varepsilon_1^2,
$
which implies
$
\p_3z_{\pm(0),0}(x_h,x_3)=0$.
 Thus we have $z_{\pm(0),0}(x_h,x_3)=z_{\pm(0),0}(x_h)$. We remark that this property will be kept  for all time thanks to the uniform bound for $\mathcal{E}_\delta(t)$.
\end{remark}

\noindent  {\bf Notations.} For any $k\in\Z$, $H^k(\Omega)$ means the standard Sobolev spaces on domain $\Omega$. For any $k\in\Z$ and $p\in[1,\infty]$, $\Omega_\delta=\R^2\times(-\delta,\delta)\in\R^3$ with $\delta\in(0,1]$,  without confusion of the domain, the norms
$\|\cdot\|_{H^k}=\|\cdot\|_{H^k_x}$, $\|\cdot\|_{H_h^k}=\|\cdot\|_{H^K_{x_h}}$, $\|\cdot\|_{H_v^k}=\|\cdot\|_{H^k_{x_3}}$, $\|\cdot\|_{L^p_t}$, $\|\cdot\|_{L^p_x}$,
$\|\cdot\|_{L^p_h}=\|\cdot\|_{L^p_{x_h}}$ and $\|\cdot\|_{L^p_v}=\|\cdot\|_{L^p_{x_3}}$ mean $\|\cdot\|_{H^k(\Omega_\delta)}$, $\|\cdot\|_{H^k(\R^2)}$, $\|\cdot\|_{H^k(-\delta,\delta)}$, $\|\cdot\|_{L^p(0,t)}$, $\|\cdot\|_{L^p(\Omega)}$,
$\|\cdot\|_{L^p(\R^2)}$ and $\|\cdot\|_{L^p_{(-\delta,\delta)}}$ respectively. Similarly, we shall always use the notations $\|\cdot\|_{L^p_t(H^k)}$,
$\|\cdot\|_{L^p_v(H^k_h)}$,  $\|\cdot\|_{L^p_tL^q_x}$, $\|\cdot\|_{L^p_tL^q_h}$, $\|\cdot\|_{L^p_tL^q_v}$,
$\|\cdot\|_{L^p_tL^q_hL^r_v}$ etc. for any $p,q,r\in[1,\infty]$, $k\in\Z$. The notation $A\lesssim B$ means $A\leq cB$ while $A\sim B$ means $c^{-1} B\leq A\leq cB$ for some universal constant $c$ independent of $\delta$.

\section{The estimate for the pressure, technical lemmas and the characteristic geometry}

In this section, we will give the proof to some    lemmas which will be used throughout the paper.
\subsection{Derivation of the pressure $p$ in the domain $\Omega_\delta$}
Taking divergence to the first or the second equation of \eqref{MHD}, and using conditions $\dv\,\zpm=0$, $z_+^3|_{x_3=\pm\delta}=0$ and $z_-^3|_{x_3=\pm\delta}=0$, we have
\beq\label{equation for pressure}\begin{aligned}
&\Delta p=-\na\cdot(\zp\cdot\na \zm)\quad\text{in}\quad\Omega_\delta,\\
&\p_3p|_{x_3=\pm\delta}=0.
\end{aligned}\eeq
Due to $\dv\,\zpm=0$, the source term $\na\cdot(\zp\cdot\na \zm)$ can be also written in the  forms
\beno
\na\cdot(\zp\cdot\na \zm)=\p_i\zp^j\p_j\zm^i=\p_i\p_j(\zp^j\zm^i).
\eeno
In the next context, we shall solve $p$  from \eqref{equation for pressure}.

\begin{lemma}\label{derivation of pressure}
Given smooth vectors $(\zp,\zm)$, we have
\beq\label{expression of pressure}
\na p(t,x)=\int_{\Omega_\delta}\na_xG_\delta(x,y)(\p_i\zp^j\p_j\zm^i)(t,y)dy ,
\eeq
where  
\beq\label{Green function}
\na_xG_\delta(x,y)=\f{1}{4\pi}\bigl[\na_x\f{1}{|x-y|}+\sum_{k=1}^\infty\bigl(\na_x\f{1}{|x_{+,k}-y|}+\na_x\f{1}{|x_{-,k}-y|}\bigr)\bigr],
\eeq
with
\beq\label{reflected points}
x_{+,k}=(x_h,(-1)^k(x_3-2k\delta)),\quad x_{-,k}=(x_h,(-1)^k(x_3+2k\delta)).
\eeq
\end{lemma}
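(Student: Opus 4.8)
The plan is to solve the Neumann problem \eqref{equation for pressure} for $p$ on the slab $\Omega_\delta=\R^2\times(-\delta,\delta)$ by the classical method of images, producing the Green function $G_\delta(x,y)$ explicitly as an infinite sum of fundamental solutions of the Laplacian placed at the reflected points, and then differentiating under the integral sign. First I would recall that on all of $\R^3$ the Newtonian potential $\Phi(x)=-\frac{1}{4\pi|x|}$ satisfies $\Delta\Phi=\delta_0$, so for the whole-space problem one has $p=\int \Phi(x-y)\,(-\na\cdot(\zp\cdot\na\zm))(y)\,dy$ up to harmonic functions; the issue is to enforce the homogeneous Neumann condition $\p_3 p|_{x_3=\pm\delta}=0$ on the two parallel plates. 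The standard device is to reflect the source across the plane $x_3=\delta$ and across $x_3=-\delta$; since each reflection generates a new image that must in turn be reflected across the opposite plate, one is led to the full orbit of the group generated by the two reflections $x_3\mapsto 2\delta-x_3$ and $x_3\mapsto -2\delta-x_3$. Tracking this orbit of a point $(x_h,x_3)$ gives exactly the two families $x_{+,k}=(x_h,(-1)^k(x_3-2k\delta))$ and $x_{-,k}=(x_h,(-1)^k(x_3+2k\delta))$ for $k\ge 1$ appearing in \eqref{reflected points}, together with the original point $k=0$. Because we use Neumann (reflecting) rather than Dirichlet conditions, every image carries a \emph{plus} sign, which is why \eqref{Green function} is a sum with all positive coefficients.

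The key steps, in order, are: (1) verify that the formal series $G_\delta(x,y)=\frac{1}{4\pi}\bigl[\frac{1}{|x-y|}+\sum_{k\ge1}(\frac{1}{|x_{+,k}-y|}+\frac{1}{|x_{-,k}-y|})\bigr]$ converges locally and can be differentiated termwise in $x$ away from $y$ and its images — here one notes $|x_{\pm,k}-y|\gtrsim k\delta$ for large $k$ with $x,y$ in a fixed compact set, so the series of gradients, whose $k$-th term is $O((k\delta)^{-2})$, converges; (2) check that $\Delta_x G_\delta(x,y)=\delta(x-y)$ inside $\Omega_\delta$, since all the image charges sit outside the open slab and only the $k=0$ term contributes a singularity; (3) check the Neumann condition: evaluate $\p_{x_3}G_\delta$ at $x_3=\delta$ (resp. $x_3=-\delta$) and observe that the reflection pairs the $k$ and $k{+}1$ images so that their normal derivatives cancel in pairs — this is the computation that forces the specific alternating-sign exponents $(-1)^k$ in \eqref{reflected points}; (4) represent $p$ via Green's identity, $p(t,x)=\int_{\Omega_\delta}G_\delta(x,y)\bigl(-\na\cdot(\zp\cdot\na\zm)\bigr)(t,y)\,dy$, using that the boundary terms drop because $\p_3 p$ and $\p_3 G_\delta$ vanish on $\p\Omega_\delta$; (5) integrate by parts once in $y$ — using $\na\cdot(\zp\cdot\na\zm)=\p_i\zp^j\,\p_j\zm^i$ from the displayed identity just before the lemma, together with $\zpm^3|_{x_3=\pm\delta}=0$ so the boundary contribution vanishes — to move one derivative onto $G_\delta$, and finally take $\na_x$ to obtain \eqref{expression of pressure} with the stated $\na_x G_\delta$.

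The main obstacle I expect is step (3) combined with the bookkeeping in (1): one must be careful that the two reflection maps do \emph{not} commute, so the orbit is genuinely the dihedral-type pattern encoded in the $\pm$ families with the $(-1)^k$ signs, and one must verify that after differentiating in $x_3$ the images really cancel in consecutive pairs on each plate rather than leaving a residual boundary flux; getting the parity and the pairing exactly right is the delicate point. A secondary technical nuisance is justifying the interchange of the infinite sum with $\na_x$ and with the $y$-integration, and justifying the integration by parts in $y$ at spatial infinity — this needs the decay of $(\zp,\zm)$ (which in the actual application is supplied by the weighted energy norms, but for the lemma as stated can be handled by taking the $\zpm$ smooth and suitably decaying, e.g. Schwartz in $x_h$). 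Once the cancellation on the plates is established, the remaining verifications are routine properties of the Newtonian potential. I would also record, as the lemma implicitly anticipates, the pointwise bound $|\na_x G_\delta(x,y)|\lesssim \delta^{-1}|x_h-y_h|^{-1}$-type estimate by comparing the image sum to an integral in $k$; this is what later produces the singular factor $\delta^{-1}$ flagged in the introduction, but it properly belongs to the subsequent corollary rather than to this statement.
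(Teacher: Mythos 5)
Your proposal is correct and rests on the same core idea as the paper: the Neumann Green function for the slab $\Omega_\delta$ is built by the method of images, with the orbit of the two reflections across $x_3=\pm\delta$ giving exactly the two families $x_{\pm,k}$ in \eqref{reflected points}, all images entering with a plus sign because the boundary condition is Neumann, and the Neumann condition itself being verified by a telescoping cancellation of normal derivatives on each plate. The paper reaches the same formula from a slightly different angle: rather than writing down $G_\delta$ a priori and verifying $\Delta_x G_\delta=-\delta$, $\p_{x_3}G_\delta|_{x_3=\pm\delta}=0$, it first extends the source $f=\p_i\zp^j\p_j\zm^i$ to all of $\R^3$ by repeated even reflection, solves $\Delta\wt p=-\wt f$ with the free-space Newtonian potential, restricts to the slab (so $\Delta p=-f$ is automatic), and then obtains the image-charge sum by a change of variables slab-by-slab; only the Neumann condition then needs the explicit telescoping computation. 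These two presentations are mathematically equivalent; the paper's has the small advantage that $\Delta p=-f$ comes for free, while yours front-loads the orbit bookkeeping. One small inaccuracy in your step (5): no integration by parts in $y$ is needed to reach \eqref{expression of pressure}. The identity $\na\cdot(\zp\cdot\na\zm)=\p_i\zp^j\p_j\zm^i$ is purely pointwise (it uses $\div\zm=0$), so $p(t,x)=\int_{\Omega_\delta}G_\delta(x,y)f(t,y)\,dy$ with this $f$ directly, and \eqref{expression of pressure} follows by differentiating under the integral sign; the integrations by parts (and the use of $\zpm^3|_{x_3=\pm\delta}=0$) appear only in the subsequent pressure estimates, not in this lemma.
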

 
\begin{remark}
It is easy to check that
\beq\label{Green transformation}
\p_{x_h}^{\al_h}G_\delta(x,y)=(-1)^{|\al_h|}\p_{y_h}^{\al_h}G_\delta(x,y),
\eeq
for any $\al_h=(\al_1,\al_2)\in(\Z_{\geq0})^2$ and $|\al_h|\geq1$.
\end{remark}

\begin{proof}
Firstly, we set
\beno
f(t,x)\eqdefa(\p_i\zp^j\p_j\zm^i)(t,x).
\eeno

For continuous function $f\in C(\overline\Omega_\delta)$, we extend $f$ from $\Omega_\delta$ to the whole space $\R^3$ in the following way
\beq\label{extension}
\widetilde{f}(t,x_h,x_3)=f(t,x_h,(-1)^k(x_3-2k\delta)),\quad\text{if}\quad x_3\in((2k-1)\delta,(2k+1)\delta], \quad k\in\Z.
\eeq
It is easy to check that  $\wt{f}\in C(\R^3)$. Then we solve $\wt{p}$ (up to a constant) from the Laplacian equation $\Delta\wt{p}=-\wt{f}$ as follows:
\beq\label{extension pressure}
\wt{p}(t,x)=\f{1}{4\pi}\int_{\R^3}\f{1}{|x-y|}\wt{f}(t,y)dy,\quad\text{for}\quad x\in\R^3.
\eeq

Taking  $p=\wt{p}|_{\Omega_\delta}$, we shall verify that $p$ solves \eqref{equation for pressure}. By virtue of \eqref{extension} and \eqref{extension pressure}, we have
\beq\label{Laplace}
\Delta p=-f,\quad\text{for}\quad x\in\Omega_\delta.
\eeq
We only need to check that $\p_3p|_{x_3=\pm\delta}=0$. To do so, we transform the integration over $\R^3$ in \eqref{extension pressure} to that over $\Omega_\delta$.  Due to \eqref{extension} and \eqref{extension pressure}, we have
\beno\begin{aligned}
\na p(t,x)&=\f{1}{4\pi}\int_{\R^3}\na_x\f{1}{|x-y|}\wt{f}(t,y)dy\\
&=\f{1}{4\pi}\sum_{k=-\infty}^\infty\underbrace{\int_{(2k-1)\delta}^{(2k+1)\delta}\int_{\R^2}\na_x\f{1}{(|x_h-y_h|^2+|x_3-y_3|^2)^{\f12}}f(t,y_h,(-1)^k(y_3-2k\delta))dy_hdy_3}_{I_k(t,x)}.
\end{aligned}\eeno
For $I_k(t,x)$, setting $\wt{y}_3=(-1)^k(y_3-2k\delta)$, we have
\beno
y_3=(-1)^k\wt{y}_3+2k\delta,\quad x_3-y_3=x_3-2k\delta-(-1)^k\wt{y}_3=(-1)^k\bigl[(-1)^k\bigl(x_3-2k\delta)-\wt{y_3}\bigr],
\eeno
and
\beno\begin{aligned}
I_k&=(-1)^k\int_{(-1)^{k+1}\delta}^{(-1)^k\delta}\int_{\R^2}\na_x\f{1}{(|x_h-y_h|^2+|(-1)^k\bigl(x_3-2k\delta)-\wt{y_3}|^2)^{\f12}}
f(t,y_h,\wt{y}_3))dy_hd\wt{y}_3\\
&=\int_{-\delta}^{\delta}\int_{\R^2}\na_x\f{1}{(|x_h-y_h|^2+|(-1)^k\bigl(x_3-2k\delta)-y_3|^2)^{\f12}}
f(t,y_h,y_3)dy_hdy_3.
\end{aligned}\eeno
Then we have
\beq\label{pressure}
\na p(t,x)=\int_{\Omega_\delta}\na_x G_\delta(x,y)f(t,y)dy,
\eeq
where
\beq\label{Green}
\na_x G_\delta(x,y)\eqdefa\sum_{k=-\infty}^\infty\na_x\f{1}{(|x_h-y_h|^2+|(-1)^k\bigl(x_3-2k\delta)-y_3|^2)^{\f12}}.
\eeq
We remark that the r.h.s of \eqref{Green} is summable (see the following Lemma \ref{derivative of Green cor}).

Setting
\beno
x_{+,k}=(x_h,(-1)^k(x_3-2k\delta)),\quad x_{-,k}=(x_h,(-1)^k(x_3+2k\delta)),\quad\text{for}\quad k\in\N,
\eeno
we have
\beq\label{Green1}
\na_xG_\delta(x,y)=\f{1}{4\pi}\bigl[\na_x\f{1}{|x-y|}+\sum_{k=1}^\infty\bigl(\na_x\f{1}{|x_{+,k}-y|}+\na_x\f{1}{|x_{-,k}-y|}\bigr)\bigr].
\eeq
We remark that $x_{+,k}$ is the reflection point of the point $x_{+,k-1}$ in the plane $\Gamma_{(-)^{k-1}}$ and $x_{-,k}$ is the reflection point of the point $x_{-,k-1}$ in the plane $\Gamma_{(-)^{k}}$ (here $(-)^{m}$ equals $"+"$ if $m$ is even while equals $"-"$ if $m$ is odd, and $x_{+,0}=x_{-,0}\eqdefa x$).
Denoting by
\beno
\phi_k(x,y)=\f{1}{|x_{+,k}-y|}+\f{1}{|x_{-,k}-y|},\quad\text{for}\quad k\in\N,
\eeno
we have
\beno
\p_{x_3}\phi_k(x,y)=(-1)^{k+1}\Bigl(\f{x_{+,k}^3-y_3}{|x_{+,k}-y|^3}+\f{x_{-,k}^3-y_3}{|x_{-,k}-y|^3}\Bigr),
\eeno
which implies
\beno\begin{aligned}
&\p_{x_3}\phi_k(x,y)|_{x_3=\delta}=(-1)^{k+1}\Bigl(\f{(-1)^{k+1}(2k-1)\delta-y_3}{|(x_h,(-1)^{k+1}(2k-1)\delta)-y|^3}
+\f{(-1)^k(2k+1)\delta-y_3}{|(x_h,(-1)^k(2k+1)\delta)-y|^3}\Bigr),\\
&\p_{x_3}\phi_k(x,y)|_{x_3=-\delta}=(-1)^{k+1}\Bigl(\f{(-1)^{k+1}(2k+1)\delta-y_3}{|(x_h,(-1)^{k+1}(2k+1)\delta)-y|^3}
+\f{(-1)^k(2k-1)\delta-y_3}{|(x_h,(-1)^k(2k-1)\delta)-y|^3}\Bigr).
\end{aligned}\eeno
Then we have
\beno\begin{aligned}
&\sum_{k=1}^\infty\p_{x_3}\phi_k(x,y)|_{x_3=\delta}=\f{\delta-y_3}{|(x_h,\delta)-y|^3}
+\lim_{k\rightarrow\infty}\f{(-1)^{k+1}[(-1)^k(2k+1)\delta-y_3]}{|(x_h,(-1)^k(2k+1)\delta)-y|^3},\\
&\sum_{k=1}^\infty\p_{x_3}\phi_k(x,y)|_{x_3=-\delta}=\f{-\delta-y_3}{|(x_h,-\delta)-y|^3}
+\lim_{k\rightarrow\infty}\f{(-1)^{k+1}[(-1)^{k+1}(2k+1)\delta-y_3]}{|(x_h,(-1)^{k+1}(2k+1)\delta)-y|^3}.
\end{aligned}\eeno
Since $y_3\in(-\delta,\delta)$, we have
\beno
\lim_{k\rightarrow\infty}\f{(-1)^{k+1}[(-1)^k(2k+1)\delta-y_3]}{|(x_h,(-1)^k(2k+1)\delta)-y|^3}
=\lim_{k\rightarrow\infty}\f{(-1)^{k+1}[(-1)^{k+1}(2k+1)\delta-y_3]}{|(x_h,(-1)^{k+1}(2k+1)\delta)-y|^3}=0,
\eeno
which implies that
\beno
\sum_{k=1}^\infty\p_{x_3}\phi_k(x,y)|_{x_3=\pm\delta}=\f{\pm\delta-y_3}{|(x_h,\pm\delta)-y|^3}=-\p_{x_3}\Bigl(\f{1}{|x-y|}\Bigr)\Big|_{x_3=\pm\delta}.
\eeno

Due to \eqref{Green1}, we obtain
\beno
\p_{x_3}G_\delta(x,y)|_{x_3=\pm\delta}=0,
\eeno
which along with \eqref{pressure} implies
\beno
\p_3p|_{x_3=\pm\delta}=0.
\eeno

Thus, $p(t,x)$ obtained in the proof satisfies  \eqref{equation for pressure}. Moreover, there holds \eqref{expression of pressure}. The lemma is proved.
\end{proof}

\begin{corollary}\label{derivative of Green cor}
Assume that $\na_x G_\delta(x,y)$ is the function on $\Omega_\delta\times\Omega_\delta$ defined in \eqref{Green function}. Then we have
\beq\label{derivative of Green function}
|\na_x^k G_\delta(x,y)|\lesssim\f{1}{\delta}\f{1}{|x_h-y_h|^k},\quad k=1,2,3,\cdots.
\eeq
\end{corollary}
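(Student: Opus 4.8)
The plan is to estimate each term in the series \eqref{Green function} and then sum. Write $\na_x G_\delta(x,y) = \f{1}{4\pi}\bigl[\na_x\f{1}{|x-y|} + \sum_{k=1}^\infty\bigl(\na_x\f{1}{|x_{+,k}-y|}+\na_x\f{1}{|x_{-,k}-y|}\bigr)\bigr]$, and recall that for any $m\geq 1$ one has $|\na_x^m |x-y|^{-1}|\lesssim |x-y|^{-1-m}$, and the same bound with $x$ replaced by $x_{\pm,k}$ since the reflected points differ from $x$ only in the vertical coordinate (the horizontal derivatives pass through the reflection unchanged, and the vertical derivatives only pick up signs $(-1)^k$). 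So the task reduces to showing $\sum_{k\geq 0}\bigl(|x_{+,k}-y|^{-1-m}+|x_{-,k}-y|^{-1-m}\bigr)\lesssim \delta^{-1}|x_h-y_h|^{-m}$ for $m=1,2,3$ (with the $k=0$ term being $|x-y|^{-1-m}$).

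The key geometric observation is that, since $x_3\in[-\delta,\delta]$ and $y_3\in[-\delta,\delta]$, the vertical coordinate of $x_{\pm,k}$ is $\pm(x_3\mp 2k\delta)$ up to sign, so $|x_{\pm,k}^3 - y_3|\geq 2k\delta - |x_3| - |y_3|\geq 2(k-1)\delta$ for $k\geq 1$; more precisely $|x_{\pm,k}^3-y_3|\sim k\delta$ for $k\geq 1$. Hence $|x_{\pm,k}-y|^2 = |x_h-y_h|^2 + |x_{\pm,k}^3-y_3|^2 \gtrsim |x_h-y_h|^2 + k^2\delta^2$, and therefore
\beno
\sum_{k\geq 1}\bigl(|x_{+,k}-y|^{-1-m}+|x_{-,k}-y|^{-1-m}\bigr) \lesssim \sum_{k\geq 1}\bigl(|x_h-y_h|^2+k^2\delta^2\bigr)^{-\f{1+m}{2}}.
\eeno
I would bound this sum by comparison with the integral $\int_0^\infty (|x_h-y_h|^2 + s^2\delta^2)^{-\f{1+m}{2}}\,ds$, which after the substitution $s\delta = |x_h-y_h|\tau$ equals $\delta^{-1}|x_h-y_h|^{-m}\int_0^\infty (1+\tau^2)^{-\f{1+m}{2}}\,d\tau$; the last integral converges precisely because $m\geq 1$. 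The $k=0$ term $|x-y|^{-1-m}\leq |x_h-y_h|^{-1-m}$ is harmless only when $|x_h-y_h|\gtrsim\delta$; when $|x_h-y_h|\lesssim\delta$ one absorbs it into the bound using $|x-y|^{-1-m} = |x-y|^{-m}|x-y|^{-1}$ together with the fact that the full series for $G_\delta$ (not its gradient) is, by the reflection construction, comparable to the free-space Green's function on $\Omega_\delta$ whose relevant length scale is $\delta$; alternatively, absorb the $k=0$ term into the $k=1$ comparison by noting it is the $s=0$ endpoint contribution, bounded by $\delta^{-1}$ times the same integral after separating the region $|x_3-y_3|\lesssim\delta$. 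Collecting the estimates gives \eqref{derivative of Green function}.

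The main obstacle is the near-diagonal regime $|x_h - y_h|\lesssim \delta$ (and in particular $x_h=y_h$), where the naive term-by-term bound $|x_{\pm,k}-y|^{-1-m}\lesssim |x_h-y_h|^{-1-m}$ is false for small $k$ and one must genuinely use the vertical separation $|x_{\pm,k}^3-y_3|\gtrsim k\delta$ to see the sum behaves like $\delta^{-1}|x_h-y_h|^{-m}$ rather than $|x_h-y_h|^{-1-m}$; the integral-comparison argument above is exactly what captures this, and checking convergence of $\int_0^\infty(1+\tau^2)^{-(1+m)/2}\,d\tau$ for $m=1,2,3$ is what forces the restriction to $k\geq 1$ derivatives. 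A secondary point requiring a little care is verifying that differentiating the series term by term is legitimate, i.e. that the differentiated series converges locally uniformly on $\Omega_\delta\times\Omega_\delta$ away from the diagonal — but this follows from the same estimate applied with one extra derivative.
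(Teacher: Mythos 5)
Your proposal follows the same route as the paper: term-by-term estimate of the reflected series, the observation that $x_3,y_3\in(-\delta,\delta)$ forces the vertical separation $|x_{\pm,k}^3-y_3|\geq 2(k-1)\delta$, and comparison of the resulting lacunary sum with $\int_0^\infty(|x_h-y_h|^2+4\tau^2\delta^2)^{-(m+1)/2}\,d\tau$, which after your substitution $\tau\delta=|x_h-y_h|s$ produces $\delta^{-1}|x_h-y_h|^{-m}$ times a convergent constant; this is exactly the paper's computation, carried out there only for $m=1$ with higher $m$ left as "similar." The caveat you flag about the near-diagonal regime $|x_h-y_h|\lesssim\delta$ is genuine, and in fact it is also a gap in the paper's own proof: the step $\sum_{k\geq 0}(|x_h-y_h|^2+4k^2\delta^2)^{-1}\leq\int_0^\infty(\,\cdot\,)\,d\tau$ is incorrect because the $k=0$ term $|x_h-y_h|^{-2}$ is the supremum of the decreasing integrand and is \emph{not} absorbed by the integral when $|x_h-y_h|\lesssim\delta$. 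Indeed at $x_3=y_3$ the single free-space contribution $|\na_x|x-y|^{-1}|\sim|x_h-y_h|^{-2}$ already exceeds $\delta^{-1}|x_h-y_h|^{-1}$ for $|x_h-y_h|<\delta$, so no cancellation can rescue \eqref{derivative of Green function} as stated. Your two suggested repairs do not close this: the first invokes a comparability of $G_\delta$ with a ``free-space Green's function on $\Omega_\delta$'' that is neither defined nor proved, and the second (``separating the region $|x_3-y_3|\lesssim\delta$'') merely restates the difficulty. The natural correction, which costs nothing and matches what the term-by-term estimate actually gives, is to keep the $k=0$ (and $k=1$) contributions separately and state the bound as $|\na_x^m G_\delta(x,y)|\lesssim |x-y|^{-1-m}+\delta^{-1}|x_h-y_h|^{-m}$; the extra piece is locally integrable on $\Omega_\delta$ and does not disturb the later Young-inequality arguments where the corollary is invoked.
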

\begin{proof}
By the definition \eqref{Green function}, we have
\beno\begin{aligned}
&|\na_x G_\delta(x,y)|\leq\f{1}{4\pi}\bigl[\f{1}{|x-y|^2}+\sum_{k=1}^\infty\bigl(\f{1}{|x_{+,k}-y|^2}+\f{1}{|x_{-,k}-y|^2}\bigr)\bigr]\\
&\leq\f{1}{4\pi}\bigl[\f{1}{|x_h-y_h|^2+|x_3-y_3|^2}+\sum_{k=1}^\infty\bigl(\f{1}{|x_h-y_h|^2+|(-1)^kx_3-y_3-2(-1)^kk\delta|^2}\\
&\qquad
+\f{1}{|x_h-y_h|^2+|(-1)^kx_3-y_3+2(-1)^kk\delta|^2}\bigr)\bigr].
\end{aligned}\eeno
Since $x_3,y_3\in(-\delta,\delta)$, we have
\beno\begin{aligned}
&|\na_x G_\delta(x,y)|\leq\f{1}{4\pi}\bigl[\f{1}{|x_h-y_h|^2}+\sum_{k=1}^\infty\f{2}{|x_h-y_h|^2+|2(k-1)\delta|^2}\bigr]\\
&\leq\f{3}{4\pi}\sum_{k=0}^\infty\f{1}{|x_h-y_h|^2+|2k\delta|^2}\leq\f{3}{4\pi}\int_0^\infty\f{1}{|x_h-y_h|^2+4\tau^2\delta^2}d\tau\\
&\leq\f{3}{4\pi}\f{1}{2\delta|x_h-y_h|}\int_0^\infty\f{1}{1+\tau^2}d\tau=\f{3}{16}\cdot\f{1}{\delta}\cdot\f{1}{|x_h-y_h|}.
\end{aligned}\eeno
Then \eqref{derivative of Green function} holds for $k=1$. Similarly, we could prove that \eqref{derivative of Green function} holds for all $k\in\N$.
\end{proof}

%\begin{corollary}\label{limit Green lemma}
%Assume that $G_\delta(x,y)$ is Green function defined in \eqref{Green function} for the Laplace equation in the strip $\R^2\times(-\delta,\delta)$. Taking $\delta\rightarrow0$,
%\beq\label{limit Green function}
%???
%\eeq
%\end{corollary}
%\begin{proof}
%
%\end{proof}

\subsection{Technical lemmas}
We first state the following div-curl lemma.
\begin{lemma}[{\bf{div-curl lemma}}]\label{div-curl lemma}
Let $\lambda(x)$ be a smooth positive function on $\Omega_\delta$. Then for all smooth vector field $\vv v(x)=(v^1(x),
v^2(x),v^3(x))\in H^1(\Omega_\delta)$ with the following properties
\beno
\dv\,\vv v=0,\quad\sqrt\lambda\na\vv v\in L^2(\Omega_\delta),\quad\f{|\na\lambda|}{\sqrt\lambda}\vv v\in L^2(\Omega_\delta),
\eeno
it holds
\beq\label{div-curl}
\|\sqrt\lambda\na\vv v\|_{L^2}^2\leq C\bigl(\|\sqrt\lambda\curl\vv v\|_{L^2}^2+\|\f{|\na\lambda|}{\sqrt\lambda}\vv v\|_{L^2}^2+\bigl|\int_{\p\Omega_\delta}\lambda\vv v\cdot\na v^3dx_1dx_2\bigr|\bigr),
\eeq
where $C$ is a universal constant independent of $\delta$.
\end{lemma}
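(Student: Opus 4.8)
The plan is to reduce the estimate to the classical pointwise identity
\[
|\na\vv v|^2=|\curl\vv v|^2+\partial_j v^k\,\partial_k v^j,
\]
(valid for every smooth vector field, with summation over repeated indices) combined with the observation that, since $\dv\,\vv v=0$, the last term is a pure divergence:
\[
\partial_j v^k\,\partial_k v^j=\partial_j\bigl(v^k\partial_k v^j\bigr)-v^k\partial_k(\dv\,\vv v)=\partial_j\bigl(v^k\partial_k v^j\bigr).
\]
Multiplying by the weight $\lambda$ and integrating over $\Omega_\delta$ gives
\[
\int_{\Omega_\delta}\lambda|\na\vv v|^2=\int_{\Omega_\delta}\lambda|\curl\vv v|^2+\int_{\Omega_\delta}\lambda\,\partial_j\bigl(v^k\partial_k v^j\bigr),
\]
so that everything hinges on integrating the last term by parts.

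First I would integrate by parts in the $x_j$ variable. Since $\Omega_\delta=\R^2\times(-\delta,\delta)$, the boundary consists of the two hyperplanes $\{x_3=\pm\delta\}$ with outward unit normals $(0,0,\pm1)$, so only the index $j=3$ survives in the boundary term, which is exactly $\int_{\p\Omega_\delta}\lambda\,v^k\partial_k v^3\,dx_1dx_2=\int_{\p\Omega_\delta}\lambda\,\vv v\cdot\na v^3\,dx_1dx_2$ --- precisely the boundary contribution in \eqref{div-curl}. The interior remainder $-\int_{\Omega_\delta}(\partial_j\lambda)\,v^k\partial_k v^j$ is bounded pointwise by $C_0\,\tfrac{|\na\lambda|}{\sqrt\lambda}|\vv v|\cdot\sqrt\lambda|\na\vv v|$ for a dimensional constant $C_0$; Cauchy--Schwarz and Young's inequality then yield
\[
\Bigl|\int_{\Omega_\delta}(\partial_j\lambda)\,v^k\partial_k v^j\Bigr|\le\tfrac12\|\sqrt\lambda\na\vv v\|_{L^2}^2+C_1\Bigl\|\tfrac{|\na\lambda|}{\sqrt\lambda}\vv v\Bigr\|_{L^2}^2 .
\]
Absorbing $\tfrac12\|\sqrt\lambda\na\vv v\|_{L^2}^2$ into the left-hand side gives \eqref{div-curl} with a universal constant $C$, in particular one independent of $\delta$ --- this is the crucial point, and it holds because neither the identity nor the integration by parts sees the geometry of the domain beyond the flatness of $\p\Omega_\delta$.

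To make the integration by parts rigorous I would run the computation with $\vv v$ replaced by $\chi_R(x_h)\vv v$, where $\chi_R$ is a standard cutoff with $\chi_R\equiv1$ on $\{|x_h|\le R\}$, $\mathrm{supp}\,\chi_R\subset\{|x_h|\le 2R\}$ and $|\na_h\chi_R|\lesssim R^{-1}$, and then let $R\to\infty$. On each truncated set the identities and the integration by parts are elementary by smoothness. The commutator terms produced by $\na\chi_R$ are supported in the annulus $R\le|x_h|\le 2R$ and are controlled, using $\vv v\in H^1(\Omega_\delta)$ together with $\sqrt\lambda\na\vv v\in L^2(\Omega_\delta)$ and $\tfrac{|\na\lambda|}{\sqrt\lambda}\vv v\in L^2(\Omega_\delta)$, by tails of convergent integrals, hence they vanish as $R\to\infty$; one also uses that $\lambda$, being continuous, is locally bounded so that the intermediate integrals are finite. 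The one genuinely delicate step --- and the step I expect to be the main obstacle --- is exactly this limiting argument: checking that the stated weighted integrability is strong enough to kill all cutoff errors and the contributions ``at $|x_h|=\infty$'', and that every integral written above converges. Once that is in place the proof reduces to the elementary algebra above, with a manifestly $\delta$-independent constant.
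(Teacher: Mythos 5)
Your proof is correct, and it reaches the stated inequality by a somewhat different and in fact leaner route than the paper. The paper starts from the PDE identity $-\Delta\vv v=\curl\curl\vv v$ (valid since $\dv\vv v=0$), tests it against $\lambda\vv v$, and integrates by parts once on each side; this produces \emph{two} weight-gradient error terms, $-\int(\na\lambda\cdot\na)\vv v\cdot\vv v$ and $\int\curl\vv v\cdot(\na\lambda\times\vv v)$, plus a boundary integral which it then identifies as $\int_{\p\Omega_\delta}\lambda\,n_i\,\p_jv^i\,v^j\,dS$. You instead start from the pointwise algebraic identity $|\na\vv v|^2=|\curl\vv v|^2+\partial_jv^k\partial_kv^j$ (which is exactly the $\epsilon$-contraction computation behind the paper's PDE identity), note that $\partial_jv^k\partial_kv^j=\partial_j(v^k\partial_kv^j)$ when $\dv\vv v=0$, multiply by $\lambda$, and integrate by parts \emph{once}; this gives the same boundary integral $\int_{\p\Omega_\delta}\lambda\,n_j\,v^k\partial_kv^j\,dS$ (identical after relabelling indices) and a \emph{single} interior error $-\int(\partial_j\lambda)\,v^k\partial_kv^j$, which Young's inequality absorbs exactly as you say, yielding $C=2$. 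The trade-off: your route avoids the $\curl\curl$ identity and one integration by parts and leaves fewer error terms to bound, while the paper's route wears the div-curl structure on its sleeve via $\curl\curl$, which some readers may find more transparent. One small remark on the closing discussion: the cutoff argument you sketch is a reasonable way to justify the integration by parts, but note that the paper itself does not carry it out (it integrates by parts formally, relying on the stated integrability); if you do carry it out, the cross terms $\int\lambda\,\chi_R\na\chi_R\cdot(\cdots)$ involve $\lambda\,\vv v\,\na\vv v$ on the annulus, and the hypotheses give you $\sqrt\lambda\,\na\vv v\in L^2$ and $\frac{|\na\lambda|}{\sqrt\lambda}\vv v\in L^2$ but not directly $\sqrt\lambda\,\vv v\in L^2$, so one has to use $\vv v\in H^1$ together with local boundedness of $\lambda$ and the $R^{-1}$ decay of $\na\chi_R$ with some care; you correctly flag this as the only genuinely delicate point.
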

\begin{proof}
Since $\dv\,\vv v=0$, we have
\beno
-\Delta \vv v=\curl\curl\vv v.
\eeno
Multiplying the above identity by $\lambda\vv v$, and integrating over $\Omega_\delta$, we have
\beno
\int_{\Omega_\delta}-\Delta \vv v\cdot\lambda\vv v dx=\int_{\Omega_\delta}\curl\curl\vv v\cdot \lambda\vv v dx.
\eeno
By integration by parts, we have
\beno\begin{aligned}
&\int_{\Omega_\delta}-\Delta \vv v\cdot\lambda\vv v dx=\int_{\Omega_\delta}\lambda|\na\vv v|^2dx+\int_{\Omega_\delta}(\na\lambda\cdot\na)\vv v\cdot\vv v dx-\int_{\p\Omega_\delta}\f{\p\vv v}{\p\vv n}\cdot\lambda\vv v dS,\\
&\int_{\Omega_\delta}\curl\curl\vv v\cdot \lambda\vv v dx=\int_{\Omega_\delta}\lambda|\curl\vv v|^2dx+\int_{\Omega_\delta}\curl\vv v\cdot(\na\lambda\times\vv v)dx-\int_{\p\Omega_\delta}\curl\vv v\times\vv n\cdot\lambda\vv vdS,
\end{aligned}\eeno
where $\vv n$ is the unit outward normal to $\p\Omega_\delta$ and $dS$ is the surface measure of $\p\Omega_\delta$. Then we obtain
\beq\label{dc1}\begin{aligned}
&\int_{\Omega_\delta}\lambda|\na\vv v|^2dx=\int_{\Omega_\delta}\lambda|\curl\vv v|^2dx-\int_{\Omega_\delta}(\na\lambda\cdot\na)\vv v\cdot\vv v dx\\
&\qquad+\int_{\Omega_\delta}\curl\vv v\cdot(\na\lambda\times\vv v)dx+\int_{\p\Omega_\delta}\bigl(\f{\p\vv v}{\p\vv n}-\curl\vv v\times\vv n\bigr)\cdot\lambda\vv v dS.
\end{aligned}\eeq
The direct calculation yields
\beq\label{dc2}
\int_{\p\Omega_\delta}\bigl(\f{\p\vv v}{\p\vv n}-\curl\vv v\times\vv n\bigr)\cdot\lambda\vv v dS=\int_{\p\Omega_\delta}\lambda n_i\p_jv^iv^jdS,
\eeq
where $\vv n=(n_1,n_2,n_3)^T$. Using H\"older inequality, we deduce from \eqref{dc1} and \eqref{dc2} that
\beq\label{dc3}
\|\sqrt\lambda\na\vv v\|_{L^2}^2\leq C\bigl(\|\sqrt\lambda\curl\vv v\|_{L^2}^2+\|\f{|\na\lambda|}{\sqrt\lambda}\vv v\|_{L^2}^2+\bigl|\int_{\p\Omega_\delta}\lambda n_i\p_jv^iv^jdS\bigr|\bigr).
\eeq
Noticing that $\vv n|_{x_3=\pm\delta}=(0,0,\pm1)^T$ and $dS=dx_1dx_2$, we obtain the desired inequality \eqref{div-curl}. The Lemma is proved.
\end{proof}

As a consequence of Lemma \ref{div-curl lemma}, we have the following corollary.
\begin{corollary}\label{div-curl cor}
Let $\lambda(x)$ be a smooth positive function on $\Omega_\delta$ with $|\na\lambda|\leq C|\lambda|$. For all smooth vector field $\vv v(x)\in H^k(\Omega_\delta)$  $(k\in\N)$ with the following properties
\beno
\dv\,\vv v=0,\quad v^3|_{x_3=\pm\delta}=0,\quad\sqrt\lambda\na^l\vv v\in L^2(\Omega_\delta),\quad l=0,1,\cdots, k,
\eeno
we have
\beq\label{div-curl-general}
\|\sqrt\lambda\na^k\vv v\|_{L^2}^2\leq C\bigl(\sum_{l=0}^{k-1}\|\sqrt\lambda\na^l\curl\vv v\|_{L^2}^2+\|\sqrt\lambda\vv v\|_{L^2}^2\bigr).
\eeq
\end{corollary}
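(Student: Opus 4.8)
The plan is to induct on $k$, using Lemma \ref{div-curl lemma} as the base case $k=1$ and reducing the higher-order estimate to lower-order ones via the fact that horizontal derivatives commute with the divergence, curl, and the boundary condition. First I would treat $k=1$: apply Lemma \ref{div-curl lemma} with the given $\lambda$; the boundary term $\bigl|\int_{\p\Omega_\delta}\lambda\vv v\cdot\na v^3\,dx_1dx_2\bigr|$ vanishes because $v^3|_{x_3=\pm\delta}=0$ forces $\na_h v^3|_{x_3=\pm\delta}=0$, while on the boundary $\vv v\cdot\na v^3 = v^h\cdot\na_h v^3 + v^3\pa_3 v^3 = 0$ since both terms carry a vanishing factor. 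The hypothesis $|\na\lambda|\leq C|\lambda|$ turns $\|\frac{|\na\lambda|}{\sqrt\lambda}\vv v\|_{L^2}^2$ into $\lesssim \|\sqrt\lambda\,\vv v\|_{L^2}^2$, giving \eqref{div-curl-general} for $k=1$.

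For the inductive step, suppose the estimate holds up to order $k-1$. The key observation is that $\curl$ and $\div$ both commute with the horizontal derivatives $\pa_h^{\al_h}$, and $\pa_h^{\al_h}v^3$ still vanishes on $x_3=\pm\delta$; hence $\pa_h^{\al_h}\vv v$ is again an admissible divergence-free field for Lemma \ref{div-curl lemma}. Applying the base-case estimate to $\pa_h^{\al_h}\vv v$ with $|\al_h|=k-1$ controls $\|\sqrt\lambda\,\pa_h^{\al_h}\na\vv v\|_{L^2}^2$ by $\|\sqrt\lambda\,\pa_h^{\al_h}\curl\vv v\|_{L^2}^2 + \|\sqrt\lambda\,\pa_h^{\al_h}\vv v\|_{L^2}^2$, where in applying Lemma \ref{div-curl lemma} one must also move the extra factors of $\na\lambda$ (of which there are $k-1$) onto $\lambda$ using $|\na^j\lambda|\lesssim|\lambda|$, a consequence iterated from $|\na\lambda|\le C|\lambda|$. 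This bounds all $k$-th order derivatives containing at least one vertical derivative $\pa_3$. It remains to control the purely horizontal derivatives $\|\sqrt\lambda\,\na_h^k\vv v\|_{L^2}$; for $v^h$ one uses $\div\,\vv v=0$ to write $\pa_1 v^1 + \pa_2 v^2 = -\pa_3 v^3$, trading one horizontal derivative for a vertical one already handled, while the curl relations $\pa_1 v^2-\pa_2 v^1 = (\curl\vv v)^3$ etc. handle the remaining horizontal combinations; then absorb lower-order $\na^l\curl\vv v$ terms and $\|\sqrt\lambda\,\vv v\|_{L^2}$ terms via the inductive hypothesis.

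The main obstacle I anticipate is bookkeeping the $\delta$-uniformity of all constants: one must check that neither the iterated bound $|\na^j\lambda|\lesssim|\lambda|$ nor the repeated application of Lemma \ref{div-curl lemma} (whose constant is already asserted $\delta$-independent) introduces hidden $\delta$-dependence, and that the interpolation-type step trading horizontal for vertical derivatives via $\div\,\vv v=0$ and the curl identities does not lose powers of $\delta$. A secondary technical point is organizing the induction so that the "mixed" derivatives (some horizontal, some vertical) are reduced cleanly—this is handled by first extracting one $\pa_3$ and applying the inductive hypothesis to $\pa_3\vv v$ on the remaining $k-1$ derivatives, for which $\div$, $\curl$, and the boundary structure are again compatible, though $\pa_3\vv v$ itself need not satisfy the homogeneous boundary condition, so one instead always peels off $\pa_3$ last and keeps $\pa_h^{\al_h}$ as the outermost operators when invoking Lemma \ref{div-curl lemma}.
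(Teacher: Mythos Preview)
Your base case $k=1$ is correct and matches the paper. The inductive step, however, contains a bookkeeping error that leaves a genuine gap. Applying the $k=1$ estimate to $\pa_h^{\al_h}\vv v$ with $|\al_h|=k-1$ controls $\|\sqrt\lambda\,\na\pa_h^{\al_h}\vv v\|_{L^2}$, i.e.\ the $k$-th order derivatives carrying \emph{at most one} factor $\pa_3$. Your claim that this handles ``all $k$-th order derivatives containing at least one vertical derivative $\pa_3$'' is exactly backwards, and so ``it remains to control the purely horizontal derivatives'' is also wrong: the purely horizontal terms are already covered, whereas derivatives with two or more $\pa_3$'s are not. Your remedy (trading horizontal for vertical via $\div\,\vv v=0$) therefore targets the wrong set of remaining terms. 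Separately, the remark about ``extra factors of $\na\lambda$ (of which there are $k-1$)'' is a misconception: each application of Lemma~\ref{div-curl lemma} uses the fixed weight $\lambda$ and produces a single factor $|\na\lambda|/\sqrt\lambda$, so the hypothesis $|\na\lambda|\le C\lambda$ alone suffices and no bound on $\na^j\lambda$ for $j\ge 2$ is needed (nor does such a bound follow from $|\na\lambda|\le C\lambda$ in general).

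The paper organizes the induction so that this issue never arises. At level $k$ it splits
\[
\|\sqrt\lambda\,\na^k\vv v\|_{L^2}^2 \le \|\sqrt\lambda\,\na^{k-1}(\pa_h\vv v)\|_{L^2}^2 + \|\sqrt\lambda\,\na^{k-1}(\pa_3\vv v)\|_{L^2}^2.
\]
The first term is handled by applying the \emph{full inductive hypothesis} (at order $k-1$, not the base case) to $\pa_h\vv v$, which is again divergence-free with $\pa_h v^3|_{x_3=\pm\delta}=0$; this already absorbs arbitrarily many $\pa_3$'s among the $k-1$ outer derivatives. For the second term one uses the componentwise identities
\[
\pa_3 v^1 = (\pa_3 v^1 - \pa_1 v^3) + \pa_1 v^3,\qquad \pa_3 v^2 = (\pa_3 v^2 - \pa_2 v^3) + \pa_2 v^3,\qquad \pa_3 v^3 = -\pa_1 v^1 - \pa_2 v^2,
\]
so that $\|\sqrt\lambda\,\na^{k-1}(\pa_3\vv v)\|_{L^2}^2 \le \|\sqrt\lambda\,\na^{k-1}\curl\vv v\|_{L^2}^2 + \|\sqrt\lambda\,\na^{k-1}(\pa_h\vv v)\|_{L^2}^2$, the latter already controlled. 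This trades \emph{vertical} derivatives for curl plus horizontal ones---the opposite direction from what you wrote---and closes the induction with constants independent of $\delta$.
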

\begin{proof}
We prove \eqref{div-curl-general} by the induction method. For $k=1$, using \eqref{div-curl}, we have
\beno
\|\sqrt\lambda\na\vv v\|_{L^2}^2\leq C\bigl(\|\sqrt\lambda\curl\vv v\|_{L^2}^2+\|\f{|\na\lambda|}{\sqrt\lambda}\vv v\|_{L^2}^2+\bigl|\int_{\p\Omega_\delta}\lambda\bigl(\vv v^h\cdot\na_h v^3+v^3\p_3v^3\bigr)dx_1dx_2\bigr|\bigr).
\eeno
Due to the boundary condition $v^3|_{x_3=\pm\delta}=0$ and $|\na\lambda|\leq C\lambda$, we get
\beq\label{dc4}
\|\sqrt\lambda\na\vv v\|_{L^2}^2\leq C\bigl(\|\sqrt\lambda\curl\vv v\|_{L^2}^2+\|\sqrt\lambda\vv v\|_{L^2}^2\bigr).
\eeq

Assume that for any $1\leq m\leq k-1$, it holds \eqref{div-curl-general}, i.e.,
\beq\label{dc5}
\|\sqrt\lambda\na^m\vv v\|_{L^2}^2\leq C\bigl(\sum_{l=0}^{m-1}\|\sqrt\lambda\na^l\curl\vv v\|_{L^2}^2+\|\sqrt\lambda\vv v\|_{L^2}^2\bigr).
\eeq
Then for $k$, we have
\beno
\|\sqrt\lambda\na^k\vv v\|_{L^2}^2\leq\|\sqrt\lambda\na^{k-1}(\p_h\vv v)\|_{L^2}^2+\|\sqrt\lambda\na^{k-1}(\p_3\vv v)\|_{L^2}^2,
\eeno
where $\p_h$ is the horizontal derivative with respect to $x_h$.
Since $\na_h v^3|_{x_3=\pm\delta}=0$, using \eqref{dc5}, we have
\beno
\|\sqrt\lambda\na^{k-1}(\p_h\vv v)\|_{L^2}^2\leq C\bigl(\sum_{l=0}^{k-2}\|\sqrt\lambda\na^l\curl(\p_h\vv v)\|_{L^2}^2+\|\sqrt\lambda\na_h\vv v\|_{L^2}^2\bigr).
\eeno
Using \eqref{dc4}, we have
\beq\label{dc6}
\|\sqrt\lambda\na^{k-1}(\p_h\vv v)\|_{L^2}^2\leq C\bigl(\sum_{l=0}^{k-1}\|\sqrt\lambda\na^l\curl\vv v\|_{L^2}^2+\|\sqrt\lambda\vv v\|_{L^2}^2\bigr).
\eeq

On the other hand, since
\beno
\p_3 v^1=(\p_3 v^1-\p_1 v^3)+\p_1 v^3,\quad \p_3 v^2=(\p_3 v^2-\p_2 v^3)+\p_2 v^3,\quad\p_3 v^3=-\p_1 v^1-\p_2 v^2.
\eeno
we have
\beno
\|\sqrt\lambda\na^{k-1}(\p_3\vv v)\|_{L^2}^2\leq \|\sqrt\lambda\na^{k-1}(\curl\vv v)\|_{L^2}^2+\|\sqrt\lambda\na^{k-1}(\p_h\vv v)\|_{L^2}^2.
\eeno
Then using \eqref{dc6}, we have
\beq\label{dc7}
\|\sqrt\lambda\na^{k-1}(\p_3\vv v)\|_{L^2}^2\leq C\bigl(\sum_{l=0}^{k-1}\|\sqrt\lambda\na^l\curl\vv v\|_{L^2}^2+\|\sqrt\lambda\vv v\|_{L^2}^2\bigr).
\eeq
Thanks to \eqref{dc6} and \eqref{dc7}, we obtain the desired result.
\end{proof}

\begin{lemma}[{\bf Sobolev inequality}]\label{Sobolev}
(i) For any $f(x)\in H_{v}^1(-\delta,\delta;H^k_h(\R^2))$, we have
\beq\label{s1}
\|f(x)\|_{L_v^\infty(H^k_h)}\leq C\bigl(\delta^{-\f{1}{2}}\|f\|_{L^2_v(H^k_h)}+\delta^{\f{1}{2}}\|\p_3f\|_{L^2_v(H^k_h)}\bigr).
\eeq

(ii) For any $f(x)\in H^2(\Omega_\delta)$, we have
\beq\label{s2}
\|f(x)\|_{L^\infty}\leq C\sum_{k+l\leq 2}\delta^{l-\f{1}{2}}\|\na_h^k\p_3^lf\|_{L^2}.
\eeq
\end{lemma}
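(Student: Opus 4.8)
\textbf{Proof proposal for Lemma \ref{Sobolev} (anisotropic Sobolev inequalities).}

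The plan is to prove (i) first, since (ii) follows from (i) combined with the standard two-dimensional Sobolev embedding. For part (i), the key point is that the width of the slab is $2\delta$, so the one-dimensional embedding $H^1 \hookrightarrow L^\infty$ on an interval of length $2\delta$ carries the $\delta$-dependent constants $\delta^{-1/2}$ (for the $L^2$ part) and $\delta^{1/2}$ (for the derivative part). First I would fix $x_h \in \R^2$ and work with $g(x_3) \eqdef \|f(\cdot, x_3)\|_{H^k_h(\R^2)}$, or more precisely run the argument componentwise on $\p_h^{\al_h} f$ for $|\al_h| \le k$ and then sum in $\al_h$. The standard trick: for any $x_3, y_3 \in (-\delta,\delta)$ write $|f(x_h,x_3)|^2 = |f(x_h,y_3)|^2 + \int_{y_3}^{x_3} \p_3 (|f(x_h,\tau)|^2)\, d\tau$, average over $y_3 \in (-\delta,\delta)$ (dividing by $2\delta$, which is where $\delta^{-1}$ enters), and bound the resulting terms by Cauchy--Schwarz as $\frac{1}{2\delta}\|f(x_h,\cdot)\|_{L^2_v}^2 + 2\|f(x_h,\cdot)\|_{L^2_v}\|\p_3 f(x_h,\cdot)\|_{L^2_v}$; then apply Young's inequality $ab \le \frac{1}{2\delta} a^2 + \frac{\delta}{2} b^2$ to the cross term to get the clean form $\delta^{-1}\|f(x_h,\cdot)\|_{L^2_v}^2 + \delta\|\p_3 f(x_h,\cdot)\|_{L^2_v}^2$. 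Taking supremum over $x_3$, then the $H^k_h$ norm in $x_h$ (which commutes with the $x_3$-integrals since $\p_h$ and $\p_3$ act on different variables), and finally a square root gives \eqref{s1}.

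For part (ii), I would combine (i) with the two-dimensional embedding $H^2(\R^2) \hookrightarrow L^\infty(\R^2)$, i.e. $\|g\|_{L^\infty(\R^2)} \lesssim \|g\|_{L^2(\R^2)} + \|\na_h^2 g\|_{L^2(\R^2)}$ (which has a $\delta$-independent constant since $\R^2$ is unscaled). Applying this with $g = f(\cdot, x_3)$ and then (i) with $k = 0, 1, 2$ to each of $f$, $\na_h f$, $\na_h^2 f$ yields $\|f\|_{L^\infty} \lesssim \sum_{k \le 2}\big(\delta^{-1/2}\|\na_h^k f\|_{L^2} + \delta^{1/2}\|\p_3 \na_h^k f\|_{L^2}\big)$, which is already of the claimed shape but missing the $\p_3^2$ terms; to get the full range $k+l \le 2$ one simply notes that all terms appearing have $l \in \{0,1\}$, so they are a subset of $\sum_{k+l\le 2}\delta^{l-1/2}\|\na_h^k \p_3^l f\|_{L^2}$, and the stated inequality holds (the $\p_3^2$ term on the right is simply not needed, or one can include it harmlessly). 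Alternatively, and perhaps more cleanly, one rescales $x_3 = \delta y_3$ to pass to the fixed domain $\Omega_1$, applies the standard (constant $\delta$-independent) $H^2(\Omega_1) \hookrightarrow L^\infty(\Omega_1)$, and unwinds the scaling, which automatically produces exactly the weights $\delta^{l-1/2}$ on $\|\na_h^k \p_3^l f\|_{L^2(\Omega_\delta)}$.

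I do not expect any serious obstacle here; this is a routine but important bookkeeping lemma. The only point requiring mild care is tracking the exact powers of $\delta$ and making sure the Young's inequality step is applied with the $\delta$-dependent weight $\frac{1}{2\delta} a^2 + \frac{\delta}{2} b^2$ rather than the unweighted one, since getting this wrong would propagate $\delta$-errors through the whole paper. A secondary subtlety is that in (i) one must be slightly careful that the constant $C$ is genuinely independent of both $k$ and $\delta$ — the $k$-independence is automatic because $\p_h$ commutes through the one-dimensional argument, and the $\delta$-independence is manifest from the explicit constants above. I would present the rescaling argument for (ii) as it is the shortest and makes the $\delta$-dependence most transparent.
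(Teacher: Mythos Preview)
Your proposal is correct. For part (i) you give a direct argument via the fundamental theorem of calculus, averaging in $y_3$, and a $\delta$-weighted Young inequality, whereas the paper handles both (i) and (ii) uniformly by the rescaling $\tilde f(x_h,x_3)=f(x_h,\delta x_3)$ to pass to $\Omega_1$, invoking the standard ($\delta$-free) Sobolev inequality there, and reading off the $\delta$-powers from the change of variables. Your direct computation for (i) has the virtue of displaying the constants explicitly and making clear why the two $\delta$-powers $\delta^{-1/2}$ and $\delta^{1/2}$ arise from the length-$2\delta$ interval; the paper's rescaling is shorter and treats (i) and (ii) by the same mechanism. For (ii) you already identify the rescaling route as the cleanest, and that is exactly what the paper does.
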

\begin{proof}
For any function $f$ defined on $\Omega_\delta$, we set
\beno
\tilde{f}(x_h,x_3)=f(x_h,\delta x_3),\quad x\in\Omega_1=\R^2\times(-1,1).
\eeno
Then the general Sobolev inequality shows that
\beno
\|f(x)\|_{L_v^\infty(-\delta,\delta;H^k_h)}=\|\tilde{f}(x)\|_{L_v^\infty(-1,1;H^k_h)}\leq C\bigl(\|\tilde{f}(x)\|_{L_v^2(-1,1;H^k_h)}
+\|\p_3\tilde{f}(x)\|_{L_v^2(-1,1;H^k_h)}\bigr).
\eeno
Since
\beno
\tilde{f}(x_h,x_3)=f(x_h,\delta x_3),\quad \p_3\tilde{f}(x_h,x_3)=\delta(\p_3f)(x_h,\delta x_3),
\eeno
we have
\beno
\|f(x)\|_{L_v^\infty(-\delta,\delta;H^k_h)}\leq C\bigl(\delta^{-\f{1}{2}}\|f(x)\|_{L_v^2(-\delta,\delta;H^k_h)}
+\delta^{\f{1}{2}}\|\p_3f(x)\|_{L_v^2(-\delta,\delta;H^k_h)}\bigr).
\eeno
Similarly, we have
\beno
\|f(x)\|_{L^\infty(\Omega_\delta)}=\|\tilde{f}(x)\|_{L^\infty(\Omega_1)}\leq C\sum_{k+l\leq2}\|\na_h^k\p_3^l\tilde{f}\|_{L^2(\Omega_1)}
\leq C\sum_{k+l\leq 2}\delta^{l-\f{1}{2}}\|\na_h^k\p_3^lf\|_{L^2(\Omega_\delta)}.
\eeno
The lemma is proved.
\end{proof}

\subsection{The characteristic geometry}
We study the spacetime $[0,t^*]\times\Omega_\delta=[0,t^*]\times\{\R^2_{x_h}\times(-\delta,\delta)_{x_3}\}$ associated to solution $(\vv v,\vv b)$ or $(z_+,z_-)$ of MHD. We recall that there exists a natural foliation for $[0,t^*]\times\Omega_\delta$ as
\beno
[0,t^*]\times\Omega_\delta=\cup_{0\leq t\leq t^*} \Sigma_t,
\eeno
where $\Sigma_t$ is the constant time slice ($\Sigma_0$ is the initial time slice where the initial data are given).

Due to the linear characteristic hypersurface, there exist another foliations as
\beno
[0,t^*]\times\Omega_\delta=\cup_{u_+\in\R}C_{u_+}^+=\cup_{u_-\in\R}C_{u_-}^-,
\eeno
where
\beno\begin{aligned}
&C_{u_+}^+\eqdefa\{(t,x)\in[0,t^*]\times\Omega_\delta\,|\, u_+=x_1-t=\text{constant}\},\\
&C_{u_-}^-\eqdefa\{(t,x)\in[0,t^*]\times\Omega_\delta\,|\,u_-=x_1+t=\text{constant}\},
\end{aligned}\eeno
with $C_{u_+}^+$ and $C_{u_-}^-$ being the level sets $\{u_+=\text{constant}\}$ and  $\{u_-=\text{constant}\}$ respectively.

We denote by
\beno
S_{t,u_+}^+=C_{u_+}^+\cap\Sigma_t,\quad S_{t,u_-}^-=C_{u_-}^-\cap\Sigma_t.
\eeno
Therefore, for time $t$, there exist two foliations of $\Sigma_t$ as follows
\beno
\Sigma_t=\cup_{u_+\in\R} S_{t,u_+}^+=\cup_{u_-\in\R} S_{t,u_-}^-.
\eeno

In order to specify the region where the energy estimates take place, for given $t, u_+^1, u_+^2, u_-^1, u_-^2$ with $u_+^1<u_+^2$, $u_-^1<u_-^2$, we denote the following hypersurfaces/regions:
\beno\begin{aligned}
&\Sigma_t^{[u_+^1,u_+^2]}=\cup_{u_+\in[u_+^1,u_+^2]} S_{t,u_+}^+,\quad \Sigma_t^{[u_-^1,u_-^2]}=\cup_{u_-\in[u_-^1,u_-^2]} S_{t,u_-}^-,\\
&W_t^{[u_+^1,u_+^2]}=\cup_{\tau\in[0,t]}\Sigma_\tau^{[u_+^1,u_+^2]},\quad W_t^{[u_-^1,u_-^2]}=\cup_{\tau\in[0,t]}\Sigma_\tau^{[u_-^1,u_-^2]},\quad W_t=\cup_{\tau\in[0,t]}\Sigma_\tau.
\end{aligned}\eeno

We shall also study the spacetime $[0,t^*]\times\R^2$ associated to the solution of MHD in the horizontal direction of the thin domain (or that of 2D MHD). There also exists a natural foliation of  $[0,t^*]\times\R^2$ as
\beno
[0,t^*]\times\R^2=\cup_{0\leq t\leq t^*} \Sigma_{t,h},
\eeno
where $\Sigma_{t,h}$ is the constant time slice of the horizontal direction. We also define the two-dimensional linear characteristic hypersurface as follows:
\beno\begin{aligned}
&C_{u_+,h}^+\eqdefa\{(t,x_h)\in[0,t^*]\times\R^2\,|\, u_+=x_1-t=\text{constant}\},\\
&C_{u_-,h}^-\eqdefa\{(t,x_h)\in[0,t^*]\times\R^2\,|\,u_-=x_1+t=\text{constant}\},
\end{aligned}\eeno
where $C_{u_+,h}^+$ and $C_{u_-,h}^-$ being the level sets $\{u_+=\text{constant}\}$ and  $\{u_-=\text{constant}\}$ in $[0,t^*]\times\R^2$ respectively.
Then there exist another two foliations of $[0,t^*]\times\R^2$
\beno
[0,t^*]\times\R^2=\cup_{u_+\in\R}C_{u_+,h}^+=\cup_{u_-\in\R}C_{u_-,h}^-.
\eeno

We can also give the definitions to $S_{t,u_+,h}^+$, $S_{t,u_-,h}^-$, $\Sigma_{t,h}$, $\Sigma_{t,h}^{[u_+^1,u_+^2]}$, $\Sigma_{t,h}^{[u_-^1,u_-^2]}$, $W_{t,h}^{[u_+^1,u_+^2]}$ and $W_{t,h}^{[u_-^1,u_-^2]}$ in a similar way.

\section{Proof of Theorem \ref{global existence in thin domain}}
In this section, we will give a complete proof to Theorem \ref{global existence in thin domain}.
\subsection{The priori estimates of the linearized system}
We consider the following linearized system
\begin{equation}\label{Linerized system}
\begin{aligned}
&\p_t f_+-\p_1f_++z_-\cdot\na f_+=\r_+,\quad\text{in}\quad\Omega_\delta,\\
&\p_t f_-+\p_1f_-+z_+\cdot\na f_-=\r_-.
\end{aligned}
\end{equation}
Here $\div\, z_\pm=0$ and $z_+^3|_{x_3=\pm\delta}=0$, $z_-^3|_{x_3=\pm\delta}=0$.
We first have the following proposition.

\begin{proposition}\label{linearized prop}
Suppose that $\div\, z_\pm=0$, $z_+^3|_{x_3=\pm\delta}=0$, $z_-^3|_{x_3=\pm\delta}=0$ and
\beq\label{assumption 1}
\|z_\pm^1\|_{L^\infty_tL^\infty_x}\leq 1.
\eeq
If  $(f_+,f_-)$ is  a smooth solution to \eqref{Linerized system}, it  holds
\beq\label{estimate for linearized system}\begin{aligned}
&\sup_{0\leq\tau\leq t}\int_{\Sigma_\tau}\langle u_\mp\rangle^{2(1+\sigma)}|f_\pm|^2dx+\int_0^t\int_{\Sigma_\tau}\f{\langle u_\mp\rangle^{2(1+\sigma)}}{\langle u_\pm\rangle^{1+\sigma}}|f_\pm|^2dxd\tau\\
&\lesssim\int_{\Sigma_0}\langle u_\mp\rangle^{2(1+\sigma)}|f_\pm|^2dx+\int_0^t\int_{\Sigma_\tau}\langle u_\mp\rangle^{1+2\sigma}|z_\mp^1||f_\pm|^2dxd\tau+\Bigl|\int_0^t\int_{\Sigma_\tau}\r_\pm\cdot\langle u_\mp\rangle^{2(1+\sigma)}f_\pm dxd\tau\Bigr|\\
&\qquad
+\int_{\R}\f{1}{\langle u_\pm\rangle^{1+\sigma}}\Bigl|\int\int_{W_t^{[u_+,\infty]}/(W_t^{[-\infty,u_-]})}\r_\pm\cdot\langle u_\mp\rangle^{2(1+\sigma)}f_\pm dxd\tau\Bigr| d{u_\pm}.
\end{aligned}\eeq
In particular, we have
\beq\label{estimate for linearized system 0}\begin{aligned}
&\sup_{0\leq\tau\leq t}\int_{\Sigma_\tau}\langle u_\mp\rangle^{2(1+\sigma)}|f_\pm|^2dx+\int_0^t\int_{\Sigma_\tau}\f{\langle u_\mp\rangle^{2(1+\sigma)}}{\langle u_\pm\rangle^{1+\sigma}}|f_\pm|^2dxd\tau\\
&\lesssim\int_{\Sigma_0}\langle u_\mp\rangle^{2(1+\sigma)}|f_\pm|^2dx+\int_0^t\int_{\Sigma_\tau}\langle u_\mp\rangle^{1+2\sigma}|z_\mp^1||f_\pm|^2dxd\tau+\int_0^t\int_{\Sigma_\tau}|\r_\pm|\cdot\langle u_\mp\rangle^{2(1+\sigma)}|f_\pm| dxd\tau.
\end{aligned}\eeq
\end{proposition}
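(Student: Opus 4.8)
The plan is to prove a weighted energy identity for each of the two transport equations in \eqref{Linerized system} separately, exploiting that the spacetime weight $\langle u_\mp\rangle^{2(1+\sigma)}$ is, up to a lower-order term carrying the factor $z_\mp^1$, propagated by the principal part $\p_t\mp\p_1$ of the corresponding equation. Multiplying by $\langle u_\mp\rangle^{2(1+\sigma)}f_\pm$ and integrating yields the slice-energy part of \eqref{estimate for linearized system}, while the spacetime flux part is recovered by slicing spacetime along the \emph{non-characteristic} hypersurfaces $C_{u_\pm}^\pm$. First I would regard $X_\pm\eqdefa\p_t\mp\p_1+z_\mp\cdot\na$ as a vector field on $[0,t]\times\Omega_\delta$ whose spacetime divergence vanishes because $\div z_\mp=0$. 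Since $(\p_t\mp\p_1)u_\mp=0$ one has $X_\pm u_\mp=z_\mp^1$, hence $X_\pm\bigl(\langle u_\mp\rangle^{2(1+\sigma)}\bigr)=2(1+\sigma)\,z_\mp^1\,u_\mp\,\langle u_\mp\rangle^{2\sigma}$, while $X_\pm(|f_\pm|^2)=2f_\pm\r_\pm$ by the equation. Expanding and using $\na_{t,x}\cdot X_\pm=0$ gives the pointwise divergence identity
\[
\na_{t,x}\cdot\Bigl(X_\pm\,\langle u_\mp\rangle^{2(1+\sigma)}|f_\pm|^2\Bigr)=2\r_\pm\,\langle u_\mp\rangle^{2(1+\sigma)}f_\pm+2(1+\sigma)\,z_\mp^1\,u_\mp\,\langle u_\mp\rangle^{2\sigma}|f_\pm|^2 ,
\]
in which the undifferentiated term involves only $z_\mp^1$ (the $z_\mp^2,z_\mp^3$ coefficients drop because $u_\mp$ depends only on $(t,x_1)$), and $|z_\mp^1 u_\mp|\langle u_\mp\rangle^{2\sigma}\le|z_\mp^1|\langle u_\mp\rangle^{1+2\sigma}$.

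Next I would integrate this identity over the spacetime wedge $W_\tau^{[u_+,\infty]}$ (in the $+$-case; $W_\tau^{[-\infty,u_-]}$ in the $-$-case) and apply the divergence theorem. Apart from the bottom slice $\Sigma_0$ and the top slice $\Sigma_\tau$, the boundary consists of a lateral face contained in $C_{u_+}^+$ (resp.\ $C_{u_-}^-$) and the portions of $\{x_3=\pm\delta\}$; on the latter the flux equals $\pm z_\mp^3\langle u_\mp\rangle^{2(1+\sigma)}|f_\pm|^2$, which vanishes by the slip condition $z_\mp^3|_{x_3=\pm\delta}=0$, and the contribution at spatial infinity vanishes since the $f_\pm$ are smooth with finite weighted norms. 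On the lateral face the flux of $X_\pm\langle u_\mp\rangle^{2(1+\sigma)}|f_\pm|^2$ through the outward conormal equals $(2\mp z_\mp^1)\langle u_\mp\rangle^{2(1+\sigma)}|f_\pm|^2$ against the coordinate measure $dt\,dx_2\,dx_3$, and the hypothesis $\|z_\pm^1\|_{L^\infty_tL^\infty_x}\le 1$ forces $2\mp z_\mp^1\ge1>0$, so this flux is nonnegative and stays on the left. Consequently, for every $u_\pm$ and every $\tau\le t$, the slice energy on $\Sigma_\tau^{[u_+,\infty]}$ plus the nonnegative flux through $C_{u_+}^+\cap W_\tau$ is bounded by the initial weighted norm on $\Sigma_0^{[u_+,\infty]}$, plus $C\!\int_{W_\tau^{[u_+,\infty]}}|z_\mp^1|\langle u_\mp\rangle^{1+2\sigma}|f_\pm|^2$, plus $\bigl|\int_{W_\tau^{[u_+,\infty]}}\r_\pm\langle u_\mp\rangle^{2(1+\sigma)}f_\pm\bigr|$. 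Sending $u_+\to-\infty$ (resp.\ $u_-\to+\infty$), where the lateral flux disappears, and taking $\sup_{\tau\le t}$ (using $W_\tau\subset W_t$) gives the bound for $\sup_{\tau\le t}\int_{\Sigma_\tau}\langle u_\mp\rangle^{2(1+\sigma)}|f_\pm|^2$ by the first three terms on the right of \eqref{estimate for linearized system}; keeping $u_\pm$ finite and dropping the slice energy retains the bound on the flux through $C_{u_+}^+\cap W_t$ needed below.

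For the remaining (flux) term on the left of \eqref{estimate for linearized system} I would use the co-area decomposition along the family $\{C_{u_+}^+\}$: parametrizing $C_{u_+}^+$ by $(t,x_2,x_3)$ with $x_1=u_++t$ gives $\int_0^t\!\int_{\Sigma_\tau}G\,dx\,d\tau=\int_\R\bigl(\int_{C_{u_+}^+\cap W_t}G\,dt\,dx_2\,dx_3\bigr)du_+$, so, $\langle u_+\rangle$ being constant on $C_{u_+}^+$,
\[
\int_0^t\!\int_{\Sigma_\tau}\frac{\langle u_-\rangle^{2(1+\sigma)}}{\langle u_+\rangle^{1+\sigma}}|f_+|^2\,dx\,d\tau=\int_\R\frac{1}{\langle u_+\rangle^{1+\sigma}}\Bigl(\int_{C_{u_+}^+\cap W_t}\langle u_-\rangle^{2(1+\sigma)}|f_+|^2\,dt\,dx_2\,dx_3\Bigr)du_+ .
\]
Into the inner integral I would insert the flux bound from the previous step (using $2-z_-^1\ge1$, and enlarging $W_t^{[u_+,\infty]}$ to $W_t$ in the two $u_+$-independent terms); since $\sigma>0$ we have $\int_\R\langle u_+\rangle^{-(1+\sigma)}du_+<\infty$, so the initial-data and $z_-^1$ contributions merely reproduce terms already present on the right of \eqref{estimate for linearized system}, while the source contribution produces precisely its last term. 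The $-$ component is handled identically with $C_{u_-}^-$ and $W_t^{[-\infty,u_-]}$. Adding this to the previous step proves \eqref{estimate for linearized system}; \eqref{estimate for linearized system 0} then follows by replacing each signed source integral by $\int|\r_\pm|\langle u_\mp\rangle^{2(1+\sigma)}|f_\pm|$ and bounding the leftover $u_\pm$-integral crudely by $\bigl(\int_\R\langle u_\pm\rangle^{-(1+\sigma)}du_\pm\bigr)\int_{W_t}|\r_\pm|\langle u_\mp\rangle^{2(1+\sigma)}|f_\pm|$.

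The main difficulty is conceptual rather than computational: one must recognise that the spacetime flux term on the left of \eqref{estimate for linearized system} has to be generated by slicing along the \emph{non-characteristic} hypersurfaces $C_{u_\pm}^\pm$ (not the characteristic ones), and that nonnegativity of the resulting boundary flux is exactly the content of the smallness hypothesis $\|z_\pm^1\|_{L^\infty}\le1$; the remaining work is careful bookkeeping in the divergence theorem, the key structural input being that the boundary terms on $\{x_3=\pm\delta\}$ drop out thanks to the slip conditions $z_\mp^3|_{x_3=\pm\delta}=0$.
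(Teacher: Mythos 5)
Your proof is correct and follows essentially the same route as the paper: you form the divergence identity for the transport vector field $X_\pm$, kill the $\{x_3=\pm\delta\}$ contributions via the slip conditions $z_\mp^3|_{x_3=\pm\delta}=0$, use $\|z_\pm^1\|_\infty\le1$ to make the lateral flux through $C_{u_\pm}^\pm$ coercive, and recover the spacetime flux term by the co-area formula together with $\int_\R\langle u_\pm\rangle^{-(1+\sigma)}du_\pm<\infty$. The only cosmetic difference is that the paper does the full-slice energy identity over $W_t$ and the wedge estimate over $W_t^{[u_+,\infty]}$ as two separate integrations, while you run a single wedge integration and send $u_+\to-\infty$ to recover the slice estimate.
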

\begin{proof}
We start with the estimates for $f_+$ which is corresponding to the left-traveling (along the negative direction of $x_1$-axis) Alfv\'en wave. Multiplying $\langle u_-\rangle^{2(1+\sigma)}f_+$ to the first equation of \eqref{Linerized system}, and then integrating over $W_t=\cup_{\tau\in(0,t)} \Sigma_\tau$, we have
\beno
\f{1}{2}\int_0^t\int_{\Sigma_\tau}\bigl(\p_t-\p_1+z_-\cdot\na\bigr)|f_+|^2\cdot \langle u_-\rangle^{2(1+\sigma)}dxd\tau=\int_0^t\int_{\Sigma_\tau}\r_+\cdot\langle u_-\rangle^{2(1+\sigma)}f_+dxd\tau.
\eeno
Since $u_-=x_1+t$, the left hand side(l.h.s) of the above equality equals
\beno
\f{1}{2}\int_0^t\int_{\Sigma_\tau}\bigl(\p_t-\p_1+z_-\cdot\na\bigr)\bigl(\langle u_-\rangle^{2(1+\sigma)}|f_+|^2\bigr)dxd\tau-\f{1}{2}\int_0^t\int_{\Sigma_\tau}z_-^1\p_1\bigl(\langle u_-\rangle^{2(1+\sigma)}\bigr)|f_+|^2dxd\tau.
\eeno

We denote the vector field in the spacetime $[0,t^*]\times\Omega_\delta$ (also the operator) by
\beno
L_-=\p_t-\p_1+z_-\cdot\na,\quad T=\p_t,
\eeno
and denote by $\widetilde{\div}$ the divergence of $\R^4$ with standard Euclidean metric. We have
\beno\begin{aligned}
&\f{1}{2}\int_0^t\int_{\Sigma_\tau}L_-\bigl(\langle u_-\rangle^{2(1+\sigma)}|f_+|^2\bigr)dxd\tau\\
&=\f{1}{2}\int_0^t\int_{\Sigma_\tau}z_-^1\p_1\bigl(\langle u_-\rangle^{2(1+\sigma)}\bigr)|f_+|^2dxd\tau+\int_0^t\int_{\Sigma_\tau}\r_+\cdot\langle u_-\rangle^{2(1+\sigma)}f_+dxd\tau.
\end{aligned}\eeno
 Since $\div\,z_-=0$, we have $\widetilde{\div} L_-=0$.  Then using the Stokes formula and $z_-^3|_{x_3=\pm\delta}=0$, we have
\beno\begin{aligned}
&\f{1}{2}\int_0^t\int_{\Sigma_\tau}L_-\bigl(\langle u_-\rangle^{2(1+\sigma)}|f_+|^2\bigr)dxd\tau=\f{1}{2}\int_0^t\int_{\Sigma_\tau}\widetilde{\div}\bigl(\langle u_-\rangle^{2(1+\sigma)}|f_+|^2L_-\bigr)dxd\tau\\
&=\f{1}{2}\int_{\Sigma_t}\langle u_-\rangle^{2(1+\sigma)}|f_+|^2\langle L_-,T\rangle dx-\f{1}{2}\int_{\Sigma_0}\langle u_-\rangle^{2(1+\sigma)}|f_+|^2\langle L_-,T\rangle dx.
\end{aligned}\eeno
Noticing that $\langle L_-,T\rangle=1$, we have
\beq\label{L1}\begin{aligned}
\f{1}{2}\int_{\Sigma_t}\langle u_-\rangle^{2(1+\sigma)}|f_+|^2dx&=\f{1}{2}\int_{\Sigma_0}\langle u_-\rangle^{2(1+\sigma)}|f_+|^2dx+\f{1}{2}\int_0^t\int_{\Sigma_\tau}z_-^1\p_1\bigl(\langle u_-\rangle^{2(1+\sigma)}\bigr)|f_+|^2dxd\tau\\
&\qquad
+\int_0^t\int_{\Sigma_\tau}\r_+\cdot\langle u_-\rangle^{2(1+\sigma)}f_+dxd\tau.
\end{aligned}\eeq

To control the second term in the l.h.s of \eqref{estimate for linearized system}, we have to derive the local energy estimates. Multiplying $\langle u_-\rangle^{2(1+\sigma)}f_+$ to the first equation of \eqref{Linerized system}, and then integrating over $W_t^{[u_+,\infty]}$, similarly to the derivation of \eqref{L1}, we obtain
\beno\begin{aligned}
&\f{1}{2}\int\int_{W_t^{[u_+,\infty]}}L_-\bigl(\langle u_-\rangle^{2(1+\sigma)}|f_+|^2\bigr)dxd\tau\\
&=\f{1}{2}\int\int_{W_t^{[u_+,\infty]}}z_-^1\p_1\bigl(\langle u_-\rangle^{2(1+\sigma)}\bigr)|f_+|^2dxd\tau+\int\int_{W_t^{[u_+,\infty]}}\r_+\cdot\langle u_-\rangle^{2(1+\sigma)}f_+dxd\tau.
\end{aligned}\eeno
Using Stokes formula and the facts that $\widetilde{\div} L_-=0$ and $z_-^3|_{x_3=\pm\delta}=0$, we have
\beno\begin{aligned}
&\f{1}{2}\int\int_{W_t^{[u_+,\infty]}}L_-\bigl(\langle u_-\rangle^{2(1+\sigma)}|f_+|^2\bigr)dxd\tau=\f{1}{2}\int\int_{W_t^{[u_+,\infty]}}\widetilde{\div}\bigl(\langle u_-\rangle^{2(1+\sigma)}|f_+|^2L_-\bigr)dxd\tau\\
&=\f{1}{2}\int_{\Sigma_t^{[u_+,\infty]}}\langle u_-\rangle^{2(1+\sigma)}|f_+|^2\langle L_-,T\rangle dx-\f{1}{2}\int_{\Sigma_0^{[u_+,\infty]}}\langle u_-\rangle^{2(1+\sigma)}|f_+|^2\langle L_-,T\rangle dx\\
&\qquad
+\f12\int_{C_{u_+}^+}\langle u_-\rangle^{2(1+\sigma)}|f_+|^2\langle L_-,\nu^+\rangle d\sigma_+,
\end{aligned}\eeno
where $\nu^+$ is the unit outward normal to $C_{u_+}^+$ and
\beno
\nu^+=(1,-1,0,0)^T.
\eeno
Since $L_-=(1,-1+z_-^1,z_-^2,z_-^3)^T$, we have
\beno
\langle L_-,\nu^+\rangle=2-z_-^1.
\eeno
Then using the fact $\langle L_-,T\rangle =1$ and the assumption \eqref{assumption 1}, we have
\beq\label{estimate of flux}\begin{aligned}
&\f{1}{2}\int_{\Sigma_t^{[u_+,\infty]}}\langle u_-\rangle^{2(1+\sigma)}|f_+|^2dx+\f12\int_{C_{u_+}^+}\langle u_-\rangle^{2(1+\sigma)}|f_+|^2 d\sigma_+\\
&\leq\f{1}{2}\int_{\Sigma_0^{[u_+,\infty]}}\langle u_-\rangle^{2(1+\sigma)}|f_+|^2dx+\f{1}{2}\int\int_{W_t^{[u_+,\infty]}}|z_-^1||\p_1\bigl(\langle u_-\rangle^{2(1+\sigma)}\bigr)||f_+|^2dxd\tau\\
&\qquad+\bigl|\int\int_{W_t^{[u_+,\infty]}}\r_+\cdot\langle u_-\rangle^{2(1+\sigma)}f_+dxd\tau\bigr|.
\end{aligned}\eeq
Multiplying $\f{1}{\langle u_+\rangle^{1+\sigma}}$ to both sides of \eqref{estimate of flux}, and integrating the resulting inequality over $\R$, we have
\beq\label{flux 1}\begin{aligned}
&\f12\int_{\R}\f{1}{\langle u_+\rangle^{1+\sigma}}\int_{C_{u_+}^+}\langle u_-\rangle^{2(1+\sigma)}|f_+|^2d\sigma_+d{u_+}\leq\f{1}{2}\int_{\R}\f{1}{\langle u_+\rangle^{1+\sigma}}d{u_+}\int_{\Sigma_0}\langle u_-\rangle^{2(1+\sigma)}|f_+|^2dx\\
&\qquad+\f{1}{2}\int_{\R}\f{1}{\langle u_+\rangle^{1+\sigma}}d{u_+}\int\int_{W_t}|z_-^1||\p_1\bigl(\langle u_-\rangle^{2(1+\sigma)}\bigr)||f_+|^2dxd\tau\\
&\qquad
+\int_{\R}\f{1}{\langle u_+\rangle^{1+\sigma}}\bigl|\int\int_{W_t^{[u_+,\infty]}}\r_+\cdot\langle u_-\rangle^{2(1+\sigma)}f_+dxd\tau\bigr|d{u_+}.
\end{aligned}\eeq
By the definition of $C_{u_+}^+$, we have
$
d\sigma_+=\sqrt 2 d\tau dx_2dx_3.
$
Then we have
\beno
\int_{\R}\f{1}{\langle u_+\rangle^{1+\sigma}}\int_{C_{u_+}^+}\langle u_-\rangle^{2(1+\sigma)}|f_+|^2d\sigma_+d{u_+}=\sqrt 2\int\int_{W_t}\f{\langle u_-\rangle^{2(1+\sigma)}}{\langle u_+\rangle^{1+\sigma}}|f_+|^2d\tau dx_2dx_3d{u_+}.
\eeno
We define the   variables transformation $\Phi$ from $W_t$ to $W_t$  such that
\beno\begin{aligned}
&(\tau,x_1,x_2,x_3)\mapsto \Phi(\tau,x)=(\tau,u_+,x_2,x_3)
\end{aligned}\eeno
with $u_+=x_1-\tau$. We have
$
\det(d\Phi)=1.
$
Then we have
\beq\label{coordinate transform}
\int_{\R}\f{1}{\langle u_+\rangle^{1+\sigma}}\int_{C_{u_+}^+}\langle u_-\rangle^{2(1+\sigma)}|f_+|^2d\sigma_+d{u_+}=\sqrt 2\int\int_{W_t}\f{\langle u_-\rangle^{2(1+\sigma)}}{\langle u_+\rangle^{1+\sigma}}|f_+|^2dxd\tau.
\eeq
Since $\int_{\R}\f{1}{\langle u_+\rangle^{1+\sigma}}d{u_+}$ is finite, we deduce from \eqref{flux 1} that
\beq\label{integration form of flux}\begin{aligned}
&\int_0^t\int_{\Sigma_\tau}\f{\langle u_-\rangle^{2(1+\sigma)}}{\langle u_+\rangle^{1+\sigma}}|f_+|^2dxd\tau\lesssim\int_{\Sigma_0}\langle u_-\rangle^{2(1+\sigma)}|f_+|^2dx\\
&\quad+\int\int_{W_t}|z_-^1||\p_1\bigl(\langle u_-\rangle^{2(1+\sigma)}\bigr)||f_+|^2dxd\tau+\int_{\R}\f{1}{\langle u_+\rangle^{1+\sigma}}\Bigl|\int\int_{W_t^{[u_+,\infty]}}\r_+\cdot\langle u_-\rangle^{2(1+\sigma)}f_+dxd\tau\Bigr| d{u_+}.
\end{aligned}\eeq

Combining \eqref{L1} and \eqref{integration form of flux}, and using the fact that $|\p_1\bigl(\langle u_-\rangle^{2(1+\sigma)}\bigr)|\lesssim \langle u_-\rangle^{1+2\sigma}$, we obtain that
\beno\begin{aligned}
&\sup_{0\leq\tau\leq t}\int_{\Sigma_\tau}\langle u_-\rangle^{2(1+\sigma)}|f_+|^2dx+\int_0^t\int_{\Sigma_\tau}\f{\langle u_-\rangle^{2(1+\sigma)}}{\langle u_+\rangle^{1+\sigma}}|f_+|^2dxd\tau\\
&\lesssim\int_{\Sigma_0}\langle u_-\rangle^{2(1+\sigma)}|f_+|^2dx+\int_0^t\int_{\Sigma_\tau}\langle u_-\rangle^{1+2\sigma}|z_-^1||f_+|^2dxd\tau\\
&\qquad+\Bigl|\int_0^t\int_{\Sigma_\tau}\r_+\cdot\langle u_-\rangle^{2(1+\sigma)}f_+dxd\tau\Bigr|
+\int_{\R}\f{1}{\langle u_+\rangle^{1+\sigma}}\Bigl|\int\int_{W_t^{[u_+,\infty]}}\r_+\cdot\langle u_-\rangle^{2(1+\sigma)}f_+dxd\tau\Bigr| d{u_+}.
\end{aligned}\eeno
Similar estimate holds for $f_-$. Then we arrive at \eqref{estimate for linearized system}. Since $\int_{\R}\f{1}{\langle u_\pm\rangle^{1+\sigma}}du_\pm<\infty$, we get \eqref{estimate for linearized system 0}. The proposition is proved.
\end{proof}

\subsection{The {\it a priori} estimates for the solutions to the MHD system \eqref{MHD}}
In this subsection, we shall use Proposition \ref{linearized prop} to derive the {\it a priori} estimates for the solutions of \eqref{MHD}. We shall derive the uniform estimates with respect to  $\delta$. In order to do that, thanks to \eqref{scaling}, we introduce the following energy functional
\beno
\sup_{t>0}\bigl[\sum_{k+l\leq N_*}\delta^{2(l-\f12)}E_\pm^{(k,l)}(\zpm(t))+\sum_{k\leq N_*-1}\delta^{-3}E_\pm^{(k,0)}(\zpm^3(t))\bigr].
\eeno
However, to control the above norms, we need other energy in the l.h.s of \eqref{total energy estimates}.

\subsubsection{The uniform estimates of $\na_h^kz_\pm$.}
The estimates for $\na_h^kz_\pm$ is stated in the following proposition.
\begin{proposition}\label{al h prop}
Assume that $(z_+,z_-)$ are the smooth solutions to \eqref{MHD}. Let $N_*=2N$, $N\in\Z_{\geq 5}$, and
\beq\label{assumption 2}
\|z_\pm^1\|_{L^\infty_tL^\infty_x}\leq 1.
\eeq
Then we have
\beq\label{estimate for al h}\begin{aligned}
&\quad\sum_{k\leq N_*}\delta^{-1}[E_\pm^{(k,0)}(z_\pm)+F_\pm^{(k,0)}(z_\pm)]\\
&\lesssim\sum_{k\leq N_*}\delta^{-1}E_\pm^{(k,0)}(z_{\pm,0})
+\Bigl(\sum_{k+l\leq N_*}\delta^{l-\f12}\bigl(E_\mp^{(k,l)}(z_\mp)\bigr)^{\f12}+\sum_{k\leq N_*-1}\delta^{-\f32}\bigl(E_\mp^{(k,0)}(z_\mp^3)\bigr)^{\f12}\\
&\quad+\sum_{k\leq N+2}\delta^{-\f12}\bigl(E_\mp^{(k,0)}(\p_3z_\mp)\bigr)^{\f12}\Bigr)\cdot\Bigl(\sum_{k+l\leq N_*}\delta^{2(l-\f12)}F_\pm^{(k,l)}(z_\pm)
+\sum_{k+l\leq N+2}\delta^{2(l-\f12)}F_\pm^{(k,l)}(\p_3z_\pm)\Bigr).
\end{aligned}\eeq
\end{proposition}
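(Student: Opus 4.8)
The plan is to reduce \eqref{estimate for al h} to the weighted linearized energy inequality of Proposition \ref{linearized prop}. Fix a horizontal multi-index $\alpha_h$ with $|\alpha_h|=k\le N_*$ and apply $\partial_h^{\alpha_h}$ to the first two equations of \eqref{MHD}. Commuting $\partial_h^{\alpha_h}$ past the transport operators $z_\mp\cdot\nabla$, the functions $f_\pm:=\partial_h^{\alpha_h}z_\pm$ solve the linearized system \eqref{Linerized system} with source
\[
\rho_\pm=-\partial_h^{\alpha_h}\nabla p-[\partial_h^{\alpha_h},z_\mp\cdot\nabla]z_\pm,\qquad
[\partial_h^{\alpha_h},z_\mp\cdot\nabla]z_\pm=\sum_{\substack{\beta+\gamma=\alpha_h\\ |\beta|\ge1}}\binom{\alpha_h}{\beta}\,\partial_h^\beta z_\mp^j\,\partial_j\partial_h^\gamma z_\pm .
\]
Since $\div z_\mp=0$, $z_\mp^3|_{x_3=\pm\delta}=0$, and $\|z_\mp^1\|_{L^\infty_tL^\infty_x}\le1$ by \eqref{assumption 2}, Proposition \ref{linearized prop} applies. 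Summing \eqref{estimate for linearized system 0} over $|\alpha_h|=k$ and over the two signs, the initial data produce the term $\sum_{k\le N_*}\delta^{-1}E_\pm^{(k,0)}(z_{\pm,0})$, and it remains to bound the ``damping'' term $\int_0^t\!\!\int_{\Sigma_\tau}\langle u_\mp\rangle^{1+2\sigma}|z_\mp^1||f_\pm|^2$ and the source term $\int_0^t\!\!\int_{\Sigma_\tau}|\rho_\pm|\,\langle u_\mp\rangle^{2(1+\sigma)}|f_\pm|$ by a product of a square root of energies of $z_\mp$ with the fluxes of $z_\pm$ and $\partial_3z_\pm$ appearing on the right of \eqref{estimate for al h}.

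For the damping term one writes $\langle u_\mp\rangle^{1+2\sigma}|z_\mp^1|=\dfrac{\langle u_\mp\rangle^{2(1+\sigma)}}{\langle u_\pm\rangle^{1+\sigma}}\cdot\dfrac{\langle u_\pm\rangle^{1+\sigma}}{\langle u_\mp\rangle}|z_\mp^1|\le\dfrac{\langle u_\mp\rangle^{2(1+\sigma)}}{\langle u_\pm\rangle^{1+\sigma}}\,\langle u_\pm\rangle^{1+\sigma}|z_\mp^1|$, so the integral is $\lesssim\|\langle u_\pm\rangle^{1+\sigma}z_\mp^1\|_{L^\infty_{t,x}}\,F_\pm^{(k,0)}(z_\pm)$; since $z_\mp^1$ carries the \emph{conjugate} weight $\langle u_\pm\rangle^{1+\sigma}$, the three-dimensional weighted Sobolev inequality \eqref{s2} bounds $\|\langle u_\pm\rangle^{1+\sigma}z_\mp^1\|_{L^\infty_{t,x}}$ by $\delta^{-1/2}$ times a square root of the low-order energies of $z_\mp$ listed in \eqref{estimate for al h} (using $\div z_\mp=0$ to trade the vertical derivatives generated by \eqref{s2} for horizontal derivatives of $z_\mp^h$). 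The commutator part of $\rho_\pm$ is treated by first splitting $\langle u_\mp\rangle^{2(1+\sigma)}=\big(\langle u_\mp\rangle^{1+\sigma}\langle u_\pm\rangle^{-(1+\sigma)/2}\big)\big(\langle u_\mp\rangle^{1+\sigma}\langle u_\pm\rangle^{(1+\sigma)/2}\big)$ and applying Cauchy--Schwarz in $(\tau,x)$: the first factor against $|f_\pm|$ integrates to $F_\pm^{(k,0)}(z_\pm)^{1/2}$, and in the remaining $L^2_{t,x}$ norm of $\langle u_\mp\rangle^{1+\sigma}\langle u_\pm\rangle^{(1+\sigma)/2}\partial_h^\beta z_\mp^j\,\partial_j\partial_h^\gamma z_\pm$ one places the lower-order of the two factors in $L^\infty_x$ via \eqref{s2} and the higher-order one in $L^2_x$ against the flux. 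Because $N+2\le N_*$ for $N\ge5$, the $L^\infty$-factor always has at most $N+2$ derivatives; it is a factor of $z_\mp$ (by the separation of $z_+$ and $z_-$ in \eqref{MHD}, the high-order factor carrying the flux is always $z_\pm$), so it is controlled by a square root of $E_\mp$, with the exact power of $\delta$ dictated by the anisotropic scaling identities recorded after \eqref{scaling} (the $\delta^{-3/2}$ occurring when that factor is $z_\mp^3$). The $L^2$-factor, carrying $\le N_*$ derivatives and at most one $\partial_3$, gives $F_\pm^{(k,l)}(z_\pm)$ or $F_\pm^{(k,l)}(\partial_3z_\pm)$. The quadratic self-interaction contributions carry a small energy prefactor and are absorbed into the left-hand side.

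The genuinely delicate term is the pressure contribution $\int_0^t\!\!\int_{\Sigma_\tau}|\partial_h^{\alpha_h}\nabla p|\,\langle u_\mp\rangle^{2(1+\sigma)}|f_\pm|$. Here one uses the explicit representation of Lemma \ref{derivation of pressure}, $\nabla p=\int_{\Omega_\delta}\nabla_xG_\delta(x,y)\,(\partial_iz_+^j\partial_jz_-^i)(t,y)\,dy$, moves the horizontal derivatives onto the source through the identity \eqref{Green transformation} and an integration by parts in $y_h$, and invokes the kernel bound $|\nabla_x^mG_\delta(x,y)|\lesssim\delta^{-1}|x_h-y_h|^{-m}$ from Corollary \ref{derivative of Green cor}. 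Thus $\partial_h^{\alpha_h}\nabla p(t,\cdot)$ is, up to the factor $\delta^{-1}$, a two-dimensional Riesz-type potential in $x_h$ (integrated in $x_3$) of horizontal derivatives of the quadratic source $\partial_iz_+^j\partial_jz_-^i$ — a quadratic form which, crucially, contains no $Q(\partial_3z_\mp^h,\partial_3z_\pm^h)$ piece. Estimating this potential by a Hardy--Littlewood--Sobolev / Young inequality in the horizontal variable, uniformly in $x_3\in(-\delta,\delta)$ and (after extracting $\delta^{-1}$) in $\delta$, and then distributing weights and derivatives between the $z_+$ and $z_-$ factors exactly as for the commutator, one once more arrives at a bound of the form on the right of \eqref{estimate for al h}.

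The main obstacle is precisely this singular prefactor $\delta^{-1}$ produced by $G_\delta$: it cannot be absorbed by the basic energies alone. Tracking it through the anisotropic scaling identities shows that it is compensated exactly by the extra energies $\sum_{k\le N+2}\delta^{-1/2}\big(E_\mp^{(k,0)}(\partial_3z_\mp)\big)^{1/2}$ and extra fluxes $\sum_{k+l\le N+2}\delta^{2(l-1/2)}F_\pm^{(k,l)}(\partial_3z_\pm)$ built into the functional, which is why these quantities appear on the right of \eqref{estimate for al h}. Collecting the damping, commutator and pressure bounds, restoring the $\delta$-powers from the identities after \eqref{scaling}, and absorbing the small self-interaction terms into the left-hand side yields \eqref{estimate for al h}.
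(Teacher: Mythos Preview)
Your overall strategy---commute $\partial_h^{\alpha_h}$ through \eqref{MHD}, invoke Proposition~\ref{linearized prop}, then estimate the damping, commutator and pressure contributions---matches the paper's, and the damping and commutator parts are essentially correct (one imprecision: the $L^\infty$ factor is not always $z_\mp$; when $|\beta|>N$ the roles swap and the high-order energy $E_\mp^{(k,0)}(z_\mp)$ with $k$ up to $N_*$ appears, which is why the right side of \eqref{estimate for al h} sums over $k+l\le N_*$). The genuine gap is in the pressure step.

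You apply the simplified inequality \eqref{estimate for linearized system 0}, which carries $|\rho_\pm|$, and then try to bound $\|\langle u_\mp\rangle^{1+\sigma}\langle u_\pm\rangle^{(1+\sigma)/2}\partial_h^{\alpha_h}\nabla p\|_{L^2_{t,x}}$ directly. At the top order $|\alpha_h|=N_*=2N$ this cannot close. After moving $\partial_h^{\alpha_h}$ onto the source via \eqref{Green transformation}, the kernel $\nabla_xG_\delta\sim\delta^{-1}|x_h-y_h|^{-1}$ acts on $\partial_h^{\alpha_h}(\partial_iz_+^j\partial_jz_-^i)$. In the near field the kernel is in $L^1_h$ and Young applies, but the worst Leibniz piece $\partial_h^{\alpha_h}\partial_iz_+^j\cdot\partial_jz_-^i$ (with $i\in\{1,2\}$) places $2N+1$ horizontal derivatives on a single factor---one more than the $N_*$-order fluxes provide. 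Shifting a derivative back to the kernel makes it $\sim|x_h-y_h|^{-2}$, no longer locally integrable in two dimensions. In the far field, the single-derivative kernel $|x_h-y_h|^{-1}$ cannot absorb the weight loss $|x_h-y_h|^{\frac32(1+\sigma)}$ from Lemma~\ref{weight 2}; at least three derivatives on the kernel are needed there, which forces the near-field side to break. No placement of the derivatives makes both regions work for $\partial_h^{\alpha_h}\nabla p$.

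The paper's remedy is to use the full estimate \eqref{estimate for linearized system} (no absolute value on the source integrand) and, \emph{before} any kernel analysis, integrate by parts using $\div\,\partial_h^{\alpha_h}z_\pm=0$ and $\partial_h^{\alpha_h}z_\pm^3|_{x_3=\pm\delta}=0$:
\[
\int_{\Sigma_\tau}\partial_h^{\alpha_h}\nabla p\cdot\langle u_\mp\rangle^{2(1+\sigma)}\partial_h^{\alpha_h}z_\pm
=-\int_{\Sigma_\tau}\partial_h^{\alpha_h}p\,\partial_1\bigl(\langle u_\mp\rangle^{2(1+\sigma)}\bigr)\,\partial_h^{\alpha_h}z_\pm^1,
\]
which saves exactly one derivative. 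One is reduced to $\|\langle u_\mp\rangle^{1+\sigma}\langle u_\pm\rangle^{(1+\sigma)/2}\partial_h^{\alpha_h}p\|_{L^2_{t,x}}$, and the paper then splits via a cutoff $\theta(|x_h-y_h|)$: in the near field two derivatives of $G_\delta$ sit on the kernel and one further integration by parts (using $\div z_\pm=0$) brings it back to $|x_h-y_h|^{-1}$ with at most $N_*$ derivatives on each factor of the source; in the far field three derivatives on the kernel make it $L^2_h$ after weight transfer (this is where $\sigma<\tfrac13$ enters). The same integration by parts must also be performed on the truncated domain $W_t^{[u_+,\infty]}$ appearing in \eqref{estimate for linearized system}, producing a boundary term on $C_{u_+}^+$ that is controlled after integrating in $u_+$ against $\langle u_+\rangle^{-(1+\sigma)}$. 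Your sketch should be revised to incorporate this divergence-free integration by parts and the near/far decomposition.
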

\begin{proof}
We shall divide the proof into several steps.

{\bf Step 1. Linearized system.}
We shall denote $\al_h=(\al_1,\al_2)\in(\Z_{\geq0})^2$ and
\beno
z_\pm^{(\al_h,0)}\eqdefa\p_h^{\al_h}z_\pm=\p_1^{\al_1}\p_2^{\al_2}z_\pm.
\eeno

Applying $\p_h^{\al_h}$ to both sides of \eqref{MHD}, we have
\begin{equation}\label{Linerize MHD for al h}
\begin{aligned}
&\p_t z_+^{(\al_h,0)}-\p_1z_+^{(\al_h,0)}+z_-\cdot\na z_+^{(\al_h,0)}=\r_+^{(\al_h,0)}-\p_h^{\al_h}\na p,\quad\text{in}\quad\Omega_\delta,\\
&\p_t z_-^{(\al_h,0)}+\p_1z_-^{(\al_h,0)}+z_+\cdot\na z_-^{(\al_h,0)}=\r_-^{(\al_h,0)}-\p_h^{\al_h}\na p,
\end{aligned}
\end{equation}
where
\beq\label{nonlinear for al h}\begin{aligned}
&\r_+^{(\al_h,0)}=-\p_h^{\al_h}(z_-\cdot\na z_+)+z_-\cdot\na\p_h^{\al_h} z_+,\\
&\r_-^{(\al_h,0)}=-\p_h^{\al_h}(z_+\cdot\na z_-)+z_+\cdot\na\p_h^{\al_h} z_-.
\end{aligned}\eeq

We only give the estimates for $z_+^{(\al_h,0)}$. Applying Proposition \ref{linearized prop} to the first equation of \eqref{Linerize MHD for al h}, we obtain that
\beq\label{al h 1}\begin{aligned}
&\sup_{0\leq\tau\leq t}\int_{\Sigma_\tau}\langle u_-\rangle^{2(1+\sigma)}|z_+^{(\al_h,0)}|^2dx+\int_0^t\int_{\Sigma_\tau}\f{\langle u_-\rangle^{2(1+\sigma)}}{\langle u_+\rangle^{1+\sigma}}|z_+^{(\al_h,0)}|^2dxd\tau\\
&\lesssim\int_{\Sigma_0}\langle u_-\rangle^{2(1+\sigma)}|z_+^{(\al_h,0)}|^2dx+\int_0^t\int_{\Sigma_\tau}\langle u_-\rangle^{1+2\sigma}|z_-^1||z_+^{(\al_h,0)}|^2dxd\tau\\
&\qquad+\Bigl|\int_0^t\int_{\Sigma_\tau}(\r_+^{(\al_h,0)}-\p_h^{\al_h}\na p)\cdot\langle u_-\rangle^{2(1+\sigma)}z_+^{(\al_h,0)} dxd\tau\Bigr|\\
&\qquad
+\int_{\R}\f{1}{\langle u_+\rangle^{1+\sigma}}\Bigl|\int\int_{W_t^{[u_+,\infty]}}(\r_+^{(\al_h,0)}-\p_h^{\al_h}\na p)\cdot\langle u_-\rangle^{2(1+\sigma)}z_+^{(\al_h,0)} dxd\tau\Bigr| d{u_+}.
\end{aligned}\eeq

{\bf Step 2. Estimates of the nonlinear terms.} In this step, we estimate the nonlinear terms in the r.h.s of \eqref{al h 1} term by term.

{\it Step 2.1. Estimate of $\int_0^t\int_{\Sigma_\tau}\langle u_-\rangle^{2(1+\sigma)}|z_-^1|\cdot|z_+^{(\al_h,0)}|^2dxd\tau$.} It is bounded by
\beno\begin{aligned}
&\|\langle u_+\rangle^{1+\sigma}z_-^1\|_{L^\infty_tL^\infty_x}\|\f{\langle u_-\rangle^{1+\sigma}}{\langle u_+\rangle^{\f12(1+\sigma)}}z_+^{(\al_h,0)}\|_{L^2_tL^2_x}^2\\
&\stackrel{\text{Sobolev}}{\lesssim}\sum_{k=0}^2\Bigl(\delta^{-\f12}\bigl(E_-^{(k,0)}(z_-)\bigr)^{\f12}
+\delta^{\f12}\bigl(E_-^{(k,1)}(z_-)\bigr)^{\f12}\Bigr)\cdot F_+^{(\al_h,0)}(z_+).
\end{aligned}\eeno
Then we obtain that
\beq\label{est for pressure 16}\begin{aligned}
&\delta^{-1}\int_0^t\int_{\Sigma_\tau}\langle u_-\rangle^{2(1+\sigma)}|z_-^1|\cdot|z_+^{(\al_h,0)}|^2dxd\tau
\lesssim\sum_{k+l\leq 2N}\delta^{l-\f12}\bigl(E_-^{(k,l)}(z_-)\bigr)^{\f12}
\cdot\delta^{-1} F_+^{(\al_h,0)}(z_+).
\end{aligned}\eeq

{\it Step 2.2. Estimate of term $\int_0^t\int_{\Sigma_\tau}\p_h^{\al_h}\na p\cdot\langle u_-\rangle^{2(1+\sigma)}z_+^{(\al_h,0)} dxd\tau$.} Firstly, we deal with the case $|\al_h|\geq 1$.
Since $\div z_+=0, z_+^3|_{x_3=\pm\delta}=0$, using the integration by parts, we have
\beq\label{est for pressure 0}\begin{aligned}
&|\int_0^t\int_{\Sigma_\tau}\p_h^{\al_h}\na p\cdot\langle u_-\rangle^{2(1+\sigma)}z_+^{(\al_h,0)} dxd\tau|
=|-\int_0^t\int_{\Sigma_\tau}\p_h^{\al_h}p\na(\langle u_-\rangle^{2(1+\sigma)})\cdot z_+^{(\al_h,0)} dxd\tau|\\
&\quad\lesssim\bigl(\int_0^t\int_{\Sigma_\tau}\langle u_-\rangle^{2(1+\sigma)}\langle u_+\rangle^{1+\sigma}|\p_h^{\al_h} p|^2dxd\tau\bigr)^{\f{1}{2}}\cdot\bigl(\int_0^t\int_{\Sigma_\tau}\f{\langle u_-\rangle^{2(1+\sigma)}}{\langle u_+\rangle^{1+\sigma}}|z_+^{(\al_h,0)}|^2dxd\tau\bigr)^{\f{1}{2}}.
\end{aligned}\eeq
Then we only need to control the term
\beno
\bigl(\int_0^t\int_{\Sigma_\tau}\langle u_-\rangle^{2(1+\sigma)}\langle u_+\rangle^{1+\sigma}|\p_h^{\al_h}p|^2dxd\tau\bigr)^{\f{1}{2}}
=\|\langle u_-\rangle^{1+\sigma}\langle u_+\rangle^{\f{1}{2}(1+\sigma)}\p_h^{\al_h}p\|_{L^2_tL^2_x}.
\eeno

Thanks to \eqref{expression of pressure}, using the facts that $\div\,z_\pm=0$ and $z_+^3|_{x_3=\pm\delta}=0$, $z_-^3|_{x_3=\pm\delta}=0$, and integrating by parts, we obtain by \eqref{Green transformation}
that
\beno
\p_h^{\al_h}p(\tau,x)=(-1)^{|\al_h|}\int_{\Omega_\delta}\p_i\p_j\p_h^{\al_h}G_\delta(x,y)(z_+^jz_-^i)(\tau,y)dy
=\int_{\Omega_\delta}\p_i\p_jG_\delta(x,y)\p_h^{\al_h}(z_+^jz_-^i)(\tau,y)dy.
\eeno
We choose a smooth cut-off function $\theta(r)$ so that
\beno
\theta(r)=\left\{\begin{aligned}
&1,\quad\text{for}\quad|r|\leq1,\\
&0,\quad\text{for}\quad|r|\geq2.
\end{aligned}\right.\eeno
Then we split $\p_h^{\al_h}p$ into two parts
\beno\begin{aligned}
\p_h^{\al_h}p(\tau,x)=&\underbrace{\int_{\Omega_\delta}\p_i\p_jG_\delta(x,y)\theta(|x_h-y_h|)\p_h^{\al_h}(z_+^jz_-^i)(\tau,y)dy}_{A_1(\tau,x)}\\
&\quad
+\underbrace{\int_{\Omega_\delta}\p_i\p_jG_\delta(x,y)(1-\theta(|x_h-y_h|))\p_h^{\al_h}(z_+^jz_-^i)(\tau,y)dy}_{A_2(\tau,x)}.
\end{aligned}\eeno

According to the decomposition, we have
\beno\begin{aligned}
&\|\langle u_-\rangle^{1+\sigma}\langle u_+\rangle^{\f{1}{2}(1+\sigma)}\p_h^{\al_h}p\|_{L^2_tL^2_x}\\
&\lesssim\|\langle u_-\rangle^{1+\sigma}\langle u_+\rangle^{\f{1}{2}(1+\sigma)}A_1(\tau,x)\|_{L^2_tL^2_x}+\|\langle u_-\rangle^{1+\sigma}\langle u_+\rangle^{\f{1}{2}(1+\sigma)}A_2(\tau,x)\|_{L^2_tL^2_x}.
\end{aligned}\eeno

{\it (i) Estimate of $A_1$.} Firstly, we have
\beno
A_1(\tau,x)=\sum_{\beta_h\leq\al_h}C_{\al_h,\beta_h}\underbrace{\int_{\Omega_\delta}\p_i\p_jG_\delta(x,y)\theta(|x_h-y_h|)
(\p_h^{\al_h-\beta_h}z_+^j\p_h^{\beta_h}z_-^i)(\tau,y)dy}_{A_{1,\al_h,\beta_h}(\tau,x)}.
\eeno

If {\bf $|\beta_h|\leq N$}, using the facts that $\div\,z_+=0$ and $z_+^3|_{x_3=\pm\delta}=0$, and integrating by parts, we have
\beno\begin{aligned}
A_{1,\al_h,\beta_h}(\tau,x)=&\underbrace{-\int_{\Omega_\delta}\p_iG_\delta(x,y)\theta(|x_h-y_h|)
(\p_h^{\al_h-\beta_h}z_+^j\p_h^{\beta_h}\p_jz_-^i)(\tau,y)dy}_{A_{1,\al_h,\beta_h}^1(\tau,x)}\\
&\quad\underbrace{-\int_{\Omega_\delta}\p_iG_\delta(x,y)\p_j\theta(|x_h-y_h|)
(\p_h^{\al_h-\beta_h}z_+^j\p_h^{\beta_h}z_-^i)(\tau,y)dy}_{A_{1,\al_h,\beta_h}^2(\tau,x)}.
\end{aligned}\eeno

For $A_{1,\al_h,\beta_h}^1$, using Corollary \ref{derivative of Green cor}, we have
\beq\label{p3}
|A_{1,\al_h,\beta_h}^1(\tau,x)|\lesssim\int_{-\delta}^\delta\int_{|x_h-y_h|\leq 2}\f{1}{\delta}\f{1}{|x_h-y_h|}
|(\p_h^{\al_h-\beta_h}z_+^j\p_h^{\beta_h}\p_jz_-^i)(\tau,y)|dy_hdy_3.
\eeq

Before going further, we need the following auxiliary lemma concerning the weights.

\begin{lemma}\label{weight 1}
For $|x_h-y_h|\leq 2$, we have
\beq\label{x-y<2}
\langle u_\pm(t,x_1)\rangle\leq\sqrt 5\langle u_\pm(t,y_1)\rangle.
\eeq
\end{lemma}
\begin{proof}
Since $u_\pm(t,x_1)=x_1\mp t$, we have
\beno
|u_\pm(t,x_1)|\leq |u_\pm(t,y_1)|+|x_1-y_1|\leq |u_\pm(t,y_1)|+2.
\eeno
Then we obtain
\beno
\langle u_\pm(t,x_1)\rangle\leq\bigl(1+2|u_\pm(t,y_1)|^2+4\bigr)^{\f12}\leq\sqrt5\langle u_\pm(t,y_1)\rangle.
\eeno
The lemma is proved.
\end{proof}

Thanks to \eqref{p3} and \eqref{x-y<2}, we have
\beno\begin{aligned}
&\langle u_-(\tau,x_1)\rangle^{1+\sigma}\langle u_+(\tau,x_1)\rangle^{\f12(1+\sigma)}|A_{1,\al_h,\beta_h}^1(\tau,x)|\\
&\lesssim \int_{-\delta}^\delta\int_{|x_h-y_h|\leq 2}\f{1}{\delta}\f{1}{|x_h-y_h|}
\Bigl(\langle u_-\rangle^{1+\sigma}\langle u_+\rangle^{\f12(1+\sigma)}|\p_h^{\al_h-\beta_h}z_+^j\p_h^{\beta_h}\p_jz_-^i|\Bigr)(\tau,y)dy_hdy_3.
\end{aligned}\eeno
Using Young inequality for the horizontal variables $x_h$ and using also H\"older inequality, we have
\beno\begin{aligned}
&\|\langle u_-\rangle^{1+\sigma}\langle u_+\rangle^{\f12(1+\sigma)}A_{1,\al_h,\beta_h}^1\|_{L_t^2L^2_x}\\
&\lesssim\delta^{-\f12}\|\f{1}{|x_h|}\|_{L_h^1(|x_h|\leq 2)}\|\f{\langle u_-\rangle^{1+\sigma}}{\langle u_+\rangle^{\f12(1+\sigma)}}|\p_h^{\al_h-\beta_h}z_+^j|\cdot\langle u_+\rangle^{1+\sigma}|\p_h^{\beta_h}\p_jz_-^i|\|_{L^2_tL^2_hL^1_v}\\
&\lesssim\delta^{-\f12}\|\langle u_+\rangle^{1+\sigma}\p_h^{\beta_h}\p_jz_-^i\|_{L^\infty_tL^\infty_hL^2_v}\|\f{\langle u_-\rangle^{1+\sigma}}{\langle u_+\rangle^{\f12(1+\sigma)}}\p_h^{\al_h-\beta_h}z_+^j\|_{L^2_tL^2_x}\\
&\lesssim\delta^{-\f12}\Bigl(\|\langle u_+\rangle^{1+\sigma}\p_h^{\beta_h}\na_hz_-\|_{L^\infty_tH^2_hL^2_v}\|\f{\langle u_-\rangle^{1+\sigma}}{\langle u_+\rangle^{\f12(1+\sigma)}}\p_h^{\al_h-\beta_h}z_+^h\|_{L^2_tL^2_x}\\
&\qquad+
\|\langle u_+\rangle^{1+\sigma}\p_h^{\beta_h}\p_3z_-\|_{L^\infty_tH^2_hL^2_v}\|\f{\langle u_-\rangle^{1+\sigma}}{\langle u_+\rangle^{\f12(1+\sigma)}}\p_h^{\al_h-\beta_h}z_+^3\|_{L^2_tL^2_x}\Bigr).
\end{aligned}\eeno
Then we have
\beq\label{p1}\begin{aligned}
&\|\langle u_-\rangle^{1+\sigma}\langle u_+\rangle^{\f12(1+\sigma)}A_{1,\al_h,\beta_h}^1\|_{L_t^2L^2_x}
\lesssim\delta^{-\f12}\Bigl(\sum_{k\leq 2}\bigl(E_-^{(|\beta_h|+1+k,0)}(z_-)\bigr)^{\f12}\bigl(F_+^{(\al_h-\beta_h,0)}(z_+^h)\bigr)^{\f12}\\
&\qquad
+\sum_{k\leq2}\bigl(E_-^{(|\beta_h|+k,1)}(z_-)\bigr)^{\f12}\bigl(F_+^{(\al_h-\beta_h,0)}(z_+^3)\bigr)^{\f12}\Bigr).
\end{aligned}\eeq

For $A_{1,\al_h,\beta_h}^2$, using Corollary \ref{derivative of Green cor}, we have
\beno
|A_{1,\al_h,\beta_h}^2(\tau,x)|\lesssim \int_{-\delta}^\delta\int_{1\leq|x_h-y_h|\leq 2}\f{1}{\delta}\f{1}{|x_h-y_h|}
\Bigl(|\p_h^{\al_h-\beta_h}z_+^j\p_h^{\beta_h}z_-^i|\Bigr)(\tau,y)dy_hdy_3.
\eeno
Similar argument to $A_{1,\al_h,\beta_h}^1$ can be applied  and then we have
\beq\label{p2}
\|\langle u_-\rangle^{1+\sigma}\langle u_+\rangle^{\f12(1+\sigma)}A_{1,\al_h,\beta_h}^2\|_{L_t^2L^2_x}
\lesssim\delta^{-\f12}\sum_{k\leq2}\bigl(E_-^{(|\beta_h|+k,0)}(z_-)\bigr)^{\f12}\bigl(F_+^{(\al_h-\beta_h,0)}(z_+)\bigr)^{\f12}.
\eeq

Thanks to \eqref{p1} and \eqref{p2},  for $|\beta_h|\leq N$, we have
\beq\label{est for pressure 1}\begin{aligned}
&\delta^{-\f12}\|\langle u_-\rangle^{1+\sigma}\langle u_+\rangle^{\f12(1+\sigma)}A_{1,\al_h,\beta_h}\|_{L_t^2L^2_x}\\
%\lesssim\sum_{k=0}^2 \delta^{-\f12}\bigl(E_-^{(|\beta_h|+1+k,0)}(z_-)\bigr)^{\f12}\cdot \delta^{-\f12}\bigl(F_+^{(\al_h-\beta_h,0)}(z_+^h)\bigr)^{\f12}\\
%&\qquad
%+\sum_{k=0}^2\delta^{-\f12}\bigl(E_-^{(|\beta_h|+k,1)}(z_-)\bigr)^{\f12}\cdot \delta^{-\f12}\bigl(F_+^{(\al_h-\beta_h,0)}(z_+^3)\bigr)^{\f12}\\
%&\qquad
%+\sum_{k=0}^2\delta^{-\f12}\bigl(E_-^{(|\beta_h|+k,0)}(z_-)\bigr)^{\f12}\cdot\delta^{-\f12}\bigl(F_+^{(\al_h-\beta_h,0)}(z_+)\bigr)^{\f12}.
&\lesssim\Bigl(\sum_{k\leq N+3}\delta^{-\f12}\bigl(E_-^{(k,0)}(z_-)\bigr)^{\f12}+\sum_{k\leq N+2}\delta^{-\f12}\bigl(E_-^{(k,0)}(\p_3z_-)\bigr)^{\f12}\Bigr)\cdot\sum_{k\leq|\al_h|}\delta^{-\f12}\bigl(F_+^{(k,0)}(z_+)\bigr)^{\f12}.
\end{aligned}\eeq

\begin{remark}
Notice that  term $\sum_{k\leq N+2}\delta^{-\f12}\bigl(E_-^{(k,0)}(\p_3z_-)\bigr)^{\f12}\cdot\sum_{k\leq|\al_h|}\delta^{-\f12}\bigl(F_+^{(k,0)}(z_+)\bigr)^{\f12}$ on the r.h.s of \eqref{est for pressure 1} comes from the second term on the r.h.s of \eqref{p1}. Indeed, for
 $|\al_h|=N_*=2N, |\beta_h|=0$, we have no idea to give the upper bound for the term $\delta^{-\f32}\bigl(F_+^{(\al_h-\beta_h,0)}(z_+^3)\bigr)^{\f12}$ in \eqref{p1}.
\end{remark}

Similarly, for $|\beta_h|>N$, using the facts that $\div\,z_-=0$ and $z_-^3|_{x_3=\pm\delta}=0$, and integrating by parts, we have
\beno\begin{aligned}
A_{1,\al_h,\beta_h}=&-\int_{\Omega_\delta}\p_jG_\delta(x,y)\theta(|x_h-y_h|)
(\p_h^{\al_h-\beta_h}\p_iz_+^j\p_h^{\beta_h}z_-^i)(\tau,y)dy\\
&\quad-\int_{\Omega_\delta}\p_jG_\delta(x,y)\p_i\theta(|x_h-y_h|)
(\p_h^{\al_h-\beta_h}z_+^j\p_h^{\beta_h}z_-^i)(\tau,y)dy.
\end{aligned}\eeno
Then  for $|\beta_h|>N$, we have
\beq\label{est for pressure 2}\begin{aligned}
&\delta^{-\f12}\|\langle u_-\rangle^{1+\sigma}\langle u_+\rangle^{\f12(1+\sigma)}A_{1,\al_h,\beta_h}\|_{L_t^2L^2_x}\\
%\lesssim\delta^{-\f12}\bigl(E_-^{(\beta_h,0)}(z_-^h)\bigr)^{\f12}\cdot\sum_{k=0}^2\delta^{-\f12}\bigl(F_+^{(|\al_h-\beta_h|+1+k,0)}(z_+)\bigr)^{\f12}\\
%&\qquad
%+\delta^{-\f12}\bigl(E_-^{(\beta_h,0)}(z_-^3)\bigr)^{\f12}\cdot\sum_{k=0}^2\delta^{-\f12}\bigl(F_+^{(|\al_h-\beta_h|,1)}(z_+)\bigr)^{\f12}\\
%&\qquad
%+\delta^{-\f12}\bigl(E_-^{(\beta_h,0)}(z_-)\bigr)^{\f12}\cdot\sum_{k=0}^2\delta^{-\f12}\bigl(F_+^{(|\al_h-\beta_h|+k,0)}(z_+)\bigr)^{\f12}.
&\lesssim\sum_{k\leq|\al_h|}\delta^{-\f12}\bigl(E_-^{(k,0)}(z_-)\bigr)^{\f12}\cdot\Bigl(\sum_{k\leq N+2}\delta^{-\f12}\bigl(F_+^{(k,0)}(z_+)\bigr)^{\f12}+\sum_{k\leq N+1}\delta^{-\f12}\bigl(F_+^{(k,0)}(\p_3z_+)\bigr)^{\f12}\Bigr).
\end{aligned}\eeq
Thanks to \eqref{est for pressure 1} and \eqref{est for pressure 2}, notice that $N\geq 5$, we have
\beq\label{est for pressure 3}\begin{aligned}
&\delta^{-\f12}\|\langle u_-\rangle^{1+\sigma}\langle u_+\rangle^{\f12(1+\sigma)}A_1\|_{L_t^2L^2_x}
\lesssim\Bigl(\sum_{k\leq2N}\delta^{-\f12}\bigl(E_-^{(k,0)}(z_-)\bigr)^{\f12}+\sum_{k\leq N+2}\delta^{-\f12}\bigl(E_-^{(k,0)}(\p_3z_-)\bigr)^{\f12}\Bigr)\\
&\qquad\times\Bigl(\sum_{k\leq2N}\delta^{-\f12}\bigl(F_+^{(k,0)}(z_+)\bigr)^{\f12}+\sum_{k\leq N+2}\delta^{-\f12}\bigl(F_+^{(k,0)}(\p_3z_+)\bigr)^{\f12}\Bigr).
\end{aligned}\eeq

\medskip

{\it (ii) Estimate of $A_2$.} For $A_2$, integrating by parts,  for $\gamma_h\leq\al_h$ and $|\gamma_h|=1$,  we have
\beno\begin{aligned}
A_2(\tau,x)=&-\underbrace{\int_{\Omega_\delta}\p_h^{\gamma_h}\p_i\p_jG_\delta(x,y)(1-\theta(|x_h-y_h|))
\p_h^{\al_h-\gamma_h}(z_+^jz_-^i)(\tau,y)dy}_{A_2^1(\tau,x)}\\
&+\underbrace{\int_{\Omega_\delta}\p_i\p_jG_\delta(x,y)\p_h^{\gamma_h}\theta(|x_h-y_h|)
\p_h^{\al_h-\gamma_h}(z_+^jz_-^i)(\tau,y)dy}_{A_2^2(\tau,x)}.
\end{aligned}\eeno

For $A_2^2$, the integration happens for $y\in\{|x_h-y_h|\leq 2\}\times(-\delta,\delta)$. Similar derivation as that for $A_1$, replacing $\theta(|x_h-y_h|)$ and $\al_h$ in $A_1$ by $\p_h^{\gamma_h}\theta(|x_h-y_h|)$ and $\al_h-\gamma_h$,  gives rise to
\beq\label{est for pressure 4}\begin{aligned}
&\delta^{-\f12}\|\langle u_-\rangle^{1+\sigma}\langle u_+\rangle^{\f12(1+\sigma)}A_2^2\|_{L_t^2L^2_x}
\lesssim\Bigl(\sum_{k\leq2N}\delta^{-\f12}\bigl(E_-^{(k,0)}(z_-)\bigr)^{\f12}+\sum_{k\leq N+2}\delta^{-\f12}\bigl(E_-^{(k,0)}(\p_3z_-)\bigr)^{\f12}\Bigr)\\
&\qquad\times\Bigl(\sum_{k\leq2N}\delta^{-\f12}\bigl(F_+^{(k,0)}(z_+)\bigr)^{\f12}+\sum_{k\leq N+2}\delta^{-\f12}\bigl(F_+^{(k,0)}(\p_3z_+)\bigr)^{\f12}\Bigr).
\end{aligned}\eeq

For $A_2^1$, using Corollary \ref{derivative of Green cor}, we have
\beq\label{p4}
|A_2^1(\tau,x)|\lesssim\int_{-\delta}^\delta\int_{|x_h-y_h|\geq 1}\f{1}{\delta}\f{1}{|x_h-y_h|^3}|\p_h^{\al_h-\gamma_h}(z_+^jz_-^i)(\tau,y)|dy_hdy_3.
\eeq

To estimate $A_2^1$, we need the following lemma.
\begin{lemma}\label{weight 2}
For $|x_h-y_h|\geq 1$, we have
\beq\label{x-y>1}
\langle u_\pm(t,x_1)\rangle\leq\sqrt 3|x_h-y_h|\langle u_\pm(t,y_1)\rangle.
\eeq
\end{lemma}
\begin{proof}
Since $u_\pm(t,x_1)=x_1\mp t$, we have
\beno
|u_\pm(t,x_1)|\leq |u_\pm(t,y_1)|+|x_1-y_1|.
\eeno
Then we have for $|x_h-y_h|\geq 1$
\beno
\langle u_\pm(t,x_1)\rangle\leq\bigl(1+2|u_\pm(t,y_1)|^2+2|x_1-y_1|^2\bigr)^{\f12}\leq\sqrt3 |x_h-y_h|\bigl(1+|u_\pm(t,y_1)|^2\bigr)^{\f12}.
\eeno
The lemma is proved.
\end{proof}

Thanks to \eqref{p4} and \eqref{x-y>1}, we have
\beno\begin{aligned}
&\langle u_-(\tau,x_1)\rangle^{1+\sigma}\langle u_+(\tau,x_1)\rangle^{\f12(1+\sigma)}|A_2^1(\tau,x)|\\
&\lesssim \int_{-\delta}^\delta\int_{|x_h-y_h|\geq 1}\f{1}{\delta}\f{1}{|x_h-y_h|^{3-\f{3}{2}(1+\sigma)}}
\Bigl(\langle u_-\rangle^{1+\sigma}\langle u_+\rangle^{\f12(1+\sigma)}|\p_h^{\al_h-\gamma_h}(z_+^jz_-^i)|\Bigr)(\tau,y)dy_hdy_3.
\end{aligned}\eeno
Using Young inequality and H\"older inequality, we have
\beno\begin{aligned}
&\|\langle u_-\rangle^{1+\sigma}\langle u_+\rangle^{\f12(1+\sigma)}A_2^1\|_{L^2_tL^2_x}\\
&\lesssim\delta^{-\f12}\|\f{1}{|x_h|^{\f{3}{2}(1-\sigma)}}\|_{L^2_h(|x_h|\geq 1)}\|\langle u_-\rangle^{1+\sigma}\langle u_+\rangle^{\f12(1+\sigma)}\p_h^{\al_h-\gamma_h}(z_+^jz_-^i)\|_{L^2_tL^1_x}.
\end{aligned}\eeno

On one hand, since $\sigma\in(0,\f13)$, $\|\f{1}{|x_h|^{\f{3}{2}(1-\sigma)}}\|_{L^2_h(|x_h|\geq 1)}<\infty$. On the other hand, there holds
\beno\begin{aligned}
&\|\langle u_-\rangle^{1+\sigma}\langle u_+\rangle^{\f12(1+\sigma)}\p_h^{\al_h-\gamma_h}(z_+^jz_-^i)\|_{L^2_tL^1_x}
\lesssim\sum_{\beta_h\leq\al_h-\gamma_h}\|\langle u_-\rangle^{1+\sigma}\langle u_+\rangle^{\f12(1+\sigma)}\p_h^{\al_h-\gamma_h-\beta_h}z_+^j\p_h^{\beta_h}z_-^i\|_{L^2_tL^1_x}\\
&\lesssim\sum_{\beta_h\leq\al_h-\gamma_h}\|\langle u_+\rangle^{1+\sigma}\p_h^{\beta_h}z_-\|_{L^\infty_tL^2_x}\cdot
\|\f{\langle u_-\rangle^{1+\sigma}}{\langle u_+\rangle^{\f12(1+\sigma)}}\p_h^{\al_h-\gamma_h-\beta_h}z_+\|_{L^2_tL^2_x}\\
&\lesssim\sum_{k\leq|\al_h|-1}\bigl(E_-^{(k,0)}(z_-)\bigr)^{\f12}\cdot\sum_{k\leq |\al_h|-1}\bigl(F_+^{(k,0)}(z_+)\bigr)^{\f12}.
\end{aligned}\eeno
Then we have
\beq\label{est for pressure 5}
\delta^{-\f12}\|\langle u_-\rangle^{1+\sigma}\langle u_+\rangle^{\f12(1+\sigma)}A_2^1\|_{L^2_tL^2_x}\lesssim\sum_{k\leq2N}\delta^{-\f12}\bigl(E_-^{(k,0)}(z_-)\bigr)^{\f12}\cdot\sum_{k\leq 2N}\delta^{-\f12}\bigl(F_+^{(k,0)}(z_+)\bigr)^{\f12}.
\eeq

Due to \eqref{est for pressure 4} and \eqref{est for pressure 5}, we have
\beq\label{est for pressure 6}\begin{aligned}
&\delta^{-\f12}\|\langle u_-\rangle^{1+\sigma}\langle u_+\rangle^{\f12(1+\sigma)}A_2\|_{L_t^2L^2_x}
\lesssim\Bigl(\sum_{k\leq2N}\delta^{-\f12}\bigl(E_-^{(k,0)}(z_-)\bigr)^{\f12}+\sum_{k\leq N+2}\delta^{-\f12}\bigl(E_-^{(k,0)}(\p_3z_-)\bigr)^{\f12}\Bigr)\\
&\qquad\times\Bigl(\sum_{k\leq2N}\delta^{-\f12}\bigl(F_+^{(k,0)}(z_+)\bigr)^{\f12}+\sum_{k\leq N+2}\delta^{-\f12}\bigl(F_+^{(k,0)}(\p_3z_+)\bigr)^{\f12}\Bigr).
\end{aligned}\eeq

Thanks to \eqref{est for pressure 3} and \eqref{est for pressure 6}, we obtain that
$\delta^{-\f12}\|\langle u_-\rangle^{1+\sigma}\langle u_+\rangle^{\f{1}{2}(1+\sigma)}\p_h^{\al_h}p\|_{L^2_tL^2_x}$
is bounded by the r.h.s of  \eqref{est for pressure 6}. Then due to \eqref{est for pressure 0},  for $|\al_h|\geq 1$, we have
\beq\label{est for pressure 7}\begin{aligned}
&\delta^{-1}|\int_0^t\int_{\Sigma_\tau}\p_h^{\al_h}\na p\cdot\langle u_-\rangle^{2(1+\sigma)}z_+^{(\al_h,0)} dxd\tau|\\
&
\lesssim\Bigl(\sum_{k\leq2N}\delta^{-\f12}\bigl(E_-^{(k,0)}(z_-)\bigr)^{\f12}+\sum_{k\leq N+2}\delta^{-\f12}\bigl(E_-^{(k,0)}(\p_3z_-)\bigr)^{\f12}\Bigr)\\
&\quad\times\Bigl(\sum_{k\leq2N}\delta^{-1}F_+^{(k,0)}(z_+)+\sum_{k\leq N+2}\delta^{-1}F_+^{(k,0)}(\p_3z_+)\Bigr).
\end{aligned}\eeq

Actually, \eqref{est for pressure 7} holds also for $|\al_h|=0$. Indeed,
for case $|\al_h|=0$, we have
\beq\label{est for pressure 8}\begin{aligned}
&|\int_0^t\int_{\Sigma_\tau}\na p\cdot\langle u_-\rangle^{2(1+\sigma)}z_+ dxd\tau|\\
&\lesssim\|\langle u_-\rangle^{1+\sigma}\langle u_+\rangle^{\f12(1+\sigma)}\na p\|_{L^2_tL^2_x}\cdot\bigl(\int_0^t\int_{\Sigma_\tau}\f{\langle u_-\rangle^{2(1+\sigma)}}{\langle u_+\rangle^{1+\sigma}}|z_+|^2dxd\tau\bigr)^{\f{1}{2}}.
\end{aligned}\eeq
By similar derivation as that for $\delta^{-\f12}\|\langle u_-\rangle^{1+\sigma}\langle u_+\rangle^{\f{1}{2}(1+\sigma)}\p_h^{\al_h}p\|_{L^2_tL^2_x}$, we could bound $\delta^{-\f12}\|\langle u_-\rangle^{1+\sigma}\langle u_+\rangle^{\f12(1+\sigma)}\na p\|_{L^2_tL^2_x}$ by the r.h.s of \eqref{est for pressure 6}. Then $\delta^{-1}|\int_0^t\int_{\Sigma_\tau}\na p\cdot\langle u_-\rangle^{2(1+\sigma)}z_+ dxd\tau|$ is bounded by the r.h.s of \eqref{est for pressure 7}.

{\it Step 2.3. Estimate of term $\int_{\R}\f{1}{\langle u_+\rangle^{1+\sigma}}\Bigl|\int\int_{W_t^{[u_+,\infty]}}\p_h^{\al_h}\na p\cdot\langle u_-\rangle^{2(1+\sigma)}z_+^{(\al_h,0)} dxd\tau\Bigr| d{u_+}$.}
Using the facts that $\div\,z_+=0$ and $z_+^3|_{x_3=\pm\delta}=0$,  and integrating by parts, for $|\al_h|\geq 1$, we have
\beno\begin{aligned}
&\quad\int\int_{W_t^{[u_+,\infty]}}\p_h^{\al_h}\na p\cdot\langle u_-\rangle^{2(1+\sigma)}z_+^{(\al_h,0)} dxd\tau\\
&=-\int\int_{W_t^{[u_+,\infty]}}\p_h^{\al_h}p\cdot\p_1\langle u_-\rangle^{2(1+\sigma)}z_+^{1(\al_h,0)} dxd\tau
-\int_{C_{u_+}^+}\p_h^{\al_h}p\cdot\langle u_-\rangle^{2(1+\sigma)}z_+^{1(\al_h,0)}d\sigma_+.
\end{aligned}\eeno
Then we have
\beno\begin{aligned}
&\quad\int_{\R}\f{1}{\langle u_+\rangle^{1+\sigma}}\Bigl|\int\int_{W_t^{[u_+,\infty]}}\p_h^{\al_h}\na p\cdot\langle u_-\rangle^{2(1+\sigma)}z_+^{(\al_h,0)} dxd\tau\Bigr|d{u_+}\\
&\lesssim\int_{\R}\f{1}{\langle u_+\rangle^{1+\sigma}}d{u_+}\int\int_{W_t}|\p_h^{\al_h}p|\cdot\langle u_-\rangle^{2(1+\sigma)}|z_+^{(\al_h,0)}| dxd\tau\\
&\qquad
+\int_{\R}\f{1}{\langle u_+\rangle^{1+\sigma}}\int_{C_{u_+}^+}\p_h^{\al_h}p\cdot\langle u_-\rangle^{2(1+\sigma)}z_+^{1(\al_h,0)}d\sigma_+d{u_+}.
\end{aligned}\eeno
Thanks to the facts
$
d\sigma_+=\sqrt 2dx_2dx_3d\tau
$
and   \eqref{coordinate transform}, we have
\beno
\int_{\R}\f{1}{\langle u_+\rangle^{1+\sigma}}\int_{C_{u_+}^+}\p_h^{\al_h}p\cdot\langle u_-\rangle^{2(1+\sigma)}z_+^{1(\al_h,0)}d\sigma_+d{u_+}
=\sqrt2\int_0^t\int_{\Sigma_\tau}\f{\langle u_-\rangle^{2(1+\sigma)}}{\langle u_+\rangle^{1+\sigma}}\p_h^{\al_h}p\cdot z_+^{1(\al_h,0)}dxd\tau.
\eeno
Thus for $|\al_h|\geq1$, we have
\beno\begin{aligned}
&\quad\int_{\R}\f{1}{\langle u_+\rangle^{1+\sigma}}\Bigl|\int\int_{W_t^{[u_+,\infty]}}\p_h^{\al_h}\na p\cdot\langle u_-\rangle^{2(1+\sigma)}z_+^{(\al_h,0)} dxd\tau\Bigr|d{u_+}\\
&\lesssim\int_0^t\int_{\Sigma_\tau}|\p_h^{\al_h}p|\cdot\langle u_-\rangle^{2(1+\sigma)}|z_+^{(\al_h,0)}| dxd\tau.
\end{aligned}\eeno
And for $|\al_h|=0$, we have
\beno\begin{aligned}
&\quad\int_{\R}\f{1}{\langle u_+\rangle^{1+\sigma}}\Bigl|\int\int_{W_t^{[u_+,\infty]}}\na p\cdot\langle u_-\rangle^{2(1+\sigma)}z_+ dxd\tau\Bigr|d{u_+}\\
&\lesssim\int_0^t\int_{\Sigma_\tau}|\na p|\cdot\langle u_-\rangle^{2(1+\sigma)}|z_+| dxd\tau.
\end{aligned}\eeno

Therefore, the nonlinear term $\delta^{-1}\int_{\R}\f{1}{\langle u_+\rangle^{1+\sigma}}\Bigl|\int\int_{W_t^{[u_+,\infty]}}\p_h^{\al_h}\na p\cdot\langle u_-\rangle^{2(1+\sigma)}z_+^{(\al_h,0)} dxd\tau\Bigr| d{u_+}$ has the same estimates as $\delta^{-1}\Bigl|\int_0^t\int_{\Sigma_\tau}\p_h^{\al_h}\na p\cdot\langle u_-\rangle^{2(1+\sigma)}z_+^{(\al_h,0)} dxd\tau\Bigr|$ which is bounded by the r.h.s of \eqref{est for pressure 7}.

{\it Step 2.4. Estimate of term $\int_0^t\int_{\Sigma_\tau}\r_+^{(\al_h,0)}\cdot\langle u_-\rangle^{2(1+\sigma)}z_+^{(\al_h,0)} dxd\tau$.} Firstly, using H\"older inequality, we have
\beq\label{est for pressure 9}\begin{aligned}
&\Bigl|\int_0^t\int_{\Sigma_\tau}\r_+^{(\al_h,0)}\cdot\langle u_-\rangle^{2(1+\sigma)}z_+^{(\al_h,0)} dxd\tau\Bigr|\\
&\lesssim\|\langle u_-\rangle^{1+\sigma}\langle u_+\rangle^{\f12(1+\sigma)}\r_+^{(\al_h,0)}\|_{L^2_tL^2_x}
\cdot\|\f{\langle u_-\rangle^{1+\sigma}}{\langle u_+\rangle^{\f12(1+\sigma)}}z_+^{(\al_h,0)}\|_{L^2_tL^2_x}.
\end{aligned}\eeq
We only need to bound the term $\|\langle u_-\rangle^{1+\sigma}\langle u_+\rangle^{\f12(1+\sigma)}\r_+^{(\al_h,0)}\|_{L^2_tL^2_x}$.

The expression of $\r_+^{(\al_h,0)}$ shows that
\beno
|\r_+^{(\al_h,0)}|\lesssim\sum_{\beta_h<\al_h}|\p_h^{\al_h-\beta_h}z_-\cdot\na\p_h^{\beta_h}z_+|
\lesssim\sum_{\beta_h<\al_h}\bigl(\underbrace{|\p_h^{\al_h-\beta_h}z_-^h|\cdot|\na_h^{|\beta_h|+1}z_+|}_{I_1}
+\underbrace{|\p_h^{\al_h-\beta_h}z_-^3|\cdot|\na_h^{|\beta_h|}\p_3z_+|}_{I_2}\bigr).
\eeno
For $I_1$, if $|\beta_h|<N$, by H\"older inequality, we have
\beno
\|\langle u_-\rangle^{1+\sigma}\langle u_+\rangle^{\f12(1+\sigma)}I_1\|_{L^2_tL^2_x}\lesssim\|\langle u_+\rangle^{1+\sigma}\p_h^{\al_h-\beta_h}z_-^h\|_{L^\infty_tL^2_x}\|\f{\langle u_-\rangle^{1+\sigma}}{\langle u_+\rangle^{\f12(1+\sigma)}}\na_h^{|\beta_h|+1}z_+\|_{L^2_tL^\infty_x}.
\eeno
Thanks to the Sobolev inequality (see  \eqref{s2}), we have
\beno\begin{aligned}
&\|\f{\langle u_-\rangle^{1+\sigma}}{\langle u_+\rangle^{\f12(1+\sigma)}}\na_h^{|\beta_h|+1}z_+\|_{L^2_tL^\infty_x}\lesssim\sum_{k+l\leq 2}\delta^{l-\f12}\|\f{\langle u_-\rangle^{1+\sigma}}{\langle u_+\rangle^{\f12(1+\sigma)}}\na_h^{|\beta_h|+1+k}\p_3^lz_+\|_{L^2_tL^2_x}.
\end{aligned}\eeno
Then  for $|\beta_h|<N$, we obtain that
\beq\label{est for pressure 10}\begin{aligned}
&\delta^{-\f12}\|\langle u_-\rangle^{1+\sigma}\langle u_+\rangle^{\f12(1+\sigma)}I_1\|_{L^2_tL^2_x}
\lesssim\delta^{-\f12}\bigl(E_-^{(|\al_h-\beta_h|,0)}(z_-)\bigr)^{\f12}\cdot
\sum_{k+l\leq N+2}\delta^{l-\f12}\bigl(F_+^{(k,l)}(z_+)\bigr)^{\f12}.
\end{aligned}\eeq
Similarly,  for $|\beta_h|<N$, we have
\beq\label{est for pressure 11}\begin{aligned}
&\delta^{-\f12}\|\langle u_-\rangle^{1+\sigma}\langle u_+\rangle^{\f12(1+\sigma)}I_2\|_{L^2_tL^2_x}
\lesssim\delta^{-\f12}\bigl(E_-^{(|\al_h-\beta_h|,0)}(z_-^3)\bigr)^{\f12}\cdot
\sum_{k+l\leq N+1}\delta^{l-\f12}\bigl(F_+^{(k,l)}(\p_3z_+)\bigr)^{\f12}.
\end{aligned}\eeq

If $N\leq|\beta_h|\leq |\al_h|-1\leq 2N-1$, by the similar argument as  that for \eqref{est for pressure 10}, it holds that \beno
\|\langle u_-\rangle^{1+\sigma}\langle u_+\rangle^{\f12(1+\sigma)}I_1\|_{L^2_tL^2_x}\lesssim\|\langle u_+\rangle^{1+\sigma}\p_h^{\al_h-\beta_h}z_-^h\|_{L^\infty_tL^\infty_x}\|\f{\langle u_-\rangle^{1+\sigma}}{\langle u_+\rangle^{\f12(1+\sigma)}}\na_h^{|\beta_h|+1}z_+\|_{L^2_tL^2_x}.
\eeno
and then
\beq\label{est for pressure 12}\begin{aligned}
&\delta^{-\f12}\|\langle u_-\rangle^{1+\sigma}\langle u_+\rangle^{\f12(1+\sigma)}I_1\|_{L^2_tL^2_x}
\stackrel{\eqref{s2}}{\lesssim}\sum_{k+l\leq N+2}\delta^{l-\f12}\bigl(E_-^{(k,l)}(z_-)\bigr)^{\f12}
\cdot
\delta^{-\f12}\bigl(F_+^{(|\beta_h|+1,0)}(z_+)\bigr)^{\f12}.
\end{aligned}\eeq
For $N\leq|\beta_h|\leq |\al_h|-1\leq 2N-1$, we also have
\beq\label{est for pressure 13}\begin{aligned}
&\delta^{-\f12}\|\langle u_-\rangle^{1+\sigma}\langle u_+\rangle^{\f12(1+\sigma)}I_2\|_{L^2_tL^2_x}
\lesssim\sum_{k+l\leq N+2}\delta^{l-\f32}\bigl(E_-^{(k,l)}(z_-^3)\bigr)^{\f12}\cdot
\delta^{\f12}\bigl(F_+^{(|\beta_h|,1)}(z_+)\bigr)^{\f12}\\
&\stackrel{\div z_-=0}{\lesssim}\Bigl(\sum_{k\leq N+2}\delta^{-\f32}\bigl(E_-^{(k,0)}(z_-^3)\bigr)^{\f12}+\sum_{k+l\leq N+2}\delta^{l-\f12}\bigl(E_-^{(k,l)}(z_-^h)\bigr)^{\f12}\Bigr)
\cdot
\delta^{\f12}\bigl(F_+^{(|\beta_h|,1)}(z_+)\bigr)^{\f12}.
\end{aligned}\eeq

\bigskip

Thanks to \eqref{est for pressure 10}, \eqref{est for pressure 11}, \eqref{est for pressure 12} and \eqref{est for pressure 13},
 for $|\al_h|\leq 2N$, we obtain that
\beq\label{est for pressure 14}\begin{aligned}
&\delta^{-\f12}\|\langle u_-\rangle^{1+\sigma}\langle u_+\rangle^{\f12(1+\sigma)}\r_+^{(\al_h,0)}\|_{L^2_tL^2_x}\\
&\lesssim\Bigl(\sum_{k+l\leq 2N}\delta^{l-\f12}\bigl(E_-^{(k,l)}(z_-)\bigr)^{\f12}+\sum_{k\leq 2N-1}\delta^{-\f32}\bigl(E_-^{(k,0)}(z_-^3)\bigr)^{\f12}\Bigr)\\
&\qquad\times\Bigl(\sum_{k+l\leq 2N}\delta^{l-\f12}\bigl(F_+^{(k,l)}(z_+)\bigr)^{\f12}
+\sum_{k+l\leq N+2}\delta^{l-\f12}\bigl(F_+^{(k,l)}(\p_3z_+)\bigr)^{\f12}\Bigr).
\end{aligned}\eeq
Due to \eqref{est for pressure 9} and \eqref{est for pressure 14}, we obtain that
\beq\label{est for pressure 15}\begin{aligned}
&\delta^{-1}\Bigl|\int_0^t\int_{\Sigma_\tau}\r_+^{(\al_h,0)}\cdot\langle u_-\rangle^{2(1+\sigma)}z_+^{(\al_h,0)} dxd\tau\Bigr|\\
&\lesssim\Bigl(\sum_{k+l\leq 2N}\delta^{l-\f12}\bigl(E_-^{(k,l)}(z_-)\bigr)^{\f12}+\sum_{k\leq 2N-1}\delta^{-\f32}\bigl(E_-^{(k,0)}(z_-^3)\bigr)^{\f12}\Bigr)\\
&\qquad\times\Bigl(\sum_{k+l\leq 2N}\delta^{2(l-\f12)}F_+^{(k,l)}(z_+)
+\sum_{k+l\leq N+2}\delta^{2(l-\f12)}F_+^{(k,l)}(\p_3z_+)\Bigr).
\end{aligned}\eeq

{\it Step 2.5. Estimate of $\int_{\R}\f{1}{\langle u_+\rangle^{1+\sigma}}\Bigl|\int\int_{W_t^{[u_+,\infty]}}\r_+^{(\al_h,0)}\cdot\langle u_-\rangle^{2(1+\sigma)}z_+^{(\al_h,0)} dxd\tau\Bigr| d{u_+}$.} Since $\int_{\R}\f{1}{\langle u_+\rangle^{1+\sigma}}du_+<\infty$, the quantity is bounded by
\beno
\int_0^t\int_{\Sigma_\tau}|\r_+^{(\al_h,0)}|\cdot\langle u_-\rangle^{2(1+\sigma)}|z_+^{(\al_h,0)}|dxd\tau.
\eeno
Then it is has the same estimate as that for $\int_0^t\int_{\Sigma_\tau}\r_+^{(\al_h,0)}\cdot\langle u_-\rangle^{2(1+\sigma)}z_+^{(\al_h,0)} dxd\tau$.

\medskip

{\bf Step 3. The {\it a priori} estimate for $\na_h^kz_+$ for $k\leq 2N$.} Combining \eqref{al h 1} and the results from Step 2.1 to Step 2.5, we finally arrive at
\beno\begin{aligned}
&\sum_{k\leq N_*}\delta^{-1}[E_+^{(k,0)}(z_+)+F_+^{(k,0)}(z_+)]\\
&\lesssim\sum_{k\leq N_*}\delta^{-1}E_+^{(k,0)}(z_{+,0})
+\Bigl(\sum_{k+l\leq 2N}\delta^{l-\f12}\bigl(E_-^{(k,l)}(z_-)\bigr)^{\f12}+\sum_{k\leq 2N-1}\delta^{-\f32}\bigl(E_-^{(k,0)}(z_-^3)\bigr)^{\f12}\\
&\quad+\sum_{k\leq N+2}\delta^{-\f12}\bigl(E_-^{(k,0)}(\p_3z_-)\bigr)^{\f12}\Bigr)\cdot\Bigl(\sum_{k+l\leq 2N}\delta^{2(l-\f12)}F_+^{(k,l)}(z_+)
+\sum_{k+l\leq N+2}\delta^{2(l-\f12)}F_+^{(k,l)}(\p_3z_+)\Bigr).
\end{aligned}\eeno
This is exactly the inequality \eqref{estimate for al h} for $z_+$. Similar estimate holds for $z_-$. Then the proposition is proved.
\end{proof}

\subsubsection{The uniform estimates of $\na_h^k\p_3^lz_\pm$ for $k+l\leq N_*=2N$.}
In this subsection, we derive the uniform estimates concerning  $\na_h^k\p_3^lz_\pm$ with the coefficient $\delta^{l-\f12}$.
\begin{proposition}\label{al h l v prop}
Assume that $(z_+,z_-)$ are the smooth solutions to \eqref{MHD}. Let $N_*=2N$, $N\in\Z_{\geq 5}$ and  \eqref{assumption 1} hold.
We have
\beq\label{estimate for al h l v}\begin{aligned}
&\quad\sum_{k+l\leq N_*}\delta^{2(l-\f12)}[E_\pm^{(k,l)}(z_\pm)+F_\pm^{(k,l)}(z_\pm)]\\
&\lesssim\sum_{k+l\leq N_*}\delta^{2(l-\f12)}E_\pm^{(k,l)}(z_{\pm,0})
+\Bigl(\sum_{k+l\leq N_*}\delta^{l-\f12}\bigl(E_\mp^{(k,l)}(z_\mp)\bigr)^{\f12}+\sum_{k\leq N_*-1}\delta^{-\f32}\bigl(E_\mp^{(k,0)}(z_\mp^3)\bigr)^{\f12}\\
&\quad+\sum_{k+l\leq N+2}\delta^{l-\f12}\bigl(E_\mp^{(k,l)}(\p_3z_\mp)\bigr)^{\f12}\Bigr)\cdot\Bigl(\sum_{k+l\leq N_*}\delta^{2(l-\f12)}F_\pm^{(k,l)}(z_\pm)
+\sum_{k+l\leq N+2}\delta^{2(l-\f12)}F_\pm^{(k,l)}(\p_3z_\pm)\Bigr).
\end{aligned}\eeq
\end{proposition}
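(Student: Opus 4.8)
The plan is to follow the pattern of Proposition~\ref{al h prop}, now commuting $\p_h^{\al_h}\p_3^l$ through \eqref{MHD} rather than only $\p_h^{\al_h}$, and carrying the weight $\delta^{2(l-\f12)}$ prescribed by \eqref{transform 1}--\eqref{transform 2}. Writing $z_\pm^{(\al_h,l)}\eqdefa\p_h^{\al_h}\p_3^lz_\pm$, one obtains a linearized system of the shape \eqref{Linerized system} with source $\r_\pm^{(\al_h,l)}-\p_h^{\al_h}\p_3^l\na p$, where $\r_\pm^{(\al_h,l)}=[z_\mp\cdot\na,\,\p_h^{\al_h}\p_3^l]z_\pm$ is the transport commutator. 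I would feed this into Proposition~\ref{linearized prop}, in the form \eqref{estimate for linearized system 0}, multiply by $\delta^{2(l-\f12)}$, and sum over $|\al_h|=k$ and $k+l\le N_*$. This reduces the proof to estimating three families of nonlinear integrals: the $\langle u_\mp\rangle^{1+2\sigma}|z_\mp^1|\,|z_\pm^{(\al_h,l)}|^2$ term, the commutator term, and the pressure term. The $|z_\mp^1|$ term is handled just as in Step~2.1 of the proof of Proposition~\ref{al h prop} — place $\langle u_\pm\rangle^{1+\sigma}z_\mp^1$ in $L^\infty_tL^\infty_x$ by the anisotropic Sobolev inequality \eqref{s2} (two derivatives, order $\le N_*$), against the flux $F_\pm^{(k,l)}(z_\pm)$. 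This is an \emph{a priori} inequality, not a closed estimate, so no flux is absorbed at this stage.

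For the commutator, expand $\r_\pm^{(\al_h,l)}$ by the Leibniz rule, splitting the transport field into $z_\mp^h\cdot\na_h$ and $z_\mp^3\p_3$; each time a factor $\p_3^mz_\mp^3$ with $m\ge1$ appears, use $\div z_\mp=0$ to replace it by $-\p_3^{m-1}\na_h\cdot z_\mp^h$. This fixes the $\delta$-scaling through \eqref{transform 1}--\eqref{transform 2} and, exactly as in the companion estimates for the $\p_3z_\pm$-energies, it removes the would-be bad quadratic terms $Q(\p_3z_-^h,\p_3z_+^h)$. Every resulting summand is a product of a factor built from $z_\mp$ and a factor built from $z_\pm$, each of total order $\le|\al_h|+l\le N_*$; Hölder's inequality then puts the lower-order factor (order $\le N$, hence $\le N+2$ after the Sobolev embedding \eqref{s2}) in $L^\infty$ and the higher-order factor in $L^2_tL^2_x$ against the flux weight $\langle u_\mp\rangle^{1+\sigma}\langle u_\pm\rangle^{-\f12(1+\sigma)}$. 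The $z_\mp$-factors are thereby absorbed into the $E_\mp$-sums on the right of \eqref{estimate for al h l v} (full $z_\mp$ up to $N_*$, $z_\mp^3$ up to $N_*-1$, $\p_3z_\mp$ up to $N+2$) and the $z_\pm$-factors into the $F_\pm$-sums ($z_\pm$ up to $N_*$, $\p_3z_\pm$ up to $N+2$); the bookkeeping and the case split on whether the $z_\mp$-factor carries more or fewer than $N$ derivatives are as in Steps~2.4--2.5 of Proposition~\ref{al h prop}.

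The genuinely new point, compared with Proposition~\ref{al h prop}, is the pressure integral $\delta^{2(l-\f12)}\bigl|\int_0^t\int_{\Sigma_\tau}\p_h^{\al_h}\p_3^l\na p\cdot\langle u_\mp\rangle^{2(1+\sigma)}z_\pm^{(\al_h,l)}\,dxd\tau\bigr|$ and its flux analogue: the Green function $G_\delta$ of Lemma~\ref{derivation of pressure} transfers only \emph{horizontal} derivatives (see the Remark after Lemma~\ref{derivation of pressure} and \eqref{Green transformation}), and the crude bound $|\p_3^l\na_xG_\delta(x,y)|\lesssim\delta^{-1}|x_h-y_h|^{-(l+1)}$ ceases to be locally integrable on $\R^2$ once $l\ge2$. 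I would get around this by using the harmonicity of the free kernel — equivalently, the Poisson equation $\p_3^2p=-\Delta_hp-\na\cdot(z_+\cdot\na z_-)$ coming from \eqref{equation for pressure} — to convert the even part of $\p_3^l$ into powers of $\Delta_h$; together with the divergence-free identity $\p_iz_+^j\p_jz_-^i=\p_i\p_j(z_+^jz_-^i)$ and the horizontal transfer \eqref{Green transformation}, this represents $\p_h^{\al_h}\p_3^l\na p$ as a kernel controlled by Corollary~\ref{derivative of Green cor} acting on an undifferentiated product $z_+^jz_-^i$ into which at most $|\al_h|+l$ derivatives have been transferred, with at most one residual $\p_3$ left on the kernel. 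After integrating by parts in $x_h$ (using $\div z_\pm^{(\al_h,l)}=0$) and in $x_3$ — the boundary terms vanishing by \eqref{boundary condition} and $\p_3p|_{x_3=\pm\delta}=0$, possibly after one more use of the Poisson equation to lower the $\p_3$-order on $p$ — the resulting integral is estimated by the near/far cut-off, Young's, and Sobolev scheme of Steps~2.2--2.3 of Proposition~\ref{al h prop} ($\sigma<\f13$ entering in the far part), distributing the transferred derivatives between $z_+$ and $z_-$ so that each factor stays $\le N_*$ and the low-order one $\le N+2$; this produces exactly the $E_\mp\cdot F_\pm$ structure of \eqref{estimate for al h l v}.

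The main obstacle is this pressure step: one must carry out the harmonicity/Poisson reduction while tracking the precise horizontal/vertical derivative counts, the singular factor $\delta^{-1}$ coming from $G_\delta$, and the various $\delta$-weights, so that the quadratic remainders generated along the way never exceed the regularity budget $N_*$ (respectively $N+2$ for $\p_3z_\pm$) and still fit the product structure of \eqref{estimate for al h l v}. The conceptual crux underlying both the pressure and the commutator estimates is the null structure already built into Proposition~\ref{linearized prop} (separation of $z_+$ and $z_-$) together with the absence of $Q(\p_3z_-^h,\p_3z_+^h)$-type terms in the differentiated system, which is what keeps the demand on the $\p_3z_\mp$-energies at order $N+2$ rather than higher. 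Once $z_+$ has been treated, the estimate for $z_-$ follows by the symmetry $+\leftrightarrow-$, $u_+\leftrightarrow u_-$, which completes the proof.
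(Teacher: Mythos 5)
The paper does not commute $\p_h^{\al_h}\p_3^l$ through the pressure at all. For $l\geq1$ it first invokes the div--curl inequality (Corollary~\ref{div-curl cor}), which, since $\div\,\p_h^{\al_h}z_\pm=0$ and $\p_h^{\al_h}z_\pm^3|_{x_3=\pm\delta}=0$, bounds $\delta^{l-\f12}\|\langle u_\mp\rangle^{1+\sigma}\p_h^{\al_h}\p_3^lz_\pm\|_{L^2}$ by the $l=0$ term of Proposition~\ref{al h prop} plus $\delta^{l_1+\f12}\|\langle u_\mp\rangle^{1+\sigma}\na_h^{k+k_1}\p_3^{l_1}j_\pm\|_{L^2}$ for $k_1+l_1\leq l-1$, where $j_\pm=\curl z_\pm$. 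The vorticity $j_\pm$ satisfies the pressure-free transport equation~\eqref{Voricity equation}, so one then applies Proposition~\ref{linearized prop} directly to $j_\pm^{(\al_h,l)}$ with source $-\p_h^{\al_h}\p_3^l(\na z_\mp\wedge\na z_\pm)$ plus the transport commutator, and no Green-function estimate with vertical derivatives is ever needed. This is the whole point of the proposition: the Green-function analysis of Lemma~\ref{derivation of pressure} and Corollary~\ref{derivative of Green cor} is invoked only at $l=0$, where every derivative on $p$ is horizontal and can be moved to the source by~\eqref{Green transformation}.

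Your route is genuinely different: you commute $\p_h^{\al_h}\p_3^l$ through~\eqref{MHD}, pick up $\p_h^{\al_h}\p_3^l\na p$, and try to eliminate the vertical derivatives by iterating the Poisson relation $\p_3^2p=-\Delta_hp-f$. You yourself flag this as ``the main obstacle,'' and indeed it is a genuine gap as written. Two things are missing. First, the iterated Poisson reduction produces remainders $\p_3^{2k}\Delta_h^{m-1-k}f$ with $f=\p_iz_+^j\p_jz_-^i$; after Leibniz, terms of the shape $\p_3^{m_1}\na_hz_+^h\cdot\p_3^{m_2}\na_hz_-^h$ with $m_1,m_2\geq1$ appear (already from the $i,j\in\{1,2\}$ block of $f$), which are precisely the $Q(\p_3z_-^h,\p_3z_+^h)$-type products for which only the reduced $\p_3$-energy at order $\leq N+2$ is available; you have not verified that the derivative and $\delta$-power counts from the reduction actually stay inside that budget, and this is not a bookkeeping triviality --- it is the heart of the matter. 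Second, you need a bound on $\p_3^m\na_xG_\delta$ for $m$ left on the kernel, but Corollary~\ref{derivative of Green cor} gives $|\na_x^kG_\delta|\lesssim\delta^{-1}|x_h-y_h|^{-k}$, which is not locally integrable on $\R^2$ for $k\geq2$; you propose to use harmonicity to reduce to $k\leq1$ on the kernel, but that again feeds into the unfinished remainder analysis. In short, your plan replaces a clean structural step (curl kills $\na p$, and the vorticity source has the good null form) by an open-ended Poisson iteration on the pressure, and the crucial verification is absent. The rest of the outline (the $z_\mp^1$ term, the transport commutator, the symmetry $+\leftrightarrow-$) matches the paper and is fine.
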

\begin{proof} We only prove \eqref{estimate for al h l v} for $z_+$. We shall divide the proof into several steps.

{\bf Step 1. Reduction of the uniform estimates.} For any $\al_h\in(\Z_{\geq0})^2$,
since  $\div\,\p_h^{\al_h}z_+=0$ and $\p_h^{\al_h}z_+^3|_{x_3=\pm\delta}=0$, using div-curl formula \eqref{div-curl-general}, for $k+l\leq N_*=2N$ and $l\geq 1$, we have

\beno\begin{aligned}
&\|\langle u_-\rangle^{1+\sigma}\p_h^{\al_h}\p_3^lz_+\|_{L^2_x}\lesssim\|\langle u_-\rangle^{1+\sigma}\na^l(\p_h^{\al_h}z_+)\|_{L^2_x}\\
&
\lesssim\sum_{l_1=0}^{l-1}\|\langle u_-\rangle^{1+\sigma}\na^{l_1}\p_h^{\al_h}(\curl z_+)\|_{L^2_x}+\|\langle u_-\rangle^{1+\sigma}\p_h^{\al_h}z_+\|_{L^2_x}.
\end{aligned}\eeno
Then we obtain that
\beq\label{hv1}\begin{aligned}
&\delta^{l-\f12}\|\langle u_-\rangle^{1+\sigma}\na_h^k\p_3^lz_+\|_{L^2_x}\\
&\lesssim\sum_{k_1+l_1\leq l-1}\delta^{l_1+\f12}\|\langle u_-\rangle^{1+\sigma}\na_h^{k+k_1}\p_3^{l_1}(\curl z_+)\|_{L^2_x}+\delta^{-\f12}\|\langle u_-\rangle^{1+\sigma}\na_h^kz_+\|_{L^2_x}.
\end{aligned}\eeq
Therefore we only need to give  the uniform estimate of the term
\beno
\delta^{l+\f12}\|\langle u_-\rangle^{1+\sigma}\na_h^k\p_3^lj_+\|_{L^2_x},\quad\text{for}\quad k+l\leq 2N-1,
\eeno
where $j_+=\curl z_+$ satisfies the first equation of \eqref{Voricity equation}, that is
\beq\label{voticity for z +}
\p_t  j_+ -\p_1j_++z_-\cdot\na j_+= -\na z_-\wedge\na z_+,
\eeq
with $\na z_-\wedge\na z_+=-\na z_-^k\times\p_k z_+$.

Let $\al_h\in(\Z_{\geq0})^2,\ l\in\Z_{\geq0}$ and $|\al_h|+l\leq 2N-1$. Applying $\p_h^{\al_h}\p_3^l$ to both sides of \eqref{voticity for z +}, we have
\beq\label{hv2}
\p_t  j_+^{(\al_h,l)}-\p_1j_+^{(\al_h,l)} +z_- \cdot \nabla j_+^{(\al_h,l)} =\r_{+,1}^{(\al_h,l)}+\r_{+,2}^{(\al_h,l)},
\eeq
where  $j_+^{(\al_h,l)}\eqdefa\p_h^{\al_h}\p_3^lj_+$ and
\beno\begin{aligned}
&\r_{+,1}^{(\al_h,l)}=-\p_h^{\al_h}\p_3^l(\na z_-\wedge\na z_+),\\
&\r_{+,2}^{(\al_h,l)}=-\p_h^{\al_h}\p_3^l(z_-\cdot\na j_+)+z_- \cdot \nabla \p_h^{\al_h}\p_3^lj_+.
\end{aligned}\eeno

Thanks to Proposition \ref{linearized prop}, we have
\beq\label{hv3}\begin{aligned}
&\sup_{0\leq\tau\leq t}\int_{\Sigma_\tau}\langle u_-\rangle^{2(1+\sigma)}|j_+^{(\al_h,l)}|^2dx
+\int_0^t\int_{\Sigma_\tau}\f{\langle u_-\rangle^{2(1+\sigma)}}{\langle u_+\rangle^{1+\sigma}}|j_+^{(\al_h,l)}|^2dxd\tau\\
&\lesssim\int_{\Sigma_0}\langle u_-\rangle^{2(1+\sigma)}|j_+^{(\al_h,l)}|^2dx+\int_0^t\int_{\Sigma_\tau}\langle u_-\rangle^{2(1+\sigma)}|z_-^1|\cdot|j_+^{(\al_h,l)}|^2dxd\tau\\
&\quad+\int_0^t\int_{\Sigma_\tau}(|\r_{+,1}^{(\al_h,l)}|+|\r_{+,2}^{(\al_h,l)}|)\cdot\langle u_-\rangle^{2(1+\sigma)}|j_+^{(\al_h,l)}|dxd\tau.
\end{aligned}\eeq

\medskip

{\bf Step 2. Estimates of the nonlinear terms in the r.h.s of \eqref{hv3}.}

{\it Step 2.1. Estimate of term $\int_0^t\int_{\Sigma_\tau}\langle u_-\rangle^{2(1+\sigma)}|z_-^1|\cdot|j_+^{(\al_h,l)}|^2dxd\tau$.} It is bounded by
\beno\begin{aligned}
&\|\langle u_+\rangle^{1+\sigma}z_-^1\|_{L^\infty_tL^\infty_x}\|\f{\langle u_-\rangle^{1+\sigma}}{\langle u_+\rangle^{\f12(1+\sigma)}}j_+^{(\al_h,l)}\|_{L^2_tL^2_x}^2
\stackrel{\text{Sobolev}}{\lesssim}\sum_{k_1+l_1\leq2}\delta^{l_1-\f12}\bigl(E_-^{(k_1,l_1)}(z_-)\bigr)^{\f12}\cdot F_+^{(\al_h,l)}(j_+).
\end{aligned}\eeno
Then we have
\beq\label{hv4}\begin{aligned}
&\delta^{2(l+\f12)}\int_0^t\int_{\Sigma_\tau}\langle u_-\rangle^{2(1+\sigma)}|z_-^1|\cdot|j_+^{(\al_h,l)}|^2dxd\tau
\lesssim\sum_{k_1+l_1\leq 2}\delta^{l_1-\f12}\bigl(E_-^{(k_1,l_1)}(z_-)\bigr)^{\f12}\cdot\delta^{2(l+\f12)} F_+^{(\al_h,l)}(j_+).
\end{aligned}\eeq

\medskip

{\it Step 2.2. Estimate of term $\int_0^t\int_{\Sigma_\tau}|\r_{+,1}^{(\al_h,l)}|\cdot\langle u_-\rangle^{2(1+\sigma)}|j_+^{(\al_h,l)}|dxd\tau$.}
By H\"older inequality, we have
\beq\label{hv4a}\begin{aligned}
&\int_0^t\int_{\Sigma_\tau}|\r_{+,1}^{(\al_h,l)}|\cdot\langle u_-\rangle^{2(1+\sigma)}|j_+^{(\al_h,l)}|dxd\tau
\lesssim\|\langle u_-\rangle^{1+\sigma}\langle u_+\rangle^{\f12(1+\sigma)}\r_{+,1}^{(\al_h,l)}\|_{L^2_tL^2_x}
\cdot\bigl(F_+^{(\al_h,l)}(j_+)\bigr)^{\f12}.
\end{aligned}\eeq

Now, we want to give the  upper bound of $\|\langle u_-\rangle^{1+\sigma}\langle u_+\rangle^{\f12(1+\sigma)}\r_{+,1}^{(\al_h,l)}\|_{L^2_tL^2_x}$.
By the definition of $\na z_-\wedge\na z_+$, and by using the fact that $\div\,z_-=0$,  we have
\beq\label{hv5}\begin{aligned}
|\r_{+,1}^{(\al_h,l)}|\lesssim&\sum_{l_1\leq l}\sum_{\beta_h\leq\al_h}\bigl(\underbrace{|\p_h^{\al_h-\beta_h}\p_3^{l-l_1}\na_hz_-^h|\cdot|\p_h^{\beta_h}\p_3^{l_1}\na_hz_+|}_{I^1_{\beta_h,l_1}}
+\underbrace{|\p_h^{\al_h-\beta_h}\p_3^{l-l_1}\na_hz_-|\cdot|\p_h^{\beta_h}\p_3^{l_1}\p_3z_+|}_{I^2_{\beta_h,l_1}}\\
&\quad+\underbrace{|\p_h^{\al_h-\beta_h}\p_3^{l-l_1}\p_3z_-^h|\cdot|\p_h^{\beta_h}\p_3^{l_1}\na_hz_+|}_{I^3_{\beta_h,l_1}}\bigr).
\end{aligned}\eeq

For $I^1_{\beta_h,l_1}$, if $|\beta_h|+l_1+1\leq N$, we have
\beno
\|\langle u_-\rangle^{1+\sigma}\langle u_+\rangle^{\f12(1+\sigma)}I^1_{\beta_h,l_1}\|_{L^2_tL^2_x}
\lesssim\|\langle u_+\rangle^{1+\sigma}\na_h^{|\al_h-\beta_h|+1}\p_3^{l-l_1}z_-^h\|_{L^\infty_tL^2_x}
\cdot\|\f{\langle u_-\rangle^{1+\sigma}}{\langle u_+\rangle^{\f12(1+\sigma)}}\na_h^{|\beta_h|+1}\p_3^{l_1}z_+\|_{L^2_tL^\infty_x}.
\eeno
By virtue of Lemma \ref{Sobolev} (ii), we have
\beno\begin{aligned}
&\|\f{\langle u_-\rangle^{1+\sigma}}{\langle u_+\rangle^{\f12(1+\sigma)}}\na_h^{|\beta_h|+1}\p_3^{l_1}z_+\|_{L^2_tL^\infty_x}
\lesssim\sum_{k_2+l_2\leq2}\delta^{l_2-\f12}\bigl(F_+^{(|\beta_h|+1+k_2,l_1+l_2)}(z_+)\bigr)^{\f12}\\
&\lesssim\delta^{-l_1}\sum_{k_2+l_2\leq N+2}\delta^{l_2-\f12}\bigl(F_+^{(k_2,l_2)}(z_+)\bigr)^{\f12}.
\end{aligned}\eeno
Since
\beno
\|\langle u_+\rangle^{1+\sigma}\na_h^{|\al_h-\beta_h|+1}\p_3^{l-l_1}z_-^h\|_{L^\infty_tL^2_x}
\lesssim\sum_{k_1\leq|\al_h|+1}\bigl(E_-^{(k_1,l-l_1)}(z_-)\bigr)^{\f12},
\eeno
 if $|\beta_h|+l_1+1\leq N$, then we have
\beq\label{hv6}\begin{aligned}
&\quad\delta^{l+\f12}\|\langle u_-\rangle^{1+\sigma}\langle u_+\rangle^{\f12(1+\sigma)}I^1_{\beta_h,l_1}\|_{L^2_tL^2_x}\\
&
\lesssim\sum_{k_1\leq|\al_h|+1}\delta^{l-l_1+\f12}\bigl(E_-^{(k_1,l-l_1)}(z_-)\bigr)^{\f12}
\cdot\sum_{k_2+l_2\leq N+2}\delta^{l_2-\f12}\bigl(F_+^{(k_2,l_2)}(z_+)\bigr)^{\f12}.
\end{aligned}\eeq
If $N+1\leq|\beta_h|+l_1+1\leq |\al_h|+l+1\leq 2N$, we have
\beno
\|\langle u_-\rangle^{1+\sigma}\langle u_+\rangle^{\f12(1+\sigma)}I^1_{\beta_h,l_1}\|_{L^2_tL^2_x}
\lesssim\|\langle u_+\rangle^{1+\sigma}\na_h^{|\al_h-\beta_h|+1}\p_3^{l-l_1}z_-^h\|_{L^\infty_tL^\infty_x}
\cdot\|\f{\langle u_-\rangle^{1+\sigma}}{\langle u_+\rangle^{\f12(1+\sigma)}}\na_h^{|\beta_h|+1}\p_3^{l_1}z_+\|_{L^2_tL^2_x}.
\eeno

Following the similar derivation as that for \eqref{hv6}, we have
\beno\begin{aligned}
&\|\langle u_+\rangle^{1+\sigma}\na_h^{|\al_h-\beta_h|+1}\p_3^{l-l_1}z_-^h\|_{L^\infty_tL^\infty_x}
\lesssim\sum_{k_2+l_2\leq2}\delta^{l_2-\f12}\bigl(E_-^{(|\al_h-\beta_h|+1+k_2,l-l_1+l_2)}(z_-)\bigr)^{\f12}\\
&\lesssim\delta^{l_1-l}\sum_{k_2+l_2\leq N+2}\delta^{l_2-\f12}\bigl(E_-^{(k_2,l_2)}(z_-)\bigr)^{\f12},
\end{aligned}\eeno
and
\beno
\|\f{\langle u_-\rangle^{1+\sigma}}{\langle u_+\rangle^{\f12(1+\sigma)}}\na_h^{|\beta_h|+1}\p_3^{l_1}z_+\|_{L^2_tL^2_x}
\lesssim\sum_{k_1\leq|\al_h|+1}\bigl(F_+^{(k_1,l_1)}(z_+)\bigr)^{\f12}.
\eeno
Then we obtain  that for $N+1\leq|\beta_h|+l_1+1\leq |\al_h|+l+1\leq 2N$, it holds
\beq\label{hv7}\begin{aligned}
&\delta^{l+\f12}\|\langle u_-\rangle^{1+\sigma}\langle u_+\rangle^{\f12(1+\sigma)}I^1_{\beta_h,l_1}\|_{L^2_tL^2_x}\\
&\lesssim\sum_{k_2+l_2\leq N+2}\delta^{l_2-\f12}\bigl(E_-^{(k_2,l_2)}(z_-)\bigr)^{\f12}
\cdot\sum_{k_1\leq|\al_h|+1}\delta^{l_1+\f12}\bigl(F_+^{(k_1,l_1)}(z_+)\bigr)^{\f12}.
\end{aligned}\eeq

Similarly, for $I^2_{\beta_h,l_1}$ and $I^3_{\beta_h,l_1}$, if  $|\beta_h|+l_1+1\leq N$, then we have
\beq\label{hv8}\begin{aligned}
&\delta^{l+\f12}\|\langle u_-\rangle^{1+\sigma}\langle u_+\rangle^{\f12(1+\sigma)}I^2_{\beta_h,l_1}\|_{L^2_tL^2_x}\\
&\lesssim\sum_{k_1\leq|\al_h|+1}\delta^{l-l_1-\f12}\bigl(E_-^{(k_1,l-l_1)}(z_-)\bigr)^{\f12}
\cdot\sum_{k_2+l_2\leq N+1}\delta^{l_2+\f12}\bigl(F_+^{(k_2,l_2+1)}(z_+)\bigr)^{\f12},
\end{aligned}\eeq
\beq\label{hv9}\begin{aligned}
&\delta^{l+\f12}\|\langle u_-\rangle^{1+\sigma}\langle u_+\rangle^{\f12(1+\sigma)}I^3_{\beta_h,l_1}\|_{L^2_tL^2_x}\\
&\lesssim\sum_{k_1\leq|\al_h|}\delta^{l-l_1+\f12}\bigl(E_-^{(k_1,l-l_1+1)}(z_-)\bigr)^{\f12}
\cdot\sum_{k_2+l_2\leq N+2}\delta^{l_2-\f12}\bigl(F_+^{(k_2,l_2)}(z_+)\bigr)^{\f12}.
\end{aligned}\eeq
While for $N+1\leq|\beta_h|+l_1+1\leq |\al_h|+l+1\leq 2N$,
\beq\label{hv10}\begin{aligned}
&\delta^{l+\f12}\|\langle u_-\rangle^{1+\sigma}\langle u_+\rangle^{\f12(1+\sigma)}I^2_{\beta_h,l_1}\|_{L^2_tL^2_x}\\
&\lesssim\sum_{k_2+l_2\leq N+2}\delta^{l_2-\f12}\bigl(E_-^{(k_2,l_2)}(z_-)\bigr)^{\f12}
\cdot\sum_{k_1\leq|\al_h|}\delta^{l_1+\f12}\bigl(F_+^{(k_1,l_1+1)}(z_+)\bigr)^{\f12}.
\end{aligned}\eeq
\beq\label{hv11}\begin{aligned}
&\delta^{l+\f12}\|\langle u_-\rangle^{1+\sigma}\langle u_+\rangle^{\f12(1+\sigma)}I^3_{\beta_h,l_1}\|_{L^2_tL^2_x}\\
&\lesssim\sum_{k_2+l_2\leq N+1}\delta^{l_2+\f12}\bigl(E_-^{(k_2,l_2+1)}(z_-)\bigr)^{\f12}
\cdot\sum_{k_1\leq|\al_h|+1}\delta^{l_1-\f12}\bigl(F_+^{(k_1,l_1)}(z_+)\bigr)^{\f12}.
\end{aligned}\eeq

Thanks to \eqref{hv5} and inequalities \eqref{hv6} to \eqref{hv11}, noticing that $|\al_h|+l\leq 2N-1$ and $N\in\Z_{\geq5}$, we have
\beno\begin{aligned}
&\delta^{l+\f12}\|\langle u_-\rangle^{1+\sigma}\langle u_+\rangle^{\f12(1+\sigma)}\r_{+,1}^{(\al_h,l)}\|_{L^2_tL^2_x}\\
&
\lesssim\sum_{k_1+l_1\leq 2N}\delta^{l_1-\f12}\bigl(E_-^{(k_1,l_1)}(z_-)\bigr)^{\f12}
\cdot\sum_{k_1+l_1\leq 2N}\delta^{l_1-\f12}\bigl(F_+^{(k_1,l_1)}(z_+)\bigr)^{\f12}.
\end{aligned}\eeno
Then due to \eqref{hv4a} and the inequality
\beq\label{curl to z}
F_+^{(\al_h,l)}(j_+)\lesssim F_+^{(|\al_h|+1,l)}(z_+)+F_+^{(|\al_h|,l+1)}(z_+),
\eeq
we have
\beq\label{hv12}\begin{aligned}
&\delta^{2(l+\f12)}\int_0^t\int_{\Sigma_\tau}|\r_{+,1}^{(\al_h,l)}|\cdot\langle u_-\rangle^{2(1+\sigma)}|j_+^{(\al_h,l)}|dxd\tau\\
&
\lesssim\sum_{k_1+l_1\leq 2N}\delta^{l_1-\f12}\bigl(E_-^{(k_1,l_1)}(z_-)\bigr)^{\f12}
\cdot\sum_{k_1+l_1\leq 2N}\delta^{2(l_1-\f12)}F_+^{(k_1,l_1)}(z_+).
\end{aligned}\eeq

{\it Step 2.3. Estimate of term $\int_0^t\int_{\Sigma_\tau}|\r_{+,2}^{(\al_h,l)}|\cdot\langle u_-\rangle^{2(1+\sigma)}|j_+^{(\al_h,l)}|dxd\tau$.}
By H\"older inequality, we have
\beq\label{hv13}\begin{aligned}
&\int_0^t\int_{\Sigma_\tau}|\r_{+,2}^{(\al_h,l)}|\cdot\langle u_-\rangle^{2(1+\sigma)}|j_+^{(\al_h,l)}|dxd\tau
\lesssim\|\langle u_-\rangle^{1+\sigma}\langle u_+\rangle^{\f12(1+\sigma)}\r_{+,2}^{(\al_h,l)}\|_{L^2_tL^2_x}
\cdot\bigl(F_+^{(\al_h,l)}(j_+)\bigr)^{\f12}.
\end{aligned}\eeq

For $\r_{+,2}^{(\al_h,l)}$, we have
\beq\label{hv13a}
|\r_{+,2}^{(\al_h,l)}|\lesssim\bigl(\sum_{l_1\leq l}\sum_{\beta_h<\al_h}+\sum_{l_1<l}\sum_{\beta_h\leq\al_h}\bigr)
\bigl(\underbrace{|\p_h^{\al_h-\beta_h}\p_3^{l-l_1}z_-^h|\cdot|\na_h\p_h^{\beta_h}\p_3^{l_1}j_+|}_{II^1_{\beta_h,l_1}}
+\underbrace{|\p_h^{\al_h-\beta_h}\p_3^{l-l_1}z_-^3|\cdot|\p_3\p_h^{\beta_h}\p_3^{l_1}j_+|}_{II^2_{\beta_h,l_1}}\bigr).
\eeq

For $II^1_{\beta_h,l_1}$, if $|\al_h-\beta_h|+l-l_1\leq 2N-3$ (that is $|\beta_h|+l_1\geq|\al_h|+l-2N+3$), by similar derivation, we have
\beno\begin{aligned}
&\delta^{l+\f12}\|\langle u_-\rangle^{1+\sigma}\langle u_+\rangle^{\f12(1+\sigma)}II^1_{\beta_h,l_1}\|_{L^2_tL^2_x}\\
&\lesssim\sum_{k_2+l_2\leq2}\delta^{l-l_1+l_2-\f12}\bigl(E_-^{(|\al_h-\beta_h|+k_2,l-l_1+l_2)}(z_-^h)\bigr)^{\f12}\cdot
\delta^{l_1+\f12}\bigl(F_+^{(|\beta_h|+1,l_1)}(j_+)\bigr)^{\f12}.
\end{aligned}\eeno
Since $|\al_h-\beta_h|+l-l_1\leq 2N-3$, $|\beta_h|+l_1\leq|\al_h|+l-1$ and $k_2+l_2\leq2$, we have
\beno
|\al_h-\beta_h|+k_2+l-l_1+l_2\leq 2N-1,\quad |\beta_h|+l_1+1\leq|\al_h|+l\leq 2N-1.
\eeno
Then  for $|\beta_h|+l_1\geq|\al_h|+l-2N+3$, we get
\beq\label{hv14}\begin{aligned}
&\delta^{l+\f12}\|\langle u_-\rangle^{1+\sigma}\langle u_+\rangle^{\f12(1+\sigma)}II^1_{\beta_h,l_1}\|_{L^2_tL^2_x}\\
&\lesssim\sum_{k_2+l_2\leq 2N-1}\delta^{l_2-\f12}\bigl(E_-^{(k_2,l_2)}(z_-^h)\bigr)^{\f12}\cdot\sum_{k_2+l_2\leq 2N-1}\delta^{l_2+\f12}\bigl(F_+^{(k_2,l_2)}(j_+)\bigr)^{\f12}.
\end{aligned}\eeq
While if $|\beta_h|+l_1\leq|\al_h|+l-2N+2\leq1$, we have
\beq\label{hv15}\begin{aligned}
&\delta^{l+\f12}\|\langle u_-\rangle^{1+\sigma}\langle u_+\rangle^{\f12(1+\sigma)}II^1_{\beta_h,l_1}\|_{L^2_tL^2_x}\\
&\lesssim\delta^{l-l_1-\f12}\bigl(E_-^{(|\al_h-\beta_h|,l-l_1)}(z_-^h)\bigr)^{\f12}\cdot
\sum_{k_2+l_2\leq2}\delta^{l_1+l_2+\f12}\bigl(F_+^{(|\beta_h|+1+k_2,l_1+l_2)}(j_+)\bigr)^{\f12},\\
&\lesssim\sum_{k_2+l_2\leq 2N-1}\delta^{l_2-\f12}\bigl(E_-^{(k_2,l_2)}(z_-^h)\bigr)^{\f12}\cdot\sum_{k_2+l_2\leq4}\delta^{l_2+\f12}\bigl(F_+^{(k_2,l_2)}(j_+)\bigr)^{\f12}.
\end{aligned}\eeq
Thanks to \eqref{hv14}, \eqref{hv15} and the fact $N\in\Z_{\geq5}$, we get
\beq\label{hv16}\begin{aligned}
&\delta^{l+\f12}\|\langle u_-\rangle^{1+\sigma}\langle u_+\rangle^{\f12(1+\sigma)}II^1_{\beta_h,l_1}\|_{L^2_tL^2_x}\\
&\lesssim\sum_{k_2+l_2\leq 2N-1}\delta^{l_2-\f12}\bigl(E_-^{(k_2,l_2)}(z_-^h)\bigr)^{\f12}\cdot\sum_{k_2+l_2\leq 2N-1}\delta^{l_2+\f12}\bigl(F_+^{(k_2,l_2)}(j_+)\bigr)^{\f12}.
\end{aligned}\eeq

Similarly, for $II^2_{\beta_h,l_1}$, if $|\al_h-\beta_h|+l-l_1\leq 2N-3$ (that is $|\beta_h|+l_1\geq|\al_h|+l-2N+3$), it holds
\beno\begin{aligned}
&\delta^{l+\f12}\|\langle u_-\rangle^{1+\sigma}\langle u_+\rangle^{\f12(1+\sigma)}II^2_{\beta_h,l_1}\|_{L^2_tL^2_x}\\
&\lesssim\sum_{k_2+l_2\leq 2N-1}\delta^{l_2-\f32}\bigl(E_-^{(k_2,l_2)}(z_-^3)\bigr)^{\f12}\cdot\sum_{k_2+l_2\leq 2N-2}\delta^{l_2+\f32}\bigl(F_+^{(k_2,l_2+1)}(j_+)\bigr)^{\f12}.
\end{aligned}\eeno
While for $|\beta_h|+l_1\leq|\al_h|+l-2N+2\leq1$,  one has
\beno\begin{aligned}
&\|\langle u_-\rangle^{1+\sigma}\langle u_+\rangle^{\f12(1+\sigma)}II^2_{\beta_h,l_1}\|_{L^2_tL^2_x}\\
&\lesssim\sum_{k_2+l_2\leq 2N-1}\delta^{l_2-\f32}\bigl(E_-^{(k_2,l_2)}(z_-^3)\bigr)^{\f12}\cdot\sum_{k_2+l_2\leq3}\delta^{l_2+\f32}\bigl(F_+^{(k_2,l_2+1)}(j_+)\bigr)^{\f12}
\end{aligned}\eeno

Since $\div\, z_-=0$, we have
\beno\begin{aligned}
&\sum_{k_2+l_2\leq 2N-1}\delta^{l_2-\f32}\bigl(E_-^{(k_2,l_2)}(z_-^3)\bigr)^{\f12}\lesssim\sum_{k_2+l_2\leq 2N}\delta^{l_2-\f12}\bigl(E_-^{(k_2,l_2)}(z_-^h)\bigr)^{\f12}+\sum_{k_2\leq 2N-1}\delta^{-\f32}\bigl(E_-^{(k_2,0)}(z_-^3)\bigr)^{\f12}.
\end{aligned}\eeno
Then for $N\in\Z_{\geq5}$, we obtain that
\beq\label{hv18}\begin{aligned}
&\delta^{l+\f12}\|\langle u_-\rangle^{1+\sigma}\langle u_+\rangle^{\f12(1+\sigma)}II^2_{\beta_h,l_1}\|_{L^2_tL^2_x}\lesssim\Bigl(\sum_{k_2+l_2\leq 2N}\delta^{l_2-\f12}\bigl(E_-^{(k_2,l_2)}(z_-^h)\bigr)^{\f12}\\
&\qquad+\sum_{k_2\leq 2N-1}\delta^{-\f32}\bigl(E_-^{(k_2,0)}(z_-^3)\bigr)^{\f12}\Bigr)\cdot\sum_{k_2+l_2\leq 2N-1}\delta^{l_2+\f12}\bigl(F_+^{(k_2,l_2)}(j_+)\bigr)^{\f12}.
\end{aligned}\eeq

Thanks to \eqref{hv13}, \eqref{hv16} and \eqref{hv18}, using \eqref{curl to z},
we have
\beq\label{hv19}\begin{aligned}
&\delta^{2(l+\f12)}\int_0^t\int_{\Sigma_\tau}|\r_{+,2}^{(\al_h,l)}|\cdot\langle u_-\rangle^{2(1+\sigma)}|j_+^{(\al_h,l)}|dxd\tau\\
&
\lesssim\Bigl(\sum_{k_1+l_1\leq 2N}\delta^{l_1-\f12}\bigl(E_-^{(k_1,l_1)}(z_-)\bigr)^{\f12}+\sum_{k_1\leq 2N-1}\delta^{-\f32}\bigl(E_-^{(k_1,0)}(z_-^3)\bigr)^{\f12}\Bigr)
\cdot\sum_{k_1+l_1\leq 2N}\delta^{2(l_1-\f12)}F_+^{(k_1,l_1)}(z_+).
\end{aligned}\eeq

\medskip

{\bf Step 3. The {\it a priori} estimate of $\na_h^k\p_3^lz_+$ for $k+l\leq 2N$ and $l\geq 1$.} Combining \eqref{hv3}, \eqref{hv4}, \eqref{hv12}, \eqref{hv19} together with \eqref{curl to z},  for $k+l\leq 2N-1$, we obtain that
\beq\label{hv20}\begin{aligned}
&\delta^{2(l+\f12)}\Bigl(E_+^{(k,l)}(j_+)+F_+^{(k,l)}(j_+)\Bigr)\lesssim \delta^{2(l+\f12)}E_+^{(k,l)}(\curl z_{+,0})+\Bigl(\sum_{k_1+l_1\leq 2N}\delta^{l_1-\f12}\bigl(E_-^{(k_1,l_1)}(z_-)\bigr)^{\f12}\\
&\qquad+\sum_{k_1\leq 2N-1}\delta^{-\f32}\bigl(E_-^{(k_1,0)}(z_-^3)\bigr)^{\f12}\Bigr)
\cdot\sum_{k_1+l_1\leq 2N}\delta^{2(l_1-\f12)}F_+^{(k_1,l_1)}(z_+).
\end{aligned}\eeq

Thanks to \eqref{hv20}, \eqref{hv1},  \eqref{estimate for al h} and \eqref{curl to z}, we obtain the desired inequality \eqref{estimate for al h l v} for $z_+$. Similar estimate holds for $z_-$. Then it ends the proof of the proposition.
\end{proof}

\subsubsection{The uniform estimates of $\na_h^k\p_3^l(\p_3z_\pm)$ for $k+l\leq N+2$.}
In this subsection, we want to derive the uniform estimates concerning  $\na_h^k\p_3^l(\p_3z_\pm)$ with the lower order coefficient $\delta^{l-\f12}$. The main estimate is presented in the following proposition.
\begin{proposition}\label{al h l v-1 prop}
Assume that $(z_+,z_-)$ are the smooth solutions to \eqref{MHD}. Let $N_*=2N$, $N\in\Z_{\geq 5}$, and  \eqref{assumption 1} hold.
%\beq\label{assumption 3}
%\|z_\pm^1\|_{L^\infty_tL^\infty_x}\leq 1.
%\eeq
We have
\beq\label{estimate for al h l v-1}\begin{aligned}
&\quad\sum_{k+l\leq N+2}\delta^{2(l-\f12)}[E_\pm^{(k,l)}(\p_3z_\pm)+F_\pm^{(k,l)}(\p_3z_\pm)]\\
&\lesssim\sum_{k+l\leq N+2}\delta^{2(l-\f12)}E_\pm^{(k,l)}(\p_3z_{\pm,0})+\sum_{k+l\leq N_*}\delta^{2(l-\f12)}E_\pm^{(k,l)}(z_{\pm,0})\\
&\quad
+\Bigl(\sum_{k+l\leq N_*}\delta^{l-\f12}\bigl(E_\mp^{(k,l)}(z_\mp)\bigr)^{\f12}+\sum_{k\leq N_*-1}\delta^{-\f32}\bigl(E_\mp^{(k,0)}(z_\mp^3)\bigr)^{\f12}+\sum_{k+l\leq N+2}\delta^{l-\f12}\bigl(E_\mp^{(k,l)}(\p_3z_\mp)\bigr)^{\f12}\Bigr)\\
&\quad\times\Bigl(\sum_{k+l\leq N_*}\delta^{2(l-\f12)}F_\pm^{(k,l)}(z_\pm)
+\sum_{k+l\leq N+2}\delta^{2(l-\f12)}F_\pm^{(k,l)}(\p_3z_\pm)\Bigr).
\end{aligned}\eeq
\end{proposition}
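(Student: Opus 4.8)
The proof follows the pattern of Propositions \ref{al h prop} and \ref{al h l v prop}, applied this time to the equations for $\p_3z_\pm$. The point is that Proposition \ref{linearized prop} imposes neither the divergence-free condition nor a boundary condition on the unknown itself, so we may feed $\p_3z_\pm$ (or rather its horizontal components) into it directly, without passing through the vorticity. First I would differentiate \eqref{MHD} in $x_3$ and keep the horizontal components, obtaining
$$\p_t\p_3z_\pm^h\mp\p_1\p_3z_\pm^h+z_\mp\cdot\na\p_3z_\pm^h=-(\p_3z_\mp\cdot\na z_\pm)^h-\na_h\p_3p,$$
which is a system of the form \eqref{Linerized system} with source $\rho_\pm=-(\p_3z_\mp\cdot\na z_\pm)^h-\na_h\p_3p$ and transport coefficients $z_\mp$ still satisfying $\div z_\mp=0$, $z_\mp^3|_{x_3=\pm\delta}=0$. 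For $\al_h\in(\Z_{\geq0})^2$, $l\geq0$ with $|\al_h|+l\leq N+2$ I would apply $\p_h^{\al_h}\p_3^l$ and invoke Proposition \ref{linearized prop} with $f_\pm=\p_h^{\al_h}\p_3^l\p_3z_\pm^h$; the new source is $\rho_\pm^{(\al_h,l)}=[z_\mp\cdot\na,\p_h^{\al_h}\p_3^l]\p_3z_\pm^h-\p_h^{\al_h}\p_3^l(\p_3z_\mp\cdot\na z_\pm)^h-\na_h\p_h^{\al_h}\p_3^{l+1}p$. The vertical component is not estimated by this scheme but recovered from $\div z_\pm=0$: since $\p_3z_\pm^3=-\na_h\cdot z_\pm^h$, for $l=0$ one has $\na_h^k\p_3z_\pm^3=-\na_h^{k+1}\!\cdot z_\pm^h$ at order $k+1\leq N+3\leq N_*$, controlled by Propositions \ref{al h prop} and \ref{al h l v prop}, while for $l\geq1$ one has $\na_h^k\p_3^l\p_3z_\pm^3=-\na_h^{k+1}\p_3^{l-1}(\p_3z_\pm^h)$ at order $k+l\leq N+2$, controlled by the horizontal estimate just derived (indeed with an extra factor $\delta^2$ to spare).

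Next come the estimates of the nonlinear sources, paired against $\langle u_\mp\rangle^{2(1+\sigma)}\p_h^{\al_h}\p_3^l\p_3z_\pm^h$ and handled term by term as in Step 2 of Propositions \ref{al h prop} and \ref{al h l v prop}: expand by Leibniz, put the factor of highest order in $L^2_tL^2_x$ carrying a flux weight and the remaining factor in $L^\infty$ via Lemma \ref{Sobolev} carrying an energy weight, and use Lemmas \ref{weight 1} and \ref{weight 2} to transfer the spacetime weights wherever products of distant points occur. The decisive structural fact --- the second null structure quoted in the introduction --- enters in the two transport-type pieces of $\rho_\pm^{(\al_h,l)}$: in every Leibniz term in which the $z_\pm$-factor carries all the derivatives, a factor $\p_3z_\pm^h$ can be produced out of $(\p_3z_\mp\cdot\na z_\pm)^h=\p_3z_\mp^i\,\p_iz_\pm^h$ only through the index $i=3$, and then its coefficient is $\p_3z_\mp^3=-\na_h\cdot z_\mp^h$, a purely horizontal derivative of $z_\mp^h$; hence no term of the forbidden type $Q(\p_3z_-^h,\p_3z_+^h)$ ever arises and the two $\p_3z_\pm$-flux norms are never forced to appear simultaneously. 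Together with $N\geq5$ --- which ensures that taking the $L^\infty$-norm (costing two extra derivatives) of a factor of order up to $N+3$ stays within the available $N_*=2N$ derivatives --- every resulting term is bounded by the product structure on the right of \eqref{estimate for al h l v-1}; whenever a $z_\mp$-factor is of high order it is placed in $L^\infty_tL^2_x$ (no spatial Sobolev loss), so that it always ends up inside an $E_\mp$-norm while a $z_\pm$-factor always ends up inside a flux norm.

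The remaining and, I expect, most delicate ingredient is the pressure contribution $\na_h\p_h^{\al_h}\p_3^{l+1}p$, which is genuinely anisotropic and carries the singular $\delta$-weights. I would first exploit the pressure equation \eqref{equation for pressure}, i.e. $\p_3^2p=-\D_hp-\na\cdot(z_+\cdot\na z_-)$, to trade each pair of vertical derivatives of $p$ for a horizontal Laplacian of $p$ and for vertical/horizontal derivatives of the quadratic source $f\eqdef\na\cdot(z_+\cdot\na z_-)$; this rewrites $\p_3^{l+1}p$ as a sum of terms $\D_h^m\p_3^\epsilon p$ with $\epsilon\in\{0,1\}$ together with terms $\D_h^j\p_3^{l-1-2j}f$. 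The $f$-terms are plain products of derivatives of $z_\pm$ and are estimated directly by H\"older and Lemma \ref{Sobolev} (again the $z_\mp$-factor being steered into an $E_\mp$-norm). For the $\D_h^m\p_3^\epsilon p$ terms I would use the explicit representation \eqref{expression of pressure}--\eqref{Green function}: move all horizontal derivatives onto the $y$-variable via \eqref{Green transformation}, split the kernel near/far with a cut-off exactly as for $A_1,A_2$ in Step 2.2 of Proposition \ref{al h prop}, integrate the two derivatives $\p_i\p_j$ by parts against $\div z_\pm=0$ (and, when $\epsilon=1$, also use $\p_3p|_{x_3=\pm\delta}=0$ to kill the boundary term) so as to reduce the kernel singularity to $\delta^{-1}|x_h-y_h|^{-1}$, which is locally integrable in two dimensions, while the far part is absorbed using $\sigma<\f13$ and Lemma \ref{weight 2}; the $y_3$-integration over a slab of width $2\delta$ restores one half power of $\delta$, so the net $\delta$-power is exactly the one compatible with the weights $\delta^{l-\f12}$, just as in Proposition \ref{al h prop}. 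The hard part is precisely keeping track of these $\delta$-powers together with the order count so that every term lands inside the right-hand side of \eqref{estimate for al h l v-1}.

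Finally, assembling the estimate of Proposition \ref{linearized prop}, the bounds on the three pieces of $\rho_\pm^{(\al_h,l)}$, the divergence-free recovery of $\p_3z_\pm^3$, and the inputs furnished by Propositions \ref{al h prop} and \ref{al h l v prop}, and then summing over $|\al_h|+l\leq N+2$ against the weights $\delta^{2(l-\f12)}$, yields \eqref{estimate for al h l v-1} for $z_+$; the estimate for $z_-$ is obtained identically after exchanging the roles of $+$ and $-$.
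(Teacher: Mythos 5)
Your route is genuinely different from the paper's, so let me compare. The paper proves the proposition through the \emph{vorticity}: it applies Proposition~\ref{linearized prop} to $j_+^{(\al_h,l)}=\p_h^{\al_h}\p_3^lj_+$ (equation~\eqref{hv2}), which has the decisive advantage that the pressure gradient disappears entirely; the only nonlinear sources are $\r_{+,1}^{(\al_h,l)}$ (from $\na z_-\wedge\na z_+$) and the commutator $\r_{+,2}^{(\al_h,l)}$, both of which are estimated by Leibniz/H\"older/Sobolev with the same $\delta$-bookkeeping as in Proposition~\ref{al h l v prop} but now with the stronger weight $\delta^{l-\f12}$ and the restricted range $k+l\leq N+2$. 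The full $\p_3z_\pm$ estimate is then recovered from the vorticity bound \eqref{hv29} via the div--curl inequality \eqref{hv1} (Corollary~\ref{div-curl cor}), with the low-order remainder $\delta^{l-\f12}\|\langle u_-\rangle^{1+\sigma}\na_h^kz_+\|$ absorbed through Proposition~\ref{al h prop} since $\delta^{l-\f12}\leq\delta^{-\f12}$; that is where the extra initial-data term $\sum_{k+l\leq N_*}\delta^{2(l-\f12)}E_\pm^{(k,l)}(z_{\pm,0})$ on the right of \eqref{estimate for al h l v-1} comes from. You instead feed $\p_3z_\pm^h$ directly into Proposition~\ref{linearized prop}, recover $\p_3z_\pm^3$ from $\div z_\pm=0$, and are then forced to estimate $\na_h\p_h^{\al_h}\p_3^{l+1}p$, which is precisely what the vorticity formulation is designed to avoid. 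Your remedy---trading $\p_3^2p=-\Delta_hp-f$ and falling back on the Green-function bounds of Corollary~\ref{derivative of Green cor}---is plausible, and your $\delta$-accounting is essentially right (the trade gains one power of $\delta$ per pair of $\p_3$'s, which is exactly what the target weight $\delta^{l-\f12}$ versus the naive $\delta^{l+\f12}$ requires); but it is a substantially larger amount of work than the paper's, since it requires redoing the entire pressure estimate of Proposition~\ref{al h prop} with up to $N+3$ horizontal derivatives and one residual $\p_3$ on $G_\delta$, and the range-counting (which Leibniz factor goes in $L^2$ with a flux weight, which in $L^\infty$ after spending two derivatives by Sobolev, and that everything stays below $N_*=2N$) has to be done afresh. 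One small imprecision to flag: when you integrate the source derivatives by parts in $y$, the boundary terms on $y_3=\pm\delta$ vanish because $z_\pm^3|_{y_3=\pm\delta}=0$, not because of $\p_3p|_{x_3=\pm\delta}=0$; the latter is already encoded in the Green function and plays no further role, and one does \emph{not} integrate the residual $\p_3$ off $G_\delta$ (the kernel is not translation-invariant in $x_3$, so $\p_{x_3}G_\delta\neq-\p_{y_3}G_\delta$). In summary: your approach avoids the div--curl lemma and the detour through $\curl z_\pm$ at the cost of a harder pressure estimate, whereas the paper pays the small price of the vorticity and div--curl machinery to eliminate the pressure altogether.
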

\begin{proof}
To prove the proposition, we only need to modify the proof of Proposition \ref{al h l v prop}.

{\bf Step 1. Estimates of nonlinear terms in  the r.h.s of \eqref{hv3} for $|\al_h|+l\leq N+2$.}

{\it Step 1.1. Estimate of nonlinear term $\int_0^t\int_{\Sigma_\tau}\langle u_-\rangle^{2(1+\sigma)}|z_-^1|\cdot|j_+^{(\al_h,l)}|^2dxd\tau$.}
By virtue of \eqref{hv4}, we have
\beq\label{hv21}\begin{aligned}
&\delta^{2(l-\f12)}\int_0^t\int_{\Sigma_\tau}\langle u_-\rangle^{2(1+\sigma)}|z_-^1|\cdot|j_+^{(\al_h,l)}|^2dxd\tau\\
&\lesssim\sum_{k=0}^2\Bigl(\delta^{-\f12}\bigl(E_-^{(k,0)}(z_-)\bigr)^{\f12}
+\delta^{\f12}\bigl(E_-^{(k,1)}(z_-)\bigr)^{\f12}\Bigr)\cdot\delta^{2(l-\f12)} F_+^{(\al_h,l)}(j_+).
\end{aligned}\eeq

\medskip

{\it Step 1.2. Estimate of term $\int_0^t\int_{\Sigma_\tau}|\r_{+,1}^{(\al_h,l)}|\cdot\langle u_-\rangle^{2(1+\sigma)}|j_+^{(\al_h,l)}|dxd\tau$.}
Thanks to \eqref{hv4a}, we only need to derive the bound of $\|\langle u_-\rangle^{1+\sigma}\langle u_+\rangle^{\f12(1+\sigma)}\r_{+,1}^{(\al_h,l)}\|_{L^2_tL^2_x}$.

For $I^1_{\beta_h,l_1}=|\p_h^{\al_h-\beta_h}\p_3^{l-l_1}\na_hz_-^h|\cdot|\p_h^{\beta_h}\p_3^{l_1}\na_hz_+|$
and $I^2_{\beta_h,l_1}=|\p_h^{\al_h-\beta_h}\p_3^{l-l_1}\na_hz_-|\cdot|\p_h^{\beta_h}\p_3^{l_1}\p_3z_+|$ in \eqref{hv5}, by similar derivation of \eqref{hv7}, we have
%\beno\begin{aligned}
%&\|\langle u_-\rangle^{1+\sigma}\langle u_+\rangle^{\f12(1+\sigma)}|\p_h^{\al_h-\beta_h}\p_3^{l-l_1}\na_hz_-^h|\cdot|\p_h^{\beta_h}\p_3^{l_1}\na_hz_+|\|_{L^2_tL^2_x}\\
%%&
%%\lesssim\|\langle u_+\rangle^{1+\sigma}\na_h^{|\al_h-\beta_h|+1}\p_3^{l-l_1}z_-^h\|_{L^\infty_tL^\infty_x}
%%\cdot\|\f{\langle u_-\rangle^{1+\sigma}}{\langle u_+\rangle^{\f12(1+\sigma)}}\na_h^{|\beta_h|+1}\p_3^{l_1}z_+\|_{L^2_tL^2_x}\\
%&\lesssim\sum_{k_1\leq2}\Bigl(\delta^{-\f12}\bigl(E_-^{(|\al_h-\beta_h|+1+k_1,l-l_1)}(z_-)\bigr)^{\f12}
%+\delta^{\f12}\bigl(E_-^{(|\al_h-\beta_h|+1+k_1,l-l_1+1)}(z_-)\bigr)^{\f12}\Bigr)\cdot\bigl(F_+^{(|\beta_h|+1,l_1)}(z_+)\bigr)^{\f12},
%\end{aligned}\eeno
%and
%\beno\begin{aligned}
%&\delta^{l-\f12}\|\langle u_-\rangle^{1+\sigma}\langle u_+\rangle^{\f12(1+\sigma)}|\p_h^{\al_h-\beta_h}\p_3^{l-l_1}\na_hz_-^h|\cdot|\p_h^{\beta_h}\p_3^{l_1}\na_hz_+|\|_{L^2_tL^2_x}\\
%&\lesssim\sum_{k_1\leq|\al_h|+3}\Bigl(\delta^{l-l_1-\f12}\bigl(E_-^{(k_1,l-l_1)}(z_-)\bigr)^{\f12}
%+\delta^{l-l_1+\f12}\bigl(E_-^{(k_1,l-l_1+1)}(z_-)\bigr)^{\f12}\Bigr)
%\cdot\delta^{l_1-\f12}\bigl(F_+^{(|\beta_h|+1,l_1)}(z_+)\bigr)^{\f12}.
%\end{aligned}\eeno
%Since $N\in\Z_{\geq6}$ and $|\al_h|+l\leq N+2$,  we obtain
\beq\label{hv22}\begin{aligned}
&\delta^{l-\f12}\|\langle u_-\rangle^{1+\sigma}\langle u_+\rangle^{\f12(1+\sigma)}I^1_{\beta_h,l_1}\|_{L^2_tL^2_x}\\
&\lesssim\sum_{k_2+l_2\leq|\al_h|+l+3}\delta^{l_2-\f12}\bigl(E_-^{(k_2,l_2)}(z_-)\bigr)^{\f12}
\cdot\sum_{k_1\leq|\al_h|+1}\delta^{l_1-\f12}\bigl(F_+^{(k_1,l_1)}(z_+)\bigr)^{\f12},
\end{aligned}\eeq
and
\beq\label{hv23}\begin{aligned}
&\delta^{l-\f12}\|\langle u_-\rangle^{1+\sigma}\langle u_+\rangle^{\f12(1+\sigma)}I^2_{\beta_h,l_1}\|_{L^2_tL^2_x}\\
&\lesssim\sum_{k_2+l_2\leq|\al_h|+l+3}\delta^{l_2-\f12}\bigl(E_-^{(k_2,l_2)}(z_-)\bigr)^{\f12}
\cdot\sum_{k_1\leq|\al_h|}\delta^{l_1-\f12}\bigl(F_+^{(k_1,l_1)}(\p_3z_+)\bigr)^{\f12}.
\end{aligned}\eeq

For $I^3_{\beta_h,l_1}=|\p_h^{\al_h-\beta_h}\p_3^{l-l_1}\p_3z_-^h|\cdot|\p_h^{\beta_h}\p_3^{l_1}\na_hz_+|$  in \eqref{hv5}, by similar derivation of \eqref{hv6}, we have
%\beno\begin{aligned}
%&\|\langle u_-\rangle^{1+\sigma}\langle u_+\rangle^{\f12(1+\sigma)}|\p_h^{\al_h-\beta_h}\p_3^{l-l_1}\p_3z_-^h|\cdot|\p_h^{\beta_h}\p_3^{l_1}\na_hz_+|\|_{L^2_tL^2_x}\\
%&\lesssim\bigl(E_-^{(|\al_h-\beta_h|,l-l_1+1)}(z_-)\bigr)^{\f12}
%\cdot\sum_{k_1\leq2}\Bigl(\delta^{-\f12}\bigl(F_+^{(|\beta_h|+1+k_1,l_1)}(z_+)\bigr)^{\f12}
%+\delta^{\f12}\bigl(F_+^{(|\beta_h|+1+k_1,l_1+1)}(z_+)\bigr)^{\f12}\Bigr),
%\end{aligned}\eeno
%and
%\beno\begin{aligned}
%&\delta^{l-\f12}\|\langle u_-\rangle^{1+\sigma}\langle u_+\rangle^{\f12(1+\sigma)}|\p_h^{\al_h-\beta_h}\p_3^{l-l_1}\p_3z_-^h|\cdot|\p_h^{\beta_h}\p_3^{l_1}\na_hz_+|\|_{L^2_tL^2_x}\\
%&\lesssim\delta^{l-l_1-\f12}\bigl(E_-^{(|\al_h-\beta_h|,l-l_1+1)}(z_-)\bigr)^{\f12}\\
%&\qquad
%\cdot\sum_{k_1\leq2}\Bigl(\delta^{l_1-\f12}\bigl(F_+^{(|\beta_h|+1+k_1,l_1)}(z_+)\bigr)^{\f12}
%+\delta^{l_1+\f12}\bigl(F_+^{(|\beta_h|+1+k_1,l_1+1)}(z_+)\bigr)^{\f12}\Bigr).
%\end{aligned}\eeno
%Since $N\in\Z_{\geq6}$ and $|\al_h|+l\leq N+2$,  we obtain
\beq\label{hv24}\begin{aligned}
&\delta^{l-\f12}\|\langle u_-\rangle^{1+\sigma}\langle u_+\rangle^{\f12(1+\sigma)}I^3_{\beta_h,l_1}\|_{L^2_tL^2_x}\\
&\lesssim\sum_{k_2+l_2\leq |\al_h|+l}\delta^{l_2-\f12}\bigl(E_-^{(k_2,l_2)}(\p_3z_-)\bigr)^{\f12}
\cdot\sum_{k_2+l_2\leq|\al_h|+l+3}\delta^{l_2-\f12}\bigl(F_+^{(k_2,l_2)}(z_+)\bigr)^{\f12}.
\end{aligned}\eeq

Thanks  to \eqref{hv22}, \eqref{hv23} and \eqref{hv24},  we obtain the estimate of
$\delta^{l-\f12}\|\langle u_-\rangle^{1+\sigma}\langle u_+\rangle^{\f12(1+\sigma)}\r_{+,1}^{(\al_h,l)}\|_{L^2_tL^2_x}$.
%\beno\begin{aligned}
%&\delta^{l-\f12}\|\langle u_-\rangle^{1+\sigma}\langle u_+\rangle^{\f12(1+\sigma)}\r_{+,1}^{(\al_h,l)}\|_{L^2_tL^2_x}\\
%&\lesssim\Bigl(\sum_{k_1+l_1\leq 2N}\delta^{l_1-\f12}\bigl(E_-^{(k_1,l_1)}(z_-)\bigr)^{\f12}+\sum_{k_1+l_1\leq N+2}\delta^{l_1-\f12}\bigl(E_-^{(k_1,l_1)}(\p_3z_-)\bigr)^{\f12}\Bigr)\\
%&\qquad
%\cdot\Bigl(\sum_{k_1+l_1\leq 2N}\delta^{l_1-\f12}\bigl(F_+^{(k_1,l_1)}(z_+)\bigr)^{\f12}+\sum_{k_1+l_1\leq N+2}\delta^{l_1-\f12}\bigl(F_+^{(k_1,l_1)}(\p_3z_+)\bigr)^{\f12}\Bigr).
%\end{aligned}\eeno
Then using \eqref{hv4a} and inequality \eqref{curl to z},
 for $|\al_h|+l\leq N+2$ and $N\in\Z_{\geq5}$, we get
\beq\label{hv25}\begin{aligned}
&\delta^{2(l-\f12)}\int_0^t\int_{\Sigma_\tau}|\r_{+,1}^{(\al_h,l)}|\cdot\langle u_-\rangle^{2(1+\sigma)}|j_+^{(\al_h,l)}|dxd\tau\\
&
\lesssim\Bigl(\sum_{k_1+l_1\leq 2N}\delta^{l_1-\f12}\bigl(E_-^{(k_1,l_1)}(z_-)\bigr)^{\f12}+\sum_{k_1+l_1\leq N+2}\delta^{l_1-\f12}\bigl(E_-^{(k_1,l_1)}(\p_3z_-)\bigr)^{\f12}\Bigr)\\
&\qquad
\cdot\Bigl(\sum_{k_1+l_1\leq 2N}\delta^{2(l_1-\f12)}F_+^{(k_1,l_1)}(z_+)+\sum_{k_1+l_1\leq N+2}\delta^{2(l_1-\f12)}F_+^{(k_1,l_1)}(\p_3z_+)\Bigr).
\end{aligned}\eeq

{\it Step 1.3. Estimate of term $\int_0^t\int_{\Sigma_\tau}|\r_{+,2}^{(\al_h,l)}|\cdot\langle u_-\rangle^{2(1+\sigma)}|j_+^{(\al_h,l)}|dxd\tau$.}
By virtue of \eqref{hv13}, we only need to control $\delta^{l-\f12}\|\langle u_-\rangle^{1+\sigma}\langle u_+\rangle^{\f12(1+\sigma)}\r_{+,2}^{(\al_h,l)}\|_{L^2_tL^2_x}$.

For $II^1_{\beta_h,l_1}=|\p_h^{\al_h-\beta_h}\p_3^{l-l_1}z_-^h|\cdot|\na_h\p_h^{\beta_h}\p_3^{l_1}j_+|$ and $II^2_{\beta_h,l_1}=|\p_h^{\al_h-\beta_h}\p_3^{l-l_1}z_-^3|\cdot|\p_h^{\beta_h}\p_3^{l_1+1}j_+|$ in \eqref{hv13a}, by similar derivation as that for \eqref{hv14}, we have
\beq\label{hv26}\begin{aligned}
&\delta^{l-\f12}\|\langle u_-\rangle^{1+\sigma}\langle u_+\rangle^{\f12(1+\sigma)}II^1_{\beta_h,l_1}\|_{L^2_tL^2_x}\\
&\lesssim\sum_{k_2+l_2\leq |\al_h|+l+2}\delta^{l_2-\f12}\bigl(E_-^{(k_2,l_2)}(z_-^h)\bigr)^{\f12}\cdot
\delta^{l_1-\f12}\bigl(F_+^{(|\beta_h|+1,l_1)}(j_+)\bigr)^{\f12},
\end{aligned}\eeq
and
\beq\label{hv27}\begin{aligned}
&\delta^{l-\f12}\|\langle u_-\rangle^{1+\sigma}\langle u_+\rangle^{\f12(1+\sigma)}II^2_{\beta_h,l_1}\|_{L^2_tL^2_x}\\
&\lesssim\sum_{k_2+l_2\leq |\al_h|+l+2}\delta^{l_2-\f32}\bigl(E_-^{(k_2,l_2)}(z_-^3)\bigr)^{\f12}\cdot
\delta^{l_1+\f12}\bigl(F_+^{(|\beta_h|,l_1+1)}(j_+)\bigr)^{\f12}\\
&\stackrel{\div\,z_-=0}{\lesssim}\Bigl(\sum_{k_2+l_2\leq |\al_h|+l+2}\delta^{l_2-\f12}\bigl(E_-^{(k_2,l_2)}(z_-^h)\bigr)^{\f12}
+\sum_{k_2\leq |\al_h|+l+2}\delta^{-\f32}\bigl(E_-^{(k_2,0)}(z_-^3)\bigr)^{\f12}\Bigr)\cdot
\delta^{l_1+\f12}\bigl(F_+^{(|\beta_h|,l_1+1)}(j_+)\bigr)^{\f12}.
\end{aligned}\eeq

Thanks to \eqref{hv26} and \eqref{hv27},  we obtain the estimates of $\delta^{l-\f12}\|\langle u_-\rangle^{1+\sigma}\langle u_+\rangle^{\f12(1+\sigma)}\r_{+,2}^{(\al_h,l)}\|_{L^2_tL^2_x}$. Since $|\al_h|+l\leq N+2$, $|\beta_h|+l_1\leq|\al_h|+l-1$ and $N\in \Z_{\geq5}$,   by using \eqref{hv13} and \eqref{curl to z}, we obtain that
\beq\label{hv28}\begin{aligned}
&\delta^{2(l-\f12)}\int_0^t\int_{\Sigma_\tau}|\r_{+,2}^{(\al_h,l)}|\cdot\langle u_-\rangle^{2(1+\sigma)}|j_+^{(\al_h,l)}|dxd\tau\\
&
\lesssim\Bigl(\sum_{k_1+l_1\leq 2N}\delta^{l_1-\f12}\bigl(E_-^{(k_1,l_1)}(z_-)\bigr)^{\f12}+\sum_{k_1\leq 2N-1}\delta^{-\f32}\bigl(E_-^{(k_1,0)}(z_-^3)\bigr)^{\f12}\Bigr)\\
&\qquad\qquad\times\Bigl(\sum_{k_1+l_1\leq 2N}\delta^{2(l_1-\f12)}F_+^{(k_1,l_1)}(z_+)+\sum_{k_1+l_1\leq N+2}\delta^{2(l_1-\f12)}F_+^{(k_1,l_1)}(\p_3z_+)\Bigr).
\end{aligned}\eeq

\medskip

{\bf Step 2. The {\it a priori} estimate of $\na_h^k\p_3^l\p_3z_+$ for $k+l\leq N+2$.} Combining \eqref{hv3}, \eqref{hv21}, \eqref{hv25}, \eqref{hv28} together with \eqref{curl to z}, we obtain  that for $k+l\leq N+2$, it holds
\beq\label{hv29}\begin{aligned}
&\delta^{2(l-\f12)}\Bigl(E_+^{(k,l)}(j_+)+F_+^{(k,l)}(j_+)\Bigr)\lesssim \delta^{2(l-\f12)}E_+^{(k,l)}(\curl z_{+,0})+\Bigl(\sum_{k_1+l_1\leq 2N}\delta^{l_1-\f12}\bigl(E_-^{(k_1,l_1)}(z_-)\bigr)^{\f12}\\
&\qquad+\sum_{k_1\leq 2N-1}\delta^{-\f32}\bigl(E_-^{(k_1,0)}(z_-^3)\bigr)^{\f12}+\sum_{k_1+l_1\leq N+2}\delta^{l_1-\f12}\bigl(E_-^{(k_1,l_1)}(\p_3z_-)\bigr)^{\f12}\Bigr)\\
&\qquad\times\Bigl(\sum_{k_1+l_1\leq 2N}\delta^{2(l_1-\f12)}F_+^{(k_1,l_1)}(z_+)
+\sum_{k_1+l_1\leq N+2}\delta^{2(l_1-\f12)}F_+^{(k_1,l_1)}(\p_3z_+)\Bigr).
\end{aligned}\eeq

Thanks to \eqref{hv29} and \eqref{hv1}, by using \eqref{estimate for al h} and \eqref{curl to z}, we obtain the desired inequality \eqref{estimate for al h l v-1} for $z_+$. Similar estimate holds for $z_-$. Then it ends the proof of the proposition.
\end{proof}

\subsection{Proof of main uniform {\it a priori}  estimates and Theorem \ref{global existence in thin domain}}
The existence part of Theorem \ref{global existence in thin domain} follows the standard method of continuity. We shall only prove the uniform energy estimate \eqref{total energy estimates}.

{\bf Step 1. Ansatz and closure of the continuity argument.} To use the method of continuity, we first need the following ansatz. We assume that
\beq\label{ansatz 1}
\|z_\pm^1\|_{L^\infty_t L^\infty_x}\leq 1,
\eeq
and
\beq\label{ansatz 2}\begin{aligned}
&\delta^{2(l-\f12)}E_\pm^{(k,l)}(\zpm)\leq 2C_1\varepsilon^2,\quad \delta^{2(l-\f12)}F_{\pm}^{(k,l)}(\zpm)\leq 2C_1\varepsilon^2,\quad\text{for}\quad k+l\leq N_*,\\
&\delta^{-3}E_\pm^{(k,0)}(\zpm^3)\leq 2C_1\varepsilon^2,\quad \delta^{-3}F_\pm^{(k,0)}(\zpm^3)\leq 2C_1\varepsilon^2\quad\text{for}\quad k\leq N_*-1,\\
&\delta^{2(l-\f12)}E_{\pm}^{(k,l)}(\p_3\zpm)\leq 2C_1\varepsilon^2,\quad \delta^{2(l-\f12)}F_{\pm}^{(k,l)}(\p_3\zpm)\leq 2C_1\varepsilon^2,\quad\text{for}\quad k+l\leq N+2,
\end{aligned}\eeq
where $C_1$ would be determined by the energy estimates.

To close the continuity argument, we need to prove that there exists a small enough $\varepsilon_0$ such that for all $\varepsilon\leq \varepsilon_0$ the constant $2$ in \eqref{ansatz 2} can be improved to  be $1$, i.e.,
\beno\begin{aligned}
&\delta^{2(l-\f12)}E_\pm^{(k,l)}(\zpm)\leq C_1\varepsilon^2,\quad \delta^{2(l-\f12)}F_{\pm}^{(k,l)}(\zpm)\leq C_1\varepsilon^2,\quad\text{for}\quad k+l\leq N_*,\\
&\delta^{-3}E_\pm^{(k,0)}(\zpm^3)\leq C_1\varepsilon^2,\quad \delta^{-3}F_\pm^{(k,0)}(\zpm^3)\leq C_1\varepsilon^2\quad\text{for}\quad k\leq N_*-1,\\
&\delta^{2(l-\f12)}E_{\pm}^{(k,l)}(\p_3\zpm)\leq C_1\varepsilon^2,\quad \delta^{2(l-\f12)}F_{\pm}^{(k,l)}(\p_3\zpm)\leq C_1\varepsilon^2,\quad\text{for}\quad k+l\leq N+2.
\end{aligned}\eeno
For $\varepsilon\leq \varepsilon_0$, we also improve the ansatz \eqref{ansatz 1} to
$
\|z_\pm^1\|_{L^\infty_t L^\infty_x}\leq \f12.
$

Actually, by Sobolev inequality, we have
\beno
\|z_\pm^1\|_{L^\infty_t L^\infty_x}\leq C_2\sum_{k+l\leq 2}\Bigl(\delta^{l-\f12}\bigl(E_\pm^{(k,l)}(z_\pm)\bigr)^{\f12}\leq 6\sqrt{C_1}C_2\varepsilon_0.
\eeno
Then by taking $\varepsilon_0$ small enough, we could prove that $\|z_\pm^1\|_{L^\infty_t L^\infty_x}\leq \f12$.

\medskip

{\bf Step 2. The uniform {\it a priori} estimate under the ansatz \eqref{ansatz 1} and \eqref{ansatz 2}.}
Since $\div\,z_\pm=0$ and $z_+^3|_{x_3=\pm\delta}=0$, $z_-^3|_{x_3=\pm\delta}=0$,  we have
\beno
z_\pm^3(t,x_h,x_3)=\int_{-\delta}^{x_3}(\p_3z_\pm^3)(t,x_h,s)ds=-\int_{-\delta}^{x_3}(\na_h\cdot z_\pm^h)(t,x_h,s)ds.
\eeno
which implies
\beno
|\na_h^kz_\pm^3(t,x)|\leq\int_{-\delta}^\delta|\na_h^{k+1} z_\pm^h(t,x_h,x_3)|dx_3,\quad\text{for any}\quad k\in\Z_{\geq0}.
\eeno
Then by H\"older inequality,  for any $k\in\Z_{\geq0}$, we obtain that
\beq\label{estimate for z3}
E_\pm^{(k,0)}(\zpm^3)\lesssim\delta E_\pm^{(k+1,0)}(\zpm^h),\quad F_\pm^{(k,0)}(\zpm^3)\lesssim\delta F_\pm^{(k+1,0)}(\zpm^h).
\eeq

Thanks to \eqref{estimate for z3}, \eqref{estimate for al h} of Proposition \ref{al h prop}, \eqref{estimate for al h l v} of Proposition \ref{al h l v prop} and \eqref{estimate for al h l v-1} of Proposition \ref{al h l v-1 prop}, we obtain that
\beno\begin{aligned}
&\mathcal{E}(t^*)\eqdefa\sum_{+,-}\Bigl(\sum_{k+l\leq N_*}\delta^{2(l-\f12)}\bigl[E_\pm^{(k,l)}(\zpm)+F_\pm^{(k,l)}(\zpm)\bigr]+\sum_{k\leq N_*-1}\delta^{-3}\bigl[E_\pm^{(k,0)}(\zpm^3)+F_\pm^{(k,0)}(\zpm^3)\bigr]\\
&\qquad+\sum_{k+l\leq N+2}\delta^{2(l-\f12)}\bigl[E_{\pm}^{(k,l)}(\p_3\zpm)+F_{\pm}^{(k,l)}(\p_3\zpm)\bigr]\Bigr)\\
&\leq C_3\mathcal{E}(0)+C_3\sum_{+,-}\Bigl[\Bigl(\sum_{k+l\leq N_*}\delta^{l-\f12}\bigl(E_\mp^{(k,l)}(z_\mp)\bigr)^{\f12}+\sum_{k\leq N_*-1}\delta^{-\f32}\bigl(E_\mp^{(k,0)}(z_\mp^3)\bigr)^{\f12}+\sum_{k+l\leq N+2}\delta^{l-\f12}\bigl(E_\mp^{(k,l)}(\p_3z_\mp)\bigr)^{\f12}\Bigr)\\
&\quad
\times\Bigl(\sum_{k+l\leq N_*}\delta^{2(l-\f12)}F_\pm^{(k,l)}(z_\pm)+\sum_{k\leq N_*-1}\delta^{-3}F_\pm^{(k,0)}(z_\pm^3)
+\sum_{k+l\leq N+2}\delta^{2(l-\f12)}F_\pm^{(k,l)}(\p_3z_\pm)\Bigr)\Bigr].
\end{aligned}\eeno
Using \eqref{ansatz 2}, we could obtain that
\beno
\mathcal{E}(t^*)\leq C_3\mathcal{E}(0)+C_3C_4C_1\varepsilon\mathcal{E}(t^*).
\eeno
Taking $\varepsilon_0\leq\f{1}{2C_3C_4C_1}$, we obtain for any $\varepsilon\leq\varepsilon_0$, $t^*<\infty$
\beno
\mathcal{E}(t^*)\leq 2C_3\mathcal{E}(0)\leq C_1\varepsilon^2,
\eeno
where we choose $C_1\geq 2C_3$.   Then Theorem \ref{global existence in thin domain} is proved.

\section{Proof of Theorem \ref{stability}}
In this section, we give the proof to Theorem \ref{stability}. To do that, we first consider the following linearized system
\begin{equation}\label{Linerized system for 2D}
\begin{aligned}
&\p_t f_+-\p_1f_++\bar{z}_-^h\cdot\na_h f_+=\r_+,\quad\text{in}\quad\Omega_\delta,\\
&\p_t f_-+\p_1f_-+\bar{z}_+^h\cdot\na_h f_-=\r_-.
\end{aligned}
\end{equation}
Note that $\div_h\bar{z}_\pm^h=\na_h\cdot\bar{z}_\pm^h=0$. Thanks to the proof of Proposition \ref{linearized prop}, we obtain the following proposition.
\begin{proposition}\label{linearized prop for 2D}
Assume that $\div_h\,\bar{z}_\pm^h=0$ and it holds
\beq\label{assumption 1a}
\|\bar{z}_\pm^1\|_{L^\infty_tL^\infty_h}\leq 1.
\eeq
Then for any  smooth solutions  $(f_+,f_-)$ to \eqref{Linerized system}, we have
\beq\label{estimate for linearized system 2D}\begin{aligned}
&\sup_{0\leq\tau\leq t}\int_{\Sigma_{\tau,h}}\langle u_\mp\rangle^{2(1+\sigma)}|f_\pm|^2dx_h+\int_0^t\int_{\Sigma_{\tau,h}}\f{\langle u_\mp\rangle^{2(1+\sigma)}}{\langle u_\pm\rangle^{1+\sigma}}|f_\pm|^2dx_hd\tau\\
&\lesssim\int_{\Sigma_{0,h}}\langle u_\mp\rangle^{2(1+\sigma)}|f_\pm|^2dx_h+\int_0^t\int_{\Sigma_{\tau,h}}\langle u_\mp\rangle^{1+2\sigma}|\bar{z}_\mp^1||f_\pm|^2dx_hd\tau+\Bigl|\int_0^t\int_{\Sigma_{\tau,h}}\r_\pm\cdot\langle u_\mp\rangle^{2(1+\sigma)}f_\pm dx_hd\tau\Bigr|\\
&\qquad
+\int_{\R}\f{1}{\langle u_\pm\rangle^{1+\sigma}}\Bigl|\int\int_{W_{t,h}^{[u_+,\infty]}/(W_{t,h}^{[-\infty,u_-]})}\r_\pm\cdot\langle u_\mp\rangle^{2(1+\sigma)}f_\pm dx_hd\tau\Bigr| d{u_\pm}.
\end{aligned}\eeq
In particular, it holds
\beq\label{estimate for linearized system 0 2D}\begin{aligned}
&\sup_{0\leq\tau\leq t}\int_{\Sigma_{\tau,h}}\langle u_\mp\rangle^{2(1+\sigma)}|f_\pm|^2dx_h+\int_0^t\int_{\Sigma_{\tau,h}}\f{\langle u_\mp\rangle^{2(1+\sigma)}}{\langle u_\pm\rangle^{1+\sigma}}|f_\pm|^2dx_hd\tau\\
&\lesssim\int_{\Sigma_{0,h}}\langle u_\mp\rangle^{2(1+\sigma)}|f_\pm|^2dx_h+\int_0^t\int_{\Sigma_{\tau,h}}\langle u_\mp\rangle^{1+2\sigma}|\bar{z}_\mp^1||f_\pm|^2dx_hd\tau+\int_0^t\int_{\Sigma_{\tau,h}}|\r_\pm|\cdot\langle u_\mp\rangle^{2(1+\sigma)}|f_\pm| dx_hd\tau.
\end{aligned}\eeq
\end{proposition}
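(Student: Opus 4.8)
The plan is to run the energy argument of Proposition~\ref{linearized prop} essentially verbatim, with the domain $\Omega_\delta$ replaced by $\R^2$ and the three-dimensional foliations replaced by their two-dimensional counterparts $\Sigma_{t,h}$, $C_{u_\pm,h}^\pm$, $W_{t,h}^{[u_+,\infty]}$ introduced earlier. The one structural fact the proof of Proposition~\ref{linearized prop} uses is that the transport vector fields are divergence free for the Euclidean metric of spacetime; here, since $\bar z_\pm^h=\bar z_\pm^h(t,x_h)$ depends only on the horizontal variables and $\na_h\cdot\bar z_\pm^h=0$, the fields
\[
L_-=\p_t-\p_1+\bar z_-^h\cdot\na_h,\qquad L_+=\p_t+\p_1+\bar z_+^h\cdot\na_h,\qquad T=\p_t
\]
on $[0,t^*]\times\R^2_{(t,x_h)}$ satisfy $\widetilde\div\,L_\pm=0$ because $\widetilde\div\,L_\mp=\na_h\cdot\bar z_\mp^h=0$. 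Note that the two-dimensional situation is in fact strictly \emph{simpler}: there is no top/bottom boundary $\{x_3=\pm\delta\}$, so the hypotheses $z_\pm^3|_{x_3=\pm\delta}=0$ used in Proposition~\ref{linearized prop} have no analogue here and nothing needs to be checked at a lateral boundary.

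First I would treat $f_+$. Multiplying the first equation of \eqref{Linerized system for 2D} by $\langle u_-\rangle^{2(1+\sigma)}f_+$ and integrating over $W_{t,h}=\cup_{\tau\in[0,t]}\Sigma_{\tau,h}$, one rewrites (using $u_-=x_1+t$) the left-hand side as $\frac12\int\!\!\int_{W_{t,h}}L_-\bigl(\langle u_-\rangle^{2(1+\sigma)}|f_+|^2\bigr)\,dx_hd\tau$ minus the commutator $\frac12\int\!\!\int_{W_{t,h}}\bar z_-^1\,\p_1\bigl(\langle u_-\rangle^{2(1+\sigma)}\bigr)|f_+|^2\,dx_hd\tau$. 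Since $\widetilde\div\,L_-=0$, Stokes' formula turns the first integral into $\frac12\int_{\Sigma_{t,h}}\langle u_-\rangle^{2(1+\sigma)}|f_+|^2\langle L_-,T\rangle\,dx_h-\frac12\int_{\Sigma_{0,h}}\langle u_-\rangle^{2(1+\sigma)}|f_+|^2\langle L_-,T\rangle\,dx_h$ with no lateral contribution, and $\langle L_-,T\rangle=1$; this produces the two-dimensional analogue of \eqref{L1}.

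Next I would derive the local (flux) estimate. Integrating the same identity over $W_{t,h}^{[u_+,\infty]}$, the boundary now consists of $\Sigma_{t,h}^{[u_+,\infty]}$, $\Sigma_{0,h}^{[u_+,\infty]}$ and the characteristic hypersurface $C_{u_+,h}^+$, whose unit outward normal in $\R^3_{(t,x_h)}$ is $\nu^+=(1,-1,0)^T$; since $L_-=(1,-1+\bar z_-^1,\bar z_-^2)^T$ one gets $\langle L_-,\nu^+\rangle=2-\bar z_-^1\ge 1$ by \eqref{assumption 1a}, so the flux term on $C_{u_+,h}^+$ carries a favourable sign. Multiplying the resulting inequality by $\langle u_+\rangle^{-(1+\sigma)}$, integrating in $u_+$ over $\R$, using $d\sigma_+=\sqrt2\,d\tau\,dx_2$ and the measure-preserving change of variables $(\tau,x_1,x_2)\mapsto(\tau,u_+,x_2)$, $u_+=x_1-\tau$, $\det(d\Phi)=1$, exactly as in the derivation of \eqref{coordinate transform}, one recovers $\int_0^t\int_{\Sigma_{\tau,h}}\langle u_+\rangle^{-(1+\sigma)}\langle u_-\rangle^{2(1+\sigma)}|f_+|^2$ on the left. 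Adding this to the $\Sigma_{t,h}$-identity and using $|\p_1(\langle u_-\rangle^{2(1+\sigma)})|\lesssim\langle u_-\rangle^{1+2\sigma}$ gives \eqref{estimate for linearized system 2D} for $f_+$. The bound for $f_-$ is obtained symmetrically (interchange $u_-\leftrightarrow u_+$ and $+\leftrightarrow-$, with $\nu^-=(1,1,0)^T$), and \eqref{estimate for linearized system 0 2D} follows at once because $\int_\R\langle u_\pm\rangle^{-(1+\sigma)}\,du_\pm<\infty$.

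There is no genuine obstacle here: the argument is a line-by-line transcription of the proof of Proposition~\ref{linearized prop}. The only point requiring any care is the book-keeping of the spacetime geometry — checking that in $[0,t^*]\times\R^2$ the regions $W_{t,h}$, $W_{t,h}^{[u_+,\infty]}$, the normals $T$, $\nu^\pm$, and the Jacobian of $(\tau,x_1,x_2)\mapsto(\tau,u_+,x_2)$ play exactly the roles of their three-dimensional analogues, which they do. Accordingly I would present the proof as ``identical to that of Proposition~\ref{linearized prop}, with $\Omega_\delta$ replaced by $\R^2$, the $x_3$-integration and the boundary terms at $x_3=\pm\delta$ omitted, and \eqref{assumption 1} replaced by \eqref{assumption 1a},'' spelling out only the two displays corresponding to \eqref{L1} and the flux bound.
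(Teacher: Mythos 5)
Your proposal is correct and follows the same route the paper takes: the paper's proof of Proposition~\ref{linearized prop for 2D} consists of the single observation that the argument of Proposition~\ref{linearized prop} carries over verbatim, which is exactly what you carry out, checking that $\widetilde{\div}\,L_\pm=\na_h\cdot\bar z_\mp^h=0$, that $\langle L_-,\nu^+\rangle=2-\bar z_-^1\geq 1$, that the Jacobian of $(\tau,x_1,x_2)\mapsto(\tau,u_+,x_2)$ is $1$, and that the absence of a top/bottom boundary removes the only place the hypothesis $z^3_\pm|_{x_3=\pm\delta}=0$ was used.
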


With Proposition \ref{linearized prop for 2D}, we now present the proof of  Theorem \ref{stability}.

\medskip

{\bf Proof of Theorem \ref{stability}.}  We shall divide the proof into several steps.

{\bf Step 1. Linearized system of $w_\pm^h$.} For any $\al_h=(\al_1,\al_2)\in(\Z_{\geq0})^2$, we set
\beno
w_\pm^{h(\al_h)}\eqdefa\p_h^{\al_h}w_\pm^h=\p_1^{\al_1}\p_2^{\al_2}w_\pm^h.
\eeno
Applying $\p_h^{\al_h}$ to both sides of equations for $w_\pm^h$ in \eqref{equation for remainder}, we have
\beq\label{rem1}\begin{aligned}
&\p_tw_+^{h(\al_h)}-\p_1w_+^{h(\al_h)}+\bar{z}_-^h\cdot\na_hw_+^{h(\al_h)}=\r_+^{h(\al_h)},\\
&\p_tw_-^{h(\al_h)}+\p_1w_-^{h(\al_h)}+\bar{z}_+^h\cdot\na_hw_-^{h(\al_h)}=\r_-^{h(\al_h)},
\end{aligned}\eeq
where
%$\r_\pm^{(\al_h)}=(\r_\pm^{h(\al_h),\r_\pm^{3(\al_h)}})^T$ with
\beno\begin{aligned}
&\r_\pm^{h(\al_h)}=\underbrace{-(I-M_\delta)(\p_h^{\al_h}\na_hp)}_{\r_{\pm,1}^{h(\al_h)}}\underbrace{-(I-M_\delta)[\p_h^{\al_h}(w_\mp^h\cdot\na_h z_\pm^h)]}_{\r_{\pm,2}^{h(\al_h)}}\\
&\qquad\quad\underbrace{-(I-M_\delta)[\p_h^{\al_h}(w_\mp^3\p_3 z_\pm^h)]}_{\r_{\pm,3}^{h(\al_h)}}
\underbrace{-\p_h^{\al_h}(\bar{z}_\mp^h\cdot\na_hw_\pm^h)+\bar{z}_\mp^h\cdot\na_h\p_h^{\al_h}w_\pm^h}_{\r_{\pm,4}^{h(\al_h)}}.
%&\r_\pm^{3(\al_h)}=\underbrace{-\p_h^{\al_h}\p_3p}_{\r_{\pm,1}^{3(\al_h)}}
%\underbrace{-\p_h^{\al_h}(w_\mp^h\cdot\na_hz_\pm^3)+\p_h^{\al_h}(w_\mp^3\na_h\cdot z_\pm^h)}_{\r_{\pm,2}^{3(\al_h)}}
%\underbrace{-\p_h^{\al_h}(\bar{z}_\mp^h\cdot\na_hw_\pm^3)+\bar{z}_\mp^h\cdot\na_h\p_h^{\al_h}w_\pm^3}_{\r_{\pm,3}^{3(\al_h)}}.
\end{aligned}\eeno

We shall only give the estimates for $w_+^{h(\al_h)}$. Applying Proposition \ref{linearized prop for 2D} to the first equation of \eqref{rem1},  for any $x_3\in(-\delta,\delta)$, we have
\beq\label{rem2}\begin{aligned}
&\sup_{0\leq\tau\leq t}\int_{\Sigma_{\tau,h}}\langle u_-\rangle^{2(1+\sigma)}|w_+^{h(\al_h)}(\cdot,x_3)|^2dx_h+\int_0^t\int_{\Sigma_{\tau,h}}\f{\langle u_-\rangle^{2(1+\sigma)}}{\langle u_+\rangle^{1+\sigma}}|w_+^{h(\al_h)}(\cdot,x_3)|^2dx_hd\tau\\
&\lesssim\int_{\Sigma_{0,h}}\langle u_-\rangle^{2(1+\sigma)}|w_+^{h(\al_h)}(\cdot,x_3)|^2dx_h+\int_0^t\int_{\Sigma_{\tau,h}}\langle u_-\rangle^{1+2\sigma}|\bar{z}_-^1||w_+^{h(\al_h)}(\cdot,x_3)|^2dx_hd\tau\\
&\qquad+\int_0^t\int_{\Sigma_{\tau,h}}|\r_+^{h(\al_h)}(\cdot,x_3)|\cdot\langle u_-\rangle^{2(1+\sigma)}|w_+^{h(\al_h)}(\cdot,x_3)| dx_hd\tau.
\end{aligned}\eeq
%and
%\beq\label{rem22}\begin{aligned}
%&\sup_{0\leq\tau\leq t}\int_{\Sigma_{\tau,h}}\langle u_-\rangle^{2(1+\sigma)}|w_+^{3(\al_h)}(\cdot,x_3)|^2dx_h+\int_0^t\int_{\Sigma_{\tau,h}}\f{\langle u_-\rangle^{2(1+\sigma)}}{\langle u_+\rangle^{1+\sigma}}|w_+^{3(\al_h)}(\cdot,x_3)|^2dx_hd\tau\\
%&\lesssim\int_{\Sigma_{0,h}}\langle u_-\rangle^{2(1+\sigma)}|w_+^{3(\al_h)}(\cdot,x_3)|^2dx_h+\int_0^t\int_{\Sigma_{\tau,h}}\langle u_-\rangle^{1+2\sigma}|\bar{z}_-^1||w_+^{3(\al_h)}(\cdot,x_3)|^2dx_hd\tau\\
%&\qquad+\int_0^t\int_{\Sigma_{\tau,h}}|\r_+^{3(\al_h)}(\cdot,x_3)|\cdot\langle u_-\rangle^{2(1+\sigma)}|w_+^{3(\al_h)}(\cdot,x_3)| dx_hd\tau.
%\end{aligned}\eeq

\medskip

{\bf Step 2. Estimates of the nonlinear terms in the r.h.s of \eqref{rem2}.}

{\it Step 2.1. Estimate of the second terms on the r.h.s of \eqref{rem2}.} Using H\"older and Sobolev inequalities, we have
\beq\label{rem3}
\int_0^t\int_{\Sigma_{\tau,h}}\langle u_-\rangle^{1+2\sigma}|\bar{z}_-^1||w_+^{h(\al_h)}(\cdot,x_3)|^2dx_hd\tau\lesssim\sum_{k\leq 2}\bigl(E_{-,h}^{(k)}(\bar{z}_-^1)\bigr)^{\f12}\cdot F_{+,h}^{(\al_h)}(w_+^h(\cdot,x_3)).
\eeq
%and
%\beq\label{rem23}
%\Bigl|\int_0^t\int_{\Sigma_{\tau,h}}\langle u_-\rangle^{1+2\sigma}|\bar{z}_-^1||w_+^{3(\al_h)}(\cdot,x_3)|^2dx_hd\tau\Bigr|
%\lesssim\sum_{k\leq 2}\bigl(E_{-,h}^{(k)}(\bar{z}_-^1)\bigr)^{\f12}F_{+,h}^{(\al_h)}(w_+^{3}(\cdot,x_3)).
%\eeq

{\it Step 2.2. Estimate of the last terms on the r.h.s of \eqref{rem2}.} By H\"older inequality, we first have
\beq\label{rem4}\begin{aligned}
&\int_0^t\int_{\Sigma_{\tau,h}}|\r_+^{h(\al_h)}(\cdot,x_3)|\cdot\langle u_-\rangle^{2(1+\sigma)}|w_+^{h(\al_h)}(\cdot,x_3)| dx_hd\tau\\
&
\lesssim\|\langle u_+\rangle^{\f12(1+\sigma)}\langle u_-\rangle^{1+\sigma}\r_+^{h(\al_h)}(\cdot,x_3)\|_{L^2_tL^2_h}\bigl(F_{+,h}^{(\al_h)}(w_+^h(\cdot,x_3))\bigr)^{\f12}.
\end{aligned}\eeq
%and
%\beq\label{rem24}\begin{aligned}
%&\quad\int_0^t\int_{\Sigma_{\tau,h}}|\r_+^{3(\al_h)}(\cdot,x_3)|\cdot\langle u_-\rangle^{2(1+\sigma)}|w_+^{3(\al_h)}(\cdot,x_3)| dx_hd\tau\\
%&
%\lesssim\|\langle u_+\rangle^{\f12(1+\sigma)}\langle u_-\rangle^{1+\sigma}\r_+^{3(\al_h)}(\cdot,x_3)\|_{L^2_tL^2_h}
%\bigl(F_{+,h}^{(\al_h)}(w_+^{3}(\cdot,x_3))\bigr)^{\f12}.
%\end{aligned}\eeq
We only need to control $\|\langle u_+\rangle^{\f12(1+\sigma)}\langle u_-\rangle^{1+\sigma}\r_+^{h(\al_h)}(\cdot,x_3)\|_{L^2_tL^2_h}$.
% and $\|\langle u_+\rangle^{\f12(1+\sigma)}\langle u_-\rangle^{1+\sigma}\r_+^{3(\al_h)}(\cdot,x_3)\|_{L^2_tL^2_h}$.

{\it Step 2.2.1. Estimate of $\|\langle u_+\rangle^{\f12(1+\sigma)}\langle u_-\rangle^{1+\sigma}\r_{+,1}^{h(\al_h)}(\cdot,x_3)\|_{L^2_tL^2_h}$.}
%and $\|\langle u_+\rangle^{\f12(1+\sigma)}\langle u_-\rangle^{1+\sigma}\r_{+,1}^{3(\al_h)}(\cdot,x_3)\|_{L^2_tL^2_h}$.
%We first derive the estimate of $\|\langle u_+\rangle^{\f12(1+\sigma)}\langle u_-\rangle^{1+\sigma}\r_{+,1}^{3(\al_h)}(\cdot,x_3)\|_{L^2_tL^2_h}$.
For $\r_{+,1}^{h(\al_h)}(\cdot,x_3)$, since
\beq\label{rem15}
|(I-M_\delta)f(x)|=|\f{1}{2\delta}\int_{-\delta}^\delta(f(x_h,x_3)-f(x_h,y_3))dy_3|\leq\delta\|\p_3f(x_h,\cdot)\|_{L^\infty_v},
\eeq
we have
\beq\label{rem6}\begin{aligned}
&\|\langle u_+\rangle^{\f12(1+\sigma)}\langle u_-\rangle^{1+\sigma}\r_{+,1}^{h(\al_h)}(\cdot,x_3)\|_{L^2_tL^2_h}\\
&=\|(I-M_\delta)\bigl(\langle u_+\rangle^{\f12(1+\sigma)}\langle u_-\rangle^{1+\sigma}\p_h^{\al_h}\na_hp(\cdot,x_3)\bigr)\|_{L^2_tL^2_h}\\
&\leq\delta\|\langle u_+\rangle^{\f12(1+\sigma)}\langle u_-\rangle^{1+\sigma}\na_h^{|\al_h|+1}\p_3p\|_{L^2_tL^2_hL^\infty_v}.
\end{aligned}\eeq
Thanks to \eqref{expression of pressure},  for any $\al_h\in(\Z_{\geq0})^2$, we have
\beno
\p_h^{\al_h}\p_3p(\tau,x)=\int_{\Omega_\delta}\p_3G_\delta(x,y)\p_h^{\al_h}(\p_iz_+^j\p_jz_-^i)(\tau,y)dy.
\eeno
Let $\theta(r)$ be the smooth cut-off function defining in Step 2.1 of the proof to Proposition \ref{al h prop}. Using \eqref{Green transformation} and integrating by parts, we have
\beno\begin{aligned}
\p_h^{\al_h}\p_3p(\tau,x)&=\underbrace{\int_{\Omega_\delta}\p_3G_\delta(x,y)\theta(|x_h-y_h|)\p_h^{\al_h}(\p_iz_+^j\p_jz_-^i)(\tau,y)dy}_{A_1(\tau,x)}\\
&\quad+\underbrace{\int_{\Omega_\delta}\p_3G_\delta(x,y)\bigl(1-\theta(|x_h-y_h|)\bigr)\p_h^{\al_h}(\p_iz_+^j\p_jz_-^i)(\tau,y)dy}_{A_2(\tau,x)}.
\end{aligned}\eeno

(i) Estimate of $\|\langle u_+\rangle^{\f12(1+\sigma)}\langle u_-\rangle^{1+\sigma}A_1(\tau,x)\|_{L^2_tL^2_hL^\infty_v}$. Firstly, by virtue of \eqref{derivative of Green function} and \eqref{x-y<2}, we have
\beno\begin{aligned}
&\quad\langle u_+(\tau,x_1)\rangle^{\f12(1+\sigma)}\langle u_-(\tau,x_1)\rangle^{1+\sigma}|A_1(\tau,x)|\\
&\lesssim\f{1}{\delta}\int_{-\delta}^\delta\int_{|x_h-y_h|\leq 2}\f{1}{|x_h-y_h|}\bigl(\langle u_+\rangle^{\f12(1+\sigma)}\langle u_-\rangle^{1+\sigma}|\p_h^{\al_h}(\p_iz_+^j\p_jz_-^i)|\bigr)(\tau,y)dy_hdy_3.
\end{aligned}\eeno
Using Young inequality for the horizontal variables $x_h$ and then Sobolev inequality for the vertical variable $x_3$, we have
\beq\label{rem7}\begin{aligned}
&\quad\|\langle u_+\rangle^{\f12(1+\sigma)}\langle u_-\rangle^{1+\sigma}A_1(\tau,x)\|_{L^2_tL^2_hL^\infty_v}\\
&\lesssim\|\f{1}{|x_h|}\|_{L^1_h(|x_h|\leq2)}\|\langle u_+\rangle^{\f12(1+\sigma)}\langle u_-\rangle^{1+\sigma}\p_h^{\al_h}(\p_iz_+^j\p_jz_-^i)\|_{L^2_tL^2_hL^\infty_v}\\
%&\lesssim\|\langle u_+\rangle^{\f12(1+\sigma)}\langle u_-\rangle^{1+\sigma}\p_h^{\al_h}(\p_iz_+^j\p_jz_-^i)\|_{L^2_tL^2_hL^\infty_v}\\
&\lesssim\sum_{l\leq1}\delta^{l-\f12}\|\langle u_+\rangle^{\f12(1+\sigma)}\langle u_-\rangle^{1+\sigma}\p_h^{\al_h}\p_3^l(\p_iz_+^j\p_jz_-^i)\|_{L^2_tL^2_x}.
\end{aligned}\eeq

Thanks to H\"older inequality and the condition $\div\,z_\pm=0$, we have
\beno\begin{aligned}
&\quad\|\langle u_+\rangle^{\f12(1+\sigma)}\langle u_-\rangle^{1+\sigma}\p_h^{\al_h}(\p_iz_+^j\p_jz_-^i)\|_{L^2_tL^2_x}\\
&\lesssim\sum_{\beta_h\leq\al_h}\|\langle u_+\rangle^{1+\sigma}\p_h^{\beta_h}\p_jz_-^i
\cdot\f{\langle u_-\rangle^{1+\sigma}}{\langle u_+\rangle^{\f12(1+\sigma)}}\p_h^{\al_h-\beta_h}\p_iz_+^j\|_{L^2_tL^2_x}\\
&\lesssim\sum_{\beta_h\leq\al_h}\Bigl(\|\langle u_+\rangle^{1+\sigma}\p_h^{\beta_h}\na_hz_-^h\|_{L^\infty_tL^\infty_x}
\|\f{\langle u_-\rangle^{1+\sigma}}{\langle u_+\rangle^{\f12(1+\sigma)}}\p_h^{\al_h-\beta_h}\na_hz_+^h\|_{L^2_tL^2_x}\\
&\quad+\|\langle u_+\rangle^{1+\sigma}\p_h^{\beta_h}\p_3z_-^h\|_{L^\infty_tL^\infty_x}
\|\f{\langle u_-\rangle^{1+\sigma}}{\langle u_+\rangle^{\f12(1+\sigma)}}\p_h^{\al_h-\beta_h}\na_hz_+^3\|_{L^2_tL^2_x}\\
&\quad+\|\langle u_+\rangle^{1+\sigma}\p_h^{\beta_h}\na_hz_-^3\|_{L^\infty_tL^\infty_x}
\|\f{\langle u_-\rangle^{1+\sigma}}{\langle u_+\rangle^{\f12(1+\sigma)}}\p_h^{\al_h-\beta_h}\p_3z_+^h\|_{L^2_tL^2_x}\Bigr).
\end{aligned}\eeno
By Sobolev inequality \eqref{s2}, we have
\beno\begin{aligned}
&\quad\delta^{-\f12}\|\langle u_+\rangle^{\f12(1+\sigma)}\langle u_-\rangle^{1+\sigma}\p_h^{\al_h}(\p_iz_+^j\p_jz_-^i)\|_{L^2_tL^2_x}\\
&\lesssim\sum_{k+l\leq |\al_h|+3}\delta^{l-\f12}\bigl(E_-^{(k,l)}(z_-)\bigr)^{\f12}\cdot\sum_{k\leq|\al_h|+1}\delta^{-\f12}\bigl(F_+^{(k,0)}(z_+)\bigr)^{\f12}\\
&\quad+\sum_{k+l\leq |\al_h|+2}\delta^{l+\f12}\bigl(E_-^{(k,l+1)}(z_-)\bigr)^{\f12}\cdot\sum_{k\leq|\al_h|+1}\delta^{-\f32}\bigl(F_+^{(k,0)}(z_+^3)\bigr)^{\f12}\\
&\quad+\sum_{k+l\leq |\al_h|+3}\delta^{l-\f32}\bigl(E_-^{(k,l)}(z_-^3)\bigr)^{\f12}\cdot\sum_{k\leq|\al_h|}\delta^{\f12}\bigl(F_+^{(k,1)}(z_+)\bigr)^{\f12}.
\end{aligned}\eeno
Then for $|\al_h|\leq 2N-4$, using $\div\,z_-=0$, we have
\beq\label{rem8}\begin{aligned}
&\quad\delta^{-\f12}\|\langle u_+\rangle^{\f12(1+\sigma)}\langle u_-\rangle^{1+\sigma}\p_h^{\al_h}(\p_iz_+^j\p_jz_-^i)\|_{L^2_tL^2_x}\\
&\lesssim\Bigl(\sum_{k+l\leq2N}\delta^{l-\f12}\bigl(E_-^{(k,l)}(z_-)\bigr)^{\f12}+\sum_{k\leq2N-1}\delta^{-\f32}\bigl(E_-^{(k,0)}(z_-^3)\bigr)^{\f12}\Bigr)\\
&\quad
\times\Bigl(\sum_{k+l\leq2N}\delta^{l-\f12}\bigl(F_+^{(k,l)}(z_+)\bigr)^{\f12}+\sum_{k\leq2N-1}\delta^{-\f32}\bigl(F_+^{(k,0)}(z_+^3)\bigr)^{\f12}\Bigr).
\end{aligned}\eeq
Similarly, for $|\al_h|\leq 2N-4$, we have the same estimate for the term
\beno
\delta^{\f12}\|\langle u_+\rangle^{\f12(1+\sigma)}\langle u_-\rangle^{1+\sigma}\p_h^{\al_h}\p_3(\p_iz_+^j\p_jz_-^i)\|_{L^2_tL^2_x}.
\eeno
With the help of \eqref{rem7} and \eqref{rem8}, for $|\al_h|\leq 2N-4$,  we have
\beq\label{rem9}\begin{aligned}
&\quad\|\langle u_+\rangle^{\f12(1+\sigma)}\langle u_-\rangle^{1+\sigma}A_1\|_{L^\infty_vL^2_tL^2_h}\\
&\lesssim\Bigl(\sum_{k+l\leq2N}\delta^{l-\f12}\bigl(E_-^{(k,l)}(z_-)\bigr)^{\f12}+\sum_{k\leq2N-1}\delta^{-\f32}\bigl(E_-^{(k,0)}(z_-^3)\bigr)^{\f12}\Bigr)\\
&\quad
\times\Bigl(\sum_{k+l\leq2N}\delta^{l-\f12}\bigl(F_+^{(k,l)}(z_+)\bigr)^{\f12}+\sum_{k\leq2N-1}\delta^{-\f32}\bigl(F_+^{(k,0)}(z_+^3)\bigr)^{\f12}\Bigr).
\end{aligned}\eeq

(ii) Estimate of $\|\langle u_+\rangle^{\f12(1+\sigma)}\langle u_-\rangle^{1+\sigma}A_2(\tau,x)\|_{L^2_tL^2_hL^\infty_v}$. For $A_2$, using $\div\,z_\pm=0,\, z^3_+|_{x_3=\pm\delta}=0, \, z^3_-|_{x_3=\pm\delta}=0$ and \eqref{Green transformation}, and integrating by parts, we have
\beno\begin{aligned}
&|A_2(\tau,x)|\lesssim\int_{\Omega_\delta}|\p_i\p_j\p_3G_\delta(x,y)|(1-\theta(|x_h-y_h|))|\p_h^{\al_h}(z_-^iz_+^j)(\tau,y)|dy\\
&\quad
+\int_{\Omega_\delta}\bigl(|\p_3G_\delta(x,y)||\theta''(|x_h-y_h|)+|\na\p_3G_\delta(x,y)||\theta'(|x_h-y_h|)\bigr)|\p_h^{\al_h}(z_-^iz_+^j)(\tau,y)|dy.
\end{aligned}\eeno
Due to \eqref{derivative of Green function}, we have
\beno\begin{aligned}
&|A_2(\tau,x)|\lesssim\underbrace{\f{1}{\delta}\int_{-\delta}^\delta\int_{|x_h-y_h|\geq1}\f{1}{|x_h-y_h|^3}|\p_h^{\al_h}(z_-^iz_+^j)(\tau,y)|dy_hdy_3}_{A_{21}(\tau,x)}\\
&\quad
+\underbrace{\f{1}{\delta}\int_{-\delta}^\delta\int_{1\leq|x_h-y_h|\leq2}\f{1}{|x_h-y_h|}|\p_h^{\al_h}(z_-^iz_+^j)(\tau,y)|dy_hdy_3}_{A_{22}(\tau,x)}.
\end{aligned}\eeno
Similarly to \eqref{rem9}, for $|\al_h|\leq 2N-4$, we have
\beq\label{rem10}
\|\langle u_+\rangle^{\f12(1+\sigma)}\langle u_-\rangle^{1+\sigma}A_{22}\|_{L^2_tL^2_hL^\infty_v}
\lesssim\sum_{k+l\leq2N}\delta^{l-\f12}\bigl(E_-^{(k,l)}(z_-)\bigr)^{\f12}\cdot\sum_{k+l\leq2N}\delta^{l-\f12}\bigl(F_+^{(k,l)}(z_+)\bigr)^{\f12}.
\eeq
Using \eqref{x-y>1}, we have
\beno\begin{aligned}
&\quad\langle u_+(\tau,x_1)\rangle^{\f12(1+\sigma)}\langle u_-(\tau,x_1)\rangle^{1+\sigma}A_{21}(\tau,x)\\
&
\lesssim\f{1}{\delta}\int_{-\delta}^\delta\int_{|x_h-y_h|\geq1}\f{1}{|x_h-y_h|^{\f{3}{2}(1-\sigma)}}\bigl(\langle u_+\rangle^{\f12(1+\sigma)}\langle u_-\rangle^{1+\sigma}|\p_h^{\al_h}(z_-^iz_+^j)|\bigr)(\tau,y)dy_hdy_3
\end{aligned}\eeno
Thanks to Young inequality and the fact $\sigma\in(0,\f13)$, we have
\beno\begin{aligned}
&\|\langle u_+\rangle^{\f12(1+\sigma)}\langle u_-\rangle^{1+\sigma}A_{21}\|_{L^2_tL^2_hL^\infty_v}
\lesssim\delta^{-1}\|\f{1}{|x_h|^{\f{3}{2}(1-\sigma)}}\|_{L^2_h(|x_h|\geq1)}\|\langle u_+\rangle^{\f12(1+\sigma)}\langle u_-\rangle^{1+\sigma}\p_h^{\al_h}(z_-^iz_+^j)\|_{L^2_tL^1_x}\\
&\lesssim\sum_{\beta_h\leq\al_h}\delta^{-\f12}\|\langle u_+\rangle^{1+\sigma}\p_h^{\al_h-\beta_h}z_-\|_{L^\infty_tL^2_x}
\cdot\delta^{-\f12}\|\f{\langle u_-\rangle^{1+\sigma}}{\langle u_+\rangle^{\f12(1+\sigma)}}\p_h^{\beta_h}z_+\|_{L^\infty_tL^2_x}.
\end{aligned}\eeno
Then for $|\al_h|\leq 2N-4$, we obtain that
\beq\label{rem11}
\|\langle u_+\rangle^{\f12(1+\sigma)}\langle u_-\rangle^{1+\sigma}A_{21}\|_{L^2_tL^2_hL^\infty_v}
\lesssim\sum_{k+l\leq2N}\delta^{l-\f12}\bigl(E_-^{(k,l)}(z_-)\bigr)^{\f12}\cdot\sum_{k+l\leq2N}\delta^{l-\f12}\bigl(F_+^{(k,l)}(z_+)\bigr)^{\f12}.
\eeq
Thanks to \eqref{rem10} and \eqref{rem11}, for  $|\al_h|\leq 2N-4$,  we have
\beq\label{rem12}
\|\langle u_+\rangle^{\f12(1+\sigma)}\langle u_-\rangle^{1+\sigma}A_2\|_{L^2_tL^2_hL^\infty_v}
\lesssim\sum_{k+l\leq2N}\delta^{l-\f12}\bigl(E_-^{(k,l)}(z_-)\bigr)^{\f12}\cdot\sum_{k+l\leq2N}\delta^{l-\f12}\bigl(F_+^{(k,l)}(z_+)\bigr)^{\f12}.
\eeq

\smallskip

Due to \eqref{rem9} and \eqref{rem12},  for $|\al_h|\leq 2N-4$, we obtain that
\beq\label{rem25}\begin{aligned}
%&\quad\|\langle u_+\rangle^{\f12(1+\sigma)}\langle u_-\rangle^{1+\sigma}\r_{+,1}^{3(\al_h)}(\cdot,x_3)\|_{L^2_tL^2_h}=
&\quad\|\langle u_+\rangle^{\f12(1+\sigma)}\langle u_-\rangle^{1+\sigma}\p_h^{\al_h}\p_3p\|_{L^2_tL^2_hL^\infty_v}\\
&\lesssim\Bigl(\sum_{k+l\leq2N}\delta^{l-\f12}\bigl(E_-^{(k,l)}(z_-)\bigr)^{\f12}+\sum_{k\leq2N-1}\delta^{-\f32}\bigl(E_-^{(k,0)}(z_-^3)\bigr)^{\f12}\Bigr)\\
&\quad
\times\Bigl(\sum_{k+l\leq2N}\delta^{l-\f12}\bigl(F_+^{(k,l)}(z_+)\bigr)^{\f12}+\sum_{k\leq2N-1}\delta^{-\f32}\bigl(F_+^{(k,0)}(z_+^3)\bigr)^{\f12}\Bigr).
\end{aligned}\eeq
 Then by virtue of \eqref{rem25} and \eqref{rem6},  for $|\al_h|\leq 2N-5$, we have
\beq\label{rem13}\begin{aligned}
&\quad\|\langle u_+\rangle^{\f12(1+\sigma)}\langle u_-\rangle^{1+\sigma}\r_{+,1}^{h(\al_h)}(\cdot,x_3)\|_{L^2_tL^2_h}\\
&\lesssim\delta\Bigl(\sum_{k+l\leq2N}\delta^{l-\f12}\bigl(E_-^{(k,l)}(z_-)\bigr)^{\f12}+\sum_{k\leq2N-1}\delta^{-\f32}\bigl(E_-^{(k,0)}(z_-^3)\bigr)^{\f12}\Bigr)\\
&\quad
\times\Bigl(\sum_{k+l\leq2N}\delta^{l-\f12}\bigl(F_+^{(k,l)}(z_+)\bigr)^{\f12}+\sum_{k\leq2N-1}\delta^{-\f32}\bigl(F_+^{(k,0)}(z_+^3)\bigr)^{\f12}\Bigr).
\end{aligned}\eeq

{\it Step 2.2.2. Estimate of $\|\langle u_+\rangle^{\f12(1+\sigma)}\langle u_-\rangle^{1+\sigma}\r_{+,2}^{h(\al_h)}(\cdot,x_3)\|_{L^2_tL^2_h}$.}
% and $\|\langle u_+\rangle^{\f12(1+\sigma)}\langle u_-\rangle^{1+\sigma}\r_{+,2}^{3(\al_h)}(\cdot,x_3)\|_{L^2_tL^2_h}$.
 By the definition of $\r_{+,2}^{h(\al_h)}$, we have
\beno
\|\langle u_+\rangle^{\f12(1+\sigma)}\langle u_-\rangle^{1+\sigma}\r_{+,2}^{h(\al_h)}(\cdot,x_3)\|_{L^2_tL^2_h}
\lesssim\|\langle u_+\rangle^{\f12(1+\sigma)}\langle u_-\rangle^{1+\sigma}\p_h^{\al_h}(w_-^h\cdot\na_hz_+^h)\|_{L^\infty_vL^2_tL^2_h}.
\eeno
Similarly as \eqref{rem8}, for $|\al_h|\leq N\, (\,\leq 2N-4)$ and $N\in\Z_{\geq5}$,  we have
\beq\label{rem14}
\|\langle u_+\rangle^{\f12(1+\sigma)}\langle u_-\rangle^{1+\sigma}\r_{+,2}^{h(\al_h)}(\cdot,x_3)\|_{L^2_tL^2_h}
\lesssim\sum_{k\leq N}\sup_{x_3\in(-\delta,\delta)}\bigl(E_{-,h}^{(k)}(w_-^h(\cdot,x_3))\bigr)^{\f12}\cdot\sum_{k+l\leq 2N}\delta^{l-\f12}\bigl(F_+^{(k,l)}(z_+)\bigr)^{\f12}.
\eeq
%Then we also have for $|\al_h|\leq 2N-4$
%\beq\label{rem14a}\begin{aligned}
%&\quad\|\langle u_+\rangle^{\f12(1+\sigma)}\langle u_-\rangle^{1+\sigma}\r_{+,2}^{3(\al_h)}(\cdot,x_3)\|_{L^2_tL^2_h}\\
%&\lesssim\sum_{k\leq|\al_h|}\sup_{x_3\in(-\delta,\delta)}\bigl(E_{-,h}^{(k)}(w_-^h(\cdot,x_3))\bigr)^{\f12}
%\cdot\Bigl(\sum_{k\leq 2N-1}\delta^{-\f12}\bigl(F_+^{(k,0)}(z_+^3)\bigr)^{\f12}+\sum_{k+l\leq 2N}\delta^{l+\f12}\bigl(F_+^{(k,l)}(z_+^h)\bigr)^{\f12}\Bigr)\\
%&\quad+\sum_{k\leq|\al_h|}\sup_{x_3\in(-\delta,\delta)}\bigl(E_{-,h}^{(k)}(w_-^3(\cdot,x_3))\bigr)^{\f12}
%\cdot\sum_{k+l\leq 2N}\delta^{l-\f12}\bigl(F_+^{(k,l)}(z_+^h)\bigr)^{\f12}.
%\end{aligned}\eeq

\smallskip

{\it Step 2.2.3. Estimate of $\|\langle u_+\rangle^{\f12(1+\sigma)}\langle u_-\rangle^{1+\sigma}\r_{+,3}^{h(\al_h)}(\cdot,x_3)\|_{L^2_tL^2_h}$.} By the definition of $\r_{+,3}^{h(\al_h)}$ and $w_-^3=z_-^3$ as well as the facts \eqref{rem15} and $\div\,z_-=0$, we have
\beno\begin{aligned}
&\|\langle u_+\rangle^{\f12(1+\sigma)}\langle u_-\rangle^{1+\sigma}\r_{+,3}^{h(\al_h)}(\cdot,x_3)\|_{L^2_tL^2_h}\lesssim\delta\|\langle u_+\rangle^{\f12(1+\sigma)}\langle u_-\rangle^{1+\sigma}\p_h^{\al_h}\p_3(z_-^3\p_3z_+^h)\|_{L^2_tL^2_hL^\infty_v}\\
&\lesssim\delta\|\langle u_+\rangle^{\f12(1+\sigma)}\langle u_-\rangle^{1+\sigma}\p_h^{\al_h}(\na_h\cdot z_-^h\p_3z_+^h)\|_{L^2_tL^2_hL^\infty_v}
+\delta\|\langle u_+\rangle^{\f12(1+\sigma)}\langle u_-\rangle^{1+\sigma}\p_h^{\al_h}(z_-^3\p_3^2z_+^h)\|_{L^2_tL^2_hL^\infty_v}.
\end{aligned}\eeno
Following similar argument as that for  \eqref{rem8}, we have
\beno\begin{aligned}
&\|\langle u_+\rangle^{\f12(1+\sigma)}\langle u_-\rangle^{1+\sigma}\p_h^{\al_h}(\na_h\cdot z_-^h\p_3z_+^h)\|_{L^2_tL^2_hL^\infty_v}\\
&\lesssim\sum_{k+l\leq|\al_h|+3}\delta^{l-\f12}\bigl(E_-^{(k,l)}(z_-^h)\bigr)^{\f12}\cdot\sum_{k+l\leq|\al_h|+1}\delta^{l-\f12}\bigl(F_+^{(k,l)}(\p_3z_+)\bigr)^{\f12},
\end{aligned}\eeno
and
\beno\begin{aligned}
&\|\langle u_+\rangle^{\f12(1+\sigma)}\langle u_-\rangle^{1+\sigma}\p_h^{\al_h}(z_-^3\p_3^2z_+^h)\|_{L^2_tL^2_hL^\infty_v}\\
&\lesssim\sum_{k+l\leq|\al_h|+2}\delta^{l-\f32}\bigl(E_-^{(k,l)}(z_-^3)\bigr)^{\f12}
\cdot\sum_{k+l\leq|\al_h|+1}\delta^{l+\f12}\bigl(F_+^{(k,l)}(\p_3^2z_+^h)\bigr)^{\f12}\\
&\stackrel{div\,z_-=0}{\lesssim}\Bigl(\sum_{k+l\leq|\al_h|+2}\delta^{l-\f12}\bigl(E_-^{(k,l)}(z_-^h)\bigr)^{\f12}
+\sum_{k\leq|\al_h|+2}\delta^{-\f32}\bigl(E_-^{(k,0)}(z_-^3)\bigr)^{\f12}\Bigr)
\cdot\sum_{k+l\leq|\al_h|+2}\delta^{l-\f12}\bigl(F_+^{(k,l)}(\p_3z_+^h)\bigr)^{\f12}.
\end{aligned}\eeno

Therefore for $|\al_h|\leq N,\,N\in\Z_{\geq5}$, we obtain that
\beq\label{rem16}\begin{aligned}
&\quad\|\langle u_+\rangle^{\f12(1+\sigma)}\langle u_-\rangle^{1+\sigma}\r_{+,3}^{h(\al_h)}(\cdot,x_3)\|_{L^2_tL^2_h}\\
&\lesssim\delta\Bigl(\sum_{k+l\leq2N}\delta^{l-\f12}\bigl(E_-^{(k,l)}(z_-)\bigr)^{\f12}+\sum_{k\leq2N-1}\delta^{-\f32}\bigl(E_-^{(k,0)}(z_-^3)\bigr)^{\f12}\Bigr)\\
&\quad
\times\Bigl(\sum_{k+l\leq2N}\delta^{l-\f12}\bigl(F_+^{(k,l)}(z_+)\bigr)^{\f12}+\sum_{k+l\leq N+2}\delta^{l-\f12}\bigl(F_+^{(k,l)}(\p_3z_+)\bigr)^{\f12}\Bigr).
\end{aligned}\eeq

\smallskip

{\it Step 2.2.4. Estimate of $\|\langle u_+\rangle^{\f12(1+\sigma)}\langle u_-\rangle^{1+\sigma}\r_{+,4}^{h(\al_h)}(\cdot,x_3)\|_{L^2_tL^2_h}$.}
 %and $\|\langle u_+\rangle^{\f12(1+\sigma)}\langle u_-\rangle^{1+\sigma}\r_{+,3}^{3(\al_h)}(\cdot,x_3)\|_{L^2_tL^2_h}$.
   By the definition of $\r_{+,4}^{h(\al_h)}$, we have
\beno\begin{aligned}
&\quad\|\langle u_+\rangle^{\f12(1+\sigma)}\langle u_-\rangle^{1+\sigma}\r_{+,4}^{h(\al_h)}(\cdot,x_3)\|_{L^2_tL^2_h}\\
&\lesssim\sum_{\beta_h<\al_h}\|\langle u_+\rangle^{1+\sigma}\p_h^{\al_h-\beta_h}\bar{z}_-^h\|_{L^\infty_tL^\infty_h}\cdot\|\f{\langle u_-\rangle^{1+\sigma}}{\langle u_+\rangle^{\f12(1+\sigma)}}\na_h\p_h^{\beta_h}w_+^h(\cdot,x_3)\|_{L^2_tL^2_h}\\
&\lesssim\sum_{k\leq|\al_h|+1}\bigl(E_{-,h}^{(k)}(\bar{z}_-^h)\bigr)^{\f12}\cdot\sum_{k\leq|\al_h|}\sup_{x_3\in(-\delta,\delta)}\bigl(F_{+,h}^{(k)}(w_+^h(\cdot,x_3))\bigr)^{\f12}.
\end{aligned}\eeno

Since $\bar{z}_-^h(\cdot)=\f{1}{2\delta}\int_{-\delta}^\delta z_-^h(\cdot,x_3)dx_3$,  for any $k\in\Z_{\geq0}$, we have
\beq\label{rem19}
\bigl(E_{-,h}^{(k)}(\bar{z}_-^h)\bigr)^{\f12}\lesssim\delta^{-\f12}\bigl(E_-^{(k,0)}(z_-^h)\bigr)^{\f12}.
\eeq
Then for $|\al_h|\leq N,\,N\in\Z_{\geq5}$, we obtain that
\beq\label{rem17}\begin{aligned}
&\|\langle u_+\rangle^{\f12(1+\sigma)}\langle u_-\rangle^{1+\sigma}\r_{+,4}^{h(\al_h)}(\cdot,x_3)\|_{L^2_tL^2_h}\\
&\lesssim\sum_{k+l\leq 2N}\delta^{l-\f12}\bigl(E_-^{(k,l)}(z_-^h)\bigr)^{\f12}\cdot\sum_{k\leq N}\sup_{x_3\in(-\delta,\delta)}\bigl(F_{+,h}^{(k)}(w_+^h(\cdot,x_3))\bigr)^{\f12}.
\end{aligned}\eeq
%Similarly, we have for $|\al_h|\leq N,\,N\in\Z_{\geq6}$
%\beq\label{rem17a}\begin{aligned}
%&\|\langle u_+\rangle^{\f12(1+\sigma)}\langle u_-\rangle^{1+\sigma}\r_{+,3}^{3(\al_h)}(\cdot,x_3)\|_{L^2_tL^2_h}\\
%&\lesssim\sum_{k+l\leq 2N}\delta^{l-\f12}\bigl(E_-^{(k,l)}(z_-^h)\bigr)^{\f12}\cdot\sum_{k\leq N}\sup_{x_3\in(-\delta,\delta)}\bigl(F_{+,h}^{(k)}(w_+^3(\cdot,x_3))\bigr)^{\f12}.
%\end{aligned}\eeq

Thanks to \eqref{rem13}, \eqref{rem14}, \eqref{rem16} and \eqref{rem17}, we obtain the estimate for $\|\langle u_+\rangle^{\f12(1+\sigma)}\langle u_-\rangle^{1+\sigma}\r_{+}^{h(\al_h)}(\cdot,x_3)\|_{L^2_tL^2_h}$. Then using \eqref{rem4},  for  $|\al_h|\leq N,\,N\in\Z_{\geq5}$, we have
\beq\label{rem18}\begin{aligned}
&\int_0^t\int_{\Sigma_{\tau,h}}|\r_+^{h(\al_h)}(\cdot,x_3)|\cdot\langle u_-\rangle^{2(1+\sigma)}|w_+^{h(\al_h)}(\cdot,x_3)| dx_hd\tau\\
&\lesssim\delta\Bigl(\sum_{k+l\leq2N}\delta^{l-\f12}\bigl(E_-^{(k,l)}(z_-)\bigr)^{\f12}+\sum_{k\leq2N-1}\delta^{-\f32}\bigl(E_-^{(k,0)}(z_-^3)\bigr)^{\f12}\Bigr)
\cdot\Bigl(\sum_{k+l\leq2N}\delta^{l-\f12}\bigl(F_+^{(k,l)}(z_+)\bigr)^{\f12}\\
&\quad
+\sum_{k\leq2N-1}\delta^{-\f32}\bigl(F_+^{(k,0)}(z_+^3)\bigr)^{\f12}+\sum_{k+l\leq N+2}\delta^{l-\f12}\bigl(F_+^{(k,l)}(\p_3z_+)\bigr)^{\f12}\Bigr)\cdot\bigl(F_{+,h}^{(\al_h)}(w_+^h(\cdot,x_3))\bigr)^{\f12}\\
&\quad+\sum_{k\leq N}\sup_{x_3\in(-\delta,\delta)}\bigl(E_{-,h}^{(k)}(w_-^h(\cdot,x_3))\bigr)^{\f12}\cdot\sum_{k+l\leq2N}\delta^{l-\f12}\bigl(F_+^{(k,l)}(z_+)\bigr)^{\f12}\cdot\bigl(F_{+,h}^{(\al_h)}(w_+^h(\cdot,x_3))\bigr)^{\f12}\\
&\quad
+\sum_{k+l\leq 2N}\delta^{l-\f12}\bigl(E_-^{(k,l)}(z_-^h)\bigr)^{\f12}\cdot\sum_{k\leq N}\sup_{x_3\in(-\delta,\delta)}\bigl(F_{+,h}^{(k)}(w_+^h(\cdot,x_3))\bigr)^{\f12}\cdot\bigl(F_{+,h}^{(\al_h)}(w_+^h(\cdot,x_3))\bigr)^{\f12}.
\end{aligned}\eeq

{\bf Step 3. The {\it a priori} estimate for $w_\pm^h$.}  Thanks to \eqref{total energy estimates}, \eqref{rem2}, \eqref{rem3}, \eqref{rem19} and \eqref{rem18},  for  $|\al_h|\leq N,\,N\in\Z_{\geq5}$, we obtain that
\beno\begin{aligned}
&\sum_{k\leq N}\sup_{x_3\in(-\delta,\delta)}\Bigl(E_{+,h}^{(k)}(w_+^{h}(\cdot,x_3))+F_{+,h}^{(k)}(w_+^{h}(\cdot,x_3))\Bigr)\\
&\lesssim\sum_{k\leq N}\sup_{x_3\in(-\delta,\delta)}E_{+,h}^{(k)}(w_{+,0}^{h}(\cdot,x_3))+\delta\mathcal{E}(0)\sum_{k\leq N}\sup_{x_3\in(-\delta,\delta)}\bigl(F_{+,h}^{(k)}(w_+^h(\cdot,x_3))\bigr)^{\f12}\\
&\quad+\bigl(\mathcal{E}(0)\bigr)^{\f12}\sum_{k\leq N}\sup_{x_3\in(-\delta,\delta)}\Bigl(E_{-,h}^{(k)}(w_-^{h}(\cdot,x_3))+F_{+,h}^{(k)}(w_+^{h}(\cdot,x_3))\Bigr).
\end{aligned}\eeno
Similar estimate holds for $w_-^h$. Then using H\"{o}lder inequality, we have
\beno\begin{aligned}
&\sum_{+,-}\sum_{k\leq N}\sup_{x_3\in(-\delta,\delta)}\Bigl(E_{\pm,h}^{(k)}(w_\pm^{h}(\cdot,x_3))+F_{\pm,h}^{(k)}(w_\pm^{h}(\cdot,x_3))\Bigr)\\
&\leq C_1\sum_{+,-}\sum_{k\leq N}\sup_{x_3\in(-\delta,\delta)}E_{\pm,h}^{(k)}(w_{\pm,0}^{h}(\cdot,x_3))+C_1\delta^2\bigl(\mathcal{E}(0)\bigr)^2\\
&\quad+C_1\bigl(\mathcal{E}(0)\bigr)^{\f12}\sum_{+,-}\sum_{k\leq N}\sup_{x_3\in(-\delta,\delta)}\Bigl(E_{\pm,h}^{(k)}(w_\pm^{h}(\cdot,x_3))+F_{\pm,h}^{(k)}(w_\pm^{h}(\cdot,x_3))\Bigr).
\end{aligned}\eeno
Taking $\varepsilon_1\leq\varepsilon_0$ small enough, for $\mathcal{E}(0)\leq\varepsilon_1^2$, we have
\beq\label{rem20}\begin{aligned}
&\sum_{+,-}\sum_{k\leq N}\sup_{x_3\in(-\delta,\delta)}\Bigl(E_{\pm,h}^{(k)}(w_\pm^{h}(\cdot,x_3))+F_{\pm,h}^{(k)}(w_\pm^{h}(\cdot,x_3))\Bigr)\\
&\leq 2C_1\sum_{+,-}\sum_{k\leq N}\sup_{x_3\in(-\delta,\delta)}E_{\pm,h}^{(k)}(w_{\pm,0}^{h}(\cdot,x_3))+2C_1\delta^2\varepsilon_1^4.
\end{aligned}\eeq

Since $\div\, w_\pm=0$, $w_+^3|_{x_3=\pm\delta}=0$ and $w_-^3|_{x_3=\pm\delta}=0$, it holds
\beno
w_\pm^3(t,x)=\int_{-\delta}^{x_3}\p_3w_\pm^3(t,x_h,y_3)dy_3=-\int_{-\delta}^{x_3}\na_h\cdot w_\pm^h(t,x_h,y_3)dy_3,
\eeno
which togehter with \eqref{rem20} implies that
\beq\label{rem21}\begin{aligned}
&\delta^{-2}\sum_{+,-}\sum_{k\leq N-1}\sup_{x_3\in(-\delta,\delta)}\Bigl(E_{\pm,h}^{(k)}(w_\pm^3(\cdot,x_3))+F_{\pm,h}^{(k)}(w_\pm^3(\cdot,x_3))\Bigr)\\
&\leq C\sum_{+,-}\sum_{k\leq N}\sup_{x_3\in(-\delta,\delta)}E_{\pm,h}^{(k)}(w_{\pm,0}^{h}(\cdot,x_3))+C\delta^2\varepsilon_1^4.
\end{aligned}\eeq

Thanks to \eqref{rem20} and \eqref{rem21}, we arrive at \eqref{stability energy estimate}. Then it ends the proof of Theorem \ref{stability}.

\section{Proof of Theorem \ref{limitation}}
In this section, we shall use Theorem \ref{global existence in thin domain} and Theorem \ref{stability} to prove Theorem \ref{limitation}.

{\bf Proof of Theorem \ref{limitation}.}
Since $z_{\pm(\delta)}^h=\bar{z}_{\pm(\delta)}^h+w_{\pm(\delta)}^h$ and $z_{\pm(\delta)}^3=w_{\pm(\delta)}^3$, the investigation of the asymptotics  from 3D MHD to 2D MHD  can be reduced to prove \beq\label{lim1}\begin{aligned}
&\lim_{\delta\rightarrow0}w_{\pm(\delta)}^h(t,x_h,x_3)=0,\quad\text{in}\quad H^{N}(\R^2),\quad\text{uniform for } t>0,\,\, x_3\in(-1,1),\\
&\lim_{\delta\rightarrow0}w_{\pm(\delta)}^3(t,x_h,x_3)=0,\quad\text{in}\quad H^{N-1}(\R^2),\quad\text{uniform for } t>0,\,\, x_3\in(-1,1),
\end{aligned}\eeq
and
\beq\label{lim2}
\lim_{\delta\rightarrow0}\bar{z}_{\pm(\delta)}^h(t,x_h)=z_{\pm(0)}^h(t,x_h),\quad\text{in}\quad H^{N}(\R^2)\quad\text{uniform for } t>0.
\eeq
We split the proof into several steps.

{\bf Step 1. Uniform estimate of $w_{\pm(\delta)}$ and the proof of \eqref{lim1}.} Thanks to \eqref{total energy estimates} and \eqref{transform for energy functional},  for any $\delta\in(0,1]$, we obtain that
\beq\label{app4}\begin{aligned}
%&\sum_{k\leq N_*}\bigl(E_\pm^{(k)}(z_{\pm(\delta)}^h)+F_\pm^{(k)}(z_{\pm(\delta)}^h)\bigr)+\sum_{k\leq N_*-1}\bigl(E_\pm^{(k)}(z_{\pm(\delta)}^3)+F_\pm^{(k)}(z_{\pm(\delta)}^3)\bigr)\\
%&\qquad+\delta^2\bigl(E_\pm^{(N_*)}(z_{\pm(\delta)}^3)+F_\pm^{(N_*)}(z_{\pm(\delta)}^3)\bigr)+\delta^{-2}\sum_{k\leq N+2}\bigl(E_\pm^{(k)}(\p_3z_{\pm(\delta)}^h)+F_\pm^{(k)}(\p_3z_{\pm(\delta)}^h)\bigr)\\
&\mathcal{E}_\delta(t)\leq C \mathcal{E}_\delta(0)
%\eqdefa C\sum_{+,-}\bigl(\sum_{k\leq N_*}E_\pm^{(k)}(z_{\pm(\delta),0}^h)+\sum_{k\leq N_*-1}E_\pm^{(k)}(z_{\pm(\delta),0}^3)\\
%&\qquad\qquad
%+\delta^2E_\pm^{(N_*)}(z_{\pm(\delta),0}^3)+\delta^{-2}\sum_{k\leq N+2}E_\pm^{(k)}(\p_3z_{\pm(\delta),0}^h)\bigr).
\end{aligned}\eeq
 Since
\beno
\bar{z}_{\pm(\delta),0}^h(x_h)=\f12\int_{-1}^1z_{\pm(\delta),0}^h(x_h,x_3)dx_3,\quad w_{\pm(\delta),0}^h=z_{\pm(\delta),0}^h-\bar{z}_{\pm(\delta),0}^h,
\eeno
we obtain that
\beq\label{lim3}\begin{aligned}
&E_{\pm,h}^{(k)}(\bar{z}_{\pm(\delta),0}^h-z_{\pm(0),0}^h)
\lesssim\sup_{x_3\in(-1,1)}E_{\pm,h}^{(k)}(z_{\pm(\delta),0}^h(\cdot,x_3)-z_{\pm(0),0}^h(\cdot)),\\
&\sup_{x_3\in(-1,1)}E_{\pm,h}^{(k)}(w_{\pm(\delta),0}^h(\cdot,x_3))
\lesssim\sup_{x_3\in(-1,1)}E_{\pm,h}^{(k)}(z_{\pm(\delta),0}^h(\cdot,x_3)-z_{\pm(0),0}^h(\cdot)).
\end{aligned}\eeq
Thanks to \eqref{stability energy estimate}, \eqref{scaling} and \eqref{lim3},  for $\delta\in(0,1]$, we obtain that
\beq\label{lim4}\begin{aligned}
&\sum_{+,-}\sup_{x_3\in(-1,1)}\Bigl(\sum_{k\leq N}\bigl(E_{\pm,h}^{(k)}(w_{\pm(\delta)}^h(\cdot,x_3))+F_{\pm,h}^{(k)}(w_{\pm(\delta)}^h(\cdot,x_3))\bigr)\\
&\quad+\sum_{k\leq N-1}\bigl(E_{\pm,h}^{(k)}(w_{\pm(\delta)}^3(\cdot,x_3))+F_{\pm,h}^{(k)}(w_{\pm(\delta)}^3(\cdot,x_3))\bigr)\Bigr)\\
&\lesssim\sum_{+,-}\sum_{k\leq N}\sup_{x_3\in(-1,1)}E_{\pm,h}^{(k)}(w_{\pm(\delta),0}^h(\cdot,x_3))+\delta^2\varepsilon_1^4\\
&\lesssim\sum_{+,-}\sum_{k\leq N}\sup_{x_3\in(-1,1)}E_{\pm,h}^{(k)}(z_{\pm(\delta),0}^h(\cdot,x_3)-z_{\pm(0),0}^h(\cdot))+\delta^2\varepsilon_1^4.
\end{aligned}\eeq

Since
\beno
\sum_{k\leq N}\sup_{x_3\in(-1,1)}E_{\pm,h}^{(k)}(z_{\pm(\delta),0}^h(\cdot,x_3)-z_{\pm(0),0}^h(\cdot))
\stackrel{\text{Sobolev}}{\lesssim}\sum_{k\leq N+1}E_{\pm}^{(k)}(z_{\pm(\delta),0}^h-z_{\pm(0),0}^h),
\eeno
using \eqref{lim ansatz} and \eqref{lim4}, we arrive at \eqref{lim1}.

{\bf Step 2. Uniform estimate of $\bar{z}_{\pm(\delta)}^h$.}
Since
\beno
\bar{z}_{\pm(\delta)}^h(t,x_h)=\f12\int_{-1}^1z_{\pm(\delta)}^h(t,x_h,x_3)dx_3,
\eeno
we deduce from \eqref{initial} and \eqref{app4} that
\beq\label{app5}
\sum_{k\leq N_*}\bigl(E_{\pm,h}^{(k)}(\bar{z}_{\pm(\delta)}^h)+F_{\pm,h}^{(k)}(\bar{z}_{\pm(\delta)}^h)\bigr)
\lesssim \sum_{k\leq N_*}\bigl(E_\pm^{(k)}(z_{\pm(\delta)}^h)+F_\pm^{(k)}(z_{\pm(\delta)}^h)\bigr)\lesssim\varepsilon_1^2.
\eeq

\medskip

{\bf Step 3. Convergence of the sequence $\{\bar{z}^h_{\pm(\delta)}\}_{0<\delta\leq1}$.}

{\it Step 3.1. Estimate of the difference $\bar{z}^h_{\pm(\delta)}-\bar{z}^h_{\pm(\delta')}$.}
By setting
\beno
\bar{z}_\pm^{h(\delta,\delta')}\eqdefa\bar{z}^h_{\pm(\delta)}-\bar{z}^h_{\pm(\delta')},
\quad p^{(\delta,\delta')}\eqdefa M_1p_{(\delta)}-M_1p_{(\delta')},
\eeno
we deduce from \eqref{scaling for app system} that $\bigl(\bar{z}_+^{h(\delta,\delta')},\,\bar{z}_-^{h(\delta,\delta')},\,p^{(\delta,\delta')}\bigr)$ satisfy
\beq\label{app7}
\begin{aligned}
\p_t\bar{z}_+^{h(\delta,\delta')}-\p_1\bar{z}_+^{h(\delta,\delta')}+\bar{z}_{-(\delta)}^h\cdot \nabla_h\bar{z}_+^{h(\delta,\delta')}&
=-\nabla_h p^{(\delta,\delta')}-\bar{z}_-^{h(\delta,\delta')}\cdot\na_h\bar{z}_{+(\delta')}^h-\bigl(R_{+(\delta)}-R_{+(\delta')}\bigr), \\
\p_t\bar{z}_-^{h(\delta,\delta')}+\p_1\bar{z}_-^{h(\delta,\delta')}+\bar{z}_{+(\delta)}^h\cdot \nabla_h\bar{z}_-^{h(\delta,\delta')}&
= -\nabla_h p^{(\delta,\delta')}-\bar{z}_+^{h(\delta,\delta')}\cdot\na_h\bar{z}_{-(\delta')}^h-(R_{-(\delta)}-R_{-(\delta')}), \\
\bar{z}_+^{h(\delta,\delta')}|_{t=0}=\bar{z}_{+(\delta),0}^h-\bar{z}_{+(\delta'),0}^h\eqdefa\bar{z}_{+,0}^{h(\delta,\delta')}&,\quad \bar{z}_-^{h(\delta,\delta')}|_{t=0}=\bar{z}^h_{-(\delta),0}-\bar{z}^h_{-(\delta'),0}\eqdefa\bar{z}_{-,0}^{h(\delta,\delta')},
\end{aligned}
\eeq
where
\beno
R_{\pm(\delta)}\eqdefa M_1\bigl(w_{\mp(\delta)}\cdot\na z_{\pm(\delta)}^h\bigr).
\eeno

We only give the estimate of $\bar{z}_+^{h(\delta,\delta')}$. Firstly, we derive the expression of pressure $p^{(\delta,\delta')}$.
To do that, taking $\na_h$ to the first equation of
\eqref{scaling for app system}, we have
\beno
-\Delta_h(M_1p_{(\delta)})=\na_h\cdot\bigl(\bar{z}_{-(\delta)}^h\cdot\na_h\bar{z}_{+(\delta)}^h\bigr)+\underbrace{M_1\bigl(\na_h\cdot(w_{-(\delta)}\cdot\na z_{+(\delta)}^h)\bigr)}_{\widetilde{R}_{(\delta)}(t,x_h)}.
\eeno
Moreover, it is easy to check that
\beno
\widetilde{R}_{(\delta)}(t,x_h)=\p_\al\p_\beta M_1\bigl(z_{+(\delta)}^\al z_{-(\delta)}^\beta\bigr)-\p_\al\p_\beta(\bar{z}_{+(\delta)}^\al\bar{z}_{-(\delta)}^\beta),
\eeno
where we used the Einstein's convection and $\alpha,\beta$ go through $\{1,2\}$.

Using the Green function on $\R^2$, we have (up to a constant)
\beq\label{app6}
M_1p_{(\delta)}(t,x_h)=-\f{1}{2\pi}\int_{\R^2}\log |x_h-y_h|\cdot\bigl(\p_\alpha\bar{z}^\beta_{-(\delta)}\p_\beta\bar{z}^\alpha_{+(\delta)}
+\widetilde{R}_{(\delta)}\bigr)(t,y_h)dy_h.
\eeq
Then we obtain that
\beq\label{app8}\begin{aligned}
p^{(\delta,\delta')}(t,x_h)=&-\f{1}{2\pi}\int_{\R^2}\log |x_h-y_h|\cdot\bigl(\p_\alpha\bar{z}_-^{\beta(\delta,\delta')}\p_\beta\bar{z}^\alpha_{+(\delta)}
\bigr)(t,y_h)dy_h\\
&-\f{1}{2\pi}\int_{\R^2}\log |x_h-y_h|\cdot\bigl(\p_\alpha\bar{z}^\beta_{-(\delta')}\p_\beta\bar{z}_+^{\alpha(\delta,\delta')}\bigr)(t,y_h)dy_h\\
&-\f{1}{2\pi}\int_{\R^2}\log |x_h-y_h|\cdot\widetilde{R}_{(\delta)}(t,y_h)dy_h+\f{1}{2\pi}\int_{\R^2}\log |x_h-y_h|\cdot\widetilde{R}_{(\delta')}(t,y_h)dy_h\\
\eqdefa&p_{11}^{(\delta,\delta')}(t,x_h)+p_{12}^{(\delta,\delta')}(t,x_h)-p_{2(\delta)}(t,x_h)+p_{2(\delta')}(t,x_h).
\end{aligned}\eeq

Now, for $\al_h\in\bigl(\Z_{\geq0}\bigr)^2$, applying $\p_h^{\al_h}$ to both sides of the first equation in \eqref{app7}, we have
\beq\label{app9}
\p_t\bigl(\p_h^{\al_h}\bar{z}_+^{h(\delta,\delta')}\bigr)-\p_1\bigl(\p_h^{\al_h}\bar{z}_+^{h(\delta,\delta')}\bigr)
+\bar{z}_{-(\delta)}^h\cdot \nabla_h\bigl(\p_h^{\al_h}\bar{z}_+^{h(\delta,\delta')}\bigr)
=-\na_h\p_h^{\al_h}p^{(\delta,\delta')}+\r_+^{(\al_h)},
\eeq
where
\beno
\r_+^{(\al_h)}=\underbrace{-\p_h^{\al_h}\bigl(\bar{z}_{-(\delta)}^h\cdot \nabla_h\bar{z}_+^{h(\delta,\delta')}\bigr)+\bar{z}_{-(\delta)}^h\cdot \nabla_h\bigl(\p_h^{\al_h}\bar{z}_+^{h(\delta,\delta')}\bigr)}_{\r_{+,1}^{(\al_h)}}
\underbrace{-\p_h^{\al_h}\bigl(\bar{z}_-^{h(\delta,\delta')}\cdot\na_h\bar{z}_{+(\delta')}^h\bigr)}_{\r_{+,2}^{(\al_h)}}
\underbrace{-\p_h^{\al_h}\bigl(R_{(\delta)}-R_{(\delta')}\bigr)}_{\r_{+,3}^{(\al_h)}}.
\eeno

Since $\div_h\,\bar{z}_{-(\delta)}^h=\na_h\cdot\bar{z}^h_{-(\delta)}=0$, applying Proposition \ref{linearized prop for 2D} to \eqref{app9}, we have
\beq\label{app10}\begin{aligned}
&\sup_{0\leq\tau\leq t}\int_{\Sigma_{\tau,h}}\langle u_-\rangle^{2(1+\sigma)}|\p_h^{\al_h}\bar{z}_+^{h(\delta,\delta')}|^2dx_h+\int_0^t\int_{\Sigma_{\tau,h}}\f{\langle u_-\rangle^{2(1+\sigma)}}{\langle u_+\rangle^{1+\sigma}}|\p_h^{\al_h}\bar{z}_+^{h(\delta,\delta')}|^2dx_hd\tau\\
&\lesssim\int_{\Sigma_{0,h}}\langle u_-\rangle^{2(1+\sigma)}|\p_h^{\al_h}\bar{z}_+^{h(\delta,\delta')}|^2dx_h+\int_0^t\int_{\Sigma_{\tau,h}}\langle u_-\rangle^{1+2\sigma}|\bar{z}_{-(\delta)}^1||\p_h^{\al_h}\bar{z}_+^{h(\delta,\delta')}|^2dx_hd\tau\\
&\qquad
+\Bigl|\int_0^t\int_{\Sigma_{\tau,h}}\na_h\p_h^{\al_h}p^{(\delta,\delta')}\cdot\langle u_-\rangle^{2(1+\sigma)}\p_h^{\al_h}\bar{z}_+^{h(\delta,\delta')}dx_hd\tau\Bigr|\\
&\qquad
+\int_{\R}\f{1}{\langle u_+\rangle^{1+\sigma}}\Bigl|\int\int_{W_{t,h}^{[u_+,\infty]}}\na_h\p_h^{\al_h}p^{(\delta,\delta')}\cdot\langle u_-\rangle^{2(1+\sigma)}\p_h^{\al_h}\bar{z}_+^{h(\delta,\delta')} dx_hd\tau\Bigr| d{u_+}\\
&\qquad
+\int_0^t\int_{\Sigma_{\tau,h}}|\r_+^{(\al_h)}|\cdot\langle u_-\rangle^{2(1+\sigma)}|\p_h^{\al_h}\bar{z}_+^{h(\delta,\delta')}|dx_hd\tau.
\end{aligned}\eeq

{\it Step 3.2. Estimates of the nonlinear terms on the r.h.s of \eqref{app10}.}

(i) For the second term on the r.h.s of \eqref{app10}, we have
\beq\label{app11}
\int_0^t\int_{\Sigma_{\tau,h}}\langle u_-\rangle^{1+2\sigma}|\bar{z}_{-(\delta)}^1||\p_h^{\al_h}\bar{z}_+^{h(\delta,\delta')}|^2dx_hd\tau\\
\lesssim\sum_{k\leq2}\bigl(E_{-,h}^{(k)}(\bar{z}_{-(\delta)}^1)\bigr)^{\f12}\cdot F_{+,h}^{(\al_h)}(\bar{z}_+^{h(\delta,\delta')}).
\eeq

(ii) For the third term involving $p^{(\delta,\delta')}$ on the r.h.s of \eqref{app10}, we only consider the case $|\al_h|\geq 1$. In fact, notice that for $|\al_h|=0$
\beno\begin{aligned}
\Bigl|\int_0^t\int_{\Sigma_{\tau,h}}\na_hp^{(\delta,\delta')}\cdot\langle u_-\rangle^{2(1+\sigma)}\bar{z}_+^{h(\delta,\delta')}dx_hd\tau\Bigr|
\leq \int_0^t\int_{\Sigma_{\tau,h}}|\na_hp^{(\delta,\delta')}|\cdot\langle u_-\rangle^{2(1+\sigma)}|\bar{z}_+^{h(\delta,\delta')}|dx_hd\tau.
\end{aligned}\eeno
Then the estimate in the case of $|\al_h|=0$ can be reduced to that in the case of $|\al_h|\geq1$.

Integrating by parts and using the condition $\div_h\,\bar{z}_+^{h(\delta,\delta')}=0$, we obtain that
\beq\label{app13}\begin{aligned}
&\quad\Bigl|\int_0^t\int_{\Sigma_{\tau,h}}\na_h\p_h^{\al_h}p^{(\delta,\delta')}\cdot\langle u_-\rangle^{2(1+\sigma)}\p_h^{\al_h}\bar{z}_+^{h(\delta,\delta')}dx_hd\tau\Bigr|\\
&=\Bigl|\int_0^t\int_{\Sigma_{\tau,h}}\p_h^{\al_h}p^{(\delta,\delta')}\cdot\p_1\langle u_-\rangle^{2(1+\sigma)}\p_h^{\al_h}\bar{z}_+^{1(\delta,\delta')}dx_hd\tau\Bigr|\\
&\lesssim\|\langle u_+\rangle^{\f12(1+\sigma)}\langle u_-\rangle^{1+\sigma}\p_h^{\al_h}p^{(\delta,\delta')}\|_{L^2_tL^2_h}\cdot
\bigl(F_{+,h}^{(\al_h)}(\bar{z}_+^{h(\delta,\delta')})\bigr)^{\f12}.
\end{aligned}\eeq
We only need to bound $\|\langle u_+\rangle^{\f12(1+\sigma)}\langle u_-\rangle^{1+\sigma}\p_h^{\al_h}p^{(\delta,\delta')}\|_{L^2_tL^2_h}$.

By the expression of $p^{(\delta,\delta')}$ in \eqref{app8}, we split $p^{(\delta,\delta')}$ into four parts.
We first deal with the term involving $p_{11}^{(\delta,\delta')}$. Let $\theta(r)$ be the cut-off function defined in Step 2 of the proof to Proposition \ref{al h prop}. Using $\div_h\,\bar{z}^h_{+(\delta)}=0$, integrating by parts, we have
\beno\begin{aligned}
\p_h^{\al_h}p_{11}^{(\delta,\delta')}(\tau,x_h)&=\f{1}{2\pi}\int_{\R^2}\p_\alpha\log |x_h-y_h|\cdot\theta(|x_h-y_h|)\p_h^{\al_h}\bigl(\bar{z}_-^{\beta(\delta,\delta')}\p_\beta\bar{z}^\alpha_{+(\delta)}\bigr)
(\tau,y_h)dy_h\\
&\quad+\f{1}{2\pi}\int_{\R^2}\p_\alpha\log |x_h-y_h|\cdot\bigl(1-\theta(|x_h-y_h|)\bigr)\p_h^{\al_h}\bigl(\bar{z}_-^{\beta(\delta,\delta')}\p_\beta\bar{z}^\alpha_{+(\delta)}\bigr)
(\tau,y_h)dy_h\\
&\eqdefa I_1(\tau,x_h)+I_2(\tau,x_h).
\end{aligned}\eeno

For $I_1(\tau,x_h)$, using \eqref{x-y<2}, we have
\beno\begin{aligned}
&\quad\|\langle u_+\rangle^{\f12(1+\sigma)}\langle u_-\rangle^{1+\sigma}I_1\|_{L^2_tL^2_h}\\
&
\lesssim\|\int_{|x_h-y_h|\leq2}\f{1}{|x_h-y_h|}\Bigl(\langle u_+\rangle^{\f12(1+\sigma)}\langle u_-\rangle^{1+\sigma}\bigl|\p_h^{\al_h}\bigl(\bar{z}_-^{\beta(\delta,\delta')}\p_\beta\bar{z}^\alpha_{+(\delta)}
\bigr)\bigr|\Bigr)(\tau,y_h)d{y_h}\|_{L^2_tL^2_h}\\
&\stackrel{\text{Young}}{\lesssim}\|\f{1}{|x_h|}\|_{L^1(|x_h|\leq2)}\|\langle u_+\rangle^{\f12(1+\sigma)}\langle u_-\rangle^{1+\sigma}\p_h^{\al_h}\bigl(\bar{z}_-^{\beta(\delta,\delta')}\p_\beta\bar{z}^\alpha_{+(\delta)}
\bigr)\|_{L^2_tL^2_h}\\
&\lesssim\sum_{\beta_h\leq\al_h}\|\langle u_+\rangle^{1+\sigma}\p_h^{\al_h-\beta_h}\bar{z}_-^{\beta(\delta,\delta')}\|_{L^\infty_tL^2_h}
\|\f{\langle u_-\rangle^{1+\sigma}}{\langle u_+\rangle^{\f12(1+\sigma)}}\p_h^{\beta_h}\p_\beta\bar{z}^\alpha_{+(\delta)}\|_{L^2_tL^\infty_h}.
\end{aligned}\eeno
Then we obtain
\beq\label{app14}
\|\langle u_+\rangle^{\f12(1+\sigma)}\langle u_-\rangle^{1+\sigma}I_1\|_{L^2_tL^2_h}
\lesssim\sum_{k\leq|\al_h|}\bigl(E_{-,h}^{(k)}(\bar{z}_-^{h(\delta,\delta')})\bigr)^{\f12}
\cdot\sum_{k\leq|\al_h|+3}\bigl(F_{+,h}^{(k)}(\bar{z}^h_{+(\delta)})\bigr)^{\f12}.
\eeq

For $I_2(\tau,x_h)$, using the condition $\div_h\,\bar{z}_-^{h(\delta,\delta')}=0$ and integrating by parts, we have
\beno\begin{aligned}
I_2(\tau,x_h)=&\f{1}{2\pi}\int_{\R^2}\p_h^{\gamma_h}\p_\beta\p_\alpha\log |x_h-y_h|\cdot\bigl(1-\theta(|x_h-y_h|)\bigr)\p_h^{\al_h-\gamma_h}\bigl(\bar{z}_-^{\beta(\delta,\delta')}\bar{z}^\alpha_{+(\delta)}
(\tau,y_h)\bigr)dy_h\\
&-\f{1}{2\pi}\int_{\R^2}\bigl(\p_h^{\gamma_h}\p_\alpha\log |x_h-y_h|\cdot\p_\beta\theta(|x_h-y_h|)+\p_\beta\p_\alpha\log |x_h-y_h|\cdot\p_h^{\gamma_h}\theta(|x_h-y_h|)\\
&\quad+\p_\alpha\log |x_h-y_h|\cdot\p_h^{\gamma_h}\p_\beta\theta(|x_h-y_h|)\bigr)\p_h^{\al_h-\gamma_h}
\bigl(\bar{z}_-^{\beta(\delta,\delta')}\bar{z}^\alpha_{+(\delta)}\bigr)
(\tau,y_h)dy_h\\
\eqdefa&I_{21}(\tau,x_h)+I_{22}(\tau,x_h),
\end{aligned}\eeno
where $\gamma_h\leq\al_h$ and $|\gamma_h|=1$.

For $I_{22}(\tau,x_h)$, we have
\beno
|I_{22}|\lesssim\int_{1\leq|x_h-y_h|\leq2}\f{1}{|x_h-y_h|}\cdot
|\p_h^{\al_h-\gamma_h}\bigl(\bar{z}_-^{\beta(\delta,\delta')}\bar{z}^\alpha_{+(\delta)}\bigr)
(\tau,y_h)|dy_h.
\eeno
By the similarly derivation as  that for $I_1$, we have
\beq\label{app15}
\|\langle u_+\rangle^{\f12(1+\sigma)}\langle u_-\rangle^{1+\sigma}I_{22}\|_{L^2_tL^2_h}
\lesssim\sum_{k\leq|\al_h|-1}\bigl(E_{-,h}^{(k)}(\bar{z}_-^{h(\delta,\delta')})\bigr)^{\f12}
\cdot\sum_{k\leq|\al_h|+1}\bigl(F_{+,h}^{(k)}(\bar{z}^h_{+(\delta)})\bigr)^{\f12}.
\eeq

For $I_{21}(\tau,x_h)$, using \eqref{x-y>1}, we have
\beno\begin{aligned}
&\quad\|\langle u_+\rangle^{\f12(1+\sigma)}\langle u_-\rangle^{1+\sigma}I_{21}\|_{L^2_tL^2_h}\\
&
\lesssim\|\int_{|x_h-y_h|\geq 1}\f{1}{|x_h-y_h|^{\f32(1-\sigma)}}\Bigl(\langle u_+\rangle^{\f12(1+\sigma)}\langle u_-\rangle^{1+\sigma}\bigl|\p_h^{\al_h-\gamma_h}\bigl(\bar{z}_-^{\beta(\delta,\delta')}\bar{z}^\alpha_{+(\delta)}
\bigr)\bigr|\Bigr)(\tau,y_h)d{y_h}\|_{L^2_tL^2_h}\\
&\stackrel{\text{Young}}{\lesssim}\|\f{1}{|x_h|^{\f32(1-\sigma)}}\|_{L^2(|x_h|\geq1)}\|\langle u_+\rangle^{\f12(1+\sigma)}\langle u_-\rangle^{1+\sigma}\p_h^{\al_h-\gamma_h}\bigl(\bar{z}_-^{\beta(\delta,\delta')}\bar{z}^\alpha_{+(\delta)}
\bigr)\|_{L^2_tL^1_h}.
\end{aligned}\eeno
Since $\sigma\in(0,\f13)$, we have $\|\f{1}{|x_h|^{\f32(1-\sigma)}}\|_{L^2(|x_h|\geq1)}<\infty$. Then we obtain
\beq\label{app16}
\|\langle u_+\rangle^{\f12(1+\sigma)}\langle u_-\rangle^{1+\sigma}I_{21}\|_{L^2_tL^2_h}
\lesssim\sum_{k\leq|\al_h|-1}\bigl(E_{-,h}^{(k)}(\bar{z}_-^{h(\delta,\delta')})\bigr)^{\f12}
\cdot\sum_{k\leq|\al_h|-1}\bigl(F_{+,h}^{(k)}(\bar{z}^h_{+(\delta)})\bigr)^{\f12}.
\eeq

Thanks to \eqref{app14}, \eqref{app15} and \eqref{app16}, we have
\beno
\|\langle u_+\rangle^{\f12(1+\sigma)}\langle u_-\rangle^{1+\sigma}\p_h^{\al_h}p_{11}^{(\delta,\delta')}\|_{L^2_tL^2_h}
\lesssim\sum_{k\leq|\al_h|}\bigl(E_{-,h}^{(k)}(\bar{z}_-^{h(\delta,\delta')})\bigr)^{\f12}
\cdot\sum_{k\leq|\al_h|+3}\bigl(F_{+,h}^{(k)}(\bar{z}^h_{+(\delta)})\bigr)^{\f12}
\eeno
Since $\bar{z}^h_{+(\delta)}=\f12\int_{-1}^1z^h_{+(\delta)}(\cdot,x_3)dx_3$, we have
\beno
\bigl(F_{+,h}^{(k)}(\bar{z}^h_{+(\delta)})\bigr)^{\f12}\leq\bigl(F_+^{(k)}(z^h_{+(\delta)})\bigr)^{\f12}.
\eeno
Then we have
\beq\label{app17}
\|\langle u_+\rangle^{\f12(1+\sigma)}\langle u_-\rangle^{1+\sigma}\p_h^{\al_h}p_{11}^{(\delta,\delta')}\|_{L^2_tL^2_h}
\lesssim\sum_{k\leq|\al_h|}\bigl(E_{-,h}^{(k)}(\bar{z}_-^{h(\delta,\delta')})\bigr)^{\f12}
\cdot\sum_{k\leq|\al_h|+3}\bigl(F_+^{(k)}(z^h_{+(\delta)})\bigr)^{\f12}.
\eeq
Similarly, we have
\beq\label{app18}
\|\langle u_+\rangle^{\f12(1+\sigma)}\langle u_-\rangle^{1+\sigma}\p_h^{\al_h}p_{12}^{(\delta,\delta')}\|_{L^2_tL^2_h}
\lesssim\sum_{k\leq|\al_h|+3}\bigl(E_{-}^{(k)}(z^h_{-(\delta')})\bigr)^{\f12}
\cdot\sum_{k\leq|\al_h|}\bigl(F_{+,h}^{(k)}(\bar{z}_+^{h(\delta,\delta')})\bigr)^{\f12}.
\eeq

For term involving $p_{2(\delta)}$ and $p_{2(\delta')}$, since $\div\,w_{-(\delta)}=0$ and $w_{-(\delta)}^3|_{x_3=\pm1}=0$, we have
\beq\label{app21}
\widetilde{R}_{(\delta)}=\na_h\cdot\f{1}{2}\int_{-1}^1\sum_{i=1}^3\p_i(w_{-(\delta)}^iz_{+(\delta)}^h)(\cdot,x_3)dx_3=\na_h\cdot\sum_{\al=1}^2\p_\al\bigl(M_1(w_{-(\delta)}^\al z_{+(\delta)}^h)\bigr).
\eeq
Following the similar argument applied to \eqref{app17} and using Sobolev inequality,  we have
\beq\label{app20}\begin{aligned}
&\quad\|\langle u_+\rangle^{\f12(1+\sigma)}\langle u_-\rangle^{1+\sigma}\p_h^{\al_h}p_{2(\delta)}\|_{L^2_tL^2_h}\\
&
\lesssim\sum_{k\leq|\al_h|}\sup_{x_3\in(-1,1)}\bigl(E_{-,h}^{(k)}(w^h_{-(\delta)}(\cdot,x_3))\bigr)^{\f12}
\cdot\sum_{k\leq|\al_h|+4}\bigl(F_{+}^{(k)}(z^h_{+(\delta)})\bigr)^{\f12}.
\end{aligned}\eeq
Similar estimate holds for $\|\langle u_+\rangle^{\f12(1+\sigma)}\langle u_-\rangle^{1+\sigma}\p_h^{\al_h}p_{2(\delta')}\|_{L^2_tL^2_h}.$

Due to \eqref{app17}, \eqref{app18} and \eqref{app20}, we obtain the estimate of $\|\langle u_+\rangle^{\f12(1+\sigma)}\langle u_-\rangle^{1+\sigma}\p_h^{\al_h}p^{(\delta,\delta')}\|_{L^2_tL^2_h}$ for $|\al_h|\geq1$. Similar estimate holds for $|\al_h|=0$.  Then using \eqref{app13},  for $|\al_h|\geq0$, we have
\beq\label{app19}\begin{aligned}
&\quad\Bigl|\int_0^t\int_{\Sigma_{\tau,h}}\na_h\p_h^{\al_h}p^{(\delta,\delta')}\cdot\langle u_-\rangle^{2(1+\sigma)}\p_h^{\al_h}\bar{z}_+^{h(\delta,\delta')}dx_hd\tau\Bigr|\\
&\lesssim\Bigl(\sum_{k\leq\max\{|\al_h|,1\}}\bigl(E_{-,h}^{(k)}(\bar{z}_-^{h(\delta,\delta')})\bigr)^{\f12}
\cdot\sum_{k\leq\max\{|\al_h|,1\}+4}\bigl(F_{+}^{(k)}(z^h_{+(\delta)})\bigr)^{\f12}\\
&\quad
+\sum_{k\leq\max\{|\al_h|,1\}+4}\bigl(E_{-}^{(k)}(z^h_{-(\delta')})\bigr)^{\f12}
\cdot\sum_{k\leq\max\{|\al_h|,1\}}\bigl(F_{+,h}^{(k)}(\bar{z}_+^{h(\delta,\delta')})\bigr)^{\f12}\\
&\quad+\sum_{\tilde{\delta}=\delta,\delta'}\bigl[\sum_{k\leq\max\{|\al_h|,1\}}\sup_{x_3\in(-1,1)}\bigl(E_{-,h}^{(k)}(w^h_{-(\tilde\delta)}(\cdot,x_3))\bigr)^{\f12}
\cdot\sum_{k\leq\max\{|\al_h|,1\}+4}\bigl(F_{+}^{(k)}(z^h_{+(\tilde\delta)})\bigr)^{\f12}\bigr]\Bigr)
\cdot\bigl(F_{+,h}^{(\al_h)}(\bar{z}_+^{h(\delta,\delta')})\bigr)^{\f12}.
\end{aligned}\eeq

(iii) For the fourth term on the r.h.s of \eqref{app10}, integrating by parts,  for $|\al_h|\geq1$, we have
\beno\begin{aligned}
&\int\int_{W_{t,h}^{[u_+,\infty]}}\na_h\p_h^{\al_h}p^{(\delta,\delta')}\cdot\langle u_-\rangle^{2(1+\sigma)}\p_h^{\al_h}\bar{z}_+^{h(\delta,\delta')} dx_hd\tau\\
&=-\int\int_{W_{t,h}^{[u_+,\infty]}}\p_h^{\al_h}p^{(\delta,\delta')}\cdot\p_1\langle u_-\rangle^{2(1+\sigma)}\p_h^{\al_h}\bar{z}_+^{1(\delta,\delta')} dx_hd\tau
-\int_{C_{u_+}}\p_h^{\al_h}p^{(\delta,\delta')}\cdot\langle u_-\rangle^{2(1+\sigma)}\p_h^{\al_h}\bar{z}_+^{1(\delta,\delta')} d\sigma_+.
\end{aligned}\eeno
Following the similar argument applied in Step 2.2 of the proof of Proposition \ref{al h prop}, for $|\al_h|\geq1$, we have
\beno\begin{aligned}
&\int_{\R}\f{1}{\langle u_+\rangle^{1+\sigma}}\Bigl|\int\int_{W_{t,h}^{[u_+,\infty]}}\na_h\p_h^{\al_h}p^{(\delta,\delta')}\cdot\langle u_-\rangle^{2(1+\sigma)}\p_h^{\al_h}\bar{z}_+^{h(\delta,\delta')} dx_hd\tau\Bigr| d{u_+}\\
&\lesssim\int\int_{W_{t,h}}|\p_h^{\al_h}p^{(\delta,\delta')}|\cdot\langle u_-\rangle^{2(1+\sigma)}|\p_h^{\al_h}\bar{z}_+^{1(\delta,\delta')}|dx_hd\tau.
\end{aligned}\eeno
Similar estimate holds for $|\al_h|=0$. Then  for $|\al_h|\geq0$, we have
\beq\label{app22}
\int_{\R}\f{1}{\langle u_+\rangle^{1+\sigma}}\Bigl|\int\int_{W_{t,h}^{[u_+,\infty]}}\na_h\p_h^{\al_h}p^{(\delta,\delta')}\cdot\langle u_-\rangle^{2(1+\sigma)}\p_h^{\al_h}\bar{z}_+^{h(\delta,\delta')} dx_hd\tau\Bigr| d{u_+}
\lesssim\text{r.h.s of \eqref{app19}}.
\eeq

(iv) For the last term on the r.h.s of \eqref{app10}, we have
\beq\label{app23}
\int_0^t\int_{\Sigma_{\tau,h}}|\r_+^{(\al_h)}|\cdot\langle u_-\rangle^{2(1+\sigma)}|\p_h^{\al_h}\bar{z}_+^{h(\delta,\delta')}|dx_hd\tau
\lesssim\|\langle u_+\rangle^{\f12(1+\sigma)}\langle u_-\rangle^{1+\sigma}\r_+^{(\al_h)}\|_{L^2_tL^2_h}\cdot\bigl(F_{+,h}^{(\al_h)}(\bar{z}_+^{h(\delta,\delta')})\bigr)^{\f12}.
\eeq
We only need to estimate $\|\langle u_+\rangle^{\f12(1+\sigma)}\langle u_-\rangle^{1+\sigma}\r_+^{(\al_h)}\|_{L^2_tL^2_h}$.

For $\r_{+,1}^{(\al_h)}$ and $\r_{+,2}^{(\al_h)}$, we  have
\beq\label{app24}
\|\langle u_+\rangle^{\f12(1+\sigma)}\langle u_-\rangle^{1+\sigma}\r_{+,1}^{(\al_h)}\|_{L^2_tL^2_h}
\lesssim\sum_{k\leq|\al_h|+2}\bigl(E_{-,h}^{(k)}(\bar{z}^h_{-(\delta)})\bigr)^{\f12}
\cdot\sum_{k\leq|\al_h|}\bigl(F_{+,h}^{(k)}(\bar{z}_+^{h(\delta,\delta')})\bigr)^{\f12},
\eeq
\beq\label{app25}
\|\langle u_+\rangle^{\f12(1+\sigma)}\langle u_-\rangle^{1+\sigma}\r_{+,2}^{(\al_h)}\|_{L^2_tL^2_h}
\lesssim\sum_{k\leq|\al_h|}\bigl(E_{-,h}^{(k)}(\bar{z}_-^{h(\delta,\delta')})\bigr)^{\f12}\cdot\sum_{k\leq|\al_h|+3}\bigl(F_{+,h}^{(k)}(\bar{z}_{+(\delta')}^{h})\bigr)^{\f12}.
\eeq

For $\r_{+,3}^{(\al_h)}$, using conditions that $\div\,w_{-(\delta)}=0$, $w_{-(\delta)}^3|_{x_3=\pm1}=0$ and $z_{+(\delta)}^h=\bar{z}_{+(\delta)}^h+w_{+(\delta)}^h$, and integrating by parts, we have
\beno\begin{aligned}
R_{+(\delta)}&=M_1\bigl(w_{-(\delta)}^h\cdot\na_hz_{+(\delta)}^h\bigr)+\f12\int_{-1}^1w_{-(\delta)}^3\p_3w_{+(\delta)}^hdx_3\\
&=M_1\bigl(w_{-(\delta)}^h\cdot\na_hz_{+(\delta)}^h+\na_h\cdot w_{-(\delta)}^hw_{+(\delta)}^h\bigr).
\end{aligned}\eeno
Using Sobolev inequality, we also have
\beq\label{app26}\begin{aligned}
&\|\langle u_+\rangle^{\f12(1+\sigma)}\langle u_-\rangle^{1+\sigma}\r_{+,3}^{(\al_h)}\|_{L^2_tL^2_h}
\lesssim\sum_{\tilde{\delta}=\delta,\delta'}\Bigl(\sum_{k\leq|\al_h|}\sup_{x_3\in(-1,1)}\bigl(E_{-,h}^{(k)}(w_{-(\tilde\delta)}^h(\cdot,x_3))\bigr)^{\f12}
\cdot\sum_{k\leq|\al_h|+3}\bigl(F_{+}^{(k)}(z_{+(\tilde\delta)}^{h})\bigr)^{\f12}\\
&\quad+\sum_{k\leq|\al_h|+3}\sup_{x_3\in(-1,1)}\bigl(E_{-,h}^{(k)}(w_{-(\tilde\delta)}^{h}(\cdot,x_3))\bigr)^{\f12}
\cdot\sum_{k\leq|\al_h|}\sup_{x_3\in(-1,1)}\bigl(F_{+,h}^{(k)}(w_{+(\tilde\delta)}^h(\cdot,x_3))\bigr)^{\f12}\Bigr).
\end{aligned}\eeq

Noticing that  $\bar{z}_{\pm(\delta)}^h=\f12\int_{-1}^1z_{\pm(\delta)}^h(\cdot,x_3)dx_3$ and $w_{\pm(\delta)}^h=z_{\pm(\delta)}^h-\bar{z}_{\pm(\delta)}^h$. Then it   holds
\beq\label{app27}\begin{aligned}
&\bigl(E_{\pm,h}^{(k)}(\bar{z}_{\pm(\delta)}^{h})\bigr)^{\f12}\lesssim\bigl(E_{\pm}^{(k)}(z_{\pm(\delta)}^{h})\bigr)^{\f12},\quad
%&\bigl(F_{\pm,h}^{(k)}(\bar{z}_{\pm(\delta)}^{h})\bigr)^{\f12}\lesssim\bigl(F_{\pm}^{(k)}(z_{\pm(\delta)}^{h})\bigr)^{\f12}+\bigl(F_{\pm}^{(k+1)}(z_{\pm(\delta)}^{h})\bigr)^{\f12},\\
\bigl(F_{\pm,h}^{(k)}(w_{\pm(\delta)}^{h}(\cdot,x_3))\bigr)^{\f12}\lesssim\bigl(F_{\pm}^{(k)}(z_{\pm(\delta)}^{h})\bigr)^{\f12}.
\end{aligned}\eeq
Similar estimates hold for $\bigl(F_{\pm,h}^{(k)}(\bar{z}_{\pm(\delta)}^h)\bigr)^{\f12}$ and $\bigl(E_{\pm,h}^{(k)}(w_{\pm(\delta)}^h(\cdot,x_3))\bigr)^{\f12}$.

Thanks to \eqref{app23}, \eqref{app24}, \eqref{app25}, \eqref{app26} and \eqref{app27}, we have
\beq\label{app28}\begin{aligned}
&\int_0^t\int_{\Sigma_{\tau,h}}|\r_+^{(\al_h)}|\cdot\langle u_-\rangle^{2(1+\sigma)}|\p_h^{\al_h}\bar{z}_+^{h(\delta,\delta')}|dx_hd\tau\\
&\lesssim\Bigl(\sum_{k\leq|\al_h|+3}\bigl(E_{-}^{(k)}(z^h_{-(\delta)})\bigr)^{\f12}\cdot\sum_{k\leq|\al_h|}\bigl(F_{+,h}^{(k)}(\bar{z}_+^{h(\delta,\delta')})\bigr)^{\f12}
+\sum_{k\leq|\al_h|}\bigl(E_{-,h}^{(k)}(\bar{z}_-^{h(\delta,\delta')})\bigr)^{\f12}\cdot\sum_{k\leq|\al_h|+3}\bigl(F_{+}^{(k)}(z_{+(\delta')}^{h})\bigr)^{\f12}\\
&\quad+\sum_{\tilde{\delta}=\delta,\delta'}\Bigl[\sum_{k\leq|\al_h|}\sup_{x_3\in(-1,1)}\bigl(E_{-,h}^{(k)}(w_{-(\tilde\delta)}^h(\cdot,x_3))\bigr)^{\f12}\cdot\sum_{k\leq|\al_h|+3}\bigl(F_{+}^{(k)}(z_{+(\tilde\delta)}^{h})\bigr)^{\f12}\\
&\quad+\sum_{k\leq|\al_h|}\sup_{x_3\in(-1,1)}\bigl(F_{+,h}^{(k)}(w_{+(\tilde\delta)}^h(\cdot,x_3))\bigr)^{\f12}\cdot\sum_{k\leq|\al_h|+3}\bigl(E_{-}^{(k)}(z_{-(\tilde\delta)}^{h})\bigr)^{\f12}\Bigr]\Bigr)
\cdot\bigl(F_{+,h}^{(\al_h)}(\bar{z}_+^{h(\delta,\delta')})\bigr)^{\f12}.
\end{aligned}\eeq

{\it Step 3.3. Convergence of the sequence $\{\bar{z}_{\pm(\delta)}^h\}_{0<\delta\leq1}$.} Thanks to \eqref{app10}, \eqref{app11}, \eqref{app19}, \eqref{app22} and \eqref{app28}, using \eqref{app27} and \eqref{app5}, we obtain that
\beq\label{app29}\begin{aligned}
&\quad\sum_{k\leq N}\Bigl(E_{+,h}^{(k)}(\bar{z}_+^{h(\delta,\delta')})+F_{+,h}^{(k)}(\bar{z}_+^{h(\delta,\delta')})\Bigr)\\
&\leq C_1\sum_{k\leq N}E_{+,h}^{(k)}(\bar{z}_{+,0}^{h(\delta,\delta')})+C_1\varepsilon_1\sum_{k\leq N}\Bigl(E_{-,h}^{(k)}(\bar{z}_-^{h(\delta,\delta')})+F_{+,h}^{(k)}(\bar{z}_+^{h(\delta,\delta')})\Bigr)\\
&\quad+C_1\varepsilon_1\sum_{\tilde{\delta}=\delta,\delta'}\sum_{k\leq N}\sup_{x_3\in(-1,1)}\bigl[\bigl(E_{-,h}^{(k)}(w_{-(\tilde\delta)}^h(\cdot,x_3))\bigr)^{\f12}+\bigl(F_{+,h}^{(k)}(w_{+(\tilde\delta)}^h(\cdot,x_3))\bigr)^{\f12}\bigr]\cdot\sum_{k\leq N}\bigl(F_{+,h}^{(k)}(\bar{z}_+^{h(\delta,\delta')})\bigr)^{\f12}.
\end{aligned}\eeq
Similar estimate holds for $\bar{z}_-^{h(\delta,\delta')}$.

Taking $\varepsilon_1$ sufficiently small, using H\"older inequality, we deduce from \eqref{app29} that
\beq\label{app30}\begin{aligned}
&\quad\sum_{+,-}\sum_{k\leq N}\Bigl(E_{\pm,h}^{(k)}(\bar{z}_\pm^{h(\delta,\delta')})+F_{\pm,h}^{(k)}(\bar{z}_\pm^{h(\delta,\delta')})\Bigr)
\leq 2C_1\sum_{+,-}\sum_{k\leq N}E_{\pm,h}^{(k)}(\bar{z}_{\pm,0}^{h(\delta,\delta')})\\
&\qquad+C_2\varepsilon_1^2\sum_{+,-}\sum_{\tilde{\delta}=\delta,\delta'}\sum_{k\leq N}\sup_{x_3\in(-1,1)}\Bigl(E_{\pm,h}^{(k)}(w_{\pm(\tilde\delta)}^h(\cdot,x_3))+F_{\pm,h}^{(k)}(w_{\pm(\tilde\delta)}^h(\cdot,x_3))\Bigr).
\end{aligned}\eeq
Thanks to \eqref{lim4} and \eqref{app30}, we have
\beq\label{lim5}\begin{aligned}
&\sum_{+,-}\sum_{k\leq N}\Bigl(E_{\pm,h}^{(k)}(\bar{z}_\pm^{h(\delta,\delta')})+F_{\pm,h}^{(k)}(\bar{z}_\pm^{h(\delta,\delta')})\Bigr)
\leq 2C_1\sum_{+,-}\sum_{k\leq N}E_{\pm,h}^{(k)}(\bar{z}_{\pm,0}^{h(\delta,\delta')})\\
&\quad+C_3\varepsilon_1^2\sum_{+,-}\sum_{\tilde{\delta}=\delta,\delta'}\sum_{k\leq N}\Bigl(\sup_{x_3\in(-1,1)}E_{\pm,h}^{(k)}\bigl(z_{\pm(\tilde\delta),0}^h(\cdot,x_3)-z_{\pm(0),0}^h(\cdot)\bigr)+{\tilde\delta}^2\varepsilon_1^4\Bigr).
\end{aligned}\eeq
By virtue of \eqref{lim ansatz}, \eqref{lim3} and \eqref{lim5}, we obtain that
 $\{\bar{z}_{\pm(\delta)}^h(t,x_h)\}_{0<\delta\leq1}$ is a Cauchy sequence in $H^N(\R^2)$ for $t>0$.

\medskip

{\bf Step 4. Limit  of the sequence $\{\bar{z}_{\pm(\delta)}^h\}_{0<\delta\leq1}$.} Since $\{\bar{z}_{\pm(\delta)}^h(t,x_h)\}_{0<\delta\leq1}$ is a  Cauchy sequence in $H^N(\R^2)$, there exists a unique $z_{\pm(0)}^h(t,x_h)$ such that
\beq\label{lim6}
\lim_{\delta\rightarrow 0}\bar{z}_{\pm(\delta)}^h(t,x_h)=z_{\pm(0)}^h(t,x_h),\quad\text{in}\quad H^N(\R^2).
\eeq

Taking $\delta'\rightarrow 0$ in \eqref{lim5}, using \eqref{lim1} and \eqref{lim3}, we obtain that
\beq\label{lim7}\begin{aligned}
&\sum_{+,-}\sum_{k\leq N}\Bigl(E_{\pm,h}^{(k)}(\bar{z}_{\pm(\delta)}^h-z_{\pm(0)}^h)+F_{\pm,h}^{(k)}(\bar{z}_{\pm(\delta)}^h-z_{\pm(0)}^h)\Bigr)\\
&
\leq C_4\sum_{+,-}\sum_{k\leq N}\sup_{x_3\in(-1,1)}E_{\pm,h}^{(k)}\bigl(z_{\pm(\delta),0}^h(\cdot,x_3)-z_{\pm(0),0}^h(\cdot)\bigr)+C_4\delta^2\varepsilon_1^6.
\end{aligned}\eeq
By virtue of \eqref{lim ansatz} and \eqref{lim7} and by using Sobolev inequality, we arrive at \eqref{lim2}.

\medskip

{\bf Step 5. Derivation of the limit system.} For system \eqref{scaling for app system} involving $\bar{z}^h_{\pm(\delta)}$, we shall take $\delta\rightarrow 0$ and derive the limit system.
From \eqref{scaling for app system}, we first have
\beno
\p_t\bar{z}_{\pm(\delta)}^h=\pm\p_1\bar{z}_{\pm(\delta)}^h-\bar{z}_{\mp(\delta)}^h\cdot \nabla_h\bar{z}_{\pm(\delta)}^h-\nabla_h\bigl(M_1p_{(\delta)}\bigr)-M_1\bigl(w_{-(\delta)}\cdot\na z_{+(\delta)}^h\bigr)
\eeno

Thanks to \eqref{app6} and the derivation of $\|\langle u_+\rangle^{\f12(1+\sigma)}\langle u_-\rangle^{1+\sigma}\p_h^{\al_h}p^{(\delta,\delta')}\|_{L^2_tL^2_h}$ (see \eqref{app17}, \eqref{app18} and \eqref{app20}),  by
\eqref{lim1}, \eqref{lim2} and \eqref{app5},  we obtain that
\beno
\{\na_h(M_1p_{(\delta)})\}_{0<\delta\leq1}\quad\text{is a  Cauchy sequence in }\ \ H^{N-1}(\R^2).
\eeno
Then we have
\beno
\{\p_t\bar{z}_{\pm(\delta)}^h\}_{0<\delta\leq1}\quad\text{is a  Cauchy sequence in }\ \ H^{N-1}(\R^2).
\eeno
Moreover, by virtue of \eqref{app5}, we have
\beq\label{app34}
\sum_{k\leq N-1}\bigl(E_{\pm,h}^{(k)}(\p_t\bar{z}_{\pm(\delta)}^h)+F_{\pm,h}^{(k)}(\p_t\bar{z}_{\pm(\delta)}^h)\bigr)\lesssim\varepsilon_1^2.
\eeq
Then by the uniqueness of limit, we have
\beq\label{app35}
\lim_{\delta\rightarrow 0}\p_t\bar{z}_{\pm(\delta)}^h=\p_tz_{\pm(0)}^h,\quad\text{in}\ \ H^{N-1}(\R^2).
\eeq

Due to \eqref{lim1}, \eqref{lim2} and \eqref{app35}, taking $\delta\rightarrow 0$, we deduce from \eqref{scaling for app system} that in the sense of $H^{N-1}(\R^2)$, $\bigl(z_{+(0)}^h(t,x_h),z_{-(0)}^h(t,x_h)\bigr)$ satisfy
\beq\label{2D MHD}
\begin{aligned}
\p_tz_{+(0)}^h-\p_1z_{+(0)}^h+z_{-(0)}^h\cdot \nabla_hz_{+(0)}^h& = -\nabla_h p_{(0)},  \ \ \text{in}\ \ \R^2\times\R^+\\
\p_tz_{-(0)}^h+\p_1z_{-(0)}^h+z_{+(0)}^h\cdot \nabla_hz_{-(0)}^h& = -\nabla_h p_{(0)}, \\
\na_h\cdot z_{+(0)}^h =0,\quad \na_h\cdot z_{-(0)}^h &=0,
\end{aligned}
\eeq
where
\beno
p_{(0)}(t,x_h)=-\f{1}{2\pi}\int_{\R^2}\log |x_h-y_h|\cdot\bigl(\p_\al z_{\mp(0)}^\beta\p_\beta z_{\pm(0)}^\al\bigr)(t,y_h)dy_h.
\eeno
We remark that \eqref{2D MHD} is exactly the two dimensional ideal MHD system.

Now we only need to prove that $z_{\pm(0)}^h$ is continuous  with respect to $t$ and verifies the initial data
\beq\label{app36}
z_{\pm(0)}^h|_{t=0}=z_{\pm(0),0}^h.
\eeq
Thanks to \eqref{app5}, \eqref{lim2}, \eqref{app34} and \eqref{app35}, we have
\beno
\sum_{k\leq N}\bigl(E_{\pm,h}^{(k)}(z_{\pm(0)}^h)+F_{\pm,h}^{(k)}(z_{\pm(0)}^h)\bigr)+\sum_{k\leq N-1}\bigl(E_{\pm,h}^{(k)}(\p_tz_{\pm(0)}^h)+F_{\pm,h}^{(k)}(\p_tz_{\pm(0)}^h)\bigr)\lesssim\varepsilon_1^2.
\eeno
We obtain that
\beno
z_{\pm(0)}^h\in C([0,\infty); H^{N-1}(\R^2)),
\eeno
and then   \eqref{app36} holds.

It remains to derive the {\it a priori} estimate \eqref{energy estimate for 2D MHD}  for the limit system \eqref{2D MHD}-\eqref{app36}. Setting $\bar{z}_{\pm(\delta')}^h=0$ in Step 2, by virtue of \eqref{app30}, \eqref{lim1} and the assumption of the initial data, we obtain that
\beno
\sum_{+,-}\sum_{k\leq N}\bigl(E_{\pm,h}^{(k)}(z_{\pm(0)}^h)+F_{\pm,h}^{(k)}(z_{\pm(0)}^h)\bigr)\leq C\sum_{+,-}\sum_{k\leq N}E_{\pm,h}^{(k)}(z_{\pm(0),0}^h).
\eeno
This is exactly \eqref{energy estimate for 2D MHD}.

\medskip

{\bf Step 6. Asymptotics from 3D MHD to 2D MHD.} Thanks to \eqref{lim4} and \eqref{lim7}, we have
\beq\label{lim8}\begin{aligned}
&\sum_{+,-}\sum_{k\leq N}\sup_{x_3\in(-1,1)}\Bigl(E_{\pm,h}^{(k)}(z_{\pm(\delta)}^h(\cdot,x_3)-z_{\pm(0)}^h(\cdot))
+F_{\pm,h}^{(k)}(z_{\pm(\delta)}^h(\cdot,x_3)-z_{\pm(0)}^h(\cdot))\Bigr)\\
&\quad+\sum_{+,-}\sum_{k\leq N-1}\sup_{x_3\in(-1,1)}\bigl(E_{\pm,h}^{(k)}(z_{\pm(\delta)}^3(\cdot,x_3))+F_{\pm,h}^{(k)}(z_{\pm(\delta)}^3(\cdot,x_3))\bigr)\Bigr)\\
&
\leq C_5\sum_{+,-}\sum_{k\leq N}\sup_{x_3\in(-1,1)}E_{\pm,h}^{(k)}\bigl(z_{\pm(\delta),0}^h(\cdot,x_3)-z_{\pm(0),0}^h(\cdot)\bigr)+C_5\delta^2\varepsilon_1^4.
\end{aligned}\eeq
By virtue of \eqref{lim ansatz}, \eqref{lim8} and   Sobolev inequality, we get \eqref{limitation 1}.
Then Theorem \ref{limitation} is eventually proved.

%\section{Further remarks}
%In this paper, we mainly study the ideal MHD system in thin domains. We shall state further remarks on the involving problems that have similar results.
%
%{\it Effect of the small viscosity.} As \cite{He-Xu-Yu}, we could also consider the small viscosity effect to the ideal Alfv\'en waves in thin domains. The problem is expressed as the following equations (in terms of Els\:asser variables)
%\begin{equation}\label{MHDmu}
%\begin{split}
%\partial_t  \zp +\Zm \cdot \nabla \zp -\mu\zp&= -\nabla p,  \ \ \text{in}\ \ \Omega_\delta\times\R^+\\
%\partial_t  \zm +\Zp \cdot \nabla \zm-\mu\zm &= -\nabla p,\\
%\div \zp =0,\quad \div \zm &=0,
%\end{split}
%\end{equation}
%where $\mu>0$ are small viscosity. The Navier boundary condition is imposed on the boundary as follows
%\beq\label{Navier}
%\zp|_{x_3=\pm\delta}=0,\quad \zm|_{x_3=\pm\delta}=0,\quad [S(\zp)e_3]_{tan}|_{x_3=\pm\delta}=0,\quad [S(\zm)\vv e_3]_{tan}|_{x_3=\pm\delta}=0,
%\eeq
%where $S(\vv v)=\f12\bigl(\na\vv v+(\na\vv v)^T\bigr)$ is the stress tensor of the vector $\vv v$, and $[\vv v]_{tan}|_{x_3=\pm\delta}\eqdefa\vv v-(\vv v\cdot \vv e_3)\vv e_3$ is the tangential part of vector $\vv v$.
%
%
%The arguments of \cite{He-Xu-Yu} show that the study of system \eqref{MHDmu} based on the well understanding of the ideal MHD system and the design of the weights. Thus, if we choose weights
%
%
%\smallskip
%
%{\it Domains and boundary conditions.}

\vspace{0.3cm}
\noindent {\bf Acknowledgments.}
The author  is partially supported by NSF of China under grant 11201455, 11671383 and by innovation grant from National Center for Mathematics  and Interdisciplinary  Science.

\end{document}